\definecolor{e-mail}{rgb}{0,.40,.80}
\definecolor{reference}{rgb}{.20,.60,.22}
\definecolor{citation}{rgb}{0,.40,.80}
\newcommand{\daj}[1]{\todo[caption={#1}]{ { \begin{spacing}{0.5}\scriptsize{#1-DJ}\end{spacing} } } }
\newcommand{\sltwo}{\mathfrak{sl}_2}
\newcommand{\cA}{\mathcal{A}}
\newcommand{\cC}{\mathcal{C}}
\newcommand{\C}{\mathbf{C}}
\newcommand{\cD}{\mathcal{D}}
\newcommand{\D}{\mathrm{D}}
\newcommand{\cE}{\mathcal{E}}
\newcommand{\cF}{\mathcal{F}}
\newcommand{\g}{\mathfrak{g}}
\newcommand{\h}{\mathfrak{h}}	
\renewcommand{\H}{\mathrm{H}}
\renewcommand{\k}{k}
\newcommand{\bL}{\mathbb{L}}
\newcommand{\N}{\mathrm{N}}
\newcommand{\cN}{\mathcal{N}}
\renewcommand{\O}{\mathcal{O}}
\newcommand{\cP}{\mathcal{P}}
\newcommand{\Q}{\mathbf{Q}}
\newcommand{\bR}{\mathbf{R}}
\newcommand{\R}{\mathrm{R}}
\newcommand{\T}{\mathrm{T}}
\newcommand{\Z}{\mathbf{Z}}
\newcommand{\rZ}{\mathrm{Z}}
\newcommand{\rt}[1]{\mathchoice{\underset{#1}{\otimes}}{\otimes_{#1}}{}{}}
\newcommand{\rtL}[1]{\mathchoice{\overset{\bL}{\underset{#1}{\otimes}}}{\otimes_{#1}^\bL}{}{}}
\newcommand{\Dq}[1][]{\mathrm{D}_q^{#1}}
\newcommand{\Oq}[1][]{\mathcal{O}_q^{#1}}
\newcommand{\U}{\mathrm{U}}
\newcommand{\Uq}{\mathrm{U}_q}
\newcommand{\bu}{\mathbf{1}}
\newcommand{\Ann}{\mathbb{A}\mathrm{nn}}
\newcommand{\act}{\mathrm{act}}
\newcommand{\bop}{\sigma\mathrm{op}}
\newcommand{\coev}{\mathrm{coev}}
\newcommand{\coinv}{\mathrm{coinv}}
\newcommand{\const}{\operatorname{const}}
\newcommand{\CP}{\mathbf{CP}}
\newcommand{\End}{\mathrm{End}}
\newcommand{\ev}{\mathrm{ev}}
\newcommand{\Fun}{\mathrm{Fun}}
\newcommand{\HC}{\operatorname{HC}}
\newcommand{\HH}{\mathrm{HH}}
\newcommand{\RHom}{\mathbb{R}\mathrm{Hom}}
\newcommand{\Hom}{\operatorname{Hom}}
\newcommand{\id}{\mathrm{Id}}
\newcommand{\inv}{\mathrm{inv}}
\newcommand{\Loc}{\mathrm{Loc}}
\newcommand{\op}{\mathrm{op}}
\newcommand{\Per}{\mathrm{Per}}
\newcommand{\PSL}{\mathrm{PSL}}
\newcommand{\QCoh}{\mathrm{QCoh}}
\newcommand{\SL}{\mathrm{SL}}
\newcommand{\str}{\mathrm{str}}
\newcommand{\Spec}{\mathrm{Spec}}
\newcommand{\TL}{\mathrm{TL}}
\newcommand{\triv}{\mathrm{triv}}
\newcommand{\ZMug}{\mathrm{Z}_{\mathrm{M\ddot{u}g}}}
\newcommand{\Bimod}{\mathrm{Bimod}}
\newcommand{\Cat}{\mathrm{Cat}}
\newcommand{\Disk}{\mathrm{Disk}}
\newcommand{\Mfld}{\mathrm{Mfld}}
\newcommand{\PrL}{{\mathrm{Pr}^{\mathrm{L}}}}
\newcommand{\Rep}{\mathrm{Rep}}
\newcommand{\Repfd}{\mathrm{Rep}^{\mathrm{fd}}}
\newcommand{\Repq}{\mathrm{Rep}_q}
\newcommand{\Repqfd}{\mathrm{Rep}^{\mathrm{fd}}_q}
\newcommand{\Sk}{\mathrm{Sk}}
\newcommand{\Skint}{\mathrm{Sk}^{\mathrm{int}}}
\newcommand{\SkAlg}{\mathrm{SkAlg}}
\newcommand{\SkAlgint}{\mathrm{SkAlg}^{\mathrm{int}}}
\newcommand{\SkCat}{\mathrm{SkCat}}
\newcommand{\SkMod}{\mathrm{SkMod}}
\newcommand{\Vect}{\mathrm{Vect}}
\newcommand{\Ext}{\mathrm{Ext}}
\newcommand{\fr}[1]{\widehat{#1}}
\newcommand{\rev}[1]{\breve{#1}}
\newcommand{\into}{\hookrightarrow}
\newcommand{\closure}[1]{\overline{#1}}
\newcommand{\DD}{\mathbb{D}}
\newcommand{\DDc}{\closure{\mathbb{D}}}
\newcommand{\DDout}{\mathbb{D}^{\mathrm{out}}}
\newcommand{\bimod}[3]{
  \ifthenelse{\isempty{#3}}
  {{}_{#1}\mathrm{BiMod}_{#2}}
  {{}_{#1}\mathrm{BiMod}_{#2}(#3)}
}
\newcommand{\lmod}[2]{
  \ifthenelse{\isempty{#2}}
  {\mathrm{LMod}_{#1}}
  {\mathrm{LMod}_{#1}(#2)}
}
\newcommand{\rmod}[2]{
  \ifthenelse{\isempty{#2}}
  {\mathrm{RMod}_{#1}}
  {\mathrm{RMod}_{#1}(#2)}
}
\newcommand{\llpar}{(\!(}
\newcommand{\rrpar}{)\!)}
\DeclareMathOperator{\colim}{colim}
\mathchardef\mhyphen="2D
\theoremstyle{plain}
\newtheorem{theorem}{Theorem}[section]
\newtheorem{proposition}[theorem]{Proposition}
\newtheorem{lemma}[theorem]{Lemma}
\newtheorem{corollary}[theorem]{Corollary}
\newtheorem{introthm}{Theorem}
\newtheorem{introcor}{Corollary}
\theoremstyle{definition}
\newtheorem{definition}[theorem]{Definition}
\newtheorem{example}[theorem]{Example}
\newtheorem*{introex}{Example}
\newtheorem{introrem}{Remark}
\theoremstyle{remark}
\newtheorem{remark}[theorem]{Remark}
\newcommand{\defterm}[1]{\textbf{\emph{#1}}}
\newcommand{\tik}[1]{\begin{tikzpicture}[baseline=(current bounding box.center), scale=0.65] #1 \end{tikzpicture} }
\tikzstyle cross=[preaction={draw=white, -, line width=4pt}, thick]
\newcommand{\fieldgoal}[3][-]{\draw[cross,#1] (#2+2.5,-#3).. controls (#2+2.5,-#3-0.3) and (#2+1,-#3-0.7)..(#2+1,-#3-1); \draw[cross,#1] (#2,-#3).. controls (#2,-#3-0.3) and (#2+2.5,-#3-0.7)..(#2+2.5,-#3-1);\draw[cross,#1] (#2+1.5,-#3).. controls (#2+1.5,-#3-0.3) and (#2,-#3-0.7)..(#2,-#3-1);}
\newcommand{\KP}[1]{%
  \begin{tikzpicture}[baseline=-\dimexpr\fontdimen22\textfont2\relax, color=black, thick]
  #1
  \end{tikzpicture}%
}
\newcommand{\KPA}{%
  \KP{\filldraw[fill=none] circle (0.3);}%
}
\newcommand{\KPB}{%
  \KP{
    \draw (-0.3,-0.3) -- (0.3,0.3);
    \draw (-0.3,0.3) -- (-0.05,0.05);
    \draw (0.05,-0.05) -- (0.3,-0.3);
  }%
}
\newcommand{\KPC}{%
  \KP{%
    \draw (-0.3,-0.3) .. controls (0.05,0) .. (-0.3,0.3);
    \draw (0.3,-0.3) .. controls (-0.05,0) .. (0.3,0.3);
  }%
}
\newcommand{\KPD}{%
  \KP{%
    \draw (-0.3,0.3) .. controls (0,-0.05) .. (0.3,0.3);
    \draw (-0.3,-0.3) .. controls (0,0.05) .. (0.3,-0.3);
  }%
}
\begin{document}
	
	\title{The finiteness conjecture for skein modules}
	\address{Department of Mathematical Sciences, Montana State University, Bozeman, Montana, USA}
	\email{sam.gunningham@montana.edu}
	\author{Sam Gunningham}
	\address{School of Mathematics, University of Edinburgh, Edinburgh, UK}
	\email{D.Jordan@ed.ac.uk}
	\author{David Jordan}
	\address{Institut f\"{u}r Mathematik, Universit\"{a}t Z\"{u}rich, Zurich, Switzerland}
	\email{pavel.safronov@math.uzh.ch}
	\author{Pavel Safronov}
	
	\begin{abstract}
		We give a new, algebraically computable formula for skein modules of closed 3-manifolds via Heegaard splittings. As an application, we prove that skein modules of closed 3-manifolds are finite-dimensional, resolving in the affirmative a conjecture of Witten.
	\end{abstract}
	
	\maketitle
	
	\section*{Introduction}
	
	A fundamental invariant of an oriented 3-manifold $M$ emerging from quantum topology is its ``Kauffman bracket skein module" $\Sk(M)$ introduced by Przytycki \cite{Przytycki} and Turaev \cite{Turaev}. This is the $\Q(A)$-vector space formally spanned by all framed links in $M$, modulo isotopy equivalence and the linear relations,
	\begin{align*}
	\left\langle L \cup \KPA\right\rangle&=(-A^{2}-A^{-2})\langle L\rangle \\
	\left\langle\KPB\right\rangle &= A\left\langle\KPC\right\rangle + A^{-1} \left\langle \KPD \right\rangle,
	\end{align*}
	which are imposed between any links agreeing outside of some oriented 3-ball, and differing as depicted inside that ball. Despite the elementary definition, many basic properties of skein modules are not known. The main result of the present paper (\cref{thm:witten}) confirms a conjecture of Witten, and establishes the following most fundamental property of skein modules:
	
	\begin{introthm}
		The skein module of any closed oriented 3-manifold has finite dimension over $\Q(A)$.
		\label{thm:maintheorem}
	\end{introthm}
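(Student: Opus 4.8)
The plan is to use a Heegaard splitting $M = H_g \cup_\Sigma H_g'$ of the closed oriented 3-manifold $M$ into two genus-$g$ handlebodies glued along a surface $\Sigma = \Sigma_g$, and to reinterpret the skein module $\Sk(M)$ as a "pairing" between the two handlebody skein modules over the skein category or skein algebra of the gluing surface. Concretely, I would work with the skein category $\SkCat(\Sigma)$ (or a suitable module category / algebra associated to $\Sigma$), observe that each handlebody $H_g$ determines a module over it — essentially the skein module $\Sk(H_g)$ with its natural action of links on the bounding surface — and show that $\Sk(M)$ is computed as a relative tensor product $\Sk(H_g) \otimes_{\SkCat(\Sigma)} \Sk(H_g')$, or equivalently as the Hochschild-homology-type object $\Hom$ in the appropriate category of modules. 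This is the "algebraically computable formula" advertised in the abstract, and once it is in place the finiteness statement becomes a question about finiteness of such a relative tensor product.

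**Next I would** bring in the factorization-homology / quantum-character-variety perspective: the skein category of $\Sigma$ is the value of a topological field theory, and its module category of "skein modules with boundary" should be identified with quasicoherent (or ind-coherent) sheaves on a quantum/stacky version of the character variety of $\Sigma$ — for $\sltwo$ at generic $q$, this is a $q$-deformation of $\mathrm{Ch}_{\SL_2}(\Sigma)$, which is a smooth affine (or finite-type) object of the expected dimension $6g-6$. The handlebody modules correspond to structure sheaves of the Lagrangian subvarieties $\mathrm{Ch}_{\SL_2}(H_g) \subset \mathrm{Ch}_{\SL_2}(\Sigma)$, and the relative tensor product computing $\Sk(M)$ becomes a derived tensor product $\O_{L} \otimes_{\O_X} \O_{L'}$ of two such subvarieties inside the ambient character stack $X$. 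Over $\C(A)$ (generic $q$) I expect the relevant quantum character variety to be a Noetherian algebra of finite global dimension, and the two handlebody modules to be finitely generated over it; then the Tor-groups $\mathrm{Tor}^{\O_X}_*(\O_L, \O_{L'})$ are finitely generated, and $\Sk(M)$ — being a subquotient or the zeroth piece of this — is finite-dimensional over $\C(A)$ because the intersection $L \cap L'$ is finite-dimensional (the character varieties of the two handlebodies meet in something $0$-dimensional, corresponding to the finitely many flat connections on $M$, after passing to the generic/deformed picture where transversality or at least finiteness holds).

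**The main obstacle** I anticipate is controlling the relative tensor product homologically: a priori it is a derived/dg object and its $H_0$ need not be finite-dimensional unless one knows (i) the quantum character variety of $\Sigma$ (equivalently the relevant skein algebra, or the category of modules over $\SkCat(\Sigma)$) has good finiteness properties — Noetherianity and finite homological dimension over $\C(A)$ — and (ii) the handlebody modules are perfect, or at least coherent, over it, and (iii) some Tor-vanishing or boundedness that forces the answer into finitely many degrees with finite-dimensional cohomology. Establishing (i) likely rests on the earlier structural results about skein categories and their presentation via quantum groups / reflection equation algebras at generic $q$, where the deformation is flat and the classical limit is the (finite-type) character variety; establishing (ii) uses that a handlebody is built from a disk by attaching finitely many $1$-handles, so its skein module is generated by finitely many links and the action is "algebraic"; and (iii) is where one exploits that $M$ is closed, so the "classical intersection" $\mathrm{Ch}(H_g) \cap \mathrm{Ch}(H_g')$ inside $\mathrm{Ch}(\Sigma)$ is the character variety of $M$ itself, which for a closed $3$-manifold is $0$-dimensional in the deformed setting — this is the crucial input that turns a generically infinite-dimensional module category into a finite-dimensional skein module.

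**Finally I would** reduce the general skein-theoretic statement to this geometric finiteness by a base-change/deformation argument: prove the finiteness of $H_*$ of the relative tensor product over the generic point $\C(A)$ by comparison with the classical ($A = \pm 1$, i.e. $q = 1$) situation where the character variety is classical and its relevant intersection-theoretic invariants are manifestly finite-dimensional, then invoke semicontinuity / generic flatness to transport finiteness to generic $A$. Care is needed because skein modules are famously \emph{not} flat over $\C[A, A^{\pm 1}]$ in general (there is torsion, and the classical limit is not the naive one), so the deformation argument must be run at the categorical level — on the skein \emph{category} and its module, which do behave well — rather than naively on $\Sk(M)$ itself; this categorical flatness, together with the identification $\Sk(M) \cong \Sk(H_g) \otimes_{\SkCat(\Sigma)} \Sk(H_g')$, is what I expect to be the technical heart of the argument.
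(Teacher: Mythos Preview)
Your overall architecture matches the paper's: Heegaard splitting, a relative tensor product formula over a skein-theoretic object attached to $\Sigma$, identification of the handlebody modules with quantizations of Lagrangians in a Poisson variety, and then a finiteness argument for that tensor product. Where your proposal diverges from the paper, and where it contains a genuine gap, is in the finiteness mechanism itself.

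\textbf{The central gap.} You write that the relative tensor product is finite-dimensional ``because the intersection $L\cap L'$ is finite-dimensional (the character varieties of the two handlebodies meet in something $0$-dimensional, corresponding to the finitely many flat connections on $M$)'' and propose to transport this via semicontinuity from $q=1$. This is false as stated: for a closed oriented $3$-manifold $M$, the $\SL_2$-character variety is \emph{not} $0$-dimensional in general. For $M=T^3$ it is positive-dimensional; for $M=\Sigma_g\times S^1$ it contains a copy of the $(6g-6)$-dimensional character variety of $\Sigma_g$. Correspondingly, at $A=-1$ the skein module is $\O(\underline{\Loc}_{\SL_2}(M))$, which is typically \emph{infinite}-dimensional. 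So the classical limit is infinite-dimensional, and no semicontinuity or generic-flatness argument from $q=1$ can possibly yield finiteness at generic $q$; finiteness is a genuinely quantum phenomenon, not a deformation of a classical one.

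\textbf{What the paper does instead.} The paper works not with the (singular) character variety of $\Sigma$ but with the \emph{internal} skein algebra $\SkAlgint(\Sigma^*)$, which is a flat deformation quantization of the \emph{smooth} Poisson variety $G^{2g}$ with the Fock--Rosly bracket. The handlebody modules are then deformation quantizations of smooth Lagrangians $G^g\subset G^{2g}$. The finiteness of the relative tensor product after inverting $\hbar$ is then deduced from the theory of \emph{holonomic deformation quantization modules} due to Kashiwara and Schapira: a quantization of a smooth Lagrangian is holonomic, and the tensor product of two holonomic DQ modules is finite-dimensional over $\C\llpar\hbar\rrpar$. This step ultimately reduces (via a Lagrangian neighborhood theorem and a choice of $\widehat{A}_{L_1}$-lattice) to finiteness of $\Hom$ between holonomic $\D$-modules on $L_1$, which holds regardless of whether $L_1\cap L_2$ is $0$-dimensional. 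That analytic input --- holonomicity replacing transversality --- is the missing idea in your sketch, and your step (iii) does not go through without it.

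A secondary point: your proposal works throughout with the character variety/stack of $\Sigma$, which is singular. The paper stresses that this is precisely why one must pass to the internal skein algebra and the smooth model $G^{2g}$; the Kashiwara--Schapira machinery requires a smooth ambient symplectic variety and smooth Lagrangians, neither of which is available on the character variety side.
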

	
	Prior to Witten's conjecture, skein modules of closed 3-manifolds had been computed only for certain free quotients of $S^3$ by finite groups \cite{HostePrzytyckiLens, GilmerHarris}, surgeries on trefoil knots \cite{Bullock, Holmes} and a certain family of torus links \cite{Harris} (see the introduction of \cite{GilmerMasbaum} for more details).  Subsequently, Carrega \cite{Carrega} and Gilmer \cite{Gilmer} showed the skein module of the three-torus $T^3=S^1\times S^1 \times S^1$ to be 9-dimensional; Gilmer and Masbaum \cite{GilmerMasbaum} have established \emph{lower} bounds for dimensions of $\Sigma_g\times S^1$ for any genus\footnote{After the present paper first appeared on the arXiv, Detcherry and Wolff proved that Gilmer and Masbaum’s lower bound is the dimension \cite{Detcherry-Wolff}.}, and Detcherry \cite{Detcherry} has established the conjecture for surgeries along two-bridge and torus knots.
	
	\subsection*{Tensor product formula}
	
	We do not prove \cref{thm:maintheorem} by directly computing the dimensions (however, see \cref{sec:comp-alg}, ``Computer algebra").  Rather, \cref{thm:maintheorem} is one of a number of consequences of our second main theorem, which gives a new algebraic reformulation of skein modules, and brings to bear tools from the representation theory of quantum groups and deformation quantization modules.
	
	First let us remark that there are skein theories associated to any reductive algebraic group $G$ (indeed, for any ribbon category $\cA$), so we will phrase the results in this section in that generality; the Kauffman bracket skein module comes from $G=\SL_2$, more precisely from a standard choice of ribbon structure on the category $\Repq(SL_2)$.  We use the notation $\Sk_\cA$ for statements applying to a general braided tensor category, and the abbreviation  $\Sk_G$ for the case $\cA=\Repq(G)$ for a reductive group $G$.
	
	If $\Sigma$ is an oriented surface, the skein module $\Sk_\cA(\Sigma\times [0, 1])=\SkAlg_\cA(\Sigma)$ is a skein \emph{algebra}, where the composition is given by stacking skeins on top of each other.  Similarly, if $M$ is a 3-manifold with boundary $\Sigma$, then $\Sk_\cA(M)$ is naturally a module over $\SkAlg_\cA(\Sigma)$.  We begin by upgrading the skein algebra and skein module constructions to what we call the \emph{internal skein algebra} and \emph{internal skein module}: in the case of $G$-skein theories these are $\Uq(\g)$-equivariant algebras, and $\Uq(\g)$-equivariant modules over them, whose invariant part recovers the ordinary skein algebra and skein modules respectively.
	
	For this, let us pick a closed disk embedding $\DDc\hookrightarrow \Sigma$ and let $\Sigma^*=\Sigma\setminus \DDc$. The \emph{internal skein algebra} $\SkAlgint_\cA(\Sigma^*)$ (see \cref{def:intskeinalg}) is the algebra whose $V$-multiplicity space consists of skeins in $\Sigma^*\times [0, 1]$ which end at the boundary of $\Sigma^*\times\{0\}$ with label $V\in\cA$ (see \cref{fig:skein-stack}).  The usual skein algebra arises therefore as its invariant subalgebra, $\SkAlgint_\cA(\Sigma^*)^{\inv} = \Hom_\cA(\bu,\SkAlgint_\cA(\Sigma^*))$. We define the \emph{internal skein module} similarly (see \cref{def:intskeinmod}).
	
	Now suppose that $M$ is decomposed as $M=N_2\cup_{\Sigma}N_1$.  Our second main theorem (\cref{thm:relativetensorproduct}) is the following simple formula for the ordinary skein module of $M$ in terms of the internal skeins of its constituents.
	
	\begin{introthm}
		The natural evaluation pairing gives an isomorphism,
		\[\Sk_\cA(M)\cong \left(\Skint_\cA(N_2)\rt{\SkAlgint_\cA(\Sigma^*)} \Skint_\cA(N_1)\right)^{\inv}.\]
		\label{thm:tensorproduct}
	\end{introthm}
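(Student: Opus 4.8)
The plan is to realize both sides as invariants of a single categorical construction attached to the Heegaard-type decomposition $M = N_2 \cup_\Sigma N_1$, and then to reduce the statement to a factorization (excision) property of internal skein modules together with a Morita-theoretic identification of relative tensor products with invariants. First I would set up the skein category $\SkCat_\cA(\Sigma)$ (equivalently, the free cocompletion of the skein category, a module category over $\Repq(G)$ via the disk embedding $\DDc \hookrightarrow \Sigma$), so that $\SkAlgint_\cA(\Sigma^*)$ is recovered as the internal endomorphism algebra $\underline{\End}(\bu_{\DDc})$ of the ``empty skein near the puncture'' object, and $\Skint_\cA(N_i)$ is the internal Hom from that same object into the state space of $N_i$. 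The point of passing to internal skein algebras and modules is precisely that this data is \emph{local}: cutting $\Sigma$ along an arc, or equivalently presenting $N_i$ by attaching handles, is reflected algebraically by (co)limits of module categories.

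The key steps, in order, are as follows. (i) Establish the gluing formula at the level of categories: if $\Sigma = \Sigma^*_1 \cup_{\text{ann}} \Sigma^*_2$ — or more relevantly, if $N_i$ is built from $\Sigma^* \times [0,1]$ by attaching $N_i$ along $\Sigma$ — then the skein module $\Sk_\cA(M)$ is the \emph{colimit} (coequalizer) of the two-sided bar construction whose terms are $\Skint_\cA(N_2) \otimes \SkAlgint_\cA(\Sigma^*)^{\otimes n} \otimes \Skint_\cA(N_1)$, followed by taking $\Repq(G)$-invariants. This is the heart of the matter and should follow from the definition of skein modules as a colimit over embedded skeins in $M$, decomposed according to how links meet the separating surface $\Sigma$; the internal skein algebra appears exactly because links may be pushed across $\Sigma \times \{0\}$ freely, which is the bar differential. (ii) Identify this bar construction with the relative tensor product: by definition $\Skint_\cA(N_2) \rt{\SkAlgint_\cA(\Sigma^*)} \Skint_\cA(N_1)$ \emph{is} the geometric realization of that bar complex in the category of $\Uq(\g)$-equivariant objects (this is just the definition of relative tensor product over an algebra object in a monoidal category, once one checks $\Skint_\cA(N_2)$ is a right module and $\Skint_\cA(N_1)$ a left module over $\SkAlgint_\cA(\Sigma^*)$ — which is geometrically clear, as one stacks a skein in $\Sigma^* \times [0,1]$ onto the collar of $N_i$). (iii) Commute invariants past the colimit: since $\Hom_\cA(\bu, -) = (-)^{\inv}$ is exact (or at least preserves the relevant colimits — it is a right adjoint but the bar colimit is a reflexive coequalizer/sifted colimit, which invariants preserve in $\Repq(G)$), we get $\left(\Skint_\cA(N_2) \rt{\SkAlgint_\cA(\Sigma^*)} \Skint_\cA(N_1)\right)^{\inv} \cong \Sk_\cA(M)$, and one checks directly that the resulting map is the evaluation pairing of the theorem statement.

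I expect step (i) — the geometric gluing formula presenting $\Sk_\cA(M)$ as a bar colimit over $\SkAlgint_\cA(\Sigma^*)$ — to be the main obstacle. The subtlety is that $\Sk_\cA(M)$ is defined via links in all of $M$, not via a gluing, so one must produce a canonical isomorphism and check it is independent of the auxiliary disk $\DDc \hookrightarrow \Sigma$ and compatible with isotopies that sweep arcs of a link back and forth across $\Sigma$. Concretely, a link in $M$ in general position meets $\Sigma$ transversally in finitely many points; isotoping so that these points lie near $\DDc \times \{0\}$ on both sides exhibits any class in $\Sk_\cA(M)$ as coming from $\Skint_\cA(N_2) \otimes \Skint_\cA(N_1)$, and the relations that identify two such presentations (moving a strand through $\Sigma$, or merging/splitting the marked points) are exactly the module relations defining the relative tensor product over $\SkAlgint_\cA(\Sigma^*)$. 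Making this precise — ideally by appealing to the factorization-homology / excision formalism for skein categories so that $\Sk_\cA(-)$ is manifestly a pushout-preserving functor — is where the real work lies; everything downstream is formal manipulation of (co)limits and adjunctions in $\Repq(G)$.
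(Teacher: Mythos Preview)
Your overall shape is sound but diverges from the paper's argument in a way that leaves a real gap in step~(i).

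The paper does not attempt a direct bar-complex presentation of $\Sk_\cA(M)$ over $\SkAlgint_\cA(\Sigma^*)$. Instead it proceeds in two moves. First, Walker's skein-category TFT (their \cref{thm:WalkerTFT}) gives
\[
\Sk_\cA(M)\;\cong\;\SkMod_\cA(N_2)\ \rt{\SkCat_\cA(\Sigma)}\ \SkMod_\cA(N_1),
\]
a coend over the skein \emph{category} of the \emph{closed} surface. Second, it observes that this coend is the evaluation pairing on $\fr{\SkCat_\cA(\Sigma)}$, and that under the equivalence $\fr{\SkCat_\cA(\Sigma)}\simeq \lmod{\SkAlgint_\cA(\Sigma^*)}{\fr{\cA}}^{\str}$ there is another nondegenerate pairing, namely $(X_2,X_1)\mapsto \Hom_{\fr{\cA}}(\bu, X_2\rt{A}X_1)$ (their \cref{prop:stronglyequivariantduality}). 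Since both pairings are nondegenerate, to show they agree it suffices to compare them on the generators $\cP(V)$; this is a two-line computation using rigidity. No bar resolution of $\Sk_\cA(M)$ is ever written down, and no colimit needs to be commuted past invariants.

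Your proposed step~(i) tries to short-circuit both moves at once, and this is where the gap is. The transversality-and-isotopy argument you sketch (``push all intersection points with $\Sigma$ near $\DDc$, the relations are exactly the bar differential'') naturally produces a presentation of $\Sk_\cA(M)$ as a coend over $\SkCat_\cA(\Sigma)$, not as a bar complex over the algebra $\SkAlgint_\cA(\Sigma^*)$ in $\fr{\cA}$. Two nontrivial identifications separate those: the monadic reconstruction $\fr{\SkCat_\cA(\Sigma^*)}\simeq \lmod{\SkAlgint_\cA(\Sigma^*)}{\fr{\cA}}$, and then the passage from $\Sigma^*$ back to the closed surface $\Sigma$, which is exactly the strongly-equivariant condition coming from the quantum moment map $\cF\to\SkAlgint_\cA(\Sigma^*)$ (their \cref{prop:closedskeincategory}). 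Your sketch never mentions the moment map or strong equivariance, and without it the isotopy relations you describe---in particular those that move a strand all the way around the removed disk---are not visibly accounted for by the bar differential over $\SkAlgint_\cA(\Sigma^*)$ alone. The paper's duality argument is precisely the device that handles this: it matches the two pairings abstractly, so one never has to track these extra relations by hand.

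Your step~(iii) is in fact fine in the paper's setting (the unit of $\fr{\cA}$ is compact projective, so $\Hom_{\fr{\cA}}(\bu,-)$ is cocontinuous), but by then the work has already been done or skipped. If you want to salvage the direct approach, the honest route is: first invoke Walker's TFT to get the coend over $\SkCat_\cA(\Sigma)$, then use monadicity plus the strongly-equivariant description to rewrite that coend---which is exactly what the paper does, only phrased as a comparison of nondegenerate pairings rather than of bar complexes.
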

	
	Now let us present some important corollaries. Consider the case of a $G$-skein module for $q$ not a root of unity. In this case $\SkAlgint_G(\Sigma^*)$ is an algebra in the category $\Repq(G)$ of representations of the quantum group. This algebra coincides with the so-called Alekseev--Grosse--Schomerus algebra \cite{AlekseevGrosseSchomerus, BuffenoirRoche, RocheSzenes} which has more recently appeared in \cite{BZBJ1}. Using triviality of the M\"uger center of $\Repq(G)$ we prove (see \cref{cor:relativetensorproductMuger}) that the relative tensor product above is already invariant.
	
	\begin{introcor}
		Suppose $q$ is not a root of unity. Then the natural evaluation pairing gives an isomorphism,
		\[\Sk_G(M)\cong \Skint_G(N_2)\rt{\SkAlgint_G(\Sigma^*)} \Skint_G(N_1).\]
		\label{cor:tensorproduct}
	\end{introcor}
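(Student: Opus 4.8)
The plan is to deduce \cref{cor:tensorproduct} from \cref{thm:tensorproduct} by showing that, when $q$ is not a root of unity, the relative tensor product $\Skint_G(N_2)\rt{\SkAlgint_G(\Sigma^*)}\Skint_G(N_1)$ is already an object of $\Vect$ (equivalently, a trivial object of $\Repq(G)$), so that applying the invariants functor $\Hom_{\Repq(G)}(\bu,-)$ does nothing. The key input is the triviality of the M\"uger center of $\Repqfd(G)$: the only transparent objects — those whose double braiding with every other object is the identity — are the trivial ones. So the strategy is to exhibit the relative tensor product as built out of transparent objects, or at least as a quotient of an object on which the braided structure acts in a way that forces transparency.

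First I would recall, from the construction of the internal skein algebra and internal skein module, that the $\Repq(G)$-module structures on $\Skint_G(N_i)$ arise geometrically: the action of $\SkAlgint_G(\Sigma^*)$ is implemented by inserting skeins in a collar $\Sigma^*\times[0,1]$ glued onto $N_i$, and the residual $\Repq(G)$-action on the tensor product comes from a further ``puncture'' — the disk $\DDc$ that was removed from $\Sigma$. The crucial point is that the two constituents $N_1$ and $N_2$ are glued along all of $\Sigma$, so that after gluing, a neighbourhood of $\partial\DDc$ in $M$ is a genuine (unpunctured) solid region: the bounding circle $\partial\DDc$ bounds a disk on the $N_1$ side and also on the $N_2$ side. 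This means that any skein labelled by $V\in\Repq(G)$ ending on that circle can be "pushed off" through either disk, and the obstruction to identifying the two push-offs is precisely a double-braiding. Concretely, I would show that on the relative tensor product the generator of the $\Repq(G)$-action coming from $\partial\DDc$ factors through the coequalizer defining $\rt{\SkAlgint_G(\Sigma^*)}$ in two a priori different ways, related by the double braiding of $V$ with the ambient skein data, and that these must agree because both describe the same skein in $M$.

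Next, with that geometric observation in hand, the representation-theoretic step is the following: I would argue that every object $V$ appearing in the multiplicity decomposition of $\Skint_G(N_2)\rt{\SkAlgint_G(\Sigma^*)}\Skint_G(N_1)$ acts on that object transparently, i.e. its double braiding against the object is trivial; hence by triviality of the M\"uger center of $\Repqfd(G)$ (valid precisely because $q$ is not a root of unity), the only such $V$ that survive are direct sums of $\bu$. Therefore the relative tensor product lies in the image of $\Vect\hookrightarrow\Repq(G)$, on which the invariants functor is the identity. Combining with \cref{thm:tensorproduct} gives the stated isomorphism. I would also need a small compatibility check that the identification $\Sk_G(M)\cong(\cdots)^{\inv}$ of \cref{thm:tensorproduct} becomes, under this vanishing, literally the evaluation pairing $\Skint_G(N_2)\rt{\SkAlgint_G(\Sigma^*)}\Skint_G(N_1)$ with no further decoration.

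The main obstacle I anticipate is making precise — and compatible with the colimit/coequalizer defining the relative tensor product — the claim that the puncture-$\partial\DDc$ action is transparent. The subtlety is that $\SkAlgint_G(\Sigma^*)$ is noncommutative and the tensor product is a relative tensor product over it, so one cannot naively "slide" the $V$-colored strand past everything; one must check that the sliding moves needed to produce the double-braiding identity are exactly the relations already imposed by passing to the relative tensor product (i.e. they hold after coequalizing, not before). I expect this to reduce to a diagrammatic/skein-theoretic lemma: in $M=N_2\cup_\Sigma N_1$, a $V$-colored arc with both endpoints on $\partial\DDc$ can be isotoped across $\DDc$, and the resulting relation is witnessed by a skein that lives in $\Skint_G(N_2)\otimes_{\SkAlgint_G(\Sigma^*)}\Skint_G(N_1)$. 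Once that lemma is in place, the appeal to the M\"uger center is formal.
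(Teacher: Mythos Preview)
Your overall strategy is right and matches the paper's: show that the relative tensor product already lies in the M\"uger center of $\Repq(G)$, invoke triviality of that center for $q$ not a root of unity, and conclude that taking invariants is the identity so that \cref{thm:tensorproduct} gives the result.

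Where you diverge from the paper is in \emph{how} you establish membership in the M\"uger center. You propose a direct skein-theoretic argument: the deleted disk $\DDc$ is filled back in when $N_1$ and $N_2$ are glued along all of $\Sigma$, so a $V$-labelled strand can be pushed across it, and the discrepancy between the two push-offs is a double braiding that must vanish on the coequalizer. This is the correct geometric picture, but the paper packages it algebraically. The paper shows (\cref{prop:closedskeincategory}) that a $\SkCat_\cA(\Sigma^*)$-module extending to a $\SkCat_\cA(\Sigma)$-module is exactly a \emph{strongly equivariant} $\SkAlgint_\cA(\Sigma^*)$-module --- one on which the quantum moment map $\cF\to\SkAlgint_\cA(\Sigma^*)$ acts through the counit. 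Since $\partial N_i\cong\Sigma$ (not merely $\Sigma^*$), each $\Skint_G(N_i)$ is automatically strongly equivariant. The paper then proves once and for all, purely in the language of quantum moment maps (\cref{prop:stronglyequivarianttensorproductMuger}, via \cref{cor:OqGtensorproducttrivial} and \cref{prop:trivialOqGbimodule}), that the relative tensor product of a right and a left strongly equivariant module over $A$ lands in $\ZMug(\fr\cA)$. Your ``push across the disk'' move is exactly the topological content of strong equivariance (the annulus around $\partial\DDc$ bounds a disk in $N_i$, so the $\cF$-action trivializes), and your ``double braiding vanishes on the coequalizer'' is the content of \cref{prop:trivialOqGbimodule} (the field-goal transform, i.e.\ the double braiding, encodes the two $\cF$-actions). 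So your route is correct but reinvents, in skein language, machinery the paper has already set up algebraically.

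One genuine wrinkle in your write-up: you say you will show that ``every object $V$ appearing in the multiplicity decomposition \ldots\ acts on that object transparently.'' That is not what is needed --- membership in the M\"uger center requires trivial double braiding against \emph{all} $V\in\Repq(G)$, not just those occurring as summands. Your geometric argument in fact gives this (the disk-push works for any $V$-label), but as phrased the step is misstated and would not suffice.
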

	
	We note that, in contrast to ordinary skein algebras, the internal skein algebra $\SkAlgint_G(\Sigma^*)$ has an explicit presentation in terms of generators and relations, so the above relative tensor product can be made quite explicit.  Moreover the internal skein algebras are \emph{smooth}, and in particular their limits as $q\to 1$ are smooth affine algebraic varieties, in contrast to the skein algebras which develop singularities.
	
	\begin{introrem}\label{rem:derived}
		\cref{cor:tensorproduct} suggests a potential definition for a derived analogue of the skein module of a $3$-manifold $M$ at generic $q$, namely, taking the derived tensor product
		\[
		L\Sk_G^\sharp(M) := \Skint_G(N_2)\rtL{\SkAlgint_G(\Sigma^*)} \Skint_G(N_1)
		\]
		The proof of \cref{thm:maintheorem} implies that this is a bounded chain complex whose homology groups are finite dimensional vector spaces over $\Q(q)$. The resulting homology groups is a skein-theoretic analogue of the sheaf-theoretic framed Floer homology $HP_\sharp^\bullet(M)$ introduced by Abouzaid and Manolescu \cite{AbouzaidManolescu} (see \cref{sec:complexified-instanton-floer-homology} for further details). A precise connection between the skein theoretic and sheaf theoretic invariants will be established in forthcoming work of the first and third named authors. The authors of this article intend to further study derived skein theoretic invariants in future work.
	\end{introrem}
	
	\begin{introex}
		Let us consider the simplest interesting example, the case $G=\SL_2$, and $\Sigma=T^2$.  Then the algebra $\SkAlgint_G(\Sigma^*)$ coincides with the so-called ``elliptic double" $\Dq(G)$, a subalgebra of the Heisenberg double of $\Uq(\sltwo)$ (see \cite{BrochierJordan} and for an expanded list of relations, \cite{BalagovicJordan}\daj{But the paper with Martina is GL2-specific rather than SL2.  Also Juliet's paper never wrote them out either, so they haven't appeared in print anywhere.  Could write an appendix to this paper giving the presentation for $\Dq(G)$ for all $G$?}).  This is the algebra generated by elements, $a^1_1, a^1_2, a^2_1, a^2_2, b^1_1, b^1_2, b^2_1, b^2_2$, subject to the relations,
		\[
		\begin{array}{ll}
		R_{21}A_1R_{12}A_2 &= A_2R_{21}A_1R_{12}\\
		R_{21}B_1RB_2 &= B_2R_{21}B_1R \\
		R_{21}B_1RA_2 &= A_2R_{21}B_1R_{21}^{-1}
		\end{array}, \qquad \textrm{ and } \quad \begin{array}{l}a^1_1a^2_2 - q^2 a^1_2 a^2_1 = 1,\\b^1_1b^2_2 - q^2 b^1_2 b^2_1 = 1\end{array}
		\]
		The first three equations take place in $\Dq(\SL_2)\otimes \End(V\otimes V)$, where
		\[A = \left(\begin{array}{cc} a^1_1 & a^1_2 \\ a^2_1 & a^2_2\end{array}\right),\qquad B = \left(\begin{array}{cc} b^1_1 & b^1_2 \\ b^2_1 & b^2_2 \end{array}\right),\qquad \begin{array}{l}A_1 = A\otimes \id,\qquad A_2 = \id\otimes A,\\ B_1 = B\otimes \id,\qquad B_2 = \id\otimes B,\end{array}\]
		and $R=R_{12}, R_{21}\in \End(V\otimes V)$ denote the quantum $R$-matrix and its flip, for the defining representation $V$ of $\Uq(\sltwo)$.
		
		The algebra $\Dq(G)$ may be regarded simultaneously as a deformation quantization of the variety $G\times G$ with its Heisenberg double Poisson structure \cite{STS}, and as a $q$-analogue of the algebra $\D(G)$ of differential operators on the group $G$.  The subalgebra of invariants in $\Dq(G)$ surjects onto the usual skein algebra of the torus, via a very general procedure known as quantum Hamiltonian reduction \cite{VaragnoloVasserot, BalagovicJordan, BZBJ2}.
	\end{introex}
	
	Typically, taking invariants does not commute with relative tensor products: the invariants in the tensor product are not spanned by the tensor product of the invariants in each factor.  However, in certain cases, when one of the factors is a \emph{cyclic} module over the internal skein algebra, we may in fact replace internal skein modules by ordinary skein modules in the formula (see \cref{prop:tensorproducthandlebody} and \cref{cor:connectedsum}).
	
	\begin{introcor}\label{cor:skalgtens}
		Suppose $q$ is not a root of unity and one of the following conditions is satisfied:
		\begin{itemize}
			\item $N_1$ and $N_2$ are handlebodies (hence define a Heegaard decomposition of $M$).
			
			\item $\Sigma = S^2$.
		\end{itemize}
		
		Then the natural evaluation pairing restricts to an isomorphism
		\[\Sk_G(M)\cong \Sk_G(N_2)\rt{\SkAlg_G(\Sigma)} \Sk_G(N_1).\]
	\end{introcor}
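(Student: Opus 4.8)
The plan is to deduce this from \cref{thm:tensorproduct} by replacing, in the formula there, the internal skein algebra and modules by their invariant parts, using that $q$ is not a root of unity to control how invariants interact with relative tensor products. Set $A=\SkAlgint_G(\Sigma^*)$ and $a=A^{\inv}=\SkAlg_G(\Sigma)$, and recall $\Sk_G(N_i)=\Skint_G(N_i)^{\inv}=\Hom_{\Repq(G)}(\bu,\Skint_G(N_i))$, regarded as a module over $a$ by restriction. By \cref{thm:tensorproduct} we have $\Sk_G(M)\cong\bigl(\Skint_G(N_2)\rt{A}\Skint_G(N_1)\bigr)^{\inv}$, and the inclusions of invariants assemble into a tautological map $\Sk_G(N_2)\rt{a}\Sk_G(N_1)\to\Skint_G(N_2)\rt{A}\Skint_G(N_1)$. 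Our task is to show that this map is an isomorphism (and, combined with \cref{thm:tensorproduct}, recovers the stated evaluation pairing).

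The essential geometric input is \cref{prop:tensorproducthandlebody}: under either hypothesis the internal skein module of (say) $N_1$ is \emph{induced} from its ordinary skein module, in the sense that the natural map of left $A$-modules
\[A\rt{a}\Sk_G(N_1)\;\longrightarrow\;\Skint_G(N_1)\]
is an isomorphism. When $N_1$ is a handlebody this packages two facts: that $\Skint_G(N_1)$ is cyclic, generated by the empty skein, because every skein in $N_1$ meeting the removed disk can be combed into a collar of $\partial N_1$; and that the kernel of $A\twoheadrightarrow\Skint_G(N_1)$ is generated as a left ideal by \emph{invariant} skeins, i.e.\ by closed skeins on $\Sigma$. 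One proves this by an excision argument reducing along the genus to the solid torus, where $\Skint_G(\text{solid torus})\cong\Oq(G)$, the algebra $A$ is the elliptic double $\Dq(G)=\SkAlgint_G(T^2\setminus\DDc)$, and the statement becomes the assertion that $\Oq(G)$ is free of rank one over $\Dq(G)$ relative to invariants — a quantum Hamiltonian reduction statement about the quantum moment map. In the case $\Sigma=S^2$ one has $a=\SkAlg_G(S^2)=\C(A)$, and the proposition reduces to the easier fact that each $\Skint_G(N_i)$ is a trivial object of $\Repq(G)$, a direct sum of copies of $\bu$.

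Granting \cref{prop:tensorproducthandlebody}, the corollary follows formally. By associativity of the relative tensor product,
\[\Skint_G(N_2)\rt{A}\Skint_G(N_1)\;\cong\;\Skint_G(N_2)\rt{A}\Bigl(A\rt{a}\Sk_G(N_1)\Bigr)\;\cong\;\Skint_G(N_2)\rt{a}\Sk_G(N_1),\]
and since $\Sk_G(N_1)$ is a plain $\C(A)$-module carrying an $a$-action, the right-hand side is the coequalizer, taken in $\Repq(G)$, of the bar diagram $\Skint_G(N_2)\otimes_{\C(A)}a\otimes_{\C(A)}\Sk_G(N_1)\rightrightarrows\Skint_G(N_2)\otimes_{\C(A)}\Sk_G(N_1)$. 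Because $q$ is not a root of unity, $\Repq(G)$ is semisimple and $\bu$ is a compact projective object, so $\Hom_{\Repq(G)}(\bu,-)$ is exact, commutes with this coequalizer, and carries $\Skint_G(N_2)\otimes_{\C(A)}V$ to $\Sk_G(N_2)\otimes_{\C(A)}V$ for any $\C(A)$-module $V$. Applying it to the displayed isomorphism therefore gives $\bigl(\Skint_G(N_2)\rt{A}\Skint_G(N_1)\bigr)^{\inv}\cong\Sk_G(N_2)\rt{a}\Sk_G(N_1)$, and combining with \cref{thm:tensorproduct} yields $\Sk_G(M)\cong\Sk_G(N_2)\rt{a}\Sk_G(N_1)$; tracing the identifications shows this is the evaluation pairing.

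The main obstacle is \cref{prop:tensorproducthandlebody}, and within it the handlebody case. Combing an arbitrary skein in a handlebody into a boundary collar, though delicate, yields only cyclicity; the genuinely hard point is the \emph{absence of unexpected relations} — that $A\rt{a}\Sk_G(N_1)\to\Skint_G(N_1)$ is injective, equivalently that the relation ideal is generated by invariant skeins — whose genus-one incarnation, freeness of $\Oq(G)$ over $\Dq(G)$ relative to invariants, is a substantive statement about quantum Hamiltonian reduction rather than a formal consequence of the definitions.
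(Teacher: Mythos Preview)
Your route diverges from the paper's, and the divergence introduces a genuine gap.

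The paper does \emph{not} pass through the internal skein algebra formula (\cref{thm:tensorproduct}) at all. Instead, both cases are proved directly from the skein-category TFT property $\Sk_\cA(M)\cong \SkMod_\cA(N_2)\otimes_{\SkCat_\cA(\Sigma)}\SkMod_\cA(N_1)$ together with the purely categorical \cref{prop:reltensorproductcategory}: if two modules over a pointed category $\cC$ are \emph{generated by invariants}, their $\cC$-tensor product collapses to the tensor product over $\End_\cC(\bu)=\SkAlg_\cA(\Sigma)$. For handlebodies, ``generated by invariants'' follows from cyclicity (\cref{lem:handlebodycyclic}): every skein can be combed into a boundary collar, so the empty skein generates. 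For $\Sigma=S^2$, the skein category is the M\"uger center (\cref{prop:MugerS2}), hence $\Vect$ for generic $q$, and every module is trivially generated by invariants. That is the whole argument; \cref{prop:tensorproducthandlebody} \emph{is} the handlebody case of the corollary, not an auxiliary ``induced module'' statement.

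Your approach instead requires the isomorphism $A\otimes_a \Sk_G(N_1)\xrightarrow{\sim}\Skint_G(N_1)$ for $A=\SkAlgint_G(\Sigma^*)$ and $a=A^{\inv}$. Surjectivity is indeed cyclicity, but injectivity---that the annihilator of the empty skein in $A$ is generated as a left ideal by its \emph{invariant} part---is strictly stronger and is neither stated nor proved in the paper. Your sketch (``excision reducing to the solid torus'' and ``freeness of $\Oq(G)$ over $\Dq(G)$ relative to invariants'') is not a proof: the genus-one statement you invoke is itself a nontrivial assertion about quantum Hamiltonian reduction that would need independent justification, and the excision step glueing these together is not spelled out. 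By contrast, the paper's \cref{prop:reltensorproductcategory} needs only that \emph{both} modules are generated by invariants---a hypothesis about surjectivity, with no control on relations required---so the paper's route sidesteps exactly the obstacle you correctly flag as hard. Your $S^2$ argument (that $\Skint_G(N_i)$ lies in the M\"uger center, hence is a sum of copies of $\bu$) is fine and essentially matches \cref{cor:connectedsum}, but for handlebodies you should abandon the induced-module claim and work with the skein category directly.
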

	
	We note that the skein algebra of $S^2$ is one-dimensional, so the case $\Sigma=S^2$ recovers the main theorem of \cite{PrzytyckiConnectedSum} expressing the skein module of a connected sum of three-manifolds as a tensor product of the skein modules. The case of a Heegaard splitting was also considered in \cite{McLendonTorsion}.  We would like to stress, however, that even in these cases, where one \emph{could} work directly with ordinary skein algebras, one perhaps should not:  the internal skein algebras are simply easier to work with for both proofs and computations.  In particular, it is difficult to present the skein module $\Sk_G(H^g)$ of the genus $g$ handlebody as a module for $\Sk_G(\Sigma_g)$, while by contrast $\Skint_G(H^g)$ is simply an induced module for $\SkAlgint_G(\Sigma_g)$.  Moreover, the failure of the classical character variety of the handlebody to define a smooth Lagrangian means that the deformation quantization techniques of \cite{KashiwaraSchapiraDQ} do not apply to skein algebras, while they do perfectly well for their internal enhancements.
	
	\subsection*{Proof of \texorpdfstring{\cref{thm:maintheorem}}{Theorem 1}} Let us now sketch our proof of \cref{thm:maintheorem} -- and its natural generalization to $G$-skein modules for any reductive group $G$ -- starting from \cref{thm:tensorproduct}.  The complete proof is given in \cref{sec:finite-dimensionality}.	
	
	A basic notion in the theory of ordinary differential equations on algebraic varieties is that of a \emph{holonomic system} -- this is a system of ``over-determined" differential equations, whose space of solutions is always finite-dimensional.  The algebra of polynomial differential operators on a smooth affine algebraic variety may be regarded as a deformation quantization of its cotangent space; in \cite{KashiwaraSchapiraDQ}, the notion of holonomicity was abstracted to hold for arbitrary deformation quantizations of smooth symplectic varieties besides cotangent spaces, and in this generality the same suite of finite-dimensionality results was established.  Because the internal skein algebras are flat deformations of smooth algebraic varieties, we may appeal to this deep and powerful general theory.
	
	Hence, given a closed 3-manifold $M$, we choose a Heegaard splitting $M=N_1\coprod_\Sigma N_2$, where $N_1$ and $N_2$ are handlebodies of genus $g$, and $\Sigma=\Sigma_g$ is their common boundary.  The internal skein algebra $\SkAlgint_G(\Sigma^*)$ is a deformation quantization (with the quantization parameter $q$) of the Poisson variety $G^{2g}$ with respect to the Fock--Rosly Poisson structure, which is generically symplectic.
	
	In \cref{thm:handlebody} we compute the handlebody modules $\Skint_G(N_1)$ and $\Skint_G(N_2)$ over the internal skein algebra $\SkAlgint_G(\Sigma^*)$ and show that they are also deformation quantizations, now of Lagrangian subvarieties $G^g\hookrightarrow G^{2g}$ (in particular, they lie in the symplectic locus). So, $\Skint_G(N_1)$ and $\Skint_G(N_2)$ determine \emph{holonomic} deformation quantization modules over the deformation quantization of $G^{2g}$.
	
	Appealing therefore to the theory of deformation quantization modules due to Kashiwara and Schapira \cite{KashiwaraSchapiraDQ}, we prove (see \cref{thm:holonomictensorproduct}) that the relative tensor product $\Skint_G(N_2)\otimes_{\SkAlgint_G(\Sigma^*)} \Skint_G(N_1)$ is finite-dimensional for $q=\exp(\hbar)$, where $\hbar$ is a formal parameter, and hence for generic $q$.  Using \cref{cor:tensorproduct} we identify the $G$-skein module of $M$ with the above relative tensor product, and the proof is complete.
	
	\subsection*{Further applications}
	
	One of the main tools in establishing \cref{thm:tensorproduct} is a construction of the skein TFT due to Walker \cite{Walker}. Namely, (see \cref{thm:WalkerTFT}) the assignment of a skein module $\Sk_\cA(M)$ to a closed 3-manifold $M$ and a \emph{skein category} $\SkCat_\cA(\Sigma)$ to a closed 2-manifold $\Sigma$ is a part of a topological field theory valued in categories and their bimodules.  Taking `free co-completions', we obtain a TFT $\rZ_\cA$ valued instead in locally presentable categories and their functors, which was shown in \cite{Cooke} to recover the factorization homology categories of \cite{BZBJ1}.
	
	It is a general feature of topological field theories that the value on $S^1\times X$ yields the corresponding categorical dimension of the value on $X$.  For a vector space, the categorical dimension is the ordinary dimension (an integer) while for a category, it is the categorical trace, or zeroth Hochschild homology (a vector space).  A corollary, \cref{lm:HHS1}, is that the skein module of $\Sigma\times S^1$ is identified with the Hochschild homology (a.k.a. categorical trace) of $\SkCat_\cA(\Sigma)$ (equivalently, of $\rZ_\cA(\Sigma)$).   Note that this property fails when one replaces skein categories by skein algebras: the Kauffman bracket skein module $\Sk(T^3)$ is $9$-dimensional, whereas the Hochschild homology of $\SkAlg(T^2)$ is $5$-dimensional \cite{Oblomkov,McLendonTraces}.
	
	Let us illustrate this perspective on two examples. Consider $\Sigma=S^2$. Then we give an equivalence (see \cref{prop:MugerS2}) between $\rZ_\cA(S^2)$ and the M\"uger center of $\cA$. In particular, we identify it with the trivial category of vector spaces, in the case of representations of the quantum group for $q$ not a root of unity.  Taking Hochschild homology we recover the result of \cite{HostePrzytycki} (see \cref{cor:skeinS2}).
	
	\begin{introcor}\label{cor:S2xS1}
		The $G$-skein module $\Sk_G(S^2\times S^1)$ is one-dimensional for $q$ not a root of unity.
	\end{introcor}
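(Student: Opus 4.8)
The plan is to derive \cref{cor:S2xS1} directly from the general machinery the authors have set up, rather than from any hands-on skein-theoretic computation. The key observation is that $S^2 \times S^1$ admits a genus-zero Heegaard splitting: writing $S^2 \times S^1 = N_2 \cup_{S^2} N_1$, one can take $N_1 = N_2 = S^2 \times [0,1]$, so that $\Sigma = S^2$ and $\Sigma^* = \DD$ is a disk. Since $S^2$ appears in the list of hypotheses of \cref{cor:skalgtens}, we obtain
\[
\Sk_G(S^2 \times S^1) \cong \Sk_G(S^2\times[0,1]) \rt{\SkAlg_G(S^2)} \Sk_G(S^2\times[0,1]).
\]
Now $\SkAlg_G(S^2) = \Sk_G(S^2\times[0,1])$ is the skein algebra of the sphere, which is one-dimensional (spanned by the empty skein, as noted in the discussion after \cref{cor:skalgtens}), and each factor $\Sk_G(S^2\times[0,1])$ is the corresponding rank-one module over it. Hence the relative tensor product is $\C(A) \otimes_{\C(A)} \C(A) \cong \C(A)$, which is one-dimensional, finishing the proof.

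Alternatively — and this is the route the authors seem to be flagging with the phrase ``taking Hochschild homology'' just before the statement — one can use \cref{lm:HHS1} together with \cref{prop:MugerS2}. The former identifies $\Sk_\cA(\Sigma \times S^1)$ with the Hochschild homology (categorical trace) of $\SkCat_\cA(\Sigma)$, equivalently of $\rZ_\cA(\Sigma)$; the latter identifies $\rZ_\cA(S^2)$ with the M\"uger center of $\cA$. For $\cA = \Repq(G)$ with $q$ not a root of unity the M\"uger center is trivial, i.e.\ equivalent to $\Vect$. Therefore $\Sk_G(S^2 \times S^1)$ is the Hochschild homology of $\Vect$, which is the ground field $\C(A)$, a one-dimensional vector space. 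Either argument suffices; I would present the Heegaard-splitting one as the primary proof since it requires only the corollaries already in hand, and mention the TFT argument as a remark, since it makes transparent why the answer is exactly the (categorical) dimension of the trivial category.

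The only genuine subtlety — the ``hard part'' — is bookkeeping about which coefficient ring the statement lives over and the non-root-of-unity hypothesis. \cref{cor:skalgtens} is stated for $q$ not a root of unity and the relative tensor product there is taken in the appropriate category of modules; one must check that in the genus-zero case this reduces to an honest tensor product of $\C(A)$-vector spaces rather than something derived, but since all three objects involved are free of rank one this is immediate and there is no higher Tor to worry about. (In the TFT argument, the corresponding point is that the Hochschild homology of $\Vect$ is concentrated in degree zero.) So modulo these routine checks the proof is a two-line deduction from results already established in the paper.
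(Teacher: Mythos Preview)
Your ``alternative'' argument via \cref{lm:HHS1} and \cref{prop:MugerS2} is correct and is exactly the paper's proof (\cref{cor:skeinS2}): the M\"uger center of $\Repq(G)$ is trivial for $q$ not a root of unity, so $\rZ_G(S^2)\simeq\Vect$, and $\Sk_G(S^2\times S^1)\cong\HH_0(\Vect)$ is one-dimensional. You should make this the primary proof.

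Your proposed ``primary'' argument, however, has a genuine error. There is no genus-zero Heegaard splitting of $S^2\times S^1$: the only closed $3$-manifold with a genus-zero Heegaard splitting is $S^3$ (gluing two $3$-balls along $S^2$). In your decomposition $N_1=N_2=S^2\times[0,1]$ each piece has boundary $S^2\sqcup S^2$, not $S^2$, so the hypotheses of \cref{cor:skalgtens} are not met: either you are cutting $S^1$ at one point, which is a \emph{self}-gluing along a single $S^2$ (not covered by the bimodule tensor product formula as stated), or you are cutting at two points, in which case $\Sigma=S^2\sqcup S^2$ is disconnected and the ``$\Sigma=S^2$'' clause of \cref{cor:skalgtens} does not literally apply. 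One can repair the latter variant by noting that $\fr{\SkCat_G(S^2\sqcup S^2)}\simeq\Vect\otimes\Vect\simeq\Vect$ and rerunning the proof of \cref{cor:connectedsum}, but this is essentially the same input (\cref{prop:MugerS2} plus triviality of the M\"uger center) as the Hochschild-homology argument, so nothing is gained by the detour.
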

	
	Now consider $\Sigma=T^2$. In a forthcoming work of the first two authors with Monica Vazirani, we compute $\rZ_{\SL_N}(T^2)$ using a $q$-analogue of the generalized Springer decomposition \cite{Gunningham}. In the case $G=\SL_2$ it has the following description.
	
	\begin{introthm}[\cite{GJV}]
		We have a decomposition of abelian categories,
		\[\rZ_{\SL_2}(T^2) \simeq \lmod{\Dq(H)^{\Z_2}}{} \bigoplus \Vect \bigoplus \Vect \bigoplus \Vect \bigoplus \Vect.\]
	\end{introthm}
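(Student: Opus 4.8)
\textit{Proof proposal.} The plan is to realize $\rZ_{\SL_2}(T^2)$ as a quantum character stack and to decompose it by a $q$-analogue of Lusztig's generalized Springer correspondence, following the strategy of \cite{Gunningham}. First, by \cite{Cooke} one identifies $\rZ_{\SL_2}(T^2)$ with the factorization homology $\int_{T^2}\Repq(\SL_2)$ of \cite{BZBJ1}, which by \cite{BZBJ1, BZBJ2} is the quantum Hamiltonian reduction of the quantum differential operator algebra $\Dq(\SL_2)$ by the conjugation action -- a ``quantum character stack of $T^2$'' whose classical ($q \to 1$) limit is the category of conjugation-equivariant $\mathcal{D}$-modules on $\SL_2$. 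Since $\rZ_\cA$ is built by freely cocompleting ordinary skein categories, $\rZ_{\SL_2}(T^2)$ is a genuine Grothendieck-abelian category and the target of the theorem is likewise abelian, so one works at the abelian level throughout.

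Next I would construct $q$-analogues of parabolic induction and restriction relating $\int_{T^2}\Repq(\C^\times)$ and $\int_{T^2}\Repq(\SL_2)$ for the maximal torus $\C^\times\subset\SL_2$ -- functorial in the surface and forming an adjunction -- and use them to split $\rZ_{\SL_2}(T^2)$ into a \emph{principal series} block, generated by the image of induction from $\C^\times$, and a complementary \emph{cuspidal} part; since $\C^\times$ is the only proper Levi of $\SL_2$, these two pieces exhaust it. To identify the principal block, one observes that $\int_{T^2}\Repq(\C^\times) \simeq \lmod{\Dq(\C^\times)}{}$ (modules over the quantum $2$-torus, a quantization of $\C^\times\times\C^\times$), that the Fock--Rosly Hamiltonian reduction for a torus is trivial, and that the only residual gluing datum is the Weyl group $W = \Z_2$ acting by inversion; taking $W$-equivariant objects produces $\lmod{\Dq(\C^\times)^{\Z_2}}{}$, equivalently (for generic $q$, since the quantum torus is simple) modules over the crossed product.

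For the cuspidal part I would enumerate the cuspidal blocks and show each is equivalent to $\Vect$ via a quantum ``cleanness'' statement: the deformed cuspidal object is simple and both projective and injective in its block, because its support is a single (possibly centrally twisted) regular unipotent orbit which carries no nontrivial self-extensions of its defining sign local system -- classical cleanness (Lusztig; \cite{Gunningham}) is the input, and it persists under the flat deformation -- while the relative Weyl group of a cuspidal datum is trivial. The total count is four; this reflects the center $Z(\SL_2) = \Z_2$, which enters both through cuspidal local systems sitting over the two central elements $\pm I$ and through the twisted sectors that the categorical trace $\rZ_{\SL_2}(T^2) = \HH(\rZ_{\SL_2}(S^1))$ contributes beyond the trace of the principal block. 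Assembling, $\rZ_{\SL_2}(T^2) \simeq \lmod{\Dq(\C^\times)^{\Z_2}}{} \oplus \Vect^{\oplus 4}$.

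The hard part will be making the $q$-deformed generalized Springer machinery precise enough to prove that the principal and cuspidal pieces are genuinely orthogonal and jointly generating at the abelian level -- i.e.\ establishing the quantum support/cleanness estimates and the vanishing of $\Hom$ between distinct cuspidal data -- and getting the $Z(\SL_2) = \Z_2$ bookkeeping exactly right, since it is precisely the non-adjoint nature of $\SL_2$ that turns a single cuspidal block into four. A good consistency check: by \cref{lm:HHS1} the dimension of $\HH_0$ of the right-hand side must equal $\dim \Sk(T^3) = 9$; indeed $\HH_0(\lmod{\Dq(\C^\times)^{\Z_2}}{})$ is $5$-dimensional (one ``untwisted'' class together with the four fixed points $(\pm 1, \pm 1)$ of the inversion action on the quantum $2$-torus), and each $\Vect$ contributes $1$, giving $5 + 4 = 9$ in agreement with Carrega \cite{Carrega} and Gilmer \cite{Gilmer}.
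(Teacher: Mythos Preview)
The paper does not contain a proof of this statement: it is quoted from the forthcoming work \cite{GJV} and is attributed there. The only information the paper gives about the argument is the single sentence that the computation of $\rZ_{\SL_N}(T^2)$ proceeds ``using a $q$-analogue of the generalized Springer decomposition \cite{Gunningham}.'' Your proposal is exactly an elaboration of that sentence --- factorization homology identification via \cite{Cooke, BZBJ1, BZBJ2}, $q$-parabolic induction/restriction to split off a principal series block governed by $\Dq(\C^\times)^{\Z_2}$, and a cuspidal remainder counted via the center $Z(\SL_2)$ --- so at the level of strategy you are aligned with what the paper indicates \cite{GJV} does. Your $\HH_0$ consistency check is precisely the one the paper records immediately after the statement.

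Since there is no proof in the present paper to compare against line by line, the honest assessment is that your outline matches the advertised method, but the substantive work (constructing the $q$-induction/restriction adjunction, proving orthogonality and generation, and the cleanness/support arguments for the cuspidal blocks) is deferred to \cite{GJV} and cannot be checked against this paper. Your own caveat in the final paragraph correctly identifies where the real content lies.
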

	
	Here $\lmod{\Dq(H)^{\Z_2}}{}$ is the ``Springer block'', where $H$ is the maximal torus of $\SL_2$, $\Dq(H)^{\Z_2}=\SkAlg(T^2)$ is the algebra of $\Z_2$-invariants on the quantum torus (see \cite{FrohmanGelca}), and the copies of $\Vect$ are four orthogonal ``cuspidal blocks'' which are supported at each of the four singular points $(\pm 1, \pm 1)$ of the $\Z_2$-action on $H \times H$. Taking Hochschild homology, and recalling that $\HH_0(\Dq(H)^{\Z_2})\cong \Q(A)^5$ \cite{Oblomkov,McLendonTraces}, we recover the computation,
	\[\Sk(T^3) \cong \HH_0(\rZ_{\SL_2}(T^2)) \cong \Q(A)^5 \oplus \Q(A)\oplus \Q(A)\oplus \Q(A)\oplus \Q(A),\]
	of \cite{Carrega, Gilmer} in a new way.  We expect it may be possible to compute $\Sk(\Sigma\times S^1)$ more generally using these techniques.   We discuss closely related TFTs, such as the Crane--Yetter and Kapustin--Witten TFTs, in \cref{sect:TFT}.
	
	Finally, let us remark that for simplicity we have restricted attention in the introduction on the case of $G$-skein modules defined over generic quantization parameters, since it is at that generality in which \cref{thm:maintheorem} holds, and since basic definitions in the root of unity case become more cumbersome.  However, we would like to stress that our results as formulated in the body of the paper also provide a systematic framework for studying the root of unity case, or more generally when we work over some arbitrary base ring such as $\Z[A,A^{-1}]$ in place of a field.
	
	In a previous work of Iordan Ganev and the latter two authors, \cite{GaJS}, we have formulated and proved a generalization of the ``Unicity conjecture" of Bonahon-Wong, for quantum $G$-character varieties of surfaces.  In future work, we intend to combine the techniques of the two papers to the study of torsion in skein modules of 3-manifolds at root-of-unity parameters, namely by lifting the constructions in the present paper to the relevant integral forms -- those coming from Temperley-Lieb diagrammatics, those coming from tilting modules, and those coming from Lusztig's divided powers quantum groups, and small quantum groups.  For example, \cref{thm:tensorproduct}, \cref{cor:tensorproduct}, \cref{cor:skalgtens}, and \cref{cor:S2xS1} all admit modifications, which involve structures such as Lusztig's quantum Frobenius homomorphism, which are special to the root of unity setting.
	
	\subsection*{Outline of the paper}
	
	In \cref{sect:algebra} we begin with the algebraic setup for the paper. We introduce some categorical notation and recall the basics of quantum groups and quantum moment maps. The latter notion allows us to discuss strongly equivariant modules and we prove that the relative tensor product of strongly equivariant modules lies in the M\"uger center (see \cref{prop:stronglyequivarianttensorproductMuger}). We finish the section by establishing a duality between left and right strongly equivariant modules.
	
	\Cref{sect:topology} is devoted to the skein-theoretic setup. We define skein modules and the skein category TFT for an arbitrary ribbon category and relate skein categories to factorization homology. We then introduce internal skein algebras and internal skein modules and compute them for surfaces (\cref{sec:skeinalgebrasofsurfaces}) and handlebodies (\cref{sect:handlebodies}) respectively.
	
	In \cref{sect:analysis} we discuss deformation quantization modules in the algebraic context. The main result there, \cref{thm:holonomictensorproduct}, establishes finite-dimensionality of the relative tensor product of two holonomic deformation quantization modules for a generic quantization parameter following Kashiwara and Schapira \cite{KashiwaraSchapiraDQ}.
	
	\Cref{sect:applications} collects all the ingredients from previous sections to prove theorems mentioned in the introduction. We prove a relative tensor product formula for skein modules (\cref{thm:relativetensorproduct}), relate the skein category of $S^2$ to the M\"uger center (\cref{prop:MugerS2}) and prove finite-dimensionality of $G$-skein modules of closed oriented 3-manifolds for generic parameters (\cref{thm:witten}).
	
	We end the paper with \cref{sect:discussion}, where we discuss how our results fit in the context of topological field theory, character theory and instanton Floer homology for complex groups and explain an approach for computing skein modules using computer algebra.
	
	\subsection*{Acknowledgments}
	We would like to thank David Ben-Zvi, Adrien Brochier, Juliet Cooke, Theo Johnson-Freyd, Yanki Lekili, Gregor Masbaum, Du Pei, Francois Petit, Peter Samuelson, Noah Snyder and Monica Vazirani for many valuable conversations realted to this work.  We are grateful to AIM San Jose, BIRS Oaxaca, CIRM Luminy, HIM Bonn, ICMS Edinburgh, QGM Aarhus, TSIMF Sanya, for hosting conferences where many of these conversations took place, and in particular to the ICMS Research in Groups program and the Aspen Center for Physics, for their hospitality during the writing stage.  The work of S.G. and D.J. was supported by the European Research Council (ERC) under the European Union’s Horizon 2020 research and innovation programme [grant agreement no. 637618]. The work of P.S. was supported by the NCCR SwissMAP grant of the Swiss National Science Foundation.
	
	\section{Algebra}
	\label{sect:algebra}
	
	This section treats the algebraic ingredients of our proof -- categories, quantum groups, quantum Harish-Chandra category, quantum moment maps and strongly equivariant modules.
	
	\subsection{Categories}
	
	We begin with some categorical preliminaries that will be used throughout the paper. In this section we work over an arbitrary commutative ring $\k$ which we will fix later.
	
	\begin{definition}
		The bicategory $\Cat$ has:
		\begin{itemize}
			\item As its objects small $\k$-linear categories.
			\item As the 1-morphisms from $\cC$ to $\cD$ the $\k$-linear functors $\cC\rightarrow \cD$.
			\item As the 2-morphisms natural transformations.
		\end{itemize}
	\end{definition}
	
	The bicategory $\Cat$ has a natural symmetric monoidal structure given by the tensor product of $\k$-linear categories.
	
	\begin{definition}
		Let $\cC\in\Cat$ be a small category. A \defterm{left $\cC$-module} is a functor $\cC^{\op}\rightarrow \Vect$. A \defterm{right $\cC$-module} is a functor $\cC\rightarrow \Vect$.
	\end{definition}
	
	\begin{remark}
		In the case when $\cC$ is a one-object category, the above two notions coincide with the notion of modules over the endomorphism algebra of the object of $\cC$.
	\end{remark}
	
	For many purposes the bicategory $\Cat$ does not have enough morphisms, and we require the following enlargement.
	
	\begin{definition}
		The bicategory $\Bimod$ has:
		\begin{itemize}
			\item As its objects small $\k$-linear categories.
			\item As the 1-morphisms from $\cC$ to $\cD$ the $\k$-linear functors $F\colon \cC\otimes \cD^{\op}\rightarrow \Vect$ (a.k.a. ``bimodules").
			\item As the 2-morphisms natural transformations.
		\end{itemize}
	\end{definition}
	
	The composition of $F\colon \cC\otimes \cD^{\op}\rightarrow \Vect$ and $G\colon \cD\otimes \cE^{\op}\rightarrow \Vect$ is the functor $F \otimes_\cD G\colon \cC\otimes \cE^{\op}\rightarrow \Vect$ given by the coend \cite[Chapter 7.8]{Borceux}:
	\[(F\otimes_\cD G)(c, e) = \int^{d\in\cD} F(c, d)\otimes G(d, e).\]
	Explicitly, it is given by the quotient
	\[\bigoplus_{d\in\cD} F(c, d)\otimes G(d, e)/\sim,\]
	where for any morphism $f\colon d''\rightarrow d'$ in $\cD$ we mod out by the image of
	\[F(c, d')\otimes G(d'', e)\xrightarrow{F(f)\otimes \id - \id\otimes G(f)} F(c, d'')\otimes G(d'', e)\oplus F(c, d')\otimes G(d', e).\]
	
	The tensor product of $\k$-linear categories equips $\Bimod$ with the structure of a symmetric monoidal bicategory \cite[Section 7]{DayStreet}.
	
	A typical category in $\Cat$ will not be closed under colimits -- for instance it may not admit direct sums or cokernels of morphisms.  We will therefore make occasional use of the notion of a locally presentable category -- this is a large category closed under arbitrary colimits, and satisfying some further set-theoretical conditions (we refer to \cite{BCJF} for complete definitions).
	
	\begin{definition}
		The bicategory $\PrL$ has:
		\begin{itemize}
			\item As its objects locally presentable $\k$-linear categories.
			\item As the 1-morphisms from $\cC$ to $\cD$ the cocontinuous functors $\cC\to\cD$.
			\item As the 2-morphisms the natural transformations.
		\end{itemize}
	\end{definition}
	
	The Kelly--Deligne tensor product equips $\PrL$ with the structure of a symmetric monoidal bicategory \cite[Chapter 5]{Bird}.
	
	We have symmetric monoidal functors
	\[\Cat\longrightarrow \Bimod\longrightarrow \PrL\]
	defined as follows:
	\begin{itemize}
		\item The functor $\Cat\longrightarrow \Bimod$ is the identity on objects and sends a functor $F\colon \cC\rightarrow \cD$ to the bimodule $\Hom_{\cD}(-, F(-))\colon \cC\times \cD^{\op}\rightarrow \Vect$.
		\item The functor $\fr{(-)}\colon\Bimod\rightarrow \PrL$ is the free cocompletion functor
		\[\fr{\cC} = \Fun(\cC^{\op}, \Vect).\]
		It is fully faithful and identifies $\Bimod$ with the full subcategory of $\PrL$ spanned by categories with enough compact projectives \footnote{Recall that an object $x$ in a locally presentable category $\cC$ is compact projective if the functor $\Hom_\cC(x,-)$ commutes with arbitrary colimits in $\cC$.}.
	\end{itemize}
	\begin{remark}The only locally presentable categories we will encounter are free cocompletions of small categories.\end{remark}
	
	Since $\fr{(-)}\colon \Cat\rightarrow \PrL$ is symmetric monoidal, it sends (braided) monoidal categories to (braided) monoidal categories. Suppose $\cC\in\Cat$ is a monoidal category and let  $F, G\in\fr{\cC}$. Then their tensor product is given by the Day convolution \cite{Day}
	\[(F\otimes G)(x) = \int^{y_1, y_2\in\cC} \Hom_\cC(x, y_1\otimes y_2) \otimes F(y_1)\otimes G(y_2).\]
	
	\begin{lemma}
		Suppose $\cC\in\Cat$ is a monoidal category. An algebra in $\fr{\cC}$ is the same as a lax monoidal functor $F\colon \cC^{\op}\rightarrow \Vect$.
		\label{lm:Dayalgebra}
	\end{lemma}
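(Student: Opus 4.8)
The plan is to exploit the universal property of the Day convolution product recalled just above. Write $\fr\cC = \Fun(\cC^{\op},\Vect)$ with the Day convolution tensor product $\otimes$; its monoidal unit is the representable presheaf $\Hom_\cC(-,\bu)$, where $\bu$ denotes the monoidal unit of $\cC$. The key observation is that, combining the computation of a $\Hom$ out of a coend as an end, the tensor--hom adjunction in $\Vect$, and the Yoneda lemma, one obtains for all $F,G,H\in\fr\cC$ a natural isomorphism
\[
\Hom_{\fr\cC}(F\otimes G,\ H)\ \cong\ \int_{y_1,y_2\in\cC} \Hom_\Vect\bigl(F(y_1)\otimes G(y_2),\ H(y_1\otimes y_2)\bigr);
\]
in words, a map $F\otimes G\to H$ in $\fr\cC$ is the same datum as a family of $\k$-linear maps $F(y_1)\otimes G(y_2)\to H(y_1\otimes y_2)$ natural in $y_1$ and in $y_2$. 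Dually, by the Yoneda lemma a map $\Hom_\cC(-,\bu)\to F$ is the same as a $\k$-linear map $\k\to F(\bu)$.

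First I would apply these two identifications to the structure maps of an algebra $(F,\ m\colon F\otimes F\to F,\ u\colon \Hom_\cC(-,\bu)\to F)$ in $\fr\cC$: the multiplication $m$ is transported to a natural transformation $\mu_{y_1,y_2}\colon F(y_1)\otimes F(y_2)\to F(y_1\otimes y_2)$, and the unit $u$ to a map $\eta\colon \k\to F(\bu)$ --- exactly the data of a lax monoidal structure on the functor $F\colon\cC^{\op}\to\Vect$ (here $\cC^{\op}$ carries the monoidal structure inherited from $\cC$, so $y_1\otimes y_2$ is unambiguous). It then remains to match the axioms. Using the analogous ternary form of the displayed isomorphism, the associativity equation for $m$, read as an equality of two maps $F\otimes F\otimes F\to F$, becomes the associativity coherence diagram for $(\mu,\eta)$; this is the one place where the associator of $\cC$ must be carefully carried through the coend bookkeeping (equivalently, through the fact that Day convolution is the left Kan extension of $\otimes_\cC$ along the Yoneda embedding, which the paper has already recorded by declaring $\fr{(-)}$ symmetric monoidal). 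Similarly the two unit equations for $u$ translate into the unitality coherences for a lax monoidal functor.

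Conversely, starting from a lax monoidal structure $(\mu,\eta)$ on $F$, the same isomorphisms produce a multiplication and unit making $F$ an algebra in $\fr\cC$, and by construction the two passages are mutually inverse; one checks without difficulty that morphisms correspond as well, so that in fact the category of algebras in $\fr\cC$ is equivalent to the category of lax monoidal functors $\cC^{\op}\to\Vect$ and monoidal natural transformations. I expect the only nontrivial step to be the coherence bookkeeping of the previous paragraph; everything else is a routine application of the coend/Yoneda calculus, and the correspondence may alternatively be cited from Day \cite{Day}.
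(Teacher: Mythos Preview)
The paper does not actually supply a proof of this lemma; it is stated without proof as a well-known fact about Day convolution (the reference \cite{Day} appears just above). Your argument is the standard one and is correct: combining the coend description of the Day tensor product with the Yoneda lemma gives the natural bijection
\[
\Hom_{\fr\cC}(F\otimes G, H)\ \cong\ \int_{y_1,y_2}\Hom_\Vect\bigl(F(y_1)\otimes G(y_2),\, H(y_1\otimes y_2)\bigr),
\]
under which an algebra structure on $F$ corresponds exactly to a lax monoidal structure on $F\colon \cC^{\op}\to\Vect$, with the associativity and unit axioms matching. There is nothing further to compare.
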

	
	Note also that if $F\colon \cC\rightarrow \cD$ is a morphism in $\Cat$, its image $F\colon \fr{\cC}\rightarrow \fr{\cD}$ is continuous and thus has a right adjoint $F^\R\colon \fr{\cD}\rightarrow \fr{\cC}$. Explicitly, the corresponding bifunctor $\fr{\cD}\times \cC^{\op}\rightarrow \Vect$ is given by $(P, x)\mapsto P(F(x))$.
	
	Using the symmetric monoidal structure on $\Cat$, $\Bimod$ and $\PrL$, we can talk about dualizable objects, i.e. categories $\cC$ equipped with a dual category $\cC^\vee$ and a pair of 1-morphisms $\ev\colon \cC^\vee\otimes \cC\rightarrow \bu$ and $\coev\colon \bu\rightarrow \cC\otimes \cC^\vee$ satisfying the usual duality axioms. Given a triple $(\cC, \cC^\vee, \ev)$, we say $\ev\colon \cC^\vee\otimes \cC\rightarrow \bu$ is a \defterm{nondegenerate pairing} if there is a coevaluation map exhibiting $\cC^\vee$ as the dual of $\cC$.
	
	\begin{example}
		Suppose $\cC\in\Bimod$. Then the pairing $\ev\colon \cC\otimes \cC^{\op}\rightarrow \Vect$ given by $x,y\mapsto \Hom_\cC(y, x)$ is a nondegenerate pairing in $\Bimod$. The corresponding coevaluation pairing is $\coev\colon \cC^{\op}\otimes \cC\rightarrow \Vect$ given by $x,y\mapsto \Hom_\cC(x, y)$.
		
		As a consequence, $\fr{\cC}\otimes \fr{\cC^{\op}}\rightarrow \Vect$ given by
		\[F, G\mapsto \int^{x\in\cC} F(x)\otimes G(x)\]
		is a nondegenerate pairing in $\PrL$.
		\label{ex:Homnondegenerate}
	\end{example}
	
	Using the notion of a dualizable category, we may introduce the notion of Hochschild homology.
	
	\begin{definition}
		Let $\cC\in\Cat$ be a category. Its \defterm{zeroth Hochschild homology} is
		\[\HH_0(\cC) = \int^{x\in\cC} \Hom_\cC(x, x)\in\Vect.\]
		\label{def:HH}
	\end{definition}
	
	\begin{remark}
		Recall from \cref{ex:Homnondegenerate} that every small category $\cC$ is dualizable in $\Bimod$. Then we may identify $\HH_0(\cC)\in\Vect$ as the composite $\ev\circ\coev$. Thus, the zeroth Hochschild homology of a category is an instance of the general notion of a dimension of a dualizable object (see e.g. \cite{BZNTraces}).
		\label{rmk:HHdimension}
	\end{remark}
	
	\subsection{Tensor products over categories and algebras}
	
	Suppose $\cC \in \Cat$ and let $F\colon \cC^{op} \to \Vect$ and $G\colon \cC\to \Vect$ be functors. In other words, $F$ is a left $\cC$-module and $G$ is a right $\cC$-module. Now suppose the category $\cC$ comes with a distinguished object $\bu\in \cC$. Then $F(\bu)$ is naturally a left module for $\End_\cC(\bu)$ and $G(\bu)$ is naturally a right module.
	
	In this section we will give some conditions for when $G\otimes_\cC F$ is given by the (ordinary) relative tensor product $G(\bu) \rt{\End_\cC(\bu)} F(\bu)$. 
	
	\begin{remark}
		The motivation for this section is the following. Suppose we have an oriented surface $\Sigma$ and a pair of oriented $3$-manifolds $N_0$ and $N_1$ together with isomorphisms $\partial \rev{N}_0 \cong \Sigma \cong \partial{N_1}$, where $\rev{N}_0$ refers to $N_0$ with the opposite orientation. In \cref{sect:relativetensorproduct} we will show that the skein module of $M = N_0 \cup_\Sigma N_1$ may be computed as the relative tensor product of certain functors over the skein category of $\Sigma$. We would like to understand the categorical conditions required for this tensor product to be computed as the relative tensor product of the skein modules of $N_0$ and $N_1$ over the skein algebra of $\Sigma$.
	\end{remark}
	
	\begin{definition}
		\label{def:cyclic}
		Let $\cC\in\Cat$ be a category together with a distinguished object $\bu\in\cC$ and let $F\colon \cC^{\op} \to \Vect$ be a left $\cC$-module. 
		
		\begin{enumerate}
			\item We say $F$ is \defterm{generated by invariants} if the morphism
			\begin{align*}
			F(\bu) \otimes \Hom(c,\bu) &\to F(c) \\
			s \otimes f &\mapsto F(f)(s)
			\end{align*}
			is surjective for every $c\in\cC$.
			
			\item We say $F$ is \defterm{cyclic} if there is an element $s_0\in F(\bu)$ such that the morphism
			\begin{align*}
			\Hom(c,\bu) &\to F(c) \\
			f &\mapsto F(f)(s_0)
			\end{align*}
			is surjective for every $c\in\cC$. In this case we say $F$ is \defterm{generated by $s_0$}.
		\end{enumerate}
	\end{definition}
	
	The definitions for right $\cC$-modules are given analogously.
	
	\begin{example}
		Suppose $A$ is a $\k$-algebra equipped with an action of a reductive algebraic group $G$. Let $\cC$ denote the category $\lmod{A}{\Rep(G)}^{\mathrm{cp}}$ of compact projective $G$-equivariant $A$-modules. Then we may identify the free cocompletion $\fr \cC$ with the category $\lmod{A}{\Rep(G)}$ of \emph{all} $G$-equivariant $A$-modules. Under this equivalence an object $M\in\lmod{A}{\Rep(G)}$ corresponds to the functor\daj{added the op below}
		\[
		F_M=\Hom(-,M)\colon (\lmod{A}{\Rep(G)}^{cp})^{op} \longrightarrow \Vect
		\]
		
		Note that $\cC$ is pointed by the object $A$ itself. Then $F_M$ is generated by invariants in the sense of \cref{def:cyclic} if and only if $M$ is generated as an $A$-module by $M^G$. Similarly, $F_M$ is cyclic if and only if there exists an element $s_0 \in M^G$ which generates $M$ as an $A$-module.
	\end{example}
	
	\begin{remark}
		The conditions of a module for a category being generated by invariants (respectively, being cyclic) correspond to natural skein theoretic conditions on the module over the skein category of a surface $\Sigma$ induced by a $3$-manifold bounding $\Sigma$ (see \cref{sect:handlebodies}).
	\end{remark}
	
	Let $\cC \in \Cat$ be a category with a distinguished object $\bu\in \cC$, and let 
	\[
	G\colon \cC \to \Vect, \qquad F\colon \cC^{op} \to \Vect
	\]
	be right and left $\cC$-modules respectively. Associated to this data is an algebra $\End_\cC(\bu)$ together with a right module $G(\bu)$ and a left module $F(\bu)$.
	
	Note that the embedding of the distinguished object $\bu$ into $\cC$ induces a map of relative tensor products
	\[G(\bu)\rt{\End_\cC(\bu)} F(\bu)\longrightarrow G\otimes_\cC F.\]
	
	\begin{proposition}
		Suppose $F$ and $G$ are generated by invariants. Then the map
		\[G(\bu)\rt{\End_\cC(\bu)} F(\bu)\longrightarrow G\rt{\cC} F\]
		is an isomorphism.
		\label{prop:reltensorproductcategory}
	\end{proposition}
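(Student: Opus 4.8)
The plan is to realise $F$ as a canonical quotient of a ``corepresentable'' left $\cC$-module built out of $F(\bu)$, apply the right-exact functor $G\otimes_\cC(-)$, use the co-Yoneda lemma to identify the resulting middle term with $G(\bu)\otimes_{\End_\cC(\bu)}F(\bu)$, and finally show that the kernel contribution is killed because $G$ is generated by invariants. Surjectivity of the natural map will fall out of the same exact sequence.

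Concretely, write $R=\End_\cC(\bu)$ and let $h=\Hom_\cC(-,\bu)\colon\cC^{\op}\to\Vect$, a left $\cC$-module carrying a commuting $R$-action by post-composition at $\bu$. Evaluation $f\otimes s\mapsto F(f)(s)$ defines a morphism of left $\cC$-modules $\epsilon\colon h\otimes_R F(\bu)\to F$. At an object $c$ the image of $\epsilon_c$ is exactly the image of the map appearing in \cref{def:cyclic}(1), so ``$F$ generated by invariants'' says precisely that $\epsilon$ is an epimorphism; set $K=\ker\epsilon$, giving a short exact sequence $0\to K\to h\otimes_R F(\bu)\xrightarrow{\epsilon}F\to 0$ of left $\cC$-modules. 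At $c=\bu$ the map $\epsilon_\bu\colon R\otimes_R F(\bu)\to F(\bu)$ is the canonical isomorphism, so $K(\bu)=0$. Now apply $G\otimes_\cC(-)$, which is right exact since coends are colimits. The co-Yoneda lemma gives $G\otimes_\cC h=\int^{c}G(c)\otimes\Hom_\cC(c,\bu)\cong G(\bu)$ as right $R$-modules, hence $G\otimes_\cC(h\otimes_R F(\bu))\cong G(\bu)\otimes_R F(\bu)$, and one checks that under this identification $G\otimes_\cC\epsilon$ is exactly the natural map in the statement. Right-exactness then produces an exact sequence $G\otimes_\cC K\to G(\bu)\otimes_R F(\bu)\to G\otimes_\cC F\to 0$; in particular the natural map is surjective, and its kernel is the image of $G\otimes_\cC K$.

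The proof is completed by the following vanishing lemma: if $G$ is generated by invariants and $H\colon\cC^{\op}\to\Vect$ is a left $\cC$-module with $H(\bu)=0$, then $G\otimes_\cC H=0$; applying it to $H=K$ (legitimate by $K(\bu)=0$) forces the kernel above to vanish. To prove the lemma, note $G\otimes_\cC H=\int^{c}G(c)\otimes H(c)$ is spanned by classes $[g\otimes h]$, and since $G$ is generated by invariants we may take $g=G(f)(g_0)$ with $f\colon\bu\to c$ and $g_0\in G(\bu)$; the coend relation for $f$ gives $[G(f)(g_0)\otimes h]=[g_0\otimes H(f)(h)]=0$ because $H(f)$ lands in $H(\bu)=0$.

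The main obstacle I anticipate is not conceptual but bookkeeping: verifying that the co-Yoneda isomorphism $G\otimes_\cC h\cong G(\bu)$ really does intertwine $G\otimes_\cC\epsilon$ with the canonical map $G(\bu)\otimes_R F(\bu)\to G\otimes_\cC F$ of the proposition, which amounts to unwinding the two coend presentations and matching the generators $g\otimes s\in G(\bu)\otimes F(\bu)$ on each side. It is worth noting where the two hypotheses enter: ``$F$ generated by invariants'' is used to get the resolution $\epsilon$ (and symmetrically would suffice for surjectivity), while ``$G$ generated by invariants'' is used only in the vanishing lemma, i.e.\ precisely to kill the kernel $G\otimes_\cC K$.
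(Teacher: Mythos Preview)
Your argument is correct, and its overall architecture matches the paper's: resolve $F$ by the corepresentable $\Hom_\cC(-,\bu)\otimes F(\bu)$, apply $G\otimes_\cC(-)$, and collapse the middle term with co-Yoneda to get $G(\bu)\otimes_R F(\bu)$. The bookkeeping you flag as a worry is genuinely routine: tracing $g\otimes s\in G(\bu)\otimes F(\bu)$ through the co-Yoneda isomorphism lands on the class $[g\otimes s]$ in the coend, which is exactly the natural map of the proposition.

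Where you differ from the paper is in the treatment of the kernel, and your version is the more careful one. The paper asserts that
\[
\Hom_\cC(-,\bu)\otimes A\otimes F(\bu)\rightrightarrows \Hom_\cC(-,\bu)\otimes F(\bu)\longrightarrow F
\]
is a coequalizer (and similarly for $G$), then tensors the two resolutions together. But ``generated by invariants'' only says the right-hand map is an epimorphism; the coequalizer of the parallel pair is $\Hom_\cC(-,\bu)\otimes_A F(\bu)$, and the induced map $\Hom_\cC(c,\bu)\otimes_A F(\bu)\to F(c)$ need not be injective. (Take $\cC$ with two objects $\bu,c$, $\Hom(\bu,c)=0$, $\Hom(c,\bu)=k^2$, $A=k$, and $F(\bu)=F(c)=k$ with both structure maps the identity: then $\Hom(c,\bu)\otimes_A F(\bu)=k^2\twoheadrightarrow k=F(c)$ is not injective.) Your short exact sequence $0\to K\to h\otimes_R F(\bu)\to F\to 0$ together with the vanishing lemma ($G$ generated by invariants and $K(\bu)=0$ force $G\otimes_\cC K=0$) is exactly what is needed to repair this, and it makes transparent where each hypothesis is used: ``$F$ generated by invariants'' gives the epimorphism $\epsilon$, and ``$G$ generated by invariants'' kills $G\otimes_\cC K$.
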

	\begin{proof}
		Let $A = \End_\cC(\bu)$. Consider the diagram
		\[
		\xymatrix{
			\Hom_\cC(-, \bu)\otimes A\otimes F(\bu) \ar@<.5ex>[r] \ar@<-.5ex>[r] & \Hom_\cC(-, \bu)\otimes F(\bu) \ar[r] & F(-).
		}
		\]
		Since $F$ is generated by invariants, it is a coequalizer diagram. Similarly,
		\[
		\xymatrix{
			G(\bu)\otimes A\otimes \Hom_\cC(\bu, -) \ar@<.5ex>[r] \ar@<-.5ex>[r] & G(\bu)\otimes \Hom_\cC(\bu, -) \ar[r] & G(-)
		}
		\]
		is a coequalizer diagram as well.
		
		We have
		\[\int^{c\in\cC} G(\bu)\otimes \Hom_\cC(\bu, c)\otimes \Hom_\cC(c, \bu)\otimes F(\bu)\cong G(\bu)\otimes A\otimes F(\bu).\]
		Applying the above resolutions to $F$ and $G$, we get that $G\otimes_\cC F$ is computed as the colimit of
		\[
		\xymatrix{
			G(\bu)\otimes A\otimes A\otimes F(\bu) \ar@<.5ex>[r] \ar@<-.5ex>[r] & G(\bu)\otimes A\otimes F(\bu) \\
			& G(\bu)\otimes A\otimes A\otimes F(\bu) \ar@<.5ex>[u] \ar@<-.5ex>[u]
		}
		\]
		which computes
		\[G(\bu)\otimes_A A\otimes_A F(\bu)\cong G(\bu)\otimes_A F(\bu).\]
	\end{proof}
	
	\subsection{Quantum groups}
	
	Let $G$ be a connected reductive algebraic group and denote by $\Lambda$ and $\Lambda^\vee$ its weight and coweight lattices.   Let $\Uq(\g)$ be Lusztig's integral form of the quantum group defined over $\Z[q, q^{-1}]$ , see \cite{Lusztig}.  In particular, it has Cartan generators $K_\mu$ for $\mu$ in the coweight lattice $\Lambda^\vee$, and divided power Serre generators $E_i^{(r)}$ and $F_i^{(r)}$, for each simple root $\alpha_i$.  Fix a commutative ring $\k$, and a homomorphism $\Z[q,q^{-1}]\to k$. 
	
	\begin{definition}
		The category $\Repq(G)$ is the category of $\Lambda$-graded $\k$-modules $V=\oplus_{\lambda\in \Lambda} V_\lambda$ equipped with a compatible action of $\Uq(\g)$, i.e. such that $K_\mu v = q^{\langle \lambda, \mu\rangle} v$ for $v\in V_\lambda$, and such that for all $v\in V$, $E_i^{(r)}v=0$ and $F_i^{(s)}v=0$ for all but finitely many $r$ and $s$.
	\end{definition}
	
	The braiding and ribbon element on $\Repq(G)$ depend on further data in the ring $\k$.  For two simple roots $\alpha_i, \alpha_j\in \Lambda$ we denote by $\alpha_i\cdot \alpha_j\in\Z$ the $ij$ entry of the symmetrized Cartan matrix.  Choose\footnote{Such a $B$ exists and is unique, as soon as $d$ is divisible by the determinant of the Cartan matrix, so it is typical to fix that minimal choice for $d$, and suppress mention of $B$.} an integer $d$ and a symmetric bilinear form $B:\Lambda\times\Lambda\to \frac{1}{d}\mathbb{Z}$ such that $B(\alpha_i,\alpha_j)=\alpha_i\cdot\alpha_j$.  We henceforth fix a homomorphism $\Z[q^{1/d}, q^{-1/d}]\to\k$, so that we obtain a symmetric bilinear form $q^B\colon \Lambda\times \Lambda\rightarrow \k^\times,$ satisfying $q^B(\alpha_i, \alpha_j) = q^{\alpha_i\cdot \alpha_j}$.
	
	Using this bilinear form Lusztig \cite[Chapter 32]{Lusztig} equips $\Repq(G)$ with the structure of a $\k$-linear braided monoidal category. It is explained in \cite{SnyderTingley} that the additional choice of a homomorphism $\phi\colon \Lambda\rightarrow \Z/2$ such that $\phi(\alpha_i) = 0$ endows $\Repq(G)$ with a ribbon structure.
	
	\begin{remark}
		By convention, we will say $q$ \defterm{is generic} to mean $k=\Q(q^{1/d})$.  We will say $q$ \defterm{is not a root of unity} to mean either that $q$ is generic, or that $k=\C$ and $q^\ell\neq 1$ for all non-zero integers $\ell$.
	\end{remark}
	
	\begin{remark}
	We denote by $\Repqfd(G)\subset \Repq(G)$ the full subcategory of modules which are finitely generated over $k$.  For $q$ not a root of unity the category $\Repqfd(G)$ coincides with the full subcategory of compact projective objects in $\Repq(G)$. Note that it is not true at roots of unity as, for example, the trivial representation in that case is not projective.
	\end{remark}

	\begin{example}
		To fix the terminology, consider the case $G = \SL_2$. For $q$ not a root of unity the category $\Repq(\SL_2)$ has simple objects $V(m)$, for each $m \in \mathbb{N}_0$, the irreducible highest weight representation of $\Uq(\mathfrak{sl}_2)$ of highest weight $m$ and of dimension $m+1$.  Every object of $\Repq(\SL_2)$ is a (possibly infinite) direct sum of simple modules, while $\Repqfd(SL_2)$ consists of finite direct sums of simple objects.  In this case, we take $d=2$, and the braiding $\sigma\colon V(1)\otimes V(1)\rightarrow V(1)\otimes V(1)$ is given
		by applying the $R$-matrix
		\[
		R = q^{-1/2}\left(\begin{array}{cccc}
		q & 0 & 0 & 0 \\
		0 & 1 & 0 & 0 \\
		0 & q - q^{-1} & 1 & 0 \\
		0 & 0 & 0 & q
		\end{array}\right),
		\]
		post-composed with the tensor flip.  The category $\Repq(\PSL_2)$ is the full subcategory of $\Repq(\SL_2)$ generated by $V(m)$ for $m$ even.
	\end{example}

	Recall that the M\"{u}ger center $\ZMug(\cA)$ of a braided monoidal category $\cA$ is the full subcategory consisting of objects $x\in\cA$ such that for every $y\in\cA$ the map $\sigma_{y, x}\circ\sigma_{x, y}$ is the identity. We will say the M\"uger center of $\cA$ is trivial if every object in $\ZMug(\cA)$ is a direct sum of the unit object.  The following is well-known:
	
	\begin{proposition}
		Suppose $q$ is not a root of unity. Then the M\"uger center of $\Repq(G)$ is trivial.
		\label{prop:genericMugercenter}
	\end{proposition}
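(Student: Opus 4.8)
I would prove that the Müger center $\ZMug(\Repq(G))$ is trivial when $q$ is not a root of unity by reducing the problem to a computation with the braiding on simple objects, and the braiding in turn to an eigenvalue computation for the ribbon/twist element. The starting point is the standard fact that for a braided monoidal category in which every object is a sum of simples, an object lies in the Müger center precisely when its simple summands do, so it suffices to determine which simple objects $L(\lambda)$ (highest weight $\lambda$ in the dominant cone) are transparent, i.e.\ satisfy $\sigma_{L(\mu),L(\lambda)}\circ\sigma_{L(\lambda),L(\mu)}=\id$ for all dominant $\mu$.

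First I would recall the formula for the double braiding in terms of the ribbon structure: on the isotypic component corresponding to a simple summand $L(\nu)\subset L(\lambda)\otimes L(\mu)$, the monodromy $\sigma_{L(\mu),L(\lambda)}\circ\sigma_{L(\lambda),L(\mu)}$ acts by the scalar $\theta_\nu\theta_\lambda^{-1}\theta_\mu^{-1}$, where $\theta_\nu=q^{\langle\nu,\nu+2\rho\rangle}$ (up to the normalization conventions fixed via the form $q^B$ and the homomorphism $\phi$) is the twist. So $L(\lambda)$ is transparent iff $\theta_\nu=\theta_\lambda\theta_\mu$ for every $\nu$ appearing in $L(\lambda)\otimes L(\mu)$, for every dominant $\mu$. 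Next I would exploit that $L(\lambda+\mu)$ always occurs in $L(\lambda)\otimes L(\mu)$ (the Cartan component), as does a second weight $\nu=\lambda+\mu-\alpha$ for a suitable positive root $\alpha$ whenever $\lambda\neq 0$ and $\mu$ is chosen generic enough. Comparing $\theta_{\lambda+\mu}$ with $\theta_{\lambda+\mu-\alpha}$, the transparency condition becomes
\[
q^{\langle\alpha,\,2(\lambda+\mu)+2\rho-\alpha\rangle}=1,
\]
and letting $\mu$ grow in the direction of a fundamental coweight pairing nontrivially with $\alpha$ forces $q$ to satisfy $q^{2\langle\alpha,\mu\rangle}=1$ for infinitely many values of $\langle\alpha,\mu\rangle$, contradicting the hypothesis that $q$ is not a root of unity (i.e.\ $q^\ell\neq 1$ for all nonzero $\ell$, and in the generic case this is automatic). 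Hence $\lambda=0$, which is exactly the statement that the Müger center is trivial.

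**Main obstacle.** The genuinely delicate point is \emph{bookkeeping of normalizations}: the twist $\theta_\lambda$ depends on the bilinear form $q^B$ and the choice of $\phi\colon\Lambda\to\Z/2$ fixed in the setup, so I must make sure the scalar $q^{2\langle\alpha,\mu\rangle}$ that appears really is a nontrivial power of $q$ (and not, say, always $1$ for lattice reasons, or a sign that the hypothesis permits). This is where connectedness and reductivity of $G$ enter: one needs a positive root $\alpha$ and a weight $\mu$ with $\langle\alpha,\mu\rangle$ ranging over an infinite set of integers of fixed sign, which is available since $\Repq(G)$ contains enough representations (the lattice $\Lambda$ is full rank against the coroots on the semisimple part). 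A secondary point is verifying that $L(\lambda)\otimes L(\mu)$ genuinely contains a second simple constituent $L(\nu)$ with $\nu\neq\lambda+\mu$ once $\lambda\neq 0$; for $q$ not a root of unity the tensor product decomposition matches the classical one, so this follows from the classical PRV-type statement, but I would cite it rather than reprove it. Since the proposition is flagged as well-known, I would keep the write-up short, citing standard references (e.g.\ \cite{Lusztig}, and the Müger-center literature) for the twist formula and the classical tensor decomposition, and only spell out the eigenvalue comparison that produces the root-of-unity contradiction.
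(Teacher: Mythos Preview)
The paper does not actually give a proof of this proposition; it is stated as ``well-known'' and left without argument. Your sketch is one of the standard routes and is essentially correct.

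One simplification worth noting: you do not need a second constituent. Already the Cartan component $L(\lambda+\mu)\subset L(\lambda)\otimes L(\mu)$ alone gives, via the twist identity $\theta_{V\otimes W}=(\theta_V\otimes\theta_W)\circ\sigma_{W,V}\circ\sigma_{V,W}$, that transparency of $L(\lambda)$ forces $q^{2B(\lambda,\mu)}=1$ for every dominant $\mu$; since $q^{1/d}$ is not a root of unity this means $B(\lambda,-)\equiv 0$, and hence $\lambda=0$ once $B$ is nondegenerate on $\Lambda$. Your two-constituent comparison is more work and does not in fact sidestep the nondegeneracy issue: if $\lambda$ is a nonzero weight in the central direction of a non-semisimple reductive $G$, then $L(\lambda)$ is one-dimensional and $L(\lambda)\otimes L(\mu)\cong L(\lambda+\mu)$ is already simple, so no second constituent exists and no positive root $\alpha$ enters. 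That case has to be handled by the Cartan-component computation together with nondegeneracy of $B$, which is exactly the bookkeeping point you flag in your ``main obstacle'' paragraph. With that caveat acknowledged, your write-up is sound.
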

	
	\subsection{Braided function algebra}
	
	Fix the ground ring $\k$ and a ribbon $\k$-linear category $\cA$ (we assume that the unit $\bu$ is simple). Let $\cA^{\bop}$ be the same monoidal category as $\cA$ with the braiding given by $\sigma_{y, x}^{-1}\colon x\otimes y\rightarrow y\otimes x$. The free cocompletion $\fr{\cA}$ inherits a braided monoidal structure from $\cA$ given by the Day convolution.
	
	Let $T\colon \cA\otimes \cA^{\bop}\rightarrow \cA$ be the tensor product functor. After passing to free cocompletions it admits a right adjoint $T^\R\colon \cA\rightarrow \fr{\cA}\otimes \fr{\cA}$. The following definition goes back to the works \cite{Majid, Lyubashenko}.
	
	\begin{definition}
		The \defterm{braided function algebra $\cF$} is
		\[\cF = T(T^\R(\bu))\in\fr{\cA}.\]
	\end{definition}
	
	Explicitly, we may identify $\cF$ as the colimit
	\begin{align*}
	\cF &\cong \underset{V,W\in\cA, f\colon V\otimes W\rightarrow \bu}{\colim} V\otimes W \\
	&\cong \int^{X\in \cA} X^*\otimes X.
	\end{align*}
	
	Since $T^\R$ is lax monoidal, $\cF$ is naturally an algebra in $\fr{\cA}$. Moreover, since $TT^\R$ is a comonad, $\cF$ naturally becomes a bialgebra in $\fr{\cA}$. We denote by $\epsilon\colon \cF\rightarrow \bu$ the counit of $\cF$.
	
	\begin{example}
		Let $\cA=\Repqfd(G)$ for $q$ not a root of unity. Then the Peter--Weyl theorem gives
		\[\cF \cong \bigoplus_{\lambda\in\Lambda^{dom}} V(\lambda)^*\otimes V(\lambda),\]
		where $\Lambda^{dom}$ is the set of dominant weights.
	\end{example}
	
	\begin{example}
		More concretely, let $\cA = \Repqfd(\SL_2)$. Then $\Oq(\SL_2) = \cF$ can be presented with generators $a^1_1, a^1_2, a^2_1, a^2_2$, and relations,
		\[
		R_{21}A_1R_{12}A_2 = A_2R_{21}A_1R_{12}, \qquad a^1_1a^2_2 - q^2 a^1_2 a^2_1 = 1.
		\]
		The first equation takes place in $\Oq(\SL_2)\otimes \End(V\otimes V)$, where
		\[A = \left(\begin{array}{cc} a^1_1 & a^1_2 \\ a^2_1 & a^2_2 \end{array}\right),\qquad A_1 = A\otimes \id,\qquad A_2 = \id\otimes A,\]
		and $V$ denote the defining two-dimensional representation of $\Uq(\sltwo)$.
		These may be expanded out explicitly as:
		\begin{align*}
		\begin{aligned}\label{relns-aa}
		&a^1_2a^1_1 = a^1_1a^1_2 + (1-q^{-2})a^1_2a^2_2 & \quad  &a^2_2a^1_1 = a^1_1a^2_2& &\\
		&a^2_1a^1_1 = a^1_1a^2_1 - (1-q^{-2})a^2_2a^2_1& \quad  &a^2_2a^1_2 = q^2a^1_2a^2_2& &\\
		&a^2_1a^1_2 = a^1_2a^2_1 + (1-q^{-2})(a^1_1a^2_2 - a^2_2 a^2_2)& \quad  &a^2_2a^2_1 = q^{-2}a^2_1a^2_2\\
		\end{aligned}
		\end{align*}
	\end{example}
	
	\subsection{Harish-Chandra category}
	\label{sect:HC}
	
	Let $\rZ(\fr{\cA})$ be the Drinfeld center of the monoidal category $\fr{\cA}$. Since $\fr{\cA}$ is braided, we have a natural braided monoidal functor
	\[\fr{\cA}\otimes\fr{\cA}^{\bop}\longrightarrow \rZ(\fr{\cA})\]
	given by the left and right action of $\fr{\cA}$ on itself. In particular, for every pair of objects $x\in\fr{\cA}\otimes \fr{\cA}^{\bop}$ and $V\in\fr{\cA}$ we have a natural isomorphism
	\[V\otimes T(x)\longrightarrow T(x)\otimes V.\]
	
	For instance, for $\cF = T(T^\R(\bu))$ we obtain the \defterm{field goal transform}
	\[\tau_V\colon V\otimes \cF\longrightarrow \cF\otimes V.\]
	
	Explicitly, in terms of the coend components $X^*\otimes X$ of $\cF$, the map $\tau_V$ is given by
	\[V \otimes X^*\otimes X \xrightarrow{\sigma_{V, X}\circ\sigma^{-1}_{X^*,V}} X^*\otimes X\otimes V,\]
	see \cref{fig:fieldgoal}.
	
	\begin{figure}
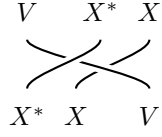

		\tik{
			\node[label=above:$V$] at (0,0) {};
			\node[label=above:$X^*$] at (1.5,0) {};
			\node[label=above:$X$] at (2.5,0) {};
			
			\node[label=below:$V$] at (2.5,-1) {};
			\node[label=below:$X^*$] at (0,-1) {};
			\node[label=below:$X$] at (1,-1) {};
			
			\fieldgoal{0}{0};
		}
		\caption{The field goal transform.}
		\label{fig:fieldgoal}
	\end{figure}
	
	\begin{definition}
		The \defterm{Harish-Chandra category} is
		\[\HC(\cA) = \lmod{\cF}{\fr{\cA}}.\]
	\end{definition}
	
	\begin{remark}
		Applied to the case $\cA=\Repqfd(G)$, we obtain a quantum group analogue of the category of $\U\g$-bimodules whose diagonal action is integrable, i.e. the category of Harish-Chandra bimodules. We refer to \cite{SafronovQMM} for more on this perspective.
	\end{remark}
	
	Since $T^\R(\bu)\in\fr{\cA}\otimes \fr{\cA}^{\bop}$ is a commutative algebra, $\HC(\cA)$ carries a natural monoidal structure given as follows. The field goal transform provides an identification
	\[
	\tau_{lr}\colon \lmod{\cF}{\fr{\cA}}\xrightarrow{\sim} \rmod{\cF}{\fr{\cA}}
	\]
	and the monoidal structure on $\HC(\cA)$ is given by the relative tensor product over $\cF$.
	
	In addition to the monoidal structure, the Harish-Chandra category possesses the following algebraic structures:
	\begin{itemize}
		\item A monoidal functor $\fr{\cA}\rightarrow \HC(\cA)$ given by the free left $\cF$-module $V\mapsto \cF\otimes V$. Equivalently, it is given by the free right $\cF$-module $V\otimes \cF$.
		
		\item A functor $\coinv_l\colon \HC(\cA)\rightarrow \fr{\cA}$ given by coinvariants on the left $M\mapsto \bu\rt{\cF} M$. Its right adjoint $\triv_l\colon \fr{\cA}\rightarrow \HC(\cA)$ is given by sending $V\in\fr{\cA}$ to the trivial left $\cF$-module.
		
		\item A functor $\coinv_r\colon \HC(\cA)\rightarrow \fr{\cA}$ given by coinvariants on the right $M\mapsto M\rt{\cF} \bu$. Its right adjoint $\triv_r\colon \fr{\cA}\rightarrow \HC(\cA)$ is given by sending $V\in\fr{\cA}$ to the trivial right $\cF$-module.
	\end{itemize}
	
	\begin{lemma}
		The functors $\triv_l, \triv_r\colon \fr{\cA}\rightarrow \HC(\cA)$ are fully faithful.
	\end{lemma}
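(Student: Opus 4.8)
The plan is to show that the unit $\eta\colon \id_{\fr{\cA}}\to \coinv_l\circ\triv_l$ is an isomorphism; an entirely parallel argument handles $\triv_r$. Unwinding the definitions, $\triv_l(V)$ is the object $\cF\otimes V$ of $\fr{\cA}$ equipped with the left $\cF$-action in which $\cF$ acts via the counit $\epsilon\colon\cF\to\bu$, i.e. the action $\cF\otimes(\cF\otimes V)\xrightarrow{\epsilon\otimes\id}\bu\otimes\cF\otimes V = \cF\otimes V$. Then $\coinv_l(\triv_l(V)) = \bu\rt{\cF}(\cF\otimes V)$, and the claim is that the canonical map $V\to \bu\rt{\cF}(\cF\otimes V)$ is an isomorphism in $\fr{\cA}$. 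So the first step is to set up this description carefully, recalling that $\bu$ is the trivial right $\cF$-module (again via $\epsilon$) and that the relative tensor product $\rt{\cF}$ is computed by the usual bar-type coequalizer in the cocomplete braided category $\fr{\cA}$.

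The key point is then that $\cF$ is an \emph{augmented} algebra: the counit $\epsilon\colon\cF\to\bu$ is a map of algebras splitting the unit $\bu\to\cF$. For an augmented algebra $A$ in any (braided) cocomplete monoidal category, the functor $M\mapsto \bu\rt{A}M$, where $\bu$ carries the $A$-module structure through $\epsilon$, is left adjoint to the functor sending $V$ to the trivial $A$-module $A\otimes V$ (or equivalently $V$ regarded through $\epsilon$), and the composite $\bu\rt{A}(A\otimes V)\cong V$ is an isomorphism because the bar resolution of the free module $A\otimes V$ is split. Concretely, the coequalizer
\[
\xymatrix{
\bu\otimes\cF\otimes(\cF\otimes V) \ar@<.5ex>[r]\ar@<-.5ex>[r] & \bu\otimes(\cF\otimes V)\ar[r] & \bu\rt{\cF}(\cF\otimes V)
}
\]
has the two parallel arrows given by the augmentation $\epsilon$ on the left-hand $\cF$ and by the multiplication $\cF\otimes\cF\to\cF$ respectively; the map $\id\otimes u\otimes\id\otimes\id\colon \bu\otimes(\cF\otimes V)\to \bu\otimes\cF\otimes(\cF\otimes V)$ built from the unit $u\colon\bu\to\cF$ provides an extra degeneracy exhibiting $V\cong\bu\otimes\bu\otimes V$ as the coequalizer, and one checks it is compatible with the natural map from $V$. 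Since everything is functorial in $V$, this gives that $\coinv_l\circ\triv_l\cong\id$, hence the unit of the adjunction is an isomorphism, hence $\triv_l$ is fully faithful.

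The only mild subtlety — and the step I would be most careful about — is checking that the relevant module structures are indeed the ones coming from $\epsilon$, and that the braiding (field goal transform) does not intervene in the split-simplicial-object computation: the left-coinvariants functor only sees the left $\cF$-module structure on $\triv_l(V)$, which by definition factors through $\epsilon$, so the braiding plays no role here and the computation is purely the formal ``augmented algebra'' lemma applied in $\fr{\cA}$. Once that is pinned down, fully faithfulness of $\triv_r$ follows by the mirror-image argument using $\coinv_r$ and the right $\cF$-module $V\otimes\cF$, or alternatively by transporting along the field goal identification $\tau_{lr}\colon\lmod{\cF}{\fr{\cA}}\xrightarrow{\sim}\rmod{\cF}{\fr{\cA}}$, which sends $\triv_l(V)$ to $\triv_r(V)$ and $\coinv_l$ to $\coinv_r$.
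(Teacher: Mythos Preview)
Your overall strategy matches the paper's one-line proof: show that the counit of the adjunction $\coinv_l\dashv\triv_l$ is an isomorphism. But there is a genuine error in how you unwind the functor $\triv_l$.

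The functor $\triv_l$ sends $V$ to the object $V$ itself, equipped with the left $\cF$-action $\cF\otimes V\xrightarrow{\epsilon\otimes\id}V$; it is \emph{not} the object $\cF\otimes V$. (This is simply restriction along the algebra map $\epsilon\colon\cF\to\bu$, which is the right adjoint to extension of scalars $\bu\rt{\cF}(-)=\coinv_l$.) You appear to have conflated $\triv_l$ with the free-module functor $V\mapsto\cF\otimes V$ described elsewhere in the paper; your parenthetical ``(or equivalently $V$ regarded through $\epsilon$)'' is the correct description, but it is certainly not equivalent to ``$A\otimes V$'' in any sense. This confusion propagates to the coequalizer computation, where you use the multiplication $\cF\otimes\cF\to\cF$ as one of the parallel arrows --- that is the \emph{free} module action, inconsistent with your earlier claim that the action is via $\epsilon$, and in any case not the action on $\triv_l(V)$.

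With the correct identification the argument becomes even easier than what you wrote: $\coinv_l(\triv_l(V))=\bu\rt{\cF}V$ is the coequalizer of two maps $\cF\otimes V\rightrightarrows V$, both equal to $\epsilon\otimes\id_V$, hence the coequalizer is $V$ and the counit $\bu\rt{\cF}\triv_l(V)\to V$ is an isomorphism. (Note also that for $\coinv_l\dashv\triv_l$ it is the \emph{counit}, not the unit, whose invertibility is equivalent to full faithfulness of the right adjoint $\triv_l$.) The argument for $\triv_r$ is symmetric.
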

	\begin{proof}
		The counit of the adjunction $\coinv_l\dashv \triv_l$ is $\bu\rt{\cF} \triv_l(V)\rightarrow V$ which is an isomorphism.
	\end{proof}
	
	The category $\fr{\cA}$ carries two $\HC(\cA)$-module structures:
	\begin{itemize}
		\item The functor $M\in\HC(\cA), V\in\fr{\cA}\mapsto M\rt{\cF}(\cF\otimes V) \rt{\cF} \bu$ equips $\fr{\cA}$ with a left $\HC(\cA)$-module structure. With respect to it $\coinv_r\colon \HC(\cA)\rightarrow \fr{\cA}$ becomes a functor of left $\HC(\cA)$-module categories.
		
		\item The functor $V\in\fr{\cA}, M\in\HC(\cA)\mapsto \bu\rt{\cF}(\cF\otimes V)\rt{\cF} M$ equips $\fr{\cA}$ with a right $\HC(\cA)$-module structure. With respect to it $\coinv_l\colon \HC(\cA)\rightarrow \fr{\cA}$ becomes a functor of right $\HC(\cA)$-module categories.
	\end{itemize}
	
	\begin{proposition}
		Suppose for $V\in\fr{\cA}$. The object $\triv_r(V)\in\HC(\cA)$ has the trivial left $\cF$-module structure iff $V$ lies in the M\"uger center of $\fr{\cA}$.
		\label{prop:trivialOqGbimodule}
	\end{proposition}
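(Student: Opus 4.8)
The plan is to unwind the definition of $\triv_r$, push everything to the coend description $\cF\cong\int^{X}X^*\otimes X$, and reduce the statement to a pointwise identity of braidings which is visibly the vanishing of the monodromy of $V$ against every object.

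By construction $\triv_r(V)\in\HC(\cA)=\lmod{\cF}{\fr{\cA}}$ is the object with underlying object $V$ obtained by equipping $V$ with the \emph{trivial right} $\cF$-module structure $\id_V\otimes\epsilon\colon V\otimes\cF\to V$ and transporting it to a left $\cF$-module through the field goal identification $\tau_{lr}$. Thus its left $\cF$-action is the composite $\cF\otimes V\xrightarrow{\tau_V^{-1}}V\otimes\cF\xrightarrow{\id_V\otimes\epsilon}V$, where $\tau_V\colon V\otimes\cF\to\cF\otimes V$ is the field goal transform. Writing $\cF$ as the colimit $\colim_{X}X^*\otimes X$, a map out of $\cF\otimes V$ is a compatible family of maps out of $(X^*\otimes X)\otimes V$; the counit $\epsilon$ restricts to $\ev_X\colon X^*\otimes X\to\bu$ on each component, and $\tau_V$ restricts to the braiding composite of \cref{fig:fieldgoal}. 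Plugging these in, the left $\cF$-action on $\triv_r(V)$ restricts on the component $X^*\otimes X$ to
\[
X^*\otimes X\otimes V\ \xrightarrow{\ \id\otimes\sigma^{-1}_{V,X}\ }\ X^*\otimes V\otimes X\ \xrightarrow{\ \sigma_{X^*,V}\otimes\id\ }\ V\otimes X^*\otimes X\ \xrightarrow{\ \id\otimes\ev_X\ }\ V,
\]
while the trivial left $\cF$-action restricts on that component to $\ev_X\otimes\id_V$.

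Next I would compare the two. Naturality of the braiding with respect to $\ev_X$, together with the hexagon axiom, gives the identity $\ev_X\otimes\id_V=(\id_V\otimes\ev_X)\circ(\sigma_{X^*,V}\otimes\id_X)\circ(\id_{X^*}\otimes\sigma_{X,V})$; hence the two component maps agree if and only if
\[
(\id_V\otimes\ev_X)\circ(\sigma_{X^*,V}\otimes\id_X)\circ(\id_{X^*}\otimes\sigma^{-1}_{V,X})=(\id_V\otimes\ev_X)\circ(\sigma_{X^*,V}\otimes\id_X)\circ(\id_{X^*}\otimes\sigma_{X,V}).
\]
Using rigidity of $X$, the assignment $g\mapsto(\id_V\otimes\ev_X)\circ(\sigma_{X^*,V}\otimes\id_X)\circ(\id_{X^*}\otimes g)$ is a bijection $\Hom_{\fr{\cA}}(X\otimes V,V\otimes X)\xrightarrow{\sim}\Hom_{\fr{\cA}}(X^*\otimes X\otimes V,V)$: under the adjunction $(X^*\otimes-)\dashv(X\otimes-)$ it becomes post-composition with the isomorphism $V\otimes X\to X\otimes V$ produced from $\sigma$ by a zig-zag (snake) identity. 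Therefore the displayed equation holds if and only if $\sigma^{-1}_{V,X}=\sigma_{X,V}$, i.e. if and only if the double braiding $\sigma_{X,V}\circ\sigma_{V,X}=\id_{V\otimes X}$. Running this over all $X$, we conclude that $\triv_r(V)$ carries the trivial left $\cF$-module structure exactly when $\sigma_{X,V}\circ\sigma_{V,X}=\id$ for all $X\in\cA$. Finally, since every object of $\fr{\cA}$ is a colimit of objects of $\cA$ and braidings are natural and preserve colimits in each variable, this is equivalent to $\sigma_{Y,V}\circ\sigma_{V,Y}=\id$ for all $Y\in\fr{\cA}$, which is precisely $V\in\ZMug(\fr{\cA})$.

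The only step with genuine content — hence the main obstacle — is the second paragraph: keeping the braidings straight and verifying the zig-zag identity that makes $g\mapsto(\id_V\otimes\ev_X)\circ(\sigma_{X^*,V}\otimes\id_X)\circ(\id_{X^*}\otimes g)$ a bijection; the remainder is unwinding definitions. If one prefers to avoid the explicit adjunction bookkeeping, the same computation can be run directly in string diagrams: the left $\cF$-action on $\triv_r(V)$ is, pictorially, ``take the $\cF$-coend strand, let it monodromy around the $V$-strand, then close it off with $\ev_X$'', so its coincidence with the trivial action is manifestly the statement that this monodromy is the identity, i.e. that $V$ is transparent.
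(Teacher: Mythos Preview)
Your proof is correct and follows essentially the same route as the paper: unwind the left $\cF$-action on $\triv_r(V)$ on coend components, compare it to the trivial action $\ev_X\otimes\id_V$, and use the rigidity adjunction to reduce the equality of these two maps $X^*\otimes X\otimes V\to V$ to the identity $\sigma_{X,V}\circ\sigma_{V,X}=\id$. The paper organises the same computation by first applying the rigidity isomorphism $\Hom(X\otimes V,X\otimes V)\cong\Hom(X^*\otimes X\otimes V,V)$ and then identifying the two sides as the images of $\sigma_{V,X}\circ\sigma_{X,V}$ and $\id$ respectively; you instead rewrite the trivial action via the hexagon and then cancel a common invertible prefix. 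Your explicit remark that the bijection step is the only substantive point, and your final density argument extending from $X\in\cA$ to all of $\fr{\cA}$, are both sound and make the proof slightly more self-contained than the paper's.
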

	\begin{proof}
		Let $X\in\cA$. By rigidity of $\cA$ we may identify
		\[\Hom_{\fr{\cA}}(X\otimes V, X\otimes V)\cong\Hom_{\fr{\cA}}(X^*\otimes X\otimes V, V).\]
		
		Under the isomorphism the image of $\sigma_{V, X}\circ \sigma_{X, V}$ is the composite
		\[
		X^*\otimes X\otimes V\xrightarrow{\id\otimes \sigma_{X, V}}X^*\otimes V\otimes X\xrightarrow{\sigma^{-1}_{V, X^*}\otimes \id}V\otimes X^*\otimes X\xrightarrow{\id\otimes \ev} V
		\]
		which coincides with the left action of $\cF$ on $\triv_r(V)$. The image of $\id\colon X\otimes V\rightarrow X\otimes V$ is
		\[
		X^*\otimes X\otimes V\xrightarrow{\ev\otimes \id} V
		\]
		which coincides with the trivial left action of $\cF$.
		
		Thus, $\sigma_{V, X}\circ\sigma_{X, V} = \id_{X\otimes V}$ for every $X\in\cA$ iff $\triv_r(V)$ has the trivial left $\cF$-module structure.
	\end{proof}
	
	\begin{corollary}
		For any $V,W\in\fr{\cA}$ we have
		\[\triv_l(V)\rt{\cF} \triv_r(W)\in \ZMug(\fr{\cA}).\]
		\label{cor:OqGtensorproducttrivial}
	\end{corollary}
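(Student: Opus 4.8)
The plan is to show that the object $\triv_l(V)\rt{\cF}\triv_r(W)$ of $\HC(\cA)$ carries the trivial left $\cF$-module structure \emph{and} the trivial right $\cF$-module structure, and then to feed this into \cref{prop:trivialOqGbimodule}.

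First recall how the monoidal structure on $\HC(\cA)=\lmod{\cF}{\fr\cA}$ is built: since $\cF=T(T^\R(\bu))$ with $T^\R(\bu)$ a commutative algebra in $\fr\cA\otimes\fr\cA^{\bop}$, every left $\cF$-module $M$ acquires, through the field goal transform $\tau_{lr}\colon\lmod{\cF}{\fr\cA}\xrightarrow{\sim}\rmod{\cF}{\fr\cA}$, a commuting right $\cF$-action; thus an object of $\HC(\cA)$ is naturally an $\cF$-bimodule, and the monoidal product is the relative tensor product of $\cF$-bimodules $M\rt{\cF}N$, whose left $\cF$-action is inherited from $M$ and whose right $\cF$-action is inherited from $N$ (the remaining ``middle'' right action of $M$ and left action of $N$ being the ones that get coequalized). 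Making this identification precise is the one genuinely bookkeeping-heavy point; everything else is formal.

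Granting it, the argument is immediate. The left $\cF$-action on $\triv_l(V)\rt{\cF}\triv_r(W)$ comes from $\triv_l(V)$, which has trivial left action by definition; the right $\cF$-action comes from $\triv_r(W)$, which has trivial right action by definition. Writing $Z\in\fr\cA$ for the underlying object of $\triv_l(V)\rt{\cF}\triv_r(W)$, triviality of the left action identifies this object with $\triv_l(Z)$, while triviality of the right action identifies it with $\triv_r(Z)$. In particular $\triv_r(Z)$ has trivial left $\cF$-module structure, so by \cref{prop:trivialOqGbimodule} we get $Z\in\ZMug(\fr\cA)$. This is the assertion, under the identification of the full subcategory of $\HC(\cA)$ on objects with simultaneously trivial left and right $\cF$-actions (on which $\triv_l\cong\triv_r$) with $\fr\cA$, and of its Müger-central part with $\ZMug(\fr\cA)$.

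The main obstacle is the first step: one must check carefully that the monoidal structure of $\HC(\cA)$ really restricts the left action to the left tensor factor and the right action to the right tensor factor, i.e.\ that $\triv_l(V)\rt{\cF}\triv_r(W)$ lies in the essential images of both $\triv_l$ and $\triv_r$; once this is pinned down the corollary follows purely formally from \cref{prop:trivialOqGbimodule}. As a sanity check, classically ($q=1$, $\cA=\Rep(G)$) one has $\triv_l=\triv_r$, the product above is $\triv_l(V\otimes W)$, and $\ZMug(\Rep(G))=\Rep(G)$, so the statement is vacuous; all of the content sits in the braiding computation underlying \cref{prop:trivialOqGbimodule}, which is where genericity of $q$ ultimately enters via \cref{prop:genericMugercenter}.
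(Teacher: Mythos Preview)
Your argument is correct and follows essentially the same route as the paper's proof: observe that the left $\cF$-action on $\triv_l(V)\rt{\cF}\triv_r(W)$ is inherited from $\triv_l(V)$ (hence trivial) and the right $\cF$-action from $\triv_r(W)$ (hence trivial), then apply \cref{prop:trivialOqGbimodule}. One small remark: your closing comment that ``genericity of $q$ ultimately enters via \cref{prop:genericMugercenter}'' is misplaced here---the corollary holds for any ribbon category $\cA$ with no assumption on the M\"uger center, and \cref{prop:genericMugercenter} only enters later when one wants to conclude that the M\"uger center is trivial.
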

	\begin{proof}
		Consider the object $\triv_l(V)\rt{\cF} \triv_r(W)\in\HC(\cA)$. As an object of $\rmod{\cF}{\fr{\cA}}\cong \HC(\cA)$, it has a trivial right $\cF$-action, i.e. it lies in the image of $\triv_r$. Similarly, as an object of $\lmod{\cF}{\fr{\cA}}\cong \HC(\cA)$, it has a trivial left $\cF$-action, i.e. it lies in the image of $\triv_l$. Therefore, by \cref{prop:trivialOqGbimodule} it lies in the M\"uger center.
	\end{proof}
	
	\subsection{Quantum moment maps}
	
	Recall that $T^\R(\bu)$ is a commutative algebra in $\fr{\cA}\otimes \fr{\cA}^{\bop}$ and $\cF=T(T^\R(\bu))$ is its image in $\fr{\cA}$.
	
	\begin{definition}
		Let $A$ be an algebra in $\fr{\cA}$. A \defterm{quantum moment map} is an algebra map $\mu\colon \cF\rightarrow A$ in $\fr{\cA}$ whose adjoint $T^\R(\bu)\rightarrow T^\R(A)$ is a central map.
	\end{definition}
	
	\begin{remark}
		Equivalently, the quantum moment map equation can be formulated as a commutativity of the diagram
		\[
		\xymatrix{
			A\otimes \cF \ar^{\id\otimes \mu}[r] \ar^{\tau_A}[dd] & A\otimes A \ar^{m}[dr] & \\
			&& A \\
			\cF\otimes A \ar^{\mu\otimes \id}[r] & A\otimes A \ar_{m}[ur] &
		}
		\]
	\end{remark}
	
	\begin{remark}
		Let us remark that there are several closely related constructions throughout the literature going by the name ``quantum moment map''. Quantizations of moment maps $M\rightarrow \g^*$ on Poisson manifolds with a Hamiltonian $\g$-action are given by homomorphisms $U\g\rightarrow A$. If $G$ is a Poisson-Lie group and $G^*$ its dual, one can also consider $G$-actions on $M$ with a moment map $M\rightarrow G^*$. In this setting quantum moment maps are given by homomorphisms $H\rightarrow A$, where $H$ is the Hopf algebra quantizing $G^*$ \cite{Lu}. In our setting we are interested in, on the classical level, actions of Poisson-Lie groups $G$ on $M$ with a moment map $M\rightarrow G$ (where the target is equipped with the so-called Semenov-Tian-Shansky Poisson structure) and, on the quantum level, quantum moment maps $\cF\rightarrow A$, where $\cF$ is an $H$-comodule algebra. These (quantum) moment maps were first considered in \cite{VaragnoloVasserot} and extensively used in \cite{BZBJ2} to describe factorization homology of closed surfaces. We refer to \cite{SafronovQMM} for more details on this definition of quantum moment maps and how it reduces to the previous definitions.
	\end{remark}
	
	The following is shown in \cite[Corollary 4.7]{BZBJ2}.
	
	\begin{proposition}
		Suppose $A\in\fr{\cA}$ is an algebra. The right action of $\HC(\cA)$ on $\lmod{A}{\fr{\cA}}$ compatible with the natural right $\cA$-module structure on $\lmod{A}{\fr{\cA}}$ is the same as the data of a quantum moment map $\mu\colon \cF\rightarrow A$.
		\label{prop:HCmomentmap}
	\end{proposition}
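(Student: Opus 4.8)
The statement to prove is \cref{prop:HCmomentmap}: for an algebra $A\in\fr{\cA}$, giving a right $\HC(\cA)$-module structure on $\lmod{A}{\fr{\cA}}$ compatible with its natural right $\cA$-module structure is equivalent to giving a quantum moment map $\mu\colon\cF\to A$. Since the excerpt attributes this to \cite[Corollary 4.7]{BZBJ2}, the plan is to spell out the argument rather than invoke anything deeper. The starting point is the observation that $\HC(\cA) = \lmod{\cF}{\fr{\cA}}$ is, by the Eilenberg--Watts-type philosophy, a kind of algebra object in $\PrL$-modules over $\fr{\cA}$: concretely, its action on any category should be governed by a "unit" object, which here is the regular bimodule $\cF$ itself, together with compatibility data. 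So I would first reduce the statement to understanding lax monoidal functors out of $\HC(\cA)$.

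First I would make precise the "compatible right $\cA$-module structure." The category $\lmod{A}{\fr{\cA}}$ carries a right $\fr{\cA}$-action by $M, V \mapsto M\otimes V$ (using the monoidal structure of $\fr{\cA}$ on the underlying objects, with $A$ acting on the left factor). A right $\HC(\cA)$-action compatible with this means a monoidal functor $\HC(\cA)\to \mathrm{End}_{\fr{\cA}}(\lmod{A}{\fr{\cA}})$ into $\fr{\cA}$-linear endofunctors. Now I would use that $\lmod{A}{\fr{\cA}}$ is generated, as a right $\fr{\cA}$-module category, by the free module $A$ itself: every $\fr{\cA}$-linear cocontinuous endofunctor is determined by where it sends $A$, and $\mathrm{End}_{\fr{\cA}}(\lmod{A}{\fr{\cA}})\simeq \lmod{A}{\rmod{A}{\fr{\cA}}} = A\text{-bimodules in }\fr{\cA}$ as a monoidal category (under relative tensor over $A$), with $A$ itself the unit. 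Then I would compute the image of the generating object $\cF\in\HC(\cA)$ under such a functor: monoidality forces $\cF$ to go to an $A$-bimodule $N$, and the algebra structure $m\colon\cF\rt{\cF}\cF\to\cF$ — which in $\HC(\cA)$ is essentially the identity since $\cF$ is the unit — pins down the bimodule to be $A\otimes \cF$-ish; more precisely, the monoidal functor $\fr{\cA}\to\HC(\cA)$, $V\mapsto \cF\otimes V$, composed with our action functor, sends $V$ to an endofunctor which, being $\fr{\cA}$-linear, must be $M\mapsto M\otimes V$ again — so the $\fr{\cA}$-worth of data is fixed, and the only genuine new datum is the image of the algebra map $\bu\to\cF$ internal to $\HC(\cA)$, equivalently a map that after unwinding becomes exactly an algebra map $\mu\colon\cF\to A$. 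The central/quantum-moment-map condition then emerges from demanding that the resulting endofunctors actually land in $\fr{\cA}$-\emph{linear} (as opposed to merely $\cA$-balanced) endofunctors: the braiding correction built into $\HC(\cA)$ via the field goal transform $\tau_V$ must match the braiding correction needed to move $V$ past $A$ inside $\lmod{A}{\fr{\cA}}$, and that matching is precisely the commuting hexagon in the Remark after the definition of quantum moment map.

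The key steps, in order: (1) identify $\mathrm{End}_{\fr{\cA}}(\lmod{A}{\fr{\cA}})$ with the monoidal category of $A$-bimodules in $\fr{\cA}$ under $\rt{A}$; (2) observe that a monoidal functor $\HC(\cA)\to (A\text{-bimod})$ restricted along $\fr{\cA}\to\HC(\cA)$ is forced to be the canonical one $V\mapsto A\otimes V$ by compatibility with the right $\cA$-action, so all the data reduces to the image of $\cF$ and of its bialgebra structure maps; (3) extract from this an algebra map $\mu\colon\cF\to A$ (this is the "easy half" — from a moment map one builds the action by $M\mapsto M\rt{\cF}(\text{-})$ via $\mu$); (4) show the compatibility of $\tau$ with the $A$-module structure is equivalent to centrality of $T^\R(\bu)\to T^\R(A)$, i.e. the quantum moment map equation. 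I expect step (4) to be the main obstacle: one must carefully track the two braidings (the one internal to $\HC(\cA)$ defining its monoidal structure, and the one implicit in the right $\cA$-action on $A$-modules) and verify that "the field goal transform $\tau_A$ agrees with the transform built from $\mu$ and $m$" is literally the diagram displayed in the Remark following the definition of quantum moment map. Concretely this is a string-diagram computation in $\fr{\cA}$ comparing $\tau_A$ with $m\circ(\mu\otimes\id)\circ(\text{swap braiding})$, and the subtlety is that it must hold after tensoring with an arbitrary $X\in\cA$ and using rigidity to strip $X^*\otimes X$, exactly as in the proof of \cref{prop:trivialOqGbimodule}. Everything else is bookkeeping about free modules generating module categories and the universal property of relative tensor product.
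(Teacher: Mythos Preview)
The paper does not actually give a proof of this proposition --- it simply attributes the result to \cite[Corollary~4.7]{BZBJ2} and moves on. So there is nothing in the paper to compare against; your sketch is already more detailed than what appears here.

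Your outline is essentially the argument one finds in \cite{BZBJ2}, and the overall shape is correct: reduce to $\fr{\cA}$-linear endofunctors, identify those with $A$-bimodules via Eilenberg--Watts, observe that the restriction along $\fr{\cA}\to\HC(\cA)$ pins down the functor on free $\cF$-modules, and then read off the remaining datum as an algebra map $\mu\colon\cF\to A$ satisfying precisely the field-goal compatibility. One point deserving more care: the assertion that a compatible right $\HC(\cA)$-action lands in $\fr{\cA}$-\emph{linear} endofunctors is not a tautology. What makes it work is that $\fr{\cA}\to\HC(\cA)$ is not just monoidal but factors through the Drinfeld center of $\HC(\cA)$ (this comes from $T^\R(\bu)$ being commutative in $\fr{\cA}\otimes\fr{\cA}^{\bop}$), so the image of $\fr{\cA}$ commutes, up to specified braiding data, with every object of $\HC(\cA)$. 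That centrality is exactly what gives the isomorphisms $(M\otimes V)\cdot N\cong (M\cdot N)\otimes V$ you need. Also, your phrase ``the image of the algebra map $\bu\to\cF$ internal to $\HC(\cA)$'' is slightly garbled --- in $\HC(\cA)$ the unit object \emph{is} $\cF$, so there is no such map to track; the genuine extra datum lives in the morphisms $\Hom_{\HC(\cA)}(\cF\otimes V,\cF)\cong\Hom_{\fr{\cA}}(V,\cF)$ beyond those coming from $\Hom_{\fr{\cA}}(V,\bu)$, and it is this enlargement that encodes $\mu$. With those two clarifications your steps (1)--(4) are sound.
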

	
	Explicitly, suppose $A\in\fr{\cA}$ carries a quantum moment map $\mu\colon \cF\rightarrow A$. Given a left $A$-module $V$ and a left $\cF$-module $M$, the action is
	\[\tau_{lr}(\mu_*(V))\rt{\cF} M,\]
	where $\mu_*(V)$ is $V$ considered as a left $\cF$-module via the quantum moment map.
	
	Quantum moment maps allow us to introduce the notion of strongly equivariant modules.
	
	\begin{definition}
		Let $A\in\fr{\cA}$ be an algebra equipped with a quantum moment map $\mu\colon \cF\rightarrow A$. A left $A$-module $V$ is \defterm{strongly equivariant} if $\tau_{lr}(\mu_*(V))$ is a trivial right $\cF$-module. We denote by
		\[\lmod{A}{\fr{\cA}}^\str\subset \lmod{A}{\fr{\cA}}\]
		the full subcategory of strongly equivariant modules.
	\end{definition}
	
	\begin{remark}
		Explicitly, a left $A$-module $V$ is strongly equivariant if the diagram
		\[
		\xymatrix{
			& V\otimes \cF \ar^{\id\otimes \epsilon}[dr] \ar_{\tau_V}[dl] & \\
			\cF \otimes V \ar_{\mu\otimes \id}[r] & A\otimes V \ar_{\act_V}[r] & V
		}
		\]
		is commutative.
	\end{remark}
	
	\begin{remark}
		Let $X$ be a smooth affine variety equipped with an action of a algebraic group $G$. Let $\D(X)$ be the algebra of global differential operators on $X$. It carries a moment map $\mu\colon \U\g\rightarrow \D(X)$ given by the action vector fields. Recall that a weakly equivariant $\D$-module on $X$ is an object of $\lmod{\D(X)}{\Rep(G)}$. If $G$ is connected, then strongly equivariant $\D$-modules, i.e. $\D$-modules on the stack $[X/G]$, form a full subcategory of weakly equivariant $\D$-modules $M$ where the $\U\g$-action induced by the moment map coincides with the $\U\g$-action coming from the $G$-action on $M$.
	\end{remark}
	
	We will also use the following perspective on the strongly equivariant category. Let $A\in\fr{\cA}$ be an algebra equipped with a quantum moment map. There is a monad on $\lmod{A}{\fr{\cA}}$ given by the composition
	\begin{equation}
	S\colon \lmod{A}{\fr{\cA}}\longrightarrow \bimod{A}{\cF}{\fr{\cA}}\xrightarrow{\coinv_r}\lmod{A}{\fr{\cA}},
	\label{eq:stronglyequivariantmonad}
	\end{equation}
	where the first functor turns an $A$-module into an $(A, \cF)$-bimodule with the right $\cF$-module structure coming from the quantum moment map. This monad is idempotent, so the forgetful functor from $S$-algebras in $\lmod{A}{\fr{\cA}}$ to $\lmod{A}{\fr{\cA}}$ is fully faithful.
	
	Recall that $\fr{\cA}$ carries a natural left $\HC(\cA)$-action. The following statement is proved in \cite[Theorem 5.2]{BZBJ2}.
	
	\begin{proposition}
		Let $A\in\fr{\cA}$ be an algebra equipped with a quantum moment map. There is an equivalence of categories
		\[\lmod{A}{\fr{\cA}}^\str\cong\lmod{A}{\fr{\cA}}\rt{\HC(\cA)} \fr{\cA}.\]
		\label{prop:HCstrongequivariance}
	\end{proposition}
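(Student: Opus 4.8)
The plan is to prove the equivalence $\lmod{A}{\fr{\cA}}^\str\cong\lmod{A}{\fr{\cA}}\rt{\HC(\cA)} \fr{\cA}$ by identifying both sides with the category of algebras for the idempotent monad $S$ of \eqref{eq:stronglyequivariantmonad}. I would rely heavily on \cref{prop:HCmomentmap}, which provides the right $\HC(\cA)$-action on $\lmod{A}{\fr{\cA}}$, together with the general fact that relative tensor products of module categories over a monoidal category can be computed via a bar construction.

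First I would unwind the right $\HC(\cA)$-module structure on $\lmod{A}{\fr{\cA}}$: by \cref{prop:HCmomentmap}, acting on $V\in\lmod{A}{\fr{\cA}}$ by $M\in\HC(\cA)=\lmod{\cF}{\fr{\cA}}$ yields $\tau_{lr}(\mu_*(V))\rt{\cF} M$. Dually, $\fr{\cA}$ is a left $\HC(\cA)$-module via $M, V\mapsto M\rt{\cF}(\cF\otimes V)\rt{\cF}\bu$, i.e. via $\coinv_r$ after the free-module functor. The relative tensor product $\lmod{A}{\fr{\cA}}\rt{\HC(\cA)} \fr{\cA}$ is then the geometric realization of the bar complex whose $n$-simplices are $\lmod{A}{\fr{\cA}}\otimes \HC(\cA)^{\otimes n}\otimes \fr{\cA}$ with the usual face maps. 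Since the unit object $\cF\in\HC(\cA)$ (the free module on $\bu$) acts on $\fr{\cA}$ producing $\coinv_r$ of a free module, one checks the bar complex can be contracted from the $\fr{\cA}$-side against this generator, so that the realization is computed by the monad $M\mapsto \coinv_r(\text{free }\cF\text{-bimodule on }M)$ acting on $\lmod{A}{\fr{\cA}}$ — which is precisely $S$. This uses the standard fact (e.g. the monadicity/Barr–Beck style argument already invoked implicitly before \cref{prop:HCstrongequivariance}) that a relative tensor product $\cM\rt{\cC}\cN$, when $\cN$ is generated under $\cC$-action by a single object, is the category of modules for the resulting monad on $\cM$.

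Next I would identify $S$-algebras with strongly equivariant modules. By the definition of strong equivariance, $V$ is strongly equivariant precisely when the canonical map $V\to S(V)$ (equivalently, when $\tau_{lr}(\mu_*(V))$ already has trivial right $\cF$-action) is an isomorphism; since $S$ is idempotent, the category of $S$-algebras coincides with the full subcategory of $\lmod{A}{\fr{\cA}}$ on which the unit $V\to S(V)$ is invertible, i.e. with $\lmod{A}{\fr{\cA}}^\str$. Concatenating with the identification of the previous paragraph gives the desired equivalence. The forgetful functors on both sides agree with the inclusion into $\lmod{A}{\fr{\cA}}$, so the equivalence is the expected one.

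The main obstacle I anticipate is the bookkeeping in the second paragraph: verifying carefully that the bar resolution computing $\lmod{A}{\fr{\cA}}\rt{\HC(\cA)} \fr{\cA}$ collapses to the monad $S$, i.e. that the left $\HC(\cA)$-action on $\fr{\cA}$ is "free" in the appropriate sense (generated by the monoidal unit's image, namely $\bu\in\fr{\cA}$, so that $\fr{\cA}\cong \HC(\cA)\rt{\HC(\cA)}\fr{\cA}$ admits a splitting making the two-sided bar complex split on one side). This is exactly the content that is attributed to \cite[Theorem 5.2]{BZBJ2}, so in the write-up I would either cite that theorem directly or reproduce the short splitting argument; the remaining steps are then formal consequences of idempotency of $S$.
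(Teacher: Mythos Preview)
Your proposal is essentially correct and matches the paper's own treatment: the paper does not prove this proposition in the body but cites \cite[Theorem 5.2]{BZBJ2}, and then in a Remark sketches exactly the argument you outline---realize the relative tensor product as the geometric realization of the bar simplicial object, show the projection $\lmod{A}{\fr{\cA}}\to\lmod{A}{\fr{\cA}}\rt{\HC(\cA)}\fr{\cA}$ is monadic with monad $S$, and then use idempotency of $S$ to identify $S$-algebras with the strongly equivariant subcategory.

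The one point where your write-up differs from the paper's sketch is the justification for monadicity. You phrase it as ``the bar complex can be contracted from the $\fr{\cA}$-side'' because $\fr{\cA}$ is generated under the $\HC(\cA)$-action by a single object; this is a bit imprecise (it is not obvious in what sense $\fr{\cA}$ is ``free'' as an $\HC(\cA)$-module). The paper's Remark instead invokes \emph{rigidity} of $\HC(\cA)$: rigidity guarantees that the face maps in the bar diagram admit right adjoints satisfying the Beck--Chevalley conditions, and then Lurie's Barr--Beck theorem \cite[Theorem 4.7.5.2]{HA} gives monadicity directly. This is cleaner and avoids having to produce an explicit splitting. Since you already flag this step as the main obstacle and propose citing \cite{BZBJ2}, your plan is sound; but if you want to reproduce the argument, use the rigidity/Beck--Chevalley route rather than the splitting heuristic.
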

	
	\begin{remark}
		One way to see \cref{prop:HCstrongequivariance} is as follows. The relative tensor product $\lmod{A}{\fr{\cA}}\rt{\HC(\cA)} \fr{\cA}$ is obtained as the geometric realization of the simplicial object
		\[
		\xymatrix{
			{\lmod{A}{\fr{\cA}}} & {\lmod{A}{\fr{\cA}}\otimes_{\fr{\cA}} \HC(\cA) \cong\bimod{A}{\cF}{\fr{\cA}}} \ar@<.5ex>[l] \ar@<-.5ex>[l] & \ldots \ar@<.8ex>[l] \ar[l] \ar@<-.8ex>[l]
		}
		\]
		in $\PrL$. Since $\HC(\cA)$ is rigid, this diagram admits right adjoints which satisfy the Beck--Chevalley conditions. Therefore, by \cite[Theorem 4.7.5.2]{HA} the right adjoint to the projection
		\[\lmod{A}{\fr{\cA}}\longrightarrow \lmod{A}{\fr{\cA}}\rt{\HC(\cA)} \fr{\cA}\]
		is monadic and the monad is identified with the monad $S$ introduced above.
	\end{remark}
	
	Note that all statements about left modules have a symmetric counterpart for right modules, so that we can define strongly equivariant right $A$-modules with an equivalence
	\[\rmod{A}{\fr{\cA}}^\str\cong \fr{\cA}\rt{\HC(\cA)} \rmod{A}{\fr{\cA}}.\]
	
	\begin{proposition}
		For any two objects $V\in \rmod{A}{\fr{\cA}}^\str$ and $W\in\lmod{A}{\fr{\cA}}^\str$ we have
		\[V\rt{A} W\in \ZMug(\fr{\cA}).\]
		\label{prop:stronglyequivarianttensorproductMuger}
	\end{proposition}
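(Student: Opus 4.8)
The plan is to reduce this statement to \cref{cor:OqGtensorproducttrivial} by passing through the equivalence of \cref{prop:HCstrongequivariance}. The key observation is that the strongly equivariant categories are \emph{module categories over} $\HC(\cA)$ realized as relative tensor products with $\fr{\cA}$, and the relative tensor product $V \rt{A} W$ can be rewritten so that the two copies of $\fr{\cA}$ appear as $\triv_r$ and $\triv_l$ objects, at which point \cref{cor:OqGtensorproducttrivial} finishes the job.

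More precisely, first I would use \cref{prop:HCstrongequivariance} and its right-module counterpart to write
\[
V \simeq \fr{\cA}\rt{\HC(\cA)} \rmod{A}{\fr{\cA}}, \qquad W \simeq \lmod{A}{\fr{\cA}}\rt{\HC(\cA)} \fr{\cA}
\]
as objects built from the respective $\HC(\cA)$-actions on $\fr{\cA}$ recalled just before the statement. The relative tensor product $V\rt{A}W$ is the composition of these in $\PrL$ (or $\Bimod$), i.e.
\[
V\rt{A} W \simeq \fr{\cA}\rt{\HC(\cA)}\left(\rmod{A}{\fr{\cA}}\rt{A} \lmod{A}{\fr{\cA}}\right)\rt{\HC(\cA)} \fr{\cA}.
\]
Now $\rmod{A}{\fr{\cA}}\rt{A}\lmod{A}{\fr{\cA}}$ is just $\fr{\cA}$ itself (with $\HC(\cA)$ acting on both sides via the quantum moment map through the $\cF$-bimodule structure that $\tau_{lr}$ encodes), so the middle collapses and one is left with a relative tensor product of $\fr{\cA}$ with itself over $\HC(\cA)$. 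Unwinding what the two $\HC(\cA)$-actions on $\fr{\cA}$ are: the object $\fr{\cA}$ appearing on the left carries the right $\HC(\cA)$-module structure for which $\coinv_l$ is equivariant, so the unit $\bu$ corresponds to $\triv_l(\bu)$; symmetrically the right copy corresponds to $\triv_r$. Tracing $\bu\otimes\bu$ through, the resulting object of $\fr{\cA}$ is computed by $\triv_l(\bu)\rt{\cF}\triv_r(\bu)$, which lies in $\ZMug(\fr{\cA})$ by \cref{cor:OqGtensorproducttrivial}; more generally, an arbitrary pair of generators produces $\triv_l(V')\rt{\cF}\triv_r(W')$ for various $V', W'\in\fr{\cA}$, still in the M\"uger center since that subcategory is closed under colimits.

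The main obstacle I expect is bookkeeping: one must check carefully that the two $\HC(\cA)$-module structures on the two copies of $\fr{\cA}$ are the \emph{correct} ones (left versus right, and which of $\coinv_l$ / $\coinv_r$ is equivariant), so that after collapsing the $A$-bimodule in the middle the surviving $\cF$-actions are genuinely the trivial ones making the $\triv_l$–$\triv_r$ identification valid, rather than some twist of them. A cleaner, more hands-on alternative that avoids this — and which I would present if the module-category manipulation gets unwieldy — is to work directly with the bar resolutions: resolve $V$ and $W$ by free $A$-modules $A\otimes(-)$, observe that strong equivariance means the right (resp. left) $\cF$-action coming from the moment map is intertwined with the trivial one, form $V\rt{A} W$ as the realization of $V\otimes A^{\otimes\bullet}\otimes W$, and identify the resulting object with a colimit of terms of the shape $\triv_l(V_i)\rt{\cF}\triv_r(W_j)$, each in $\ZMug(\fr{\cA})$ by \cref{cor:OqGtensorproducttrivial}; closure of $\ZMug(\fr{\cA})$ under colimits then gives the claim.
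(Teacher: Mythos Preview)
Your first approach has a type error: the displayed equivalences $V\simeq\fr{\cA}\rt{\HC(\cA)}\rmod{A}{\fr{\cA}}$ etc.\ conflate an \emph{object} $V$ with a \emph{category}, and the expression $\rmod{A}{\fr{\cA}}\rt{A}\lmod{A}{\fr{\cA}}$ does not parse (you cannot tensor two categories over an algebra object). One could try to salvage this by tracking specific objects through the categorical equivalences, but as written the argument does not go through, and the bookkeeping you flag as the ``main obstacle'' is in fact a symptom of this confusion rather than a mere detail.

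Your second approach (bar resolution) is on the right track but substantially more elaborate than necessary, and the step ``identify the resulting object with a colimit of terms of the shape $\triv_l(V_i)\rt{\cF}\triv_r(W_j)$'' is not clearly justified: the individual bar terms $V\otimes A^{\otimes n}\otimes W$ are not themselves of that form. The paper's proof short-circuits all of this. There is a single epimorphism $V\rt{\cF}W\twoheadrightarrow V\rt{A}W$ (restriction of scalars along $\mu\colon\cF\to A$ makes $V,W$ into $\cF$-modules, and tensoring over the larger algebra imposes more relations). Strong equivariance says precisely that these restricted $\cF$-module structures are the trivial ones, so $V\rt{\cF}W\cong\triv_l(V)\rt{\cF}\triv_r(W)$, which lies in $\ZMug(\fr{\cA})$ by \cref{cor:OqGtensorproducttrivial}. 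Since the M\"uger center is closed under quotients, $V\rt{A}W\in\ZMug(\fr{\cA})$. No resolutions, no category-level manipulations---just one surjection.
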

	\begin{proof}
		We have an epimorphism
		\[V\rt{\cF} W\longrightarrow V\rt{A} W\]
		where we consider $V$ and $W$ as $\cF$-modules via the quantum moment map. Due to strong equivariance, we may identify
		\[V\rt{\cF} W\cong \triv_l(V)\rt{\cF} \triv_r(W),\]
		so by \cref{cor:OqGtensorproducttrivial} $V\rt{A} W$ is a quotient of an object in the M\"uger center, therefore it lies in the M\"uger center itself.
	\end{proof}
	
	\subsection{Duality and strong equivariance}
	
	In this section we establish a duality property for the category of modules over algebras equipped with a quantum moment map. In this section $A\in\fr{\cA}$ is an algebra equipped with a quantum moment map $\mu\colon \cF\rightarrow A$.
	
	\begin{proposition}
		The functor
		\[\ev\colon \rmod{A}{\fr{\cA}}\otimes \lmod{A}{\fr{\cA}}\rightarrow \Vect\]
		given by $\ev(M, N) = \Hom_{\fr{\cA}}(\bu, M\rt{A} N)$ is a nondegenerate pairing in $\PrL$.
		\label{prop:weaklyequivariantduality}
	\end{proposition}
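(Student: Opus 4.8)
The plan is to exhibit the coevaluation functor explicitly and verify the two zigzag (triangle) identities. Recall that $\fr{\cA}$ is dualizable in $\PrL$ with dual $\fr{\cA^{\bop}}$ (\cref{ex:Homnondegenerate}), and that $\rmod{A}{\fr{\cA}}$ and $\lmod{A}{\fr{\cA}}$ are, respectively, categories of modules in $\fr{\cA}$ over the algebra $A$ and its "opposite" (using the ribbon structure to identify $\fr{\cA}^{\bop}$-module structures appropriately). The first step is to set up the candidate dual: I claim $\rmod{A}{\fr{\cA}}$ is dual to $\lmod{A}{\fr{\cA}}$ with evaluation as given, and with coevaluation
\[
\coev\colon \Vect\longrightarrow \lmod{A}{\fr{\cA}}\otimes \rmod{A}{\fr{\cA}},\qquad \k\longmapsto A,
\]
where $A$ is regarded as an $(A,A)$-bimodule in $\fr{\cA}$, i.e. as an object of $\lmod{A}{\fr{\cA}}\otimes\rmod{A}{\fr{\cA}}$ via the identification of that tensor product (in $\PrL$) with $\bimod{A}{A}{\fr{\cA}}$. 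This last identification is the analogue of the standard fact that $\lmod{A}{}\otimes\rmod{A}{}\simeq\bimod{A}{A}{}$ for ordinary algebras, and follows because $\fr{\cA}$ is itself rigid (so $\lmod{A}{\fr{\cA}}$ is dualizable over $\fr{\cA}$) and free cocompletion is symmetric monoidal.

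Next I would check the triangle identities. Composing $\coev\otimes\id$ with $\id\otimes\ev$ on $\lmod{A}{\fr{\cA}}$ sends $N\mapsto A\rt{A} N\cong N$ (using $\Hom_{\fr{\cA}}(\bu,-)$ together with the coend defining $\otimes$ over $\fr{\cA}$, this amounts to the bar resolution computing $A\otimes_A N$), which is naturally isomorphic to the identity; the symmetric computation handles the other side. The technical content is that the relative tensor product $\rt{A}$ in $\fr{\cA}$ computes, after applying $\Hom_{\fr{\cA}}(\bu,-)$, the correct $\Vect$-valued pairing, i.e. that $\ev$ and $\coev$ are mutually adjoint as $1$-morphisms in $\PrL$; this is where one uses that $A$ is an algebra in $\fr{\cA}$ and that $\rt{A}$ is defined as a (reflexive) coequalizer, so the zigzag reduces to the simplicial identities for the two-sided bar construction. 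I expect no subtlety here beyond bookkeeping, since everything is formal once the identification $\lmod{A}{\fr{\cA}}\otimes\rmod{A}{\fr{\cA}}\simeq\bimod{A}{A}{\fr{\cA}}$ is in place.

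The main obstacle is precisely that identification of the tensor product of module categories with the bimodule category, inside $\PrL$ rather than in a naive $1$-categorical setting: one must know that $\lmod{A}{\fr{\cA}}$ is dualizable as a module category over $\fr{\cA}$ (equivalently, that $A$ is a "smooth/proper enough" algebra object, or rather that $\fr{\cA}$ is rigid so no such hypothesis is needed) and that the Kelly--Deligne tensor product of presentable module categories behaves as expected. Given rigidity of $\cA$ (hence of the compact objects of $\fr{\cA}$), $\fr{\cA}$ is a rigid monoidal presentable category, and then $\lmod{A}{\fr{\cA}}$ is automatically dualizable with dual $\rmod{A}{\fr{\cA}}$ by the general theory (cf. the references already cited, e.g. \cite{BZBJ2, HA}); the evaluation and coevaluation are then the standard ones, and unwinding them gives exactly the formulas above. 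So the proof structure is: (1) invoke rigidity of $\fr{\cA}$; (2) deduce $\lmod{A}{\fr{\cA}}$ is dualizable with dual $\rmod{A}{\fr{\cA}}$ and identify $\ev$; (3) check the resulting $\ev$ agrees with $\Hom_{\fr{\cA}}(\bu, M\rt{A} N)$; (4) conclude nondegeneracy.
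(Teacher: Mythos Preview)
Your proposal contains a genuine error: the claimed identification
\[
\lmod{A}{\fr{\cA}}\otimes\rmod{A}{\fr{\cA}}\ \simeq\ \bimod{A}{A}{\fr{\cA}}
\]
is false. The tensor product on the left is the Kelly--Deligne tensor in $\PrL$, i.e.\ over $\Vect$, whereas the correct statement is
\[
\lmod{A}{\fr{\cA}}\ \rt{\fr{\cA}}\ \rmod{A}{\fr{\cA}}\ \simeq\ \bimod{A}{A}{\fr{\cA}},
\]
a relative tensor over $\fr{\cA}$. To see these differ, take $A=\bu$: then the left-hand side of your identification is $\fr{\cA}\otimes\fr{\cA}$ while $\bimod{\bu}{\bu}{\fr{\cA}}=\fr{\cA}$. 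Consequently your coevaluation ``$\k\mapsto A$'' does not land in the correct category, and your zigzag computation $N\mapsto A\rt{A}N$ is not what the composite $(\id\otimes\ev)\circ(\coev\otimes\id)$ actually computes.

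The paper's proof handles exactly this point. There is only a \emph{functor}
\[
\mu\colon \lmod{A}{\fr{\cA}}\otimes\rmod{A}{\fr{\cA}}\longrightarrow \lmod{A}{\fr{\cA}}\rt{\fr{\cA}}\rmod{A}{\fr{\cA}}\cong\bimod{A}{A}{\fr{\cA}},
\]
and the coevaluation is defined as the composite $\Vect\xrightarrow{A}\bimod{A}{A}{\fr{\cA}}\xrightarrow{\mu^{\R}}\lmod{A}{\fr{\cA}}\otimes\rmod{A}{\fr{\cA}}$, where $\mu^{\R}$ is the colimit-preserving right adjoint to $\mu$ (its existence uses rigidity, via \cite[Proposition 3.17]{BZBJ1}). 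The triangle identities then reduce to a Beck--Chevalley--type commutativity of $\mu$ and $\mu^{\R}$ across the various module categories, which in turn follows from $T^{\R}$ being a functor of $(\cA,\cA)$-bimodule categories. Your steps (1)--(2) at the end, establishing duality of $\lmod{A}{\fr{\cA}}$ \emph{over} $\fr{\cA}$, are correct but insufficient: you must then transport that duality down to $\Vect$ using the self-duality of $\fr{\cA}$, and that transport is precisely what $\mu^{\R}$ encodes.
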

	\begin{proof}
		Since the unit $\bu\in\fr{\cA}$ is compact and projective, $\ev$ is a colimit-preserving functor. In the proof all modules and bimodules are considered internal to $\fr{\cA}$.
		
		Consider the functor
		\begin{align*}
		\mu\colon \lmod{A}{}\otimes \rmod{A}{}&\longrightarrow \lmod{A}{}\rt{\fr{\cA}} \rmod{A}{}\\
		&\cong \bimod{A}{A}{}.
		\end{align*}
		By \cite[Proposition 3.17]{BZBJ1} it admits a colimit-preserving right adjoint $\mu^\R$. We may therefore define the coevaluation map to be
		\[\Vect\xrightarrow{A} \bimod{A}{A}{}\xrightarrow{\mu^R}\lmod{A}{}\otimes \rmod{A}{},\]
		where the first functor sends a vector space $V$ to the $(A, A)$-bimodule $V\otimes A$.
		
		The duality axioms follow from the commutative diagram
		\[
		\xymatrix{
			{\lmod{A}{} \otimes \bimod{A}{A}{}}\ar^-{\id\otimes \mu^\R}[r] \ar^{\mu}[d] & {\lmod{A}{}\otimes \rmod{A}{}\otimes \lmod{A}{}} \ar^{\mu\otimes \id}[d] \\
			{\bimod{A\otimes A}{A}{}} \ar^{\mu^\R}[r] & {\bimod{A}{A}{}\otimes \lmod{A}{}}
		}
		\]
		and similarly for $\rmod{A}{}$ which in turn follow from the fact that by rigidity $T^\R\colon \cA\rightarrow \fr{\cA}\otimes \fr{\cA}$ is a functor of $(\cA, \cA)$-bimodule categories.
	\end{proof}
	
	We will now construct a duality pairing for the strongly equivariant category.
	
	\begin{proposition}
		The functor
		\[\rmod{A}{\fr{\cA}}^\str\otimes \lmod{A}{\fr{\cA}}^\str\longrightarrow \Vect\]
		given by $M,N\mapsto \Hom_{\fr{\cA}}(\bu, M\rt{A} N)$ is a nondegenerate pairing.
		\label{prop:stronglyequivariantduality}
	\end{proposition}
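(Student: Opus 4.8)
The pairing in the statement is visibly the restriction of the weakly equivariant pairing of \cref{prop:weaklyequivariantduality} along the fully faithful inclusions $\lmod{A}{\fr{\cA}}^\str \hookrightarrow \lmod{A}{\fr{\cA}}$ and $\rmod{A}{\fr{\cA}}^\str \hookrightarrow \rmod{A}{\fr{\cA}}$, so the task is to produce a matching coevaluation. The plan is to exhibit the strongly equivariant categories as \emph{retracts} in $\PrL$ of the weakly equivariant ones and to transport the duality of \cref{prop:weaklyequivariantduality} along these retractions. Indeed, by the remark following \cref{prop:HCstrongequivariance} the idempotent monad $S$ of \eqref{eq:stronglyequivariantmonad} has a fully faithful inclusion $j\colon \lmod{A}{\fr{\cA}}^\str \hookrightarrow \lmod{A}{\fr{\cA}}$ with left adjoint the reflector $p$, so that $pj \simeq \id$ and $jp \simeq S$; the same holds on the right, giving $i\colon \rmod{A}{\fr{\cA}}^\str \hookrightarrow \rmod{A}{\fr{\cA}}$ with left adjoint $\pi$ and associated idempotent $S'$.

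With $\ev,\coev$ the evaluation and coevaluation of \cref{prop:weaklyequivariantduality}, I would set $\ev^\str = \ev\circ(i\otimes j)$ -- this is the pairing in the statement -- and $\coev^\str = (p\otimes\pi)\circ\coev$, and verify the two zigzag identities. By the general yoga of retracts of dualizable objects in the symmetric monoidal bicategory $\PrL$, these identities reduce to the single claim that the equivariantization monads $S$ on $\lmod{A}{\fr{\cA}}$ and $S'$ on $\rmod{A}{\fr{\cA}}$ are mutually transpose under the duality $\lmod{A}{\fr{\cA}}^\vee \simeq \rmod{A}{\fr{\cA}}$ of \cref{prop:weaklyequivariantduality}, i.e. $\ev\circ(S'\otimes\id) \simeq \ev\circ(\id\otimes S)$ and $(S\otimes\id)\circ\coev \simeq (\id\otimes S')\circ\coev$; granting this, $\lmod{A}{\fr{\cA}}^\str$ is dual to $\rmod{A}{\fr{\cA}}^\str$ with the asserted pairing.

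The transpose identity is the crux. Conceptually it follows from \cref{prop:HCstrongequivariance}: $S$ is the monad of the adjunction $\lmod{A}{\fr{\cA}} \rightleftarrows \lmod{A}{\fr{\cA}}\rt{\HC(\cA)}\fr{\cA}$ and $S'$ that of $\rmod{A}{\fr{\cA}} \rightleftarrows \fr{\cA}\rt{\HC(\cA)}\rmod{A}{\fr{\cA}}$, and since $\HC(\cA)$ is rigid and $\fr{\cA}$ is self-dual as a one-sided $\HC(\cA)$-module, it suffices to know that the two $\HC(\cA)$-actions -- on $\lmod{A}{\fr{\cA}}$ and on $\rmod{A}{\fr{\cA}}$, both coming from the quantum moment map $\mu$ -- are exchanged by the weakly equivariant duality. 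The latter in turn rests on the same input used in the proof of \cref{prop:weaklyequivariantduality}, namely that $T^\R\colon \cA\to\fr{\cA}\otimes\fr{\cA}$ is a functor of $(\cA,\cA)$-bimodule categories. Alternatively one can verify $S'(M)\rt{A}N \simeq M\rt{A}S(N)$ by hand, unwinding $S(N) = \tau_{lr}(\mu_*(N))\rt{\cF}\bu$ and absorbing the $\cF$-coinvariants using strong equivariance of $M$. In any case, the main obstacle is precisely this bookkeeping of how the moment-map-induced $\HC(\cA)$-structures behave under the duality of \cref{prop:weaklyequivariantduality}; everything else is formal.
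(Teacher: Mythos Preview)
Your proposal is correct and follows essentially the same route as the paper: define the strongly equivariant coevaluation by post-composing the weakly equivariant $\coev$ of \cref{prop:weaklyequivariantduality} with the idempotent monad $S$ (and its right-module counterpart), and reduce the zigzag identities to the transpose relation $\ev\circ(S'\otimes\id)\simeq\ev\circ(\id\otimes S)$. The paper verifies this transpose relation directly via the natural isomorphism $\bu\rt{\cF}M\rt{A}N\cong M\rt{A}N\rt{\cF}\bu$, which is exactly your ``by hand'' alternative; your more conceptual route through the $\HC(\cA)$-module structures is a valid rephrasing of the same content.
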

	\begin{proof}
		Let $\coev\colon \Vect\rightarrow \lmod{A}{\fr{\cA}}\otimes \rmod{A}{\fr{\cA}}$ be the coevaluation pairing constructed in \cref{prop:weaklyequivariantduality}.
		
		Recall the monad $S\colon \lmod{A}{\fr{\cA}}\rightarrow \lmod{A}{\fr{\cA}}$. Since it is given by taking coinvariants, it is clearly colimit-preserving. In particular, it makes sense to consider the dual monad $S^\vee\colon \rmod{A}{\fr{\cA}}\rightarrow \rmod{A}{\fr{\cA}}$. Consider $M\in \rmod{A}{\fr{\cA}}$ and $N\in\lmod{A}{\fr{\cA}}$. We have a natural isomorphism
		\[\bu\rt{\cF} M\rt{A} N\cong M\rt{A} N\rt{\cF} \bu\]
		which identifies $S^\vee$ with the monad on $\rmod{A}{\fr{\cA}}$ whose algebras are strongly equivariant right $A$-modules.
		
		We define the coevaluation pairing on the strongly equivariant category to be given by the composite
		\[\Vect\xrightarrow{\coev}\lmod{A}{\fr{\cA}}\otimes \rmod{A}{\fr{\cA}}\xrightarrow{S\otimes S^\vee} \lmod{A}{\fr{\cA}}^{\str}\otimes \rmod{A}{\fr{\cA}}^{\str}.\]
		Note that since $S$ is idempotent, it is equivalent to $(S\otimes \id)\circ\coev\cong (\id\otimes S^\vee)\circ\coev$.
		
		Using the relation $\ev\circ (S^\vee\otimes \id)\cong \ev\circ (\id\otimes S)$, the duality axioms for $\lmod{A}{\fr{\cA}}^{\str}$ reduce to those for $\lmod{A}{\fr{\cA}}$.
	\end{proof}
	
	\section{Topology}
	\label{sect:topology}
	
	This section treats the topological ingredients of our proof -- Walker's skein category TFT, its relation to factorization homology, monadic reconstruction of factorization homology, and finally reconstruction for handlebodies.
	
	Throughout this section we fix $\cA\in\Cat$, a ribbon category linear over some ring $k$ whose unit $\bu\in\cA$ is simple. We denote by $\DD\subset \R^2$ the open unit disk and $\DDc$ the standard closed disk.
	
	\subsection{The skein category TFT}
	
	A fundamental ingredient in the proof of \cref{thm:maintheorem} is an idea due to Kevin Walker: we can enhance the skein module invariants of 3-manifolds to a (3,2)-dimensional TFT in the Atiyah--Segal framework by assigning to a surface $\Sigma$ the ``skein category" $\SkCat(\Sigma)$, and to a 3-manifold $M$ with boundary $\partial M = \rev{\Sigma}_{in}\sqcup \Sigma_{out}$, a categorical $(\SkCat(\Sigma_{in}), \SkCat(\Sigma_{out}))$ ``skein bimodule'' $\Sk(M)$.  We recall these constructions now.
	
	Let us sketch the definition of a skein category of a surface \cite{Walker,JF,Cooke}.
	
	\begin{definition}[Sketch.  See \cref{fig:ribbon} and {\cite[Section 4.2]{Cooke}}]
		Let $\Sigma$ be an oriented surface.
		\begin{itemize}
			\item An \defterm{$\cA$-labeling of $\Sigma$} is the data, $X$, of an oriented embedding of finitely many disjoint disks $x_1, \dots, x_n\colon \DD\rightarrow \Sigma$ labeled by objects $V_1, \dots, V_n$ of $\cA$.  We denote by $\vec{x_i}$ the $x$ axis sitting inside each disk $x_i$, and denote $\vec{X}=\cup_i\vec{x_i}$.
			\item A ribbon graph has ``ribbons" connecting ``coupons".  As topological spaces, ribbons and coupons are simply embedded rectangles $I\times I$, however, we require that ribbons begin and end at either the top ``outgoing", or bottom ``incoming", boundary interval of some coupon, or else at $\Sigma\times\{0,1\}$.
			\item An \defterm{$\cA$-coloring} of a ribbon graph is a labelling of each ribbon by an object of $\cA$, and of each coupon by a morphism from the (ordered) tensor product of incoming edges to the (ordered) tensor product of outgoing edges.
			\item We say that an $\cA$-colored ribbon graph $\Gamma$ is \defterm{compatible with an $\cA$-labeling} if $\partial \Gamma=\vec{X}$, and denote by $\mathrm{Rib}_\cA(M,X)$ the $\k$-vector space with basis the $\cA$-colored ribbon graphs on $M$ compatible with $X$.
		\end{itemize}
	\end{definition}
	
	\begin{figure}
		\includegraphics[height=1.25in]{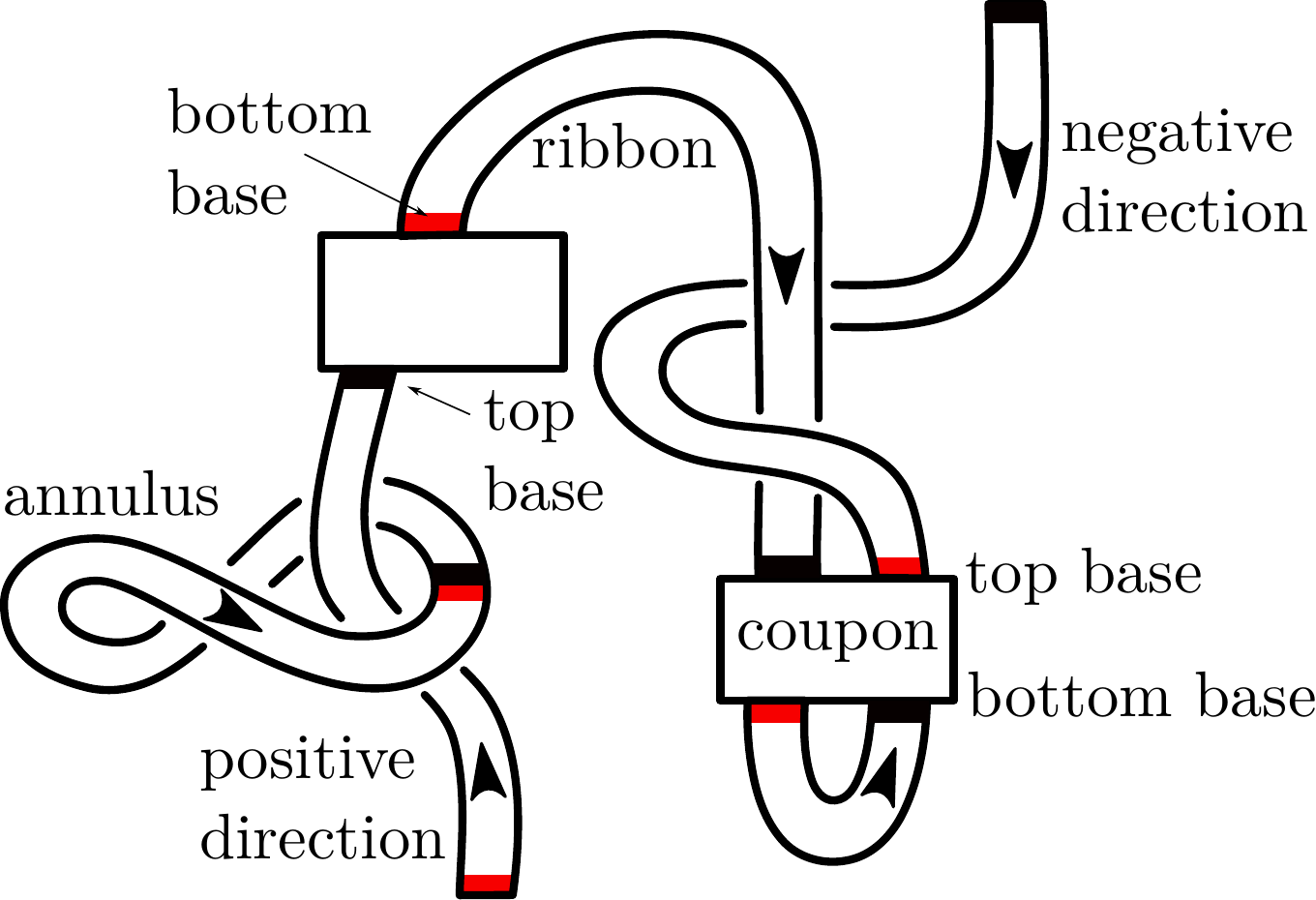}$\qquad$ \includegraphics[height=1.25in]{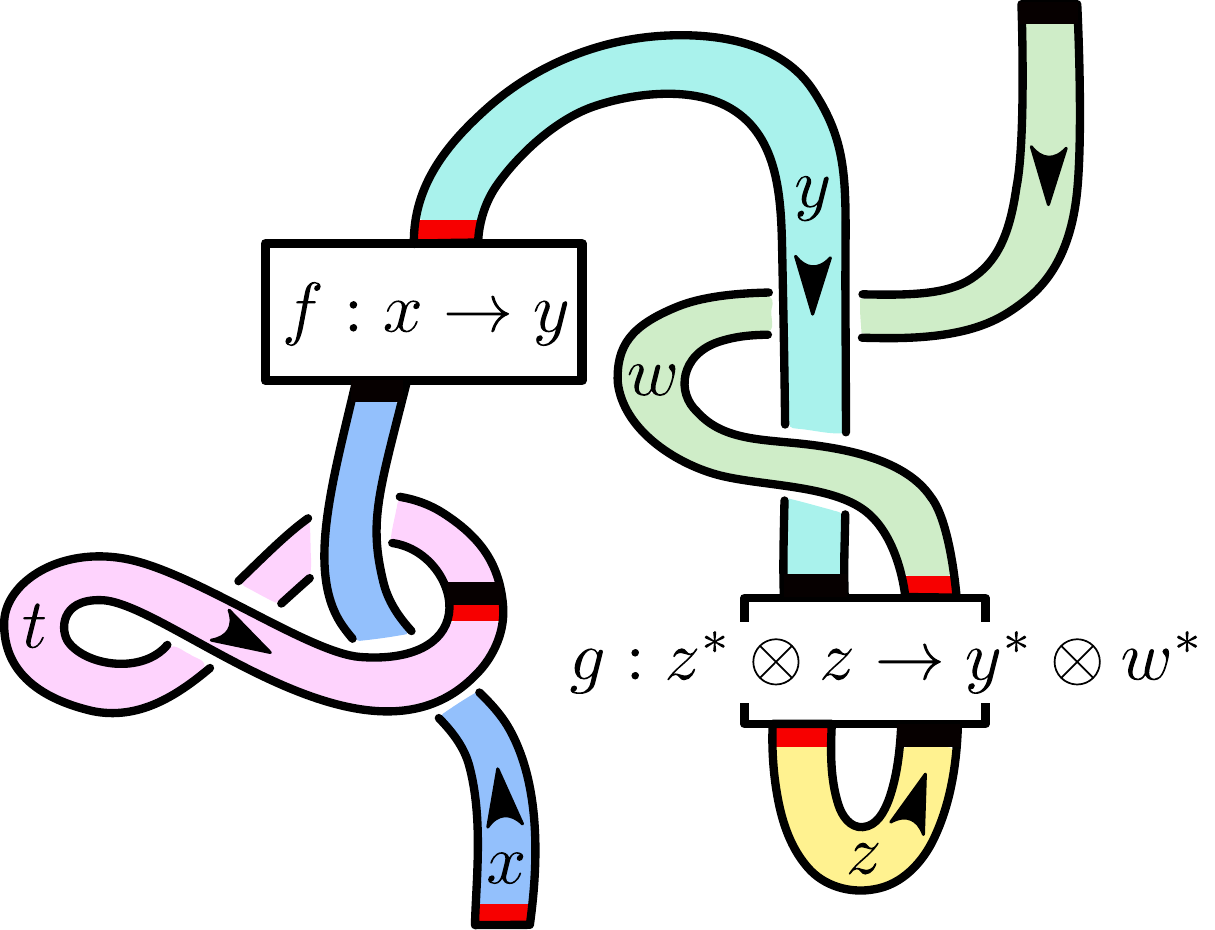}
		\caption{An example of a ribbon graph and its colouring.  Image from \cite[Section 4.2]{Cooke}.}\label{fig:ribbon}
	\end{figure}
	
	Consider the 3-ball $\DD\times I$, and consider a labeling $X\cup Y$ with disks $X=(x_1,V_1),\ldots,(x_n,V_n)$ embedded in $\DD\times\{0\}$ and $Y=\{(y_1,W_1),\ldots (y_m,W_m)\}\times \{1\}$.  Then we have a well-defined surjection, 
	\[\mathrm{Rib}_\cA(\DD\times I,X\cup Y) \to \Hom_\cA(V_1\otimes \cdots \otimes V_n,W_1\otimes \cdots \otimes W_m),\]
	see \cite{TuraevBook}. We will call the kernel of this map the \defterm{skein relations} between $X$ and $Y$.
	
	\begin{definition}
		Let $M$ be an oriented 3-manifold equipped with a decomposition of its boundary $\partial M\cong \rev{\Sigma}_{in}\coprod \Sigma_{out}$, and $\cA$-labelings $X_{in}$ of $\Sigma_{in}$ and $X_{out}$ of $\Sigma_{out}$. \begin{itemize} \item The \defterm{relative $\cA$-skein module} $\SkMod_\cA(M, X_{in}, X_{out})$ is the $\k$-module spanned by isotopy classes of $\cA$-colored ribbon graphs in $M$ compatible with $X_{in}\cup X_{out}$, taken modulo isotopy and the skein relations between $X_{in}$ and $X_{out}$ determined by any oriented ball $\DD\times I\subset M$\footnote{Here we assume without loss of generality that $\DD\times\{0\}\subset \Sigma\times \{0\}$ and $\DD\times\{1\}\subset \Sigma\times \{1\}$.}.
			\item When $\partial M=\emptyset$ (hence $\partial\Gamma=\emptyset$), we call this the \defterm{$\cA$-skein module}, and denote it by $\Sk_\cA(M)$.
		\end{itemize}
	\end{definition}
	
	Using this notion we can define the notion of a skein category of a surface.
	
	\begin{definition}
		Let $\Sigma$ be an oriented surface. The \defterm{skein category} $\SkCat_\cA(\Sigma)$ of $\Sigma$ has:
		\begin{itemize}
			\item As its objects, $\cA$-labelings of $\Sigma$.
			
			\item As the 1-morphisms from $X$ to $Y$ the relative $\cA$-skein module of $(\Sigma\times [0, 1],X,Y)$.
		\end{itemize}
	\end{definition}
	
	The following statement immediately follows from the definitions.
	
	\begin{lemma}
		Let $\rev{\Sigma}$ be the surface with the opposite orientation. Then we have an equivalence
		\[\SkCat_\cA(\rev{\Sigma})\cong \SkCat_\cA(\Sigma)^{\op}\]
		given by sending a labeling $(V_1, \dots, V_n)$ to $(V_1^*, \dots, V_n^*)$ and applying the diffeomorphism $\DD\cong \rev{\DD}$ given by the mirror reflection across the $y$-axis.
		\label{lm:reverseorientation}
	\end{lemma}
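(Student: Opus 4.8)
The plan is to exhibit the claimed equivalence directly on objects and morphisms, then check functoriality and that it is an equivalence. First I would fix the orientation-reversing diffeomorphism $r\colon \rev\Sigma\to\Sigma$ realized fiberwise: on each embedded disk $\DD\hookrightarrow\Sigma$ we use the mirror reflection $\rho\colon \DD\to\rev\DD$ across the $y$-axis, which reverses orientation, so the composite restores the ambient orientation. On objects, an $\cA$-labeling $X$ of $\rev\Sigma$ consists of embedded disks $x_i\colon\DD\to\rev\Sigma$ with labels $V_i$; send it to the labeling of $\Sigma$ whose disks are $x_i\circ\rho^{-1}$ (now correctly oriented into $\Sigma$) carrying the labels $V_i^*$. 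The passage to duals on labels is forced by the behavior of ribbon graphs under orientation reversal: a ribbon colored by $V$ meeting $\rev\Sigma\times\{0\}$ reads off as a $V^*$-colored endpoint when the surface orientation is flipped.

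Next I would define the functor on morphisms. A $1$-morphism $X\to Y$ in $\SkCat_\cA(\rev\Sigma)$ is an element of $\SkMod_\cA(\rev\Sigma\times[0,1], X, Y)$, i.e. a linear combination of $\cA$-colored ribbon graphs in $\rev\Sigma\times[0,1]$ modulo isotopy and skein relations. Apply the diffeomorphism $r\times\mathrm{id}_{[0,1]}$ composed with the flip $[0,1]\cong[0,1]$, $t\mapsto 1-t$, of the interval; simultaneously dualize all ribbon colors and replace each coupon morphism $\phi$ by its transpose/dual $\phi^*$ (using the rigid structure of $\cA$, and the ribbon twist to match framings). This sends an $\cA$-colored ribbon graph in $\rev\Sigma\times[0,1]$ from $X$ to $Y$ to one in $\Sigma\times[0,1]$ from (the image of) $Y$ to (the image of) $X$, which is exactly a $1$-morphism in $\SkCat_\cA(\Sigma)^{\op}$. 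The key point to verify is that this assignment is compatible with the skein relations: since the skein relations between two labelings in a $3$-ball are the kernel of the surjection $\mathrm{Rib}_\cA(\DD\times I,-)\to\Hom_\cA(-,-)$, and the operation ``reverse orientation $+$ dualize colors $+$ transpose coupons'' intertwines the evaluation maps $\mathrm{Rib}_\cA(\DD\times I, X\cup Y)\to\Hom_\cA(\bigotimes V_i,\bigotimes W_j)$ with $\mathrm{Rib}_\cA(\rev\DD\times I, \cdots)\to\Hom_\cA(\bigotimes W_j^*,\bigotimes V_i^*)\cong\Hom_\cA(\bigotimes V_i,\bigotimes W_j)^{\mathrm{op-ish}}$ coming from rigidity, the kernels correspond. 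Concretely this is the standard statement that reflecting a ribbon tangle and dualizing labels computes the dual (or transpose) morphism in a ribbon category; I would cite \cite{TuraevBook} for this.

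Finally I would check that the construction is functorial — composition in $\SkCat_\cA(\rev\Sigma)$ is stacking ribbon graphs in the $[0,1]$-direction, and since we flip that direction, stacking $g$ after $f$ goes to stacking (the image of $f$) after (the image of $g$), matching composition in the opposite category — and that identities go to identities (the identity labeling-cylinder graph is self-dual up to the twist, which is accounted for by the ribbon structure). It is an equivalence because applying the same construction to $\rev{\rev\Sigma}=\Sigma$ gives an inverse up to the natural isomorphism $V\cong V^{**}$ supplied by the pivotal structure underlying the ribbon structure on $\cA$; this natural isomorphism assembles into a natural isomorphism of functors, so we get an equivalence of categories rather than an isomorphism. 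I expect the main obstacle to be purely bookkeeping: tracking framings and the ribbon twist carefully so that ``dualize and reflect'' is genuinely an involution on the nose up to the pivotal isomorphism, rather than introducing spurious twist factors — but this is exactly the content encoded by working with a ribbon (as opposed to merely rigid braided) category, so no new input is needed. The lemma ``immediately follows from the definitions'' in the sense that once the dictionary between orientation reversal of surfaces and duality in $\cA$ is set up, there is nothing further to prove.
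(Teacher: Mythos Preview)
Your proposal is correct and matches the paper's approach: the paper gives no proof at all, merely asserting that the statement ``immediately follows from the definitions,'' and what you have written is precisely the unpacking of that assertion. Your final paragraph even anticipates this, and the details you supply (reflect the cylinder, dualize labels, transpose coupons, check that skein relations are preserved via the standard ribbon-category duality for tangles from \cite{TuraevBook}, and use the pivotal isomorphism $V\cong V^{**}$ for the inverse) are exactly the bookkeeping one would carry out if pressed.
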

	
	The following statement was proved by Walker \cite{Walker}.
	
	\begin{theorem}
		The assignment
		\begin{itemize}
			\item To a closed oriented surface $\Sigma$, the skein category $\SkCat_\cA(\Sigma)$.
			
			\item To an oriented 3-manifold $M$ with a decomposition of its boundary $\partial M\cong \rev{\Sigma}_{in}\coprod \Sigma_{out}$, the functor $\SkMod_\cA(M, -, -)\colon \SkCat_\cA(\Sigma_{in})\times \SkCat_\cA(\Sigma_{out})^{\op}\rightarrow \Vect$ which sends a pair of $\cA$-labelings of $\Sigma_{in}$ and $\Sigma_{out}$ to the relative $\cA$-skein module of $M$.
		\end{itemize}
		defines a 3-dimensional TFT valued in $\Bimod$.
		\label{thm:WalkerTFT}
	\end{theorem}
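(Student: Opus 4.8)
The plan is to verify that the stated assignment is a symmetric monoidal functor of bicategories from the Atiyah--Segal bordism bicategory $\mathrm{Bord}_3$ (objects: closed oriented surfaces; $1$-morphisms: oriented $3$-bordisms; $2$-morphisms: isotopy classes of boundary-fixing diffeomorphisms) to $\Bimod$. This amounts to checking four things: well-definedness on objects, $1$-morphisms and $2$-morphisms; the monoidal (disjoint union) structure; the gluing axiom; and coherence. The first two are essentially formal. A labeling of $\Sigma$, an $\cA$-colored ribbon graph, and the skein relations are all defined up to ambient isotopy, so a diffeomorphism of surfaces induces an equivalence of skein categories, a boundary-fixing diffeomorphism of bordisms induces an isomorphism of skein bimodules, and isotopic diffeomorphisms induce the same one; this is the action on $2$-morphisms. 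The functor $\SkMod_\cA(M,-,-)$ is a genuine $(\SkCat_\cA(\Sigma_{in}),\SkCat_\cA(\Sigma_{out}))$-bimodule because one glues a ribbon graph in $\Sigma_{in}\times I$ (resp. $\Sigma_{out}\times I$) onto a ribbon graph in $M$ along a boundary collar, and this is associative up to the evident identifications; when $\partial M=\emptyset$ one recovers $\Sk_\cA(M)\in\Vect$. The identity bordism $\Sigma\times I$ is sent, by definition, to the Hom-bimodule of $\SkCat_\cA(\Sigma)$, i.e. the identity $1$-morphism of $\Bimod$. For disjoint unions one observes that a labeling of $\Sigma_1\sqcup\Sigma_2$ is a pair of labelings and that every ribbon graph in $(\Sigma_1\sqcup\Sigma_2)\times I$, and in a disjoint union of $3$-manifolds, decomposes according to connected components with skein relations respecting the decomposition; hence $\SkCat_\cA(\Sigma_1\sqcup\Sigma_2)\cong\SkCat_\cA(\Sigma_1)\otimes\SkCat_\cA(\Sigma_2)$ and the analogous statement holds for bimodules, with the symmetry isomorphisms being the obvious ones. \Cref{lm:reverseorientation} supplies the duality data for orientation reversal.

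The substance is the gluing axiom. Let $M$ be a bordism from $\Sigma_0$ to $\Sigma_1$ and $N$ a bordism from $\Sigma_1$ to $\Sigma_2$, and form $N\circ M=M\cup_{\Sigma_1}N$. I will construct a natural isomorphism
\[
\SkMod_\cA(N\circ M,-,-)\;\cong\;\SkMod_\cA(M,-,-)\rt{\SkCat_\cA(\Sigma_1)}\SkMod_\cA(N,-,-).
\]
Fix a bicollar $\Sigma_1\times[-1,1]\subset N\circ M$ of the gluing locus, with $\Sigma_1\times[-1,0]\subset M$ and $\Sigma_1\times[0,1]\subset N$. Given labelings $X_0$ of $\Sigma_0$, $X_2$ of $\Sigma_2$, and an $\cA$-colored ribbon graph $\Gamma$ in $N\circ M$ compatible with $X_0\cup X_2$, put $\Gamma$ in collar-general position: after an isotopy we may assume $\Gamma$ meets $\Sigma_1\times\{t\}$ transversally in the same finite set of framed, $\cA$-labeled points for all $t\in[-1,1]$, so that $\Gamma$ is a product inside the collar. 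That point set is an $\cA$-labeling $Z$ of $\Sigma_1$, and restricting $\Gamma$ to $M$ and to $N$ gives classes $\Gamma_M\in\SkMod_\cA(M,X_0,Z)$ and $\Gamma_N\in\SkMod_\cA(N,Z,X_2)$. I send $\Gamma$ to $\Gamma_M\otimes\Gamma_N\in\bigoplus_Z\SkMod_\cA(M,X_0,Z)\otimes\SkMod_\cA(N,Z,X_2)$; this descends to the coend over $\SkCat_\cA(\Sigma_1)$ because replacing $Z$ by $Z'$ and simultaneously postcomposing $\Gamma_M$ with a morphism $\gamma\colon Z\to Z'$ and precomposing $\Gamma_N$ with $\gamma$ amounts to sliding the corresponding sub-ribbon-graph through the collar, which does not change the isotopy class of $\Gamma$. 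One checks this is independent of the isotopy used to reach general position, since any two such are connected by a generic one-parameter family, hence by finitely many elementary moves each supported in $M$ rel the collar, in $N$ rel the collar, or inside the collar, each realized by an identity in the coend; this produces a natural transformation in both variables.

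Next I would show the map is an isomorphism. Surjectivity is immediate: every ribbon graph in $N\circ M$ is isotopic to one in collar-general position. For injectivity, suppose two coend elements have equal image in $\SkMod_\cA(N\circ M,X_0,X_2)$; equality there means the assembled graphs differ by finitely many ambient isotopies and applications of skein relations in balls $\DD\times I\subset N\circ M$. Each ball can be isotoped to lie entirely in $M\setminus(\Sigma_1\times[-1,1])$, entirely in $N\setminus(\Sigma_1\times[-1,1])$, or entirely inside the collar $\Sigma_1\times[-1,1]\cong\Sigma_1\times I$; in the first two cases the relation is already imposed in $\SkMod_\cA(M,-,-)$ or $\SkMod_\cA(N,-,-)$, and in the last the collar is a product on which the relation can be pushed to either side. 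Likewise, by compactness any isotopy of a ribbon graph subdivides into finitely many steps, each either keeping the graph in collar-general position and supported on one side — hence a morphism of $\SkCat_\cA(\Sigma_1)$ applied on the corresponding factor, an identity in the coend — or a single slide of a strand or coupon across $\Sigma_1\times\{0\}$, which is again an identity in the coend by construction. Assembling these shows the two coend elements were already equal. (This argument goes back to Walker \cite{Walker}; detailed accounts appear in \cite{Cooke,JF}.) Finally, coherence: the gluing isomorphisms are associative because for $P\circ N\circ M$ with two collars, reading off transverse-intersection data with each collar and reassembling is strictly associative up to canonical identifications of intersection labelings; the unit constraints hold since gluing $\Sigma\times I$ and then taking collar-general position returns the original graph up to reparametrization; and all of this is natural under diffeomorphisms. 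Together with the disjoint-union monoidal structure this would exhibit the assignment as a symmetric monoidal functor $\mathrm{Bord}_3\to\Bimod$, giving \cref{thm:WalkerTFT}. I expect the main obstacle to be the injectivity half of the gluing isomorphism: everything hinges on the transversality/general-position statement that an arbitrary isotopy and an arbitrary skein relation in the glued manifold localize into finitely many moves, each happening on one side of the cut or inside a product collar, so as to be visible in the coend.
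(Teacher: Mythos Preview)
The paper does not prove this theorem: it is stated with attribution to Walker \cite{Walker} and used thereafter as a black box. Your proposal therefore goes beyond what the paper does, supplying an outline of the excision argument that underlies Walker's construction (and which is written out carefully in \cite{Cooke,JF}, as you note).

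Your sketch is essentially correct and follows the standard route: the only non-formal step is the gluing isomorphism, and your transversality/localization argument for it is the right one. One expositional wrinkle worth fixing: you begin by constructing a ``cutting'' map from ribbon graphs in $N\circ M$ to the coend, but when you turn to surjectivity and injectivity you silently switch to arguing about the gluing map in the opposite direction (your injectivity hypothesis ``two coend elements have equal image in $\SkMod_\cA(N\circ M,X_0,X_2)$'' concerns the gluing map, not the cutting map). It is cleaner to define the gluing map first---it is manifestly well-defined, since gluing a pair of ribbon graphs respects both the skein relations on each side and the coend identifications---and then prove it is bijective, with your cutting construction furnishing the explicit inverse. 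Organized that way you also sidestep having to check separately that the cutting map descends through skein relations in $N\circ M$, a point your write-up does not address directly.
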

	
	Note that $\SkCat_\cA(\Sigma)$ has a canonical object $\bu\in\SkCat_\cA(\Sigma)$ given by the empty $\cA$-labeling.
	
	\begin{definition}
		The \defterm{skein algebra of $\Sigma$} is
		\[\SkAlg_\cA(\Sigma) = \End_{\SkCat_\cA(\Sigma)}(\bu).\]
	\end{definition}
	
	\subsection{Examples of skein theories}
	\label{sect:skeinexamples}
	
	In this section we give examples of ribbon categories and their associated skein theories.
	
	Let $G$ be a connected reductive group and fix $q$ not a root of unity. Then $\Repqfd(G)$ is a ribbon category (where the choice of a ribbon structure will be implicit). So, we may consider the \defterm{$G$-skein module}
	\[\Sk_G(M) = \Sk_{\Repqfd(G)}(M)\]
	which is a $k$-vector space for $k=\Q(q^{1/d})$ or $k=\C$. In the cases $G=\SL_2$ and $\SL_3$ the corresponding skein module has a more familiar form as we will explain shortly.
	
	Let us briefly recall the definition of the Temperley--Lieb category, which in \cite{TuraevBook} was called simply the ``skein category'', and which has appeared in many papers since.
	
	\begin{definition}
		For each non-negative integer $m$, fix a finite set $X_m\subset I$ of cardinality $m$.  Given non-negative integers $m$ and $n$, a \defterm{Temperley--Lieb diagram} from $[m]$ to $[n]$ (see e.g. \cref{fig:TL}) is an isotopy class of smoothly embedded compact $1$-manifold $C$ in $I\times I$, such that $\partial C= X_m \times \{0\} \sqcup X_n\times \{1\}$.  Given a Temperley--Lieb diagram $C$, let $u(C)$ denote its number of $S^1$ components, and let $C'$ denote the diagram obtained by omitting the $S^1$ components. 
	\end{definition}
	
	\begin{center}
		\begin{figure}
			\includegraphics[height=1.25in]{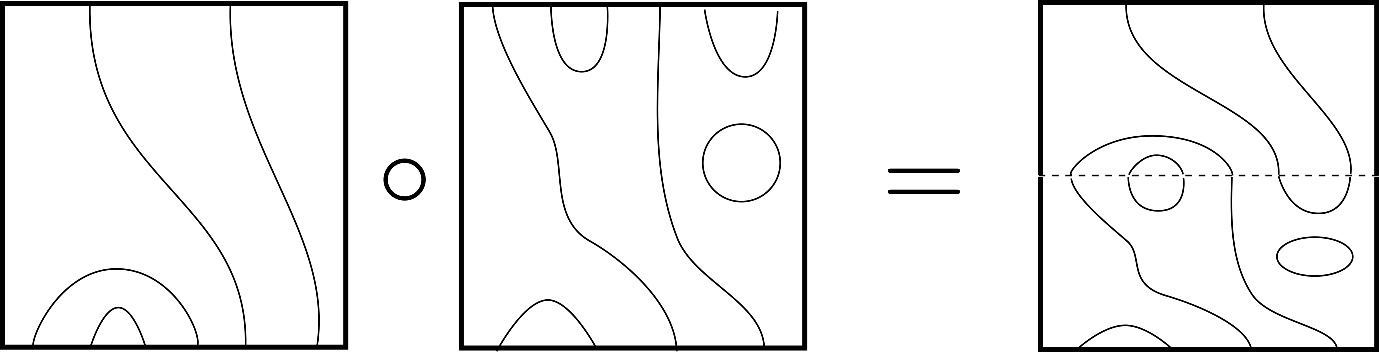}
			\label{fig:TL}
			\caption{The composition of Temperley--Lieb diagrams from $[4]$ to $[6]$ and from $[6]$ to $[2]$, giving a Temperley--Lieb diagram from $[4]$ to $[2]$.}
		\end{figure}
	\end{center}
	
	\begin{definition}[Sketch.  See {\cite[Chapter XII.2]{TuraevBook}}] The \defterm{Temperley--Lieb category} $\TL$ has as objects the non-negative integers $[n]$ and as Hom spaces, the $\Z[A, A^{-1}]$-linear span $\Hom([m],[n])$ of all Temperley--Lieb diagrams, modulo linear relations $C - \delta^{u(C)}C'$, where $\delta=-A^2-A^{-2}$.
		
		Composition of morphisms is given by vertical stacking, and a monoidal structure is given by horizontal stacking; rigidity data is given by the cup and cap diagrams.  A braiding $\sigma$ is defined by setting
		\[\sigma_{[1],[1]} := \KPB := A \KPC + A^{-1} \KPD,\]
		and extending monoidally to all objects $[n]$.  The ribbon element is defined to be $-A^{-3}$ on $[1]$.
	\end{definition}
	
	\begin{remark}
		Let $q=A^2$. For $q$ not a root of unity the Cauchy completion of the Temperley--Lieb category $\TL$ is equivalent to the category $\Repqfd(\SL_2)$. Under this equivalence the object $[1]\in\TL$ goes to the defining two-dimensional representation. The ribbon element on $\Repqfd(\SL_2)$ in this case comes from a half-ribbon element \cite{SnyderTingley} as explained in \cite{Tingley}.
	\end{remark}
	
	\begin{proposition}
		Let $M$ be an oriented 3-manifold. Then we have an isomorphism of $\Z[A, A^{-1}]$-modules
		\[\Sk_{\TL}(M)\cong \Sk(M),\]
		where $\Sk(M)$ is the Kauffman bracket skein module. In particular, for $A$ not a root of unity we have an isomorphism
		\[\Sk_{\SL_2}(M)\cong \Sk(M).\]
		\label{prop:KauffmanTL}
	\end{proposition}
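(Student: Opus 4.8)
The plan is to reduce the statement to the known fact that the Temperley--Lieb category $\TL$, after Cauchy completion, is equivalent to $\Repqfd(\SL_2)$ for $q = A^2$ not a root of unity, and that over the field $\C(A)$ the Kauffman bracket skein module is computed by exactly the same ribbon-graph-modulo-skein-relations presentation that defines $\Sk_{\TL}(M)$. First I would observe that the definition of $\Sk_\cA(M)$ given earlier --- $\k$-linear span of isotopy classes of $\cA$-colored ribbon graphs in $M$, modulo skein relations determined by embedded balls --- is, when specialized to $\cA = \TL$, literally the definition of $\Sk(M)$: objects of $\TL$ are labelled by $[n]$, which for $n=1$ is an unlabelled strand, and the morphisms (cups, caps, crossings) encode precisely the Kauffman bracket relations $\langle L \cup \bigcirc\rangle = (-A^2 - A^{-2})\langle L\rangle$ and the skein-smoothing relation $\KPB = A\,\KPC + A^{-1}\,\KPD$. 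So the first, and really the only substantive, point is to match the two presentations.

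The key steps, in order, would be: (1) Recall that a ribbon graph in $M$ colored by objects and morphisms of $\TL$ can, using the monoidal structure (horizontal stacking) and the defining relations, be rewritten as a $\C[A,A^{-1}]$-linear combination of ribbon graphs all of whose strands are colored by $[1]$ and with no coupons --- i.e. an ordinary framed link in $M$; this uses that every object of $\TL$ is a summand of a tensor power of $[1]$ and that every morphism is a combination of planar tangles built from cups, caps and identities (together with the braiding to pass between over- and under-crossings). (2) Check that under this rewriting the skein relations imposed in any embedded $3$-ball $\DD \times I$ --- which by definition are the kernel of the surjection $\mathrm{Rib}_{\TL}(\DD\times I, X \cup Y) \to \Hom_{\TL}(\cdots)$ --- are exactly generated by the two Kauffman bracket relations, since these are precisely the relations defining $\Hom$-spaces in $\TL$ (the circle-removal relation $C - \delta^{u(C)} C'$ and the crossing resolution defining $\sigma_{[1],[1]}$). (3) Conclude $\Sk_{\TL}(M) \cong \Sk(M)$ as $\C[A,A^{-1}]$-modules. (4) For the second assertion, base change along $\C[A,A^{-1}] \hookrightarrow \C(A)$ (or take $A$ a complex number that is not a root of unity), and invoke the equivalence of ribbon categories between the Cauchy/Karoubi completion of $\TL$ and $\Repqfd(\SL_2)$ recalled in the preceding remark, noting that skein modules only see the ribbon structure and are insensitive to Cauchy completion because idempotents split formally inside skein modules anyway (a strand colored by a summand of $[n]$ is a linear combination of strands colored by $[n]$). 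This gives $\Sk_{\TL}(M) \cong \Sk_{\Repqfd(\SL_2)}(M) = \Sk_{\SL_2}(M)$.

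The main obstacle --- such as it is --- is step (2): verifying that passing to an arbitrary embedded ball and quotienting by $\mathrm{Rib} \to \Hom_{\TL}$ does not impose \emph{more} than the two local Kauffman relations, i.e. that the relations in $\TL$ are generated in a way compatible with arbitrary embeddings of $\DD\times I$ into $M$. This is essentially the content of the classical fact (going back to Kauffman, and recorded in \cite{TuraevBook}) that the Temperley--Lieb/Kauffman skein relations are a confluent local rewriting system; one must also handle the braiding carefully, since $\TL$-ribbon graphs can have crossings whereas the Kauffman bracket definition resolves them immediately --- but this is exactly the relation $\sigma_{[1],[1]} = A\,\KPC + A^{-1}\,\KPD$, so the two descriptions agree on the nose. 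A secondary technical point is the interaction of Cauchy completion with skein modules in step (4), but this is harmless: a ribbon graph colored by an idempotent-cut-down object is by definition a linear combination of ribbon graphs colored by the ambient object, so $\Sk_{\TL}(M) = \Sk_{\overline{\TL}}(M)$ with no change. I would present this as a short argument citing \cite{TuraevBook} for the local-relations fact and the preceding remark for the category equivalence, rather than reproving either.
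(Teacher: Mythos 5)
Your proposal follows essentially the same route as the paper: the paper also proves this by exhibiting two explicit mutually inverse maps --- one labelling an unoriented framed link by $[1]$, the other replacing each $[m]$-labelled ribbon by $m$ parallel strands and resolving each coupon via its Temperley--Lieb diagram --- and then checking they are inverse by a purely local verification in a ball, which reduces to standard Temperley--Lieb diagrammatics. Your steps (1)--(3) are exactly the second map and that local check, and your step (4) (base change plus insensitivity of skein modules to Cauchy completion, invoking the equivalence of $\overline{\TL}$ with $\Repqfd(\SL_2)$) is the intended justification of the second assertion. The one substantive point you pass over is the well-definedness of the map \emph{from} $\Sk(M)$ \emph{to} $\Sk_{\TL}(M)$: a Kauffman-bracket link is unoriented, whereas a $[1]$-labelled ribbon carries a direction, so one must verify that reversing that direction on a component does not change the class of the resulting skein. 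This is not automatic --- it is precisely the statement that $[1]$ is self-dual with Frobenius--Schur indicator $+1$, i.e.\ that the pairing $\ev\colon V\otimes V\to\bu$ satisfies $\ev=\ev\circ(\theta^{-1}\otimes\id)\circ\sigma_{V,V}$. Since the two-dimensional representation of $\SL_2$ is symplectically rather than orthogonally self-dual, the sign is a genuine issue, resolved only by the particular (half-)ribbon structure on $\TL$ fixed in the preceding remark; your parenthetical ``for $n=1$ is an unlabelled strand'' is exactly where this verification is being assumed. With that one check added, your argument is complete and coincides with the paper's.
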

	\begin{proof}
		Let us define a morphism $f\colon\Sk(M)\rightarrow \Sk_{\TL}(M)$ as follows. An element of $\Sk(M)$ is represented by a closed (unoriented) ribbon $s$ in $M$. We assign to $s$ a Temperley--Lieb skein $f(s)$ by choosing an orientation on $s$ and labelling it with the object $[1]\in \TL$. The fact that this does not depend on the choice of orientation corresponds to the statement that the object $[1]\in \TL$ is self-dual with Frobenius--Schur indicator 1. Said equivalently, the defining representation $V$ admits a nondegenerate invariant pairing $\ev\colon V\otimes V\rightarrow \Q$ satisfying
		\[\ev = \ev\circ (\theta\otimes \id) \circ \sigma_{V, V}.\]
		
		The inverse $g$ to $f$ is given as follows: a $\TL$-colored ribbon graph in $M$ consists of a number of ribbons each labelled by some integer $[m]$ and a coupon labelled by a linear combination of Temperley--Lieb diagrams.  For each summand, i.e. for each labelling of each coupon by a single Temperley--Lieb diagram, $g$ assigns a framed link obtained by replacing each ribbon labelled $[m]$ with $m$ parallel strands, and by connecting the incoming and outgoing strands at each coupon using the data of the Temperley--Lieb diagram.  We extend the assignment linearly.
		
		To check that $f$ and $g$ are mutually inverse, it suffices to work locally in any ball in $M$ (as all the relations are local). This amounts to the standard diagrammatics for Temperley--Lieb algebras.
	\end{proof}
	
	It is possible to give a diagrammatic description of $G$-skein modules for other groups analogous to the Kauffman skein relations, though it becomes more complicated.  The first such description was given in \cite{Kuperberg} for $G=\SL_3$; the construction was generalized in \cite{Sikora, CKM} to $G=\SL_N$.  The following presentation is introduced by Kuperberg \cite{Kuperberg}; we follow the description of \cite[Section 1.4]{Sikora}. By a web we mean an oriented ribbon graph whose coupons are either sinks or sources.
	
	\begin{definition}
		Let $M$ be an oriented 3-manifold. The \defterm{Kuperberg skein module} $\Sk_{\SL_3}(M)$ is the $\Z[A, A^{-1}]$-module spanned by trivalent webs in $M$ modulo isotopy and the linear relations,
		\begin{align*}
		\KP{
			\draw[-Latex] (-0.3,-0.3) -- (0.3,0.3);
			\draw[-Latex] (-0.05,0.05) -- (-0.3,0.3);
			\draw (0.05,-0.05) -- (0.3,-0.3);
		}
		&= A^{-1}
		\KP{
			\draw[-Latex] (-0.4, -0.7) -- (-0.1, -0.4);
			\draw[-Latex] (0.4, -0.7) -- (0.1, -0.4);
			\draw (0, -0.3) circle (0.1);
			\draw[-Latex] (0, 0.2) -- (0, -0.2);
			\draw (0, 0.3) circle (0.1);
			\draw[-Latex] (-0.1, 0.4) -- (-0.4, 0.7);
			\draw[-Latex] (0.1, 0.4) -- (0.4, 0.7);
		}
		+ A^2
		\KP{
			\draw[-Latex] (-0.3,-0.3) .. controls (0,0) .. (-0.3,0.3);
			\draw[-Latex] (0.3,-0.3) .. controls (0,0) .. (0.3,0.3);
		} &
		\KP{
			\draw[-Latex] (0.3,-0.3) -- (-0.3,0.3);
			\draw[-Latex] (0.05,0.05) -- (0.3,0.3);
			\draw (-0.05,-0.05) -- (-0.3,-0.3);
		}
		&= A
		\KP{
			\draw[-Latex] (-0.4, -0.7) -- (-0.1, -0.4);
			\draw[-Latex] (0.4, -0.7) -- (0.1, -0.4);
			\draw (0, -0.3) circle (0.1);
			\draw[-Latex] (0, 0.2) -- (0, -0.2);
			\draw (0, 0.3) circle (0.1);
			\draw[-Latex] (-0.1, 0.4) -- (-0.4, 0.7);
			\draw[-Latex] (0.1, 0.4) -- (0.4, 0.7);
		}
		+ A^{-2}
		\KP{
			\draw[-Latex] (-0.3,-0.3) .. controls (0,0) .. (-0.3,0.3);
			\draw[-Latex] (0.3,-0.3) .. controls (0,0) .. (0.3,0.3);
		} &
		\KP {
			\draw (-0.3, -0.3) circle (0.1);
			\draw (0.3, -0.3) circle (0.1);
			\draw (-0.3, 0.3) circle (0.1);
			\draw (0.3, 0.3) circle (0.1);
			\draw[-Latex] (-0.2, 0.3) -- (0.2, 0.3);
			\draw[-Latex] (-0.3, 0.2) -- (-0.3, -0.2);
			\draw[-Latex] (0.2, -0.3) -- (-0.2, -0.3);
			\draw[-Latex] (0.3, -0.2) -- (0.3, 0.2);
			\draw[-Latex] (-0.7, -0.7) -- (-0.37, -0.37);
			\draw[-Latex] (0.37, -0.37) -- (0.7, -0.7);
			\draw[-Latex] (-0.37, 0.37) -- (-0.7, 0.7);
			\draw[-Latex] (0.7, 0.7) -- (0.37, 0.37);
		}
		&=
		\KP {
			\draw[-Latex] (-0.3,-0.3) .. controls (0,0) .. (-0.3,0.3);
			\draw[-Latex] (0.3,0.3) .. controls (0,0) .. (0.3,-0.3);
		}
		+
		\KP {
			\draw[-Latex] (0.3,0.3) .. controls (0,0) .. (-0.3,0.3);
			\draw[-Latex] (-0.3,-0.3) .. controls (0,0) .. (0.3,-0.3);
		}
		\end{align*}
		\begin{align*}
		\KP{
			\draw[-Latex] (0, -0.9) -- (0, -0.5);
			\draw (0, -0.4) circle (0.1);
			\draw[-Latex] (-0.07, 0.33) arc (135:225:0.47);
			\draw[-Latex] (0.07, 0.33) arc (45:-45:0.47);
			\draw (0, 0.4) circle (0.1);
			\draw[-Latex] (0, 0.5) -- (0, 0.9);
		}
		&= (-A^3 - A^{-3})
		\KP {
			\draw[-Latex] (0, -0.8) -- (0, 0.8);
		} &
		\KP {
			\draw (0, 0) circle (0.4);
			\draw[-Latex] (0.4, -0.1) -- (0.4, 0.1);
		}
		&= A^6 + 1 + A^{-6},
		\end{align*}
		which are imposed between any webs agreeing outside of some oriented 3-ball, and differing as depicted inside that ball.
	\end{definition}
	
	Using the results of \cite{Kuperberg} it is straightforward to check that $\Sk_{\SL_3}(M)$ coincides with the skein module for $\Repq(\SL_3)$ equipped with the standard ribbon element, where $q=A^3$.
	
	\subsection{Relation to factorization homology}
	
	Skein categories satisfy a locality property captured by \emph{factorization homology} which will allow us to connect it to the results of \cite{BZBJ1, BZBJ2}.
	
	\begin{definition}
		The bicategory $\Mfld^2$ has:
		\begin{itemize}
			\item As its objects, smooth oriented surfaces,
			\item As the 1-morphisms from $S$ to $T$, all smooth oriented embeddings $S\into T$,
			\item As the 2-morphisms, isotopies of smooth oriented embeddings, themselves considered modulo isotopies of isotopies.
		\end{itemize}
		The disjoint union of surfaces equips $\Mfld^2$ with the structure of a symmetric monoidal bicategory.
	\end{definition}
	
	\begin{definition}
		The bicategory $\Disk^2$ is the full subcategory of $\Mfld^2$ whose objects are finite disjoint unions of oriented disks.
	\end{definition}
	
	The following important and well-known result provides the link between topology and algebra allowing us to compute with factorization homology of braided tensor categories.
	
	\begin{proposition}[{\cite{Fiedorowicz, Dunn, Wahl}}]
		\label{prop:ribbonE2}
		The data of a ribbon tensor category $\cA$ determines a functor $\Disk^2\to\Cat$ which we also denote by $\cA$.
	\end{proposition}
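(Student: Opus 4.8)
The plan is to recognize a (symmetric monoidal) functor $\Disk^2\to\Cat$ as precisely the datum of a balanced braided monoidal structure on a $\k$-linear category, and then to observe that the braiding and the ribbon twist of $\cA$ supply exactly such a structure, with underlying category $\cA$ itself.

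First I would unwind what a symmetric monoidal functor $F\colon\Disk^2\to\Cat$ amounts to. Since every object of $\Disk^2$ is a disjoint union of copies of $\DD$, such an $F$ is determined by the category $\cC:=F(\DD)$ together with, for each $n$, a functor from the mapping groupoid $\mathrm{Hom}_{\Disk^2}(\DD^{\sqcup n},\DD)$ to the groupoid of functors $\cC^{\otimes n}\to\cC$, compatibly with composition and disjoint union. Because $\Disk^2$ is a bicategory --- its $2$-morphisms being isotopies modulo isotopies, so that it records only the $1$-truncation of embedding spaces --- these mapping groupoids are the fundamental groupoids of the spaces $\mathrm{Emb}^{\mathrm{or}}(\DD^{\sqcup n},\DD)$ of oriented embeddings.

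Second I would invoke the homotopy type of these embedding spaces. Each $\mathrm{Emb}^{\mathrm{or}}(\DD^{\sqcup n},\DD)$ deformation retracts onto the space of $n$ disjoint round little disks in $\DD$, each carrying a rotational framing; this is homotopy equivalent to $F_n(\mathbb{R}^2)\times(S^1)^n$, with $F_n$ the ordered configuration space. These spaces are aspherical ($F_n(\mathbb{R}^2)$ is a $K(P_n,1)$ and the torus a $K(\mathbb{Z}^n,1)$), so nothing above degree $1$ is lost in the bicategorical truncation, and once the symmetric monoidal (permutation) structure is incorporated the mapping groupoids become equivalent to the ribbon braid groupoids, with automorphism groups the ribbon braid groups $\mathrm{RB}_n\cong\mathbb{Z}^n\rtimes B_n$ of Wahl. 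This identification --- hence the equivalence of symmetric monoidal bicategories between $\Disk^2$ and the PROP built from the $\mathrm{RB}_n$ --- is precisely the content packaged by \cite{Fiedorowicz,Dunn,Wahl}. Consequently a symmetric monoidal functor $\Disk^2\to\Cat$ is the same as a framed $E_2$-algebra in $\Cat$, which by unwinding the generators and relations of $\mathrm{RB}_n$ is the same as a balanced braided monoidal $\k$-linear category: the two-disk configuration yields a monoidal product $\otimes$, the empty configuration the unit $\bu$, the generator of $\pi_1$ of the $2$-disk configuration space a natural isomorphism $\sigma_{X,Y}\colon X\otimes Y\to Y\otimes X$, each circle factor a natural automorphism $\theta_X\colon X\to X$ of the identity functor, and the braid-plus-framing relations become the hexagon axioms together with the balancing identities $\theta_{X\otimes Y}=\sigma_{Y,X}\circ\sigma_{X,Y}\circ(\theta_X\otimes\theta_Y)$ and $\theta_{\bu}=\id$.

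Finally I would observe that a ribbon tensor category $\cA$ is in particular a balanced braided monoidal category: the braiding is part of the ribbon data, and the ribbon twist $\theta$ serves as the balancing, satisfying exactly the identities above (the remaining ribbon axiom $\theta_{X^*}=(\theta_X)^*$ and the existence of duals play no role here). Transporting this datum across the dictionary of the previous paragraph yields the desired functor $F$ with $F(\DD)=\cA$, and naturality of the dictionary in the input makes this assignment well-defined. The main obstacle is the second step: establishing the asphericity of the oriented embedding spaces, identifying their fundamental groupoids with the ribbon braid groupoids, and matching the symmetric monoidal bicategorical structure with the PROP structure --- standard but genuinely nontrivial operad theory, which is precisely why one cites \cite{Fiedorowicz,Dunn,Wahl} rather than reproving it. Everything after that identification is routine coherence bookkeeping together with an immediate comparison with the definition of a ribbon category.
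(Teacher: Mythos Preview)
Your proposal is correct and follows essentially the same route as the paper. The paper does not give a proof of this proposition but instead cites \cite{Fiedorowicz,Dunn,Wahl} and supplies a short explanatory paragraph identifying the generating embeddings and isotopies of $\Disk^2$ (the binary embedding giving $\otimes$, the half-twist isotopy giving the braiding, the full rotation giving the ribbon twist) and asserting that these freely generate $\Disk^2$; your write-up is a more detailed unpacking of exactly this dictionary, via the identification of the truncated embedding spaces with the ribbon braid groupoids.
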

	
	Let us briefly recall the correspondence of data asserted in \cref{prop:ribbonE2}. We denote by $\DD\in\Disk^2$ the standard unit disk with the right-handed orientation.  The tensor product is defined by fixing an embedding $\DD\sqcup \DD\into \DD$, the left-to-right embedding of a pair of smaller disks along the $x$-axis.  The braiding is defined by the isotopy interchanging the embedded disks by rotating them anti-clockwise around one another.  The ribbon element is determined by the the oriented isotopy on $\DD$ rotating it through a 360 degrees turn.  The content of \cref{prop:ribbonE2} is that these embeddings and isotopies taken together freely generate $\Disk^2$, so that once they are specified -- hence the data of a ribbon braided tensor category is fixed -- then the data of the functor is specified uniquely.
	
	The following notion is studied in \cite{AyalaFrancis}, see also \cite{AyalaFrancisReview, TanakaReview}.
	
	\begin{definition}
		The \defterm{factorization homology} $\int_\Sigma \cA$ is the left Kan extension
		\[
		\xymatrix{
			\Disk^2 \ar[rr]^{\cA} \ar[dr]&& \Cat \\
			& \Mfld^2\ar[ur]_{\Sigma\mapsto\int_\Sigma\cA}
		}.
		\]
	\end{definition}
	
	We may analogously define factorization homology internal to $\PrL$ which we denote by $\int_\Sigma^\PrL$.
	
	\begin{lemma}
		We have an equivalence of categories
		\[\fr{\int_\Sigma \cA}\cong \int_\Sigma^{\PrL} \fr{\cA}.\]
		\label{lm:factorizationcocompletion}
	\end{lemma}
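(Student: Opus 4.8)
The plan is to show that both sides are computed by the same left Kan extension, just taken internal to two different target bicategories related by the free cocompletion functor $\fr{(-)}\colon \Cat\to\PrL$. The key point is that $\fr{(-)}$ is symmetric monoidal and cocontinuous, so it should commute with the left Kan extension that defines factorization homology. Concretely, recall that $\int_\Sigma\cA$ is defined as the value at $\Sigma$ of the left Kan extension of the functor $\cA\colon\Disk^2\to\Cat$ along the inclusion $\Disk^2\hookrightarrow\Mfld^2$, and similarly $\int_\Sigma^{\PrL}\fr\cA$ is the left Kan extension of $\fr\cA\colon\Disk^2\to\PrL$ (that is, of the composite $\Disk^2\xrightarrow{\cA}\Cat\xrightarrow{\fr{(-)}}\PrL$) along the same inclusion. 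So the statement amounts to the assertion that $\fr{(-)}$ commutes with this particular left Kan extension.

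First I would invoke the general principle that a cocontinuous functor between (presentable, or suitably cocomplete) bicategories commutes with left Kan extensions: if $L\colon \mathcal{M}\to\mathcal{N}$ preserves the relevant colimits (here, the bicategorical colimits computing the Kan extension along $\Disk^2\hookrightarrow\Mfld^2$), then $L\circ\mathrm{Lan}_{\iota}F\simeq \mathrm{Lan}_{\iota}(L\circ F)$. The ingredient needed is therefore that $\fr{(-)}\colon\Cat\to\PrL$ is cocontinuous. I would verify this using the characterization recalled earlier in the excerpt: $\fr{\cC}=\Fun(\cC^{\op},\Vect)$, and $\fr{(-)}$ is a symmetric monoidal left adjoint (it is the free cocompletion, so it is left adjoint to the forgetful functor $\PrL\to\Cat$ picking out a subcategory of compact projective generators), hence it preserves all colimits. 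Being symmetric monoidal, it also sends the monoidal-category structures and module structures used to build factorization homology to their $\PrL$-counterparts; this is what ensures the composite $\fr{(-)}\circ(\cA\colon\Disk^2\to\Cat)$ is precisely the $\Disk^2$-algebra $\fr\cA\colon\Disk^2\to\PrL$ appearing in the definition of $\int^{\PrL}$. Then the formal Kan-extension argument gives the equivalence $\fr{\int_\Sigma\cA}\simeq\int_\Sigma^{\PrL}\fr\cA$, naturally in $\Sigma$.

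One subtlety worth flagging: factorization homology is really computed as a colimit over the category of disks embedded in $\Sigma$ (the "$\Disk^2_{/\Sigma}$" indexing category), and one must know this colimit is of a shape that $\fr{(-)}$ preserves — but since $\fr{(-)}$ preserves \emph{all} small colimits, this is automatic. A second subtlety is size: $\fr{\int_\Sigma\cA}$ is a free cocompletion of a small category, so it is a reasonably "small" object of $\PrL$ (it has enough compact projectives, by the statement recalled in the excerpt that $\fr{(-)}$ identifies $\Bimod$ with such categories), and one should check $\int_\Sigma^{\PrL}\fr\cA$ lands in the same subcategory — but this again follows since a colimit in $\PrL$ of free cocompletions along functors coming from $\Bimod$ stays in the image of $\Bimod$.

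**Main obstacle.** The genuinely delicate point is not the formal manipulation but making precise the claim that "$\fr{(-)}$ commutes with factorization homology" at the level of bicategories (rather than $\infty$-categories): one must be careful that the left Kan extension defining $\int_\Sigma$ exists and is computed by the expected colimit in $\Cat$, that the analogous statement holds in $\PrL$, and that $\fr{(-)}$ is cocontinuous as a \emph{bifunctor} of bicategories in the appropriate sense. Much of this is implicit in the references already cited in the excerpt (\cite{AyalaFrancis}, \cite{Cooke}, and the discussion of $\fr{(-)}$ following \cref{ex:Homnondegenerate}), so in the write-up I would lean on those, state the cocontinuity of $\fr{(-)}$ as the one new observation, and deduce the equivalence formally.
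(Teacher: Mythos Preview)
Your proposal is correct and follows exactly the paper's approach: the paper's proof is the single sentence ``The claim follows since the functor $\fr{-}\colon \Cat\rightarrow \PrL$ preserves colimits,'' and your write-up is a careful unpacking of precisely that assertion. If anything, your discussion of the subtleties (that the left Kan extension is computed by a colimit of a shape preserved by $\fr{(-)}$, and the bicategorical care required) is more thorough than the paper itself, which leaves these points implicit.
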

	\begin{proof}
		The claim follows since the functor $\fr{-}\colon \Cat\rightarrow \PrL$ preserves colimits.\daj{One of Juliet's thesis referees complained that this is only true for left Kan extensions, not necessarily for aribitrary bicolimits, in the enriched setting, though I find it hard to imagine how that could be}
	\end{proof}
	
	By construction we have $\SkCat_\cA(\DD) \cong \cA$. Cooke has shown that $\SkCat_\cA(-)$ satisfies excision and thus it coincides with factorization homology.
	
	\begin{theorem}[\cite{Cooke}]
		There is an equivalence of categories
		\[\SkCat_\cA(\Sigma) \cong \int_{\Sigma} \cA.\]
		\label{thm:skeinexcision}
	\end{theorem}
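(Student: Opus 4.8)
The plan is to invoke the universal property of factorization homology \cite{AyalaFrancis}: a symmetric monoidal functor $\Mfld^2 \to \Cat$ that satisfies $\otimes$-excision is canonically the left Kan extension along $\Disk^2 \hookrightarrow \Mfld^2$ of its restriction to $\Disk^2$. It therefore suffices to verify that $\Sigma \mapsto \SkCat_\cA(\Sigma)$ (i) is functorial for oriented embeddings, with isotopic embeddings inducing naturally isomorphic functors; (ii) is symmetric monoidal; (iii) restricts on $\Disk^2$ to the functor $\cA$ of \cref{prop:ribbonE2}; and (iv) satisfies excision.

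Items (i)--(iii) are essentially formal. An oriented embedding pushes forward $\cA$-labelings and $\cA$-colored ribbon graphs, and since isotopies and skein relations are each supported inside embedded balls they are carried along; isotopic embeddings then induce naturally isomorphic functors. A ribbon graph in a disjoint union is a disjoint union of ribbon graphs, so the comparison functor $\SkCat_\cA(\Sigma) \otimes \SkCat_\cA(\Sigma') \to \SkCat_\cA(\Sigma \sqcup \Sigma')$ is an equivalence, giving (ii). For (iii) one uses the identification $\SkCat_\cA(\DD) \cong \cA$ recorded above, together with the fact that the generating embeddings and isotopies of $\Disk^2$ --- the multiplication disk, the half-rotation braiding, and the full-twist ribbon --- act on $\SkCat_\cA(\DD)$ exactly by the tensor product, braiding and ribbon structure of $\cA$; this is immediate from the description of the Hom-spaces of $\SkCat_\cA(\DD \times I)$ via the surjection $\mathrm{Rib}_\cA(\DD\times I, -)\twoheadrightarrow\Hom_\cA(-,-)$.

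The real content is (iv). Given a collar gluing $\Sigma = \Sigma_0 \cup_{P \times \bR} \Sigma_1$ along a $1$-manifold $P$, one must show the canonical comparison functor
\[
\SkCat_\cA(\Sigma_0) \otimes_{\SkCat_\cA(P \times \bR)} \SkCat_\cA(\Sigma_1) \longrightarrow \SkCat_\cA(\Sigma)
\]
is an equivalence, the source being the relative tensor product (bar construction) of module categories over the monoidal category $\SkCat_\cA(P \times \bR)$. Essential surjectivity is easy: any $\cA$-labeling of $\Sigma$ can be isotoped off the collar $P \times \bR$ into $\Sigma_0 \sqcup \Sigma_1$. Full faithfulness is where the work lies: one must identify the Hom-spaces of the relative tensor product --- a coend over $\SkCat_\cA(P \times \bR)$ of relative skein modules of $\Sigma_0 \times I$ and $\Sigma_1 \times I$ --- with the skein module of $\Sigma \times I$. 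The key geometric input is a general-position argument: every $\cA$-colored ribbon graph in $\Sigma \times I$ can be isotoped to meet the wall $P \times \{0\} \times I$ transversally in a finite $\cA$-labeled $0$-manifold, hence to decompose as a composite of a ribbon graph in $\Sigma_0 \times I$ with one in $\Sigma_1 \times I$; because the skein relations are imposed inside embedded balls, two such decompositions of isotopic graphs differ precisely by the moves generating the coend. I expect this general-position-and-cutting argument, together with the bookkeeping to confirm that the \emph{bar construction} (not a naive pushout) computes the right category --- and, if needed, a passage to idempotent completions so that the relevant colimits in $\Cat$ are well behaved --- to be the main obstacle. Once excision is established, \cite{AyalaFrancis} yields $\SkCat_\cA(\Sigma) \cong \int_\Sigma \cA$ with no further work.
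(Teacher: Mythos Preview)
The paper does not give its own proof of this theorem: it is stated with attribution to \cite{Cooke}, and the surrounding text simply records that ``Cooke has shown that $\SkCat_\cA(-)$ satisfies excision and thus it coincides with factorization homology.'' Your outline is exactly the strategy the paper is gesturing at --- verify that the skein category construction is a symmetric monoidal functor out of $\Mfld^2$ restricting to $\cA$ on disks, prove $\otimes$-excision, and then invoke the Ayala--Francis characterization of factorization homology --- so at the level of approach there is nothing to compare.

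That said, your sketch is honest about where the work lies and correctly flags the two genuine technical points that Cooke's paper handles in detail: (a) the transversality/general-position argument showing that skeins in $\Sigma\times I$ can be cut along the separating wall, with the residual ambiguity absorbed by the coend relations, and (b) the categorical bookkeeping ensuring that the bar construction in $\Cat$ really computes the relative tensor product of module categories (Cooke works with a specific ``thickened'' model and checks this carefully). Your remark about possibly needing idempotent completions is not actually required in Cooke's argument, but otherwise your outline is an accurate summary of what the cited proof does.
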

	
	We denote
	\[\rZ_\cA(\Sigma) = \int_{\Sigma}^{\PrL} \fr{\cA}\cong \fr{\SkCat_\cA(\Sigma)},\]
	where the second equivalence is provided by \cref{thm:skeinexcision} and \cref{lm:factorizationcocompletion}. The functor $\rZ_\cA(-)$ was studied extensively in \cite{BZBJ1, BZBJ2}.
	
	\subsection{Internal skein algebras}
	\label{sect:internalskeins}
	Recall from \cref{prop:ribbonE2} that $\DD\in\Mfld^2$ is naturally an algebra object. Let $\Ann\subset \bR^2$ be the annulus obtained by removing the disk of radius $1/2$ from the unit disk, both centered at the origin. It has the following algebraic structures as an object of $\Ann$:
	\begin{itemize}
		\item An algebra structure $\Ann\coprod\Ann\rightarrow \Ann$, where the second annulus is put inside the first one.
		\item An algebra map $\DD\rightarrow \Ann$ given by including the disk on the negative $x$-axis.
		\item A map $\Ann\coprod \DD\rightarrow \DD$ given by inserting the disk at the origin which gives $\DD$ a left $\Ann$-module structure.
	\end{itemize}
	
	Suppose $\Sigma\in\Mfld^2$ is a connected oriented surface with a chosen embedding $x\colon \DDc\hookrightarrow \Sigma$. Denote
	\[\Sigma^* = \Sigma\setminus \DDc,\]
	the surface obtained by removing the disk. We have an embedding
	\[\Sigma^*\coprod \Ann\hookrightarrow \Sigma^*\]
	given by retracting away the boundary and including in a copy of the annulus, see \cref{fig:annulus}. This gives $\Sigma^*\in\Mfld^2$ the structure of a right $\Ann$-module. In particular, it is a right $\DD$-module via the algebra map $\DD\rightarrow \Ann$ defined above.
	\begin{figure}
		\includegraphics[height=1.5in]{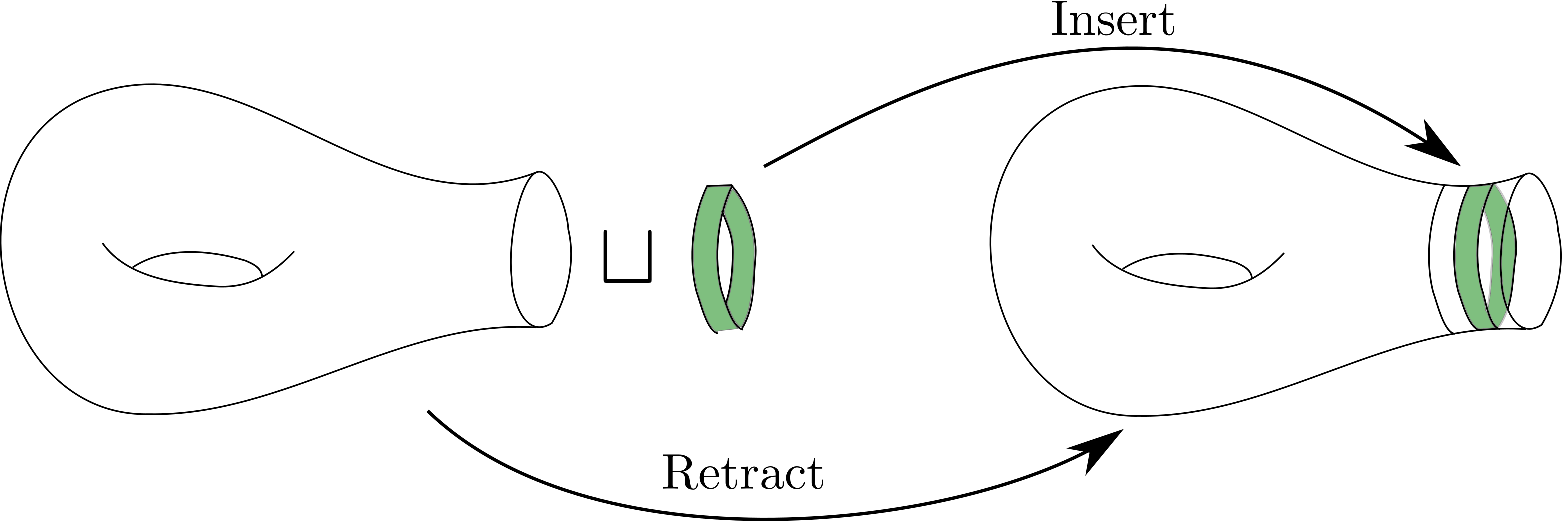}
		\caption{The right $\Ann$-module structure on $\Sigma^*\in\Mfld^2$ comes from boundary insertions.}\label{fig:annulus}
	\end{figure}
	
	On the level of skein categories we obtain a right $\cA$-module category structure on $\SkCat(\Sigma^*)$. Let
	\[\cP\colon \cA \longrightarrow \SkCat_\cA(\Sigma^*)\]
	be the functor given by the action of $\cA \cong \SkCat_\cA(\DD)$ on $\bu\in \SkCat_\cA(\Sigma^*)$.
	
	Recall (see \cref{lm:Dayalgebra}) that an algebra object in $\fr{\cA}$ is the same as a lax monoidal functor $\cA^{\op}\rightarrow \Vect$.
	
	\begin{definition}\label{def:intskeinalg}
		Let $\Sigma$ be a surface as above. The \defterm{internal skein algebra of $\Sigma^*$} is the functor
		\[\SkAlgint_\cA(\Sigma^*)\colon \cA^{\op}\longrightarrow \Vect\]
		given by $V\mapsto \Hom_{\SkCat_\cA(\Sigma^*)}(\cP(V), \bu)$. It has a lax monoidal structure
		\[\Hom_{\SkCat_\cA(\Sigma^*)}(\cP(V), \bu)\otimes \Hom_{\SkCat_\cA(\Sigma^*)}(\cP(W), \bu)\longrightarrow \Hom_{\SkCat_\cA(\Sigma^*)}(\cP(V\otimes W), \bu)\]
		given by stacking the $W$-labeled skein on top of the $V$-labeled skein, see \cref{fig:skein-stack}.
	\end{definition}

\begin{remark}
Unpacking \cref{def:intskeinalg}, we may write the internal skein algebra as a coend,
\[
\SkAlgint_\cA(\Sigma^*) = \int^{X\in \cA} \Hom_{\SkCat_{\cA}(\Sigma^*)}(\cP(X),\bu)\otimes X.
\]

The formula becomes more explicit if we suppose $\cA$ has a fiber functor. In this case we may regard the objects $X\in\cA$ as vector spaces via the fiber functor. Applying the fiber functor to $\SkAlgint_\cA(\Sigma^*)$ we obtain a vector space consisting of skeins in $\Sigma^*\times I$ which are allowed to end with some color $X$ at the distinguished boundary component of $\Sigma^*\times \{0\}$, and which carry an additional label of a vector $x\in X$.  The co-end relations state that a coupon $f:X\to Y$ near the boundary can be absorbed into the boundary by acting as a linear map $X\to Y$.

\end{remark}
	
	\begin{proposition}
		The internal skein algebra $\SkAlgint_\cA(\Sigma^*)\in\fr{\cA}$ is the algebra of $\fr{\cA}$-internal endomorphisms of the distinguished object $\bu\in\SkCat_\cA(\Sigma^*)$.
		\label{prop:internalskeinsendomorphisms}
	\end{proposition}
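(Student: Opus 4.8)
The plan is to identify $\SkAlgint_\cA(\Sigma^*)$ with the internal endomorphism algebra $\underline{\End}_{\fr{\cA}}(\bu)$ of the distinguished object $\bu$ of the $\fr{\cA}$-module category $\fr{\SkCat_\cA(\Sigma^*)}$, and then to match the two a priori different algebra structures (composition of internal homs versus stacking of skeins).

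First I would set up the module structure. As recalled before \cref{def:intskeinalg}, the boundary-insertion embeddings $\Sigma^*\coprod\Ann\into\Sigma^*$ and $\DD\to\Ann$ (together with the $E_1$-structure on $\Ann$) equip $\SkCat_\cA(\Sigma^*)$ with the structure of a right module category over $\cA\cong\SkCat_\cA(\DD)$, and $\cP(V)$ is by definition $\bu$ acted on by $V$. Applying the symmetric monoidal free cocompletion functor $\fr{(-)}\colon\Cat\to\PrL$ turns this into a right $\fr{\cA}$-module structure on $\fr{\SkCat_\cA(\Sigma^*)}$ in $\PrL$, whose action functor restricts on representables to $\cP$ (so $\bu\triangleleft V\cong\cP(V)$ for $V\in\cA$). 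Since $\fr{\cA}$ acts by cocontinuous functors, the functor $\bu\triangleleft(-)\colon\fr{\cA}\to\fr{\SkCat_\cA(\Sigma^*)}$ is cocontinuous between locally presentable categories, hence admits a right adjoint $R$, and the internal endomorphism algebra is $\underline{\End}_{\fr{\cA}}(\bu)=R(\bu)$ with its canonical algebra structure: multiplication adjoint to the composite $(\bu\triangleleft\underline{\End}(\bu))\triangleleft\underline{\End}(\bu)\xrightarrow{\varepsilon\triangleleft\id}\bu\triangleleft\underline{\End}(\bu)\xrightarrow{\varepsilon}\bu$ with $\varepsilon$ the counit, and unit adjoint to $\id_\bu$. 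I would then compute the underlying object by adjunction and Yoneda: for $V\in\cA$,
\[R(\bu)(V)\cong\Hom_{\fr{\SkCat_\cA(\Sigma^*)}}(\bu\triangleleft V,\bu)\cong\Hom_{\fr{\SkCat_\cA(\Sigma^*)}}(\cP(V),\bu)\cong\Hom_{\SkCat_\cA(\Sigma^*)}(\cP(V),\bu),\]
the last step because $\fr{(-)}$ is fully faithful on the small category $\SkCat_\cA(\Sigma^*)$; this is precisely the functor of \cref{def:intskeinalg}.

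It then remains to check that, under \cref{lm:Dayalgebra}, the canonical algebra structure on $R(\bu)$ agrees with the lax monoidal ``stacking'' structure of \cref{def:intskeinalg}. Here one unwinds the counit $\varepsilon_{\cP(W)}\colon\cP(V)\triangleleft W\cong\bu\triangleleft(V\otimes W)\to\bu$: topologically it absorbs the $W$-labelled disk into the boundary collar, so that for $f\in\Hom(\cP(V),\bu)$ and $g\in\Hom(\cP(W),\bu)$ the composite $\cP(V)\triangleleft W\xrightarrow{f\triangleleft W}\cP(W)\xrightarrow{g}\bu$ is computed in $\SkCat_\cA(\Sigma^*)$ by stacking the $W$-labelled skein $g$ on top of the $V$-labelled skein $f$ — which is the multiplication of \cref{def:intskeinalg} — while $\id_\bu$ corresponds to the empty skein. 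I expect this last comparison to be the only genuinely delicate point: everything else is formal adjoint functor theory in $\PrL$ and fully faithfulness of free cocompletion, but here one must carefully trace the associativity/unit constraints of the module action (coming from $\fr{(-)}$ applied to $(\Sigma^*\coprod\Ann)\coprod\Ann\to\Sigma^*$) through the adjunction to confirm that they reproduce the strictly associative, unital ``stack on top'' operation used to define $\SkAlgint_\cA(\Sigma^*)$. (The same argument with $\bu$ replaced by $\cP(-)$ identifies $\underline{\Hom}_{\fr{\cA}}(\cP(V),\cP(W))$ with skeins as well, though only the case of $\bu$ is needed here.)
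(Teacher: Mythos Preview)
Your proposal is correct and follows essentially the same approach as the paper: identify the action of $\cA$ on $\bu$ with $\cP$, so that $\underline{\End}(\bu)(V)\cong\Hom_{\SkCat_\cA(\Sigma^*)}(\cP(V),\bu)$. The paper's proof is in fact much terser than yours---it does not spell out the adjunction argument or the comparison of algebra structures---so your careful unwinding of the counit to match the internal-hom multiplication with the stacking operation goes beyond what the paper records, but this is a welcome elaboration rather than a different method.
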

	\begin{proof}
		The action of $\cA$ on the distinguished object $\bu\in\SkCat_\cA(\Sigma^*)$ is given by $\cP\colon \cA\rightarrow \SkCat_\cA(\Sigma^*)$. Thus, the internal endomorphism algebra $\underline{\End}(\bu)\in\fr{\cA}$ is the functor $\cA^{\op}\rightarrow \Vect$ given by $V\mapsto \Hom_{\SkCat_\cA(\Sigma^*)}(\cP(V), \bu)$ which is exactly the internal skein algebra of $\Sigma^*$.
	\end{proof}
	
	\begin{remark}
		We use the term ``internal skein algebra'' to indicate that $\SkAlgint(\Sigma^*)$ is an algebra internal to the monoidal category $\fr{\cA}$. By \cref{prop:internalskeinsendomorphisms} it is isomorphic to the \emph{moduli algebra} $A_{\Sigma^*}$ from \cite[Definition 5.3]{BZBJ1}.
	\end{remark}
	
	\begin{remark}
		Suppose $\cA = \TL$ is the Temperley--Lieb category. Let $F\colon \TL\rightarrow \Vect$ be the monoidal functor given by the composite $\TL\rightarrow \Repq(\SL_2)\rightarrow \Vect$, where at the end we apply the obvious forgetful functor. We denote by the same letter $F\colon \fr{\TL}\rightarrow \Vect$ the unique colimit-preserving extension. We may write tautologically
		\[\SkAlgint_{\TL}(\Sigma^*)\cong \int^{[n]\in \TL}\Hom_{\SkCat_\TL(\Sigma^*)}(\cP([n]), \bu)\otimes [n]\in\fr{\TL}.\]
		In particular, its underlying vector space is
		\[F(\SkAlgint_{\TL}(\Sigma^*))\cong \int^{[n]\in \TL}\Hom_{\SkCat_\TL(\Sigma^*)}(\cP([n]), \bu)\otimes F([1])^{\otimes n}\in\Vect.\]
		We see that this is exactly the stated skein algebra introduced in \cite{LeTriangular, ConstantinoLe} (see also a related definition of relative skein algebras of \cite{Lofaro}). Namely, $\Hom_{\SkCat_\TL(\Sigma^*)}(\cP([n]), \bu)$ is the vector space of skeins in $\Sigma^*\times [0, 1]$ which have $n$ endpoints on the boundary disk $\DD\hookrightarrow \Sigma^*$ in $\Sigma^*\times [0, 1]$; each endpoint is labeled by a vector in $F([1])$ which is two-dimensional.
	\end{remark}
	
	\begin{remark}
		In particular, the above discussion produces an isomorphism between the stated Kauffman skein algebra of a punctured surface, and the Alekseev-Grosse-Schomerus moduli algebra.  After the present paper first appeared on the arXiv, Matthieu Faitg independently constructed such an isomorphism in \cite{faitg}.  Shortly after that, Benjamin Ha\"ioun gave in \cite{HaiounRelatingStatedSkeinAlgebras} an explicit description of the isomorphism between the internal skein algebra and the stated skein algebra, as asserted in the preceding remark.
	\end{remark}
	
	\begin{figure}
		\includegraphics[height=1.5in]{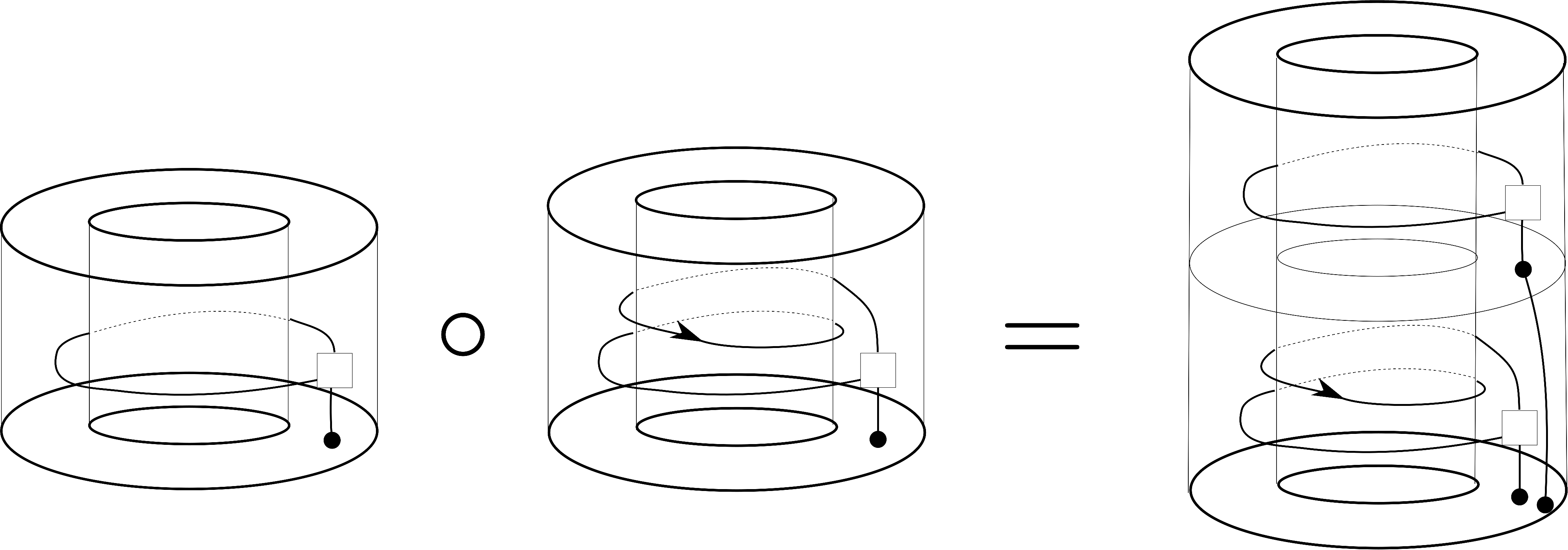}
		\caption{The stacking of internal skeins defines an algebra structure}\label{fig:skein-stack}
	\end{figure}
	
	\begin{remark}
		The skein algebra $\SkAlg_\cA(\Sigma^*)$ of $\Sigma^*$ is the value of $\SkAlgint_\cA(\Sigma^*)$ on $\bu\in\cA$, in other words its $\bu$-multiplicity space, or subalgebra of invariants.
	\end{remark}
	
	Let us now relate internal skein algebras to skein categories. We have a functor
	\[\SkCat_\cA(\Sigma^*)\longrightarrow \fr{\cA}\]
	given by $X\mapsto \Hom_{\SkCat_\cA(\Sigma^*)}(\cP(-), X)$. As for internal skein algebras, we have a stacking morphism
	\[\Hom_{\SkCat_\cA(\Sigma^*)}(\cP(V), \bu)\otimes \Hom_{\SkCat_\cA(\Sigma^*)}(\cP(W), X)\longrightarrow \Hom_{\SkCat_\cA(\Sigma^*)}(\cP(V\otimes W), X).\]
	In other words, we obtain a functor
	\[\SkCat_\cA(\Sigma^*)\longrightarrow \lmod{\SkAlgint_\cA(\Sigma^*)}{\fr{\cA}}.\]
	
	The following statement follows from \cite[Theorem 5.14]{BZBJ1}.
	
	\begin{proposition}
		The functor
		\[\SkCat_\cA(\Sigma^*)\longrightarrow \lmod{\SkAlgint_\cA(\Sigma^*)}{\fr{\cA}}\]
		induces an equivalence
		\[\rZ_\cA(\Sigma^*)=\fr{\SkCat_\cA(\Sigma^*)}\cong\lmod{\SkAlgint_\cA(\Sigma^*)}{\fr{\cA}}.\]
		\label{prop:SkCatmonadicity}
	\end{proposition}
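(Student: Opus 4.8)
The plan is to invoke monadic reconstruction of module categories, as developed in \cite{BZBJ1}, applied to the action of $\cA$ on the pointed skein category $\SkCat_\cA(\Sigma^*)$. First I would recall the general principle: if $\mathcal{M}$ is a module category over a (rigid) monoidal category $\cA$, acted on via a functor $\cA \to \fr{\mathcal{M}}$, and $m\in\mathcal{M}$ is an object whose orbit generates $\fr{\mathcal{M}}$ under colimits, then the internal endomorphism object $\underline{\End}(m)\in\fr{\cA}$ is an algebra and the functor $\underline{\Hom}(-,m)\colon \fr{\mathcal{M}} \to \lmod{\underline{\End}(m)}{\fr{\cA}}$ is an equivalence. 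This is precisely \cite[Theorem 5.14]{BZBJ1}; the hypotheses there are that the module category arises as factorization homology of a surface (or more precisely, that the distinguished ``empty labeling'' object is a \emph{progenerator}), so the main task is to verify those hypotheses in the present skein-theoretic setting.

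The key steps, in order, are as follows. First, by \cref{thm:skeinexcision} we identify $\SkCat_\cA(\Sigma^*) \cong \int_{\Sigma^*}\cA$, hence $\rZ_\cA(\Sigma^*) = \fr{\SkCat_\cA(\Sigma^*)} \cong \int_{\Sigma^*}^{\PrL}\fr{\cA}$ by \cref{lm:factorizationcocompletion}; the right $\cA$-module structure on $\SkCat_\cA(\Sigma^*)$ coming from the boundary annulus (the module structure described just before \cref{def:intskeinalg}) matches, under excision, the $\cA$-module structure on $\int_{\Sigma^*}^{\PrL}\fr{\cA}$ coming from the boundary-embedding of a disk. Second, by \cref{prop:internalskeinsendomorphisms}, the internal skein algebra is exactly the internal endomorphism algebra $\underline{\End}(\bu)\in\fr{\cA}$ of the distinguished empty-labeling object $\bu$ with respect to this action, so the functor $\SkCat_\cA(\Sigma^*)\to\lmod{\SkAlgint_\cA(\Sigma^*)}{\fr{\cA}}$ constructed above the proposition is the internal-Hom functor $\underline{\Hom}_{\fr{\cA}}(\cP(-),X)$ after free cocompletion. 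Third, I would apply \cite[Theorem 5.14]{BZBJ1} (the ``monadic reconstruction'' theorem for factorization homology of a punctured surface): since $\Sigma^*$ has nonempty boundary, it admits a handle decomposition built from the boundary annulus, which in module-categorical terms says that $\bu\in\SkCat_\cA(\Sigma^*)$ is a progenerator for the $\cA$-action — every object is obtained from $\bu$ by acting by objects of $\cA$ and taking (co)limits, and $\bu$ is compact projective. The theorem then yields that $\underline{\Hom}(\cP(-),-)$ induces an equivalence $\rZ_\cA(\Sigma^*)\xrightarrow{\sim}\lmod{\SkAlgint_\cA(\Sigma^*)}{\fr{\cA}}$, which is the claim.

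The main obstacle is the progenerator property: one must check that the empty-labeling object $\bu\in\SkCat_\cA(\Sigma^*)$ both generates $\rZ_\cA(\Sigma^*)$ under colimits and the action of $\cA$, and is compact projective therein. Generation is where the punctured nature of $\Sigma^*$ is essential — for a \emph{closed} surface the analogous statement fails, but for $\Sigma^*$ one can push any labeled disk in $\Sigma^*$ toward the boundary puncture along an isotopy, expressing an arbitrary labeling as $\cP(V)$ acted on $\bu$ up to skein relations; more precisely this is the excision-compatible statement that $\int_{\Sigma^*}^{\PrL}\fr{\cA}$ is generated over $\fr{\cA}$ by the image of the boundary disk, which is part of the analysis in \cite{BZBJ1}. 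Compact projectivity of $\bu$ follows since $\SkCat_\cA(\Sigma^*)$ sits inside $\rZ_\cA(\Sigma^*)=\fr{\SkCat_\cA(\Sigma^*)}$ as the compact projective objects by construction of the free cocompletion. Once these two points are in hand, the equivalence is a formal consequence of the Barr--Beck--Lurie theorem as packaged in \cite[Theorem 5.14]{BZBJ1}, with the relevant monad on $\fr{\cA}$ being $\SkAlgint_\cA(\Sigma^*)\otimes(-)$.
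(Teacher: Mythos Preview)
Your proposal is correct and matches the paper's approach exactly: the paper simply states that the proposition follows from \cite[Theorem 5.14]{BZBJ1}, and you have spelled out precisely how that citation applies, via \cref{thm:skeinexcision}, \cref{lm:factorizationcocompletion}, and \cref{prop:internalskeinsendomorphisms}, together with the progenerator property of the empty labeling in the punctured case. Your elaboration of the progenerator check is more detail than the paper itself provides, but it is the right content.
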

	
	Suppose now $N$ is a compact oriented 3-manifold with $\partial N\cong \Sigma$. The relative skein module defines a functor $\SkMod_\cA(N, -)\colon \SkCat_\cA(\Sigma)^{\op}\rightarrow \Vect$ which we can restrict to a functor $\SkCat_\cA(\Sigma^*)^{\op}\rightarrow \Vect$. Using the equivalence $\fr{\SkCat_\cA(\Sigma^*)}\cong\lmod{\SkAlgint_\cA(\Sigma^*)}{\fr{\cA}}$ given by \cref{prop:SkCatmonadicity} we thus obtain a $\SkAlgint_\cA(\Sigma^*)$-module. Let us describe it explicitly.
	
	\begin{definition}
		Let $N$ be a 3-manifold as above. The \defterm{internal skein module of $N$} is the functor
		\[\Skint_\cA(N)\colon \cA^{\op}\longrightarrow \Vect\]
		given by sending $V\mapsto \SkMod_\cA(N, \cP(V))$. It is a left $\SkAlgint_\cA(\Sigma^*)$-module via the map
		\[\Hom_{\SkCat_\cA(\Sigma^*)}(\cP(V), \bu)\otimes \SkMod_\cA(N, \cP(W))\longrightarrow \SkMod_\cA(N, \cP(V\otimes W))\]
		given by composing the skeins in $\Sigma^*\times [0, 1]$ with skeins in $N$.
		\label{def:intskeinmod}
	\end{definition}
	
	In other words, the internal skein module is given by considering skeins in $N$ which allow to end on $\DD\subset \Ann\subset \Sigma^*\subset \Sigma\cong\partial N$ with label $V\in\cA$. In particular, the ordinary skein module is recovered as
	\[\Sk_\cA(N)\cong \Skint_\cA(N)(\bu).\]
	
	In a similar way, if $N$ is a 3-manifold with $\partial N\cong \rev{\Sigma}$, using \cref{lm:reverseorientation} we define the internal skein module of $N$ to be
	\[V\mapsto \SkMod_\cA(N, \cP(V^*))\]
	which is a \emph{right} $\SkAlgint_\cA(\Sigma^*)$-module.
	
	\subsection{Skein category of the annulus}
	
	We have defined the annulus as $\Ann = \DD \setminus \DDc$, so it makes sense to consider its internal skein algebra. Consider a pair of representations $V, W\in\cA$ together with a morphism $f\colon V\otimes W\rightarrow \bu$. We obtain a skein
	\[s_{V, W, f}\in \Hom_{\SkCat_\cA(\Ann)}(\cP(V\otimes W), \bu)\]
	given by going once around the hole and applying $f$, see \cref{fig:REA}.
	
	\begin{figure}
		\includegraphics[height=1in]{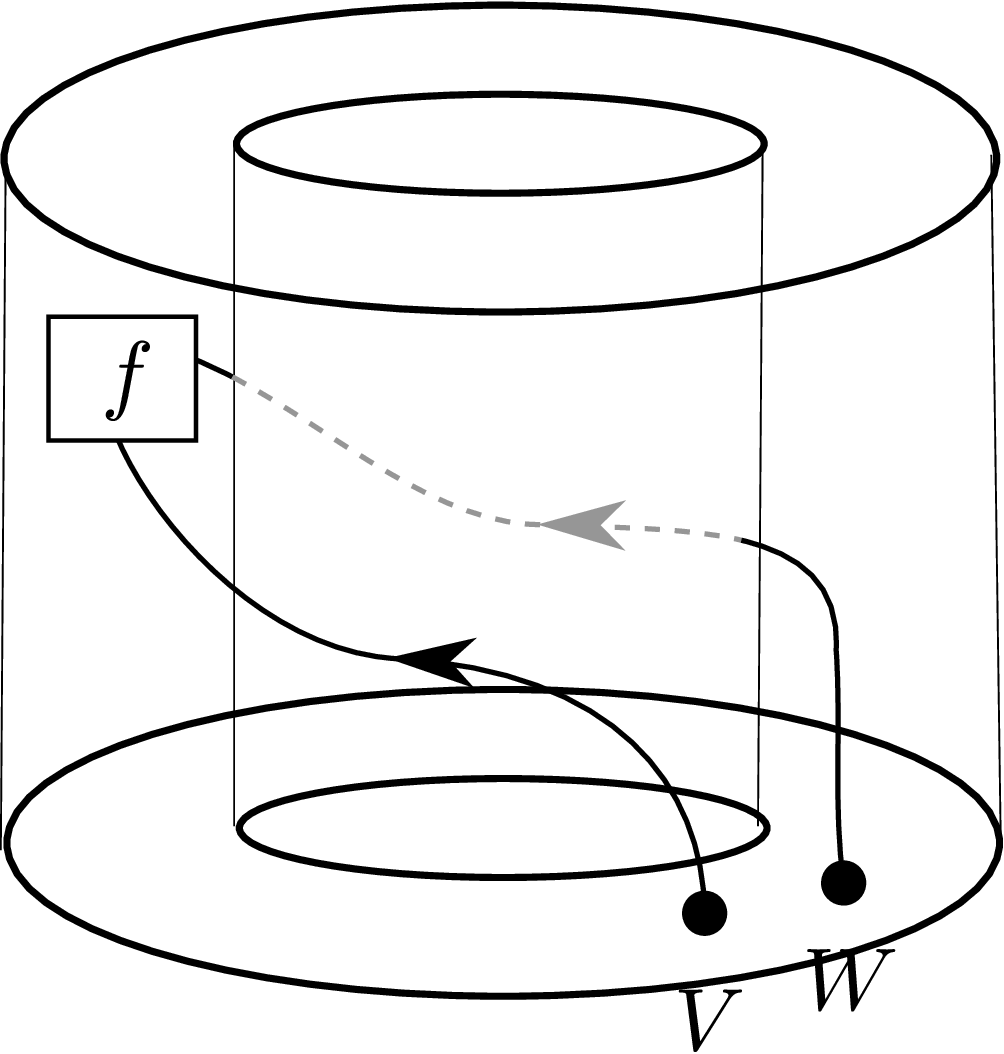}
		\caption{The internal skein $s_{V,W,f}$}\label{fig:REA}
	\end{figure}
	
	For another object $X\in\cA$ we have a composition map
	\[\Hom_\cA(X, V\otimes W)\otimes \Hom_{\SkCat_\cA(\Ann)}(\cP(V\otimes W), \bu)\longrightarrow\Hom_{\SkCat_\cA(\Ann)}(\cP(X), \bu).\]
	
	Thus, applying it to the skein $s_{V, W, f}$ we obtain a map
	\[\Hom_\cA(X, V\otimes W)\longrightarrow \Hom_{\SkCat_\cA(\Ann)}(\cP(X), \bu).\]
	It is natural in $V$ and $W$, so we obtain a morphism
	\[\cF = \left(\underset{V,W\in\cA, f\colon V\otimes W\rightarrow \bu}{\colim} V\otimes W\right)\longrightarrow \SkAlgint_\cA(\Ann).\]
	It is easy to see that it is in fact a morphism of algebras. The following follows from \cite[Corollary 6.4]{BZBJ1}.
	
	\begin{proposition}
		The map $\cF\rightarrow \SkAlgint_\cA(\Ann)$ is an isomorphism.
		\label{prop:annulusREA}
	\end{proposition}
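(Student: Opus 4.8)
The plan is to identify both sides as the same vector space by unwinding the definitions, and then to check that the map $\cF \to \SkAlgint_\cA(\Ann)$ constructed above realizes this identification. Recall that, by \cref{prop:internalskeinsendomorphisms}, $\SkAlgint_\cA(\Ann)$ is the $\fr{\cA}$-internal endomorphism algebra $\underline{\End}(\bu)$ of the distinguished object $\bu$ in the module category $\SkCat_\cA(\Ann)$, where the module structure over $\cA$ comes from inserting a disk along the negative $x$-axis of the annulus. The key geometric input is that this $\cA$-module category is the \emph{free} module category on one generator: a skein in $\Ann \times [0,1]$ ending with label $X$ on the boundary disk can always be isotoped to go around the hole exactly once, so that $\SkCat_\cA(\Ann) \simeq \cA$ as a right $\cA$-module category, with $\bu$ corresponding to the monoidal unit and $\cP(X)$ to $X$ itself.

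With that identification in hand, the internal endomorphism algebra of $\bu \in \cA$ viewed as a module over itself is computed by the coend
\[
\underline{\End}(\bu) \cong \int^{X\in\cA} \Hom_\cA(X,\bu)\otimes X \cong \int^{X\in\cA} X^*\otimes X,
\]
which is precisely the braided function algebra $\cF$. So the first step is to make the equivalence $\SkCat_\cA(\Ann)\simeq \cA$ of right $\cA$-module categories precise, using the relation-to-factorization-homology results: by \cref{thm:skeinexcision} we have $\SkCat_\cA(\Ann)\cong \int_\Ann \cA$, and since the annulus is an open disk $\DD\cong\Ann$ we get $\SkCat_\cA(\Ann)\cong \cA$; one must then check this equivalence is compatible with the right $\cA$-module structures, i.e. with the embedding $\Ann\coprod\DD\hookrightarrow\Ann$ inserting a disk near the inner boundary circle versus the standard self-insertion $\DD\coprod\DD\hookrightarrow\DD$. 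The second step is to observe that passing to internal endomorphism algebras is functorial for equivalences of module categories, so $\SkAlgint_\cA(\Ann)=\underline{\End}_{\SkCat_\cA(\Ann)}(\bu)\cong\underline{\End}_\cA(\bu)=\cF$. The third step is to verify that this abstract isomorphism agrees with the concrete map $\cF\to\SkAlgint_\cA(\Ann)$ defined above via the skeins $s_{V,W,f}$; this is a matter of tracing through the coend description of $\cF$ and checking that the colimit cocone built from the $s_{V,W,f}$ is the universal one — equivalently that the map is an isomorphism on each multiplicity space $\Hom_\cA(X,\bu\otimes\bu)\cong\Hom_\cA(X,\cF)\to\Hom_{\SkCat_\cA(\Ann)}(\cP(X),\bu)$. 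Alternatively — and this is cleaner — one simply \emph{cites} \cite[Corollary 6.4]{BZBJ1}, where exactly this statement (that the moduli algebra of the once-punctured torus's ``annular'' building block is the reflection equation / braided function algebra) is established, and matches the notation: $\SkAlgint_\cA(\Ann)\cong A_\Ann\cong\cF$ under the identification of \cref{prop:internalskeinsendomorphisms}.

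The main obstacle I expect is bookkeeping rather than depth: one must be careful that the right $\cA$-module structure on $\SkCat_\cA(\Ann)$ coming from the \emph{boundary} insertion $\Ann\coprod\DD\hookrightarrow\Ann$ (the one used to define $\SkAlgint$) really does match, under the excision equivalence $\SkCat_\cA(\Ann)\cong\cA$, the self-action of $\cA$ — there is a potential discrepancy involving the braiding or the direction of insertion (inner vs.\ outer boundary circle), and getting the ribbon twist conventions right is where a naive argument could go wrong. Once the module-category equivalence is pinned down with the correct module structure, the identification of internal endomorphism algebras and the comparison with the explicit map $s_{V,W,f}$ are formal.
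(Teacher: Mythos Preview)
Your alternative—simply citing \cite[Corollary 6.4]{BZBJ1}—is exactly what the paper does, and is correct. But the main argument you sketch contains a genuine error, not just bookkeeping.

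The claim that $\SkCat_\cA(\Ann)\simeq\cA$ as a right $\cA$-module category is false. The annulus is \emph{not} diffeomorphic to a disk (it has the homotopy type of $S^1$), so the step ``since the annulus is an open disk $\DD\cong\Ann$'' is simply wrong. Concretely, $\Hom_{\SkCat_\cA(\Ann)}(\bu,\bu)=\SkAlg_\cA(\Ann)$ is infinite-dimensional (e.g.\ for $\SL_2$ it is a polynomial ring in the core curve), whereas $\Hom_\cA(\bu,\bu)=\k$; so the functor $\cP$ is not fully faithful. Skeins can wind around the hole any number of times, not ``exactly once.''

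There is a second, related error in your coend computation. You write
\[
\underline{\End}(\bu)\cong\int^{X\in\cA}\Hom_\cA(X,\bu)\otimes X\cong\int^{X\in\cA}X^*\otimes X,
\]
but these two coends are different objects: $\Hom_\cA(X,\bu)$ is a vector space and the first coend is (by co-Yoneda) just $\bu$, while $X^*$ is an object of $\cA$ and the second coend is $\cF$. In other words, if the module category really were $\cA$ acting on itself, the internal endomorphism algebra of the unit would be $\bu$, not $\cF$—so your argument would yield the wrong answer. What is true is that $\SkCat_\cA(\Ann)$ is equivalent to $\cA$ as an $(\cA,\cA)$-\emph{bimodule} category (one action from each boundary), and the internal endomorphism algebra relative to one side then records the other action and produces $\cF$; but that is a different and more delicate statement than the one you made, and establishing it is essentially the content of \cite[Corollary 6.4]{BZBJ1}.
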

	
	Combining \cref{prop:annulusREA} and \cref{prop:SkCatmonadicity}, we obtain the following statement.
	
	\begin{corollary}
		We have a natural equivalence of categories
		\[\rZ_\cA(\Ann) = \fr{\SkCat}_\cA(\Ann)\cong \HC(\cA).\]
		\label{cor:annulusHC}
	\end{corollary}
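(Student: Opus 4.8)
The plan is to deduce the statement by directly combining the two propositions just established, with no genuinely new input. First I would note that the annulus is of the form $\Sigma^*$ for $\Sigma$ the standard disk: indeed $\Ann = \DD\setminus\DDc$, and $\DD$ is a connected oriented surface equipped with the embedded closed disk $\DDc\hookrightarrow\DD$ that underlies all of the constructions of \cref{sect:internalskeins}. Hence \cref{prop:SkCatmonadicity} applies verbatim with $\Sigma = \DD$, giving an equivalence
\[\rZ_\cA(\Ann) = \fr{\SkCat}_\cA(\Ann)\;\cong\;\lmod{\SkAlgint_\cA(\Ann)}{\fr{\cA}}.\]
Next I would invoke \cref{prop:annulusREA}, which identifies $\SkAlgint_\cA(\Ann)$ with the braided function algebra $\cF$ \emph{as an algebra object of $\fr{\cA}$}. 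Transporting module categories along this algebra isomorphism yields
\[\lmod{\SkAlgint_\cA(\Ann)}{\fr{\cA}}\;\cong\;\lmod{\cF}{\fr{\cA}},\]
and the right-hand side is by definition the Harish--Chandra category $\HC(\cA)$. Composing the two equivalences gives $\rZ_\cA(\Ann)\cong\HC(\cA)$.

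The only point requiring attention — and the reason the word ``natural'' appears in the statement — is compatibility with the ambient structures used later: one wants the composite equivalence to intertwine (i) the right $\cA$-module structure on $\rZ_\cA(\Ann)$ coming from the boundary-insertion $\Ann\coprod\Ann\to\Ann$ (equivalently the $\DD$-action) with the monoidal structure on $\HC(\cA)$ built from the field goal transform $\tau_{lr}$, and (ii) the functor $\cP\colon\cA\to\SkCat_\cA(\Ann)$ with the free-module functor $\fr{\cA}\to\HC(\cA)$, $V\mapsto\cF\otimes V$. I expect this bookkeeping — tracing the proofs of \cref{prop:SkCatmonadicity} and \cref{prop:annulusREA} and matching the stacking operation on internal skeins with the multiplication on $\cF$ and the braiding-based coend picture of \cref{sect:HC} — to be the main obstacle, though it is routine rather than hard. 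Since \cref{prop:annulusREA} is already asserted as an isomorphism \emph{of algebras}, the monoidal statement follows formally once the module-level equivalence is in place, and no diagrammatic computation beyond what is cited from \cite{BZBJ1} should be needed.

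In short, the corollary is an immediate consequence of \cref{prop:annulusREA} and \cref{prop:SkCatmonadicity}; the proof I would write simply spells out the chain of equivalences above and records the observed compatibilities with the $\cA$-module and monoidal structures.
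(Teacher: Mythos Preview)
Your proposal is correct and matches the paper's approach exactly: the paper states the corollary as an immediate consequence of \cref{prop:annulusREA} and \cref{prop:SkCatmonadicity}, and explicitly leaves the compatibility of monoidal and module structures (your points (i) and (ii)) to the reader. If anything, you have written out more than the paper does.
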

	
	We leave it to the reader to check that the monoidal structure on $\HC(\cA)$, the monoidal functor $\cA\rightarrow \HC(\cA)$ and the left $\HC(\cA)$-module structure on $\fr{\cA}$ defined in \cref{sect:HC} go under the above equivalence to the corresponding algebraic structures defined on skein categories in \cref{sect:internalskeins}.
	
	Using the above description of the annulus skein category, we can compute the skein category of a closed surface. Suppose, as before, that $\Sigma$ is a surface with a chosen disk embedding $\DDc\hookrightarrow \Sigma$ and $\Sigma^* = \Sigma \setminus \DDc$. As we have observed in \cref{sect:internalskeins}, $\SkCat_\cA(\Sigma^*)$ is naturally a right $\SkCat_\cA(\Ann)$-module. Therefore, combining \cref{cor:annulusHC} and \cref{prop:HCmomentmap} we obtain a quantum moment map
	\begin{equation}
	\mu\colon \cF\longrightarrow \SkAlgint_\cA(\Sigma^*). \label{eq:skeinalgebraqmm}
	\end{equation}
	In particular, it makes sense to talk about strongly equivariant $\SkAlgint_\cA(\Sigma^*)$-modules.
	
	\begin{proposition}
		We have a natural equivalence of categories
		\[\rZ_\cA(\Sigma) = \fr{\SkCat}_\cA(\Sigma)\cong \lmod{\SkAlgint_\cA(\Sigma^*)}{\fr{\cA}}^{\str}.\]
		\label{prop:closedskeincategory}
	\end{proposition}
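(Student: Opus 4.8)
The plan is to recover the closed surface $\Sigma$ by gluing back the removed disk along an annular collar and to transport this decomposition through the equivalences already assembled. Since $\Sigma$ is closed and connected, we may write $\Sigma = \Sigma^* \cup_{\Ann} \DD$, a collar gluing along the open annulus $\Ann \cong S^1 \times \bR$: the annulus sits in $\Sigma^*$ as a neighborhood of its end (via the embedding $\Sigma^* \coprod \Ann \hookrightarrow \Sigma^*$ of \cref{sect:internalskeins}) and in $\DD$ as a neighborhood of the boundary circle (via $\Ann \coprod \DD \hookrightarrow \DD$). The $\otimes$-excision property of factorization homology \cite{AyalaFrancis}, applied in the $\PrL$-valued form $\rZ_\cA = \int^{\PrL}_{(-)}\fr\cA$, then yields an equivalence
\[\rZ_\cA(\Sigma) \;\cong\; \rZ_\cA(\Sigma^*) \rt{\rZ_\cA(\Ann)} \rZ_\cA(\DD),\]
where the right $\rZ_\cA(\Ann)$-module structure on $\rZ_\cA(\Sigma^*)$ and the left $\rZ_\cA(\Ann)$-module structure on $\rZ_\cA(\DD)$ are the ones induced by the two embeddings just named.

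Next I would substitute the three local identifications already available: $\rZ_\cA(\DD) = \fr{\SkCat_\cA(\DD)} \cong \fr\cA$; $\rZ_\cA(\Ann) \cong \HC(\cA)$ by \cref{cor:annulusHC}; and $\rZ_\cA(\Sigma^*) \cong \lmod{\SkAlgint_\cA(\Sigma^*)}{\fr\cA}$ by \cref{prop:SkCatmonadicity}. What must be checked is that these equivalences intertwine the module structures in the gluing formula with the algebraic ones of \cref{sect:HC}. For the disk this is precisely the assertion recorded in the remark following \cref{cor:annulusHC}, namely that $\rZ_\cA(\Ann) \cong \HC(\cA)$ carries the left $\HC(\cA)$-module structure on $\fr\cA$ defined algebraically in \cref{sect:HC} to the topological one. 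For $\Sigma^*$ one combines that same remark with \cref{prop:HCmomentmap}: the right $\HC(\cA)$-action on $\lmod{\SkAlgint_\cA(\Sigma^*)}{\fr\cA}$ compatible with its ambient right $\cA$-action is classified by a quantum moment map, and under the isomorphism $\cF \cong \SkAlgint_\cA(\Ann)$ of \cref{prop:annulusREA} this moment map is exactly the map $\mu\colon \cF \to \SkAlgint_\cA(\Sigma^*)$ induced by $\Sigma^* \coprod \Ann \hookrightarrow \Sigma^*$, which is the $\mu$ fixed just before the statement.

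Granting these compatibilities, the gluing formula becomes
\[\rZ_\cA(\Sigma) \;\cong\; \lmod{\SkAlgint_\cA(\Sigma^*)}{\fr\cA} \rt{\HC(\cA)} \fr\cA,\]
and an application of \cref{prop:HCstrongequivariance}, with $A = \SkAlgint_\cA(\Sigma^*)$ carrying the quantum moment map $\mu$, identifies the right-hand side with $\lmod{\SkAlgint_\cA(\Sigma^*)}{\fr\cA}^{\str}$, which is the claim.

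I expect the only real obstacle to be the middle step: verifying that excision delivers the relative tensor product with exactly the two $\HC(\cA)$-module structures described — equivalently, that the topological gluing datum $\Sigma = \Sigma^* \cup_{\Ann} \DD$ corresponds, under \cref{thm:skeinexcision} and \cref{prop:SkCatmonadicity}, to the algebraic bimodule datum $\bigl(\lmod{\SkAlgint_\cA(\Sigma^*)}{\fr\cA},\ \HC(\cA),\ \fr\cA\bigr)$. Once the remark following \cref{cor:annulusHC} is taken as established, this amounts to chasing the standard annulus-and-disk embeddings through Cooke's excision equivalence; it is routine, but it is the one place where content beyond invoking earlier results enters, everything else being formal manipulation of the relative tensor product.
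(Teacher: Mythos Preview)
Your proposal is correct and follows essentially the same route as the paper: the decomposition $\Sigma = \Sigma^* \cup_{\Ann} \DD$, excision to obtain a relative tensor product over $\rZ_\cA(\Ann) \cong \HC(\cA)$, the identification $\rZ_\cA(\Sigma^*) \cong \lmod{\SkAlgint_\cA(\Sigma^*)}{\fr\cA}$ via \cref{prop:SkCatmonadicity}, and then \cref{prop:HCstrongequivariance}. The only cosmetic difference is that the paper invokes excision at the level of skein categories via \cref{thm:skeinexcision} and then passes to free cocompletions, whereas you apply $\otimes$-excision directly in $\PrL$; these are equivalent by \cref{lm:factorizationcocompletion}.
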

	\begin{proof}
		We have a decomposition $\Sigma = \Sigma^*\cup_{\Ann} \DD$. Therefore, by \cref{thm:skeinexcision} we have an equivalence of categories
		\[\SkCat_\cA(\Sigma)\cong \SkCat_\cA(\Sigma^*)\rt{\SkCat_\cA(\Ann)} \cA.\]
		
		Passing to free cocompletions and using \cref{cor:annulusHC} we obtain an equivalence
		\[\rZ_\cA(\Sigma)\cong \rZ_\cA(\Sigma^*)\rt{\HC(\cA)} \fr{\cA}.\]
		
		From \cref{prop:SkCatmonadicity} we get an equivalence
		\[\rZ_\cA(\Sigma)\cong \lmod{\SkAlgint_\cA(\Sigma^*)}{\fr{\cA}}\rt{\HC(\cA)} \fr{\cA}.\]
		
		The claim then follows from \cref{prop:HCstrongequivariance}.
	\end{proof}
	
	\subsection{Skein algebras of surfaces}\label{sec:skeinalgebrasofsurfaces}
	
	Let $\Sigma$ be a closed oriented surface of genus $g$ and let $\Sigma^* = \Sigma \setminus \DDc$ denote the surface obtained by removing some disk in $\Sigma$. Then $\Sigma^*$ has a ``handle and comb'' presentation with $2g$ handles, see \cref{fig:handlebody}. Each handle determines an embedding $\Ann\hookrightarrow \Sigma^*$ and hence an algebra map $\cF\rightarrow \SkAlgint_\cA(\Sigma^*)$. Thus, we obtain a map
	\[\cF^{\otimes 2g}\longrightarrow \SkAlgint_\cA(\Sigma^*)\]
	of objects in $\fr{\cA}$. The following is shown in \cite[Theorem 5.14]{BZBJ1}.
	
	\begin{figure}
		\includegraphics[height=1.5in]{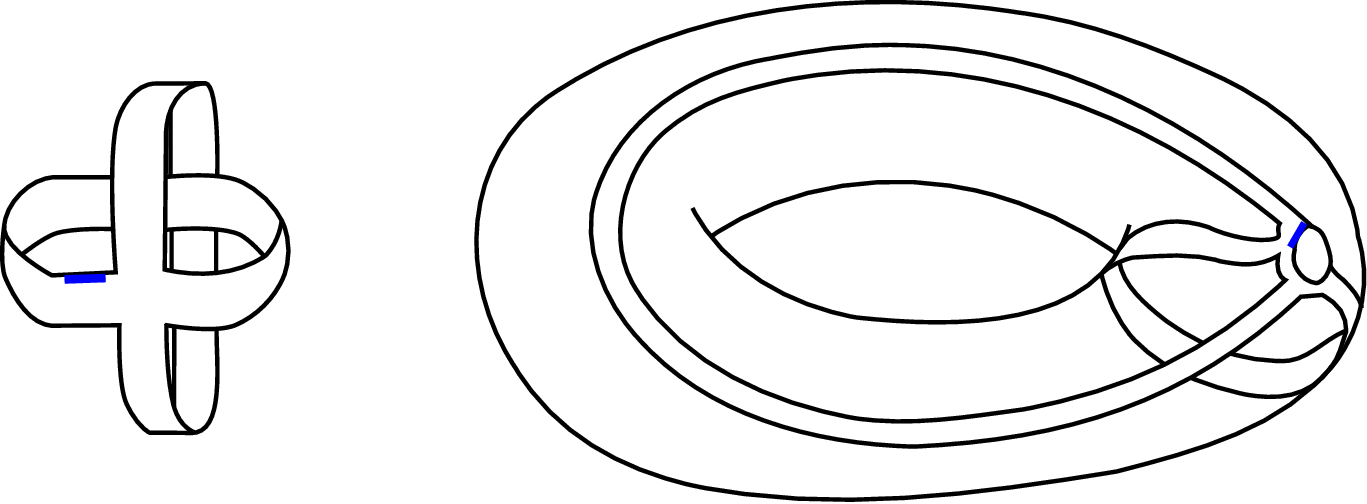}
		\caption{The handle-and-comb decomposition of the once-punctured genus one surface, embedded on the boundary of the genus one handlebody.}\label{fig:handlebody}
	\end{figure}
	
	\begin{proposition}\label{prop:handlecomb}
		The map
		\[\cF^{\otimes 2g}\rightarrow \SkAlgint_\cA(\Sigma^*)\]
		defined above is an isomorphism.
	\end{proposition}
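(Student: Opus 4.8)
The plan is to build $\Sigma^*$ out of a disk and $2g$ annuli via the handle-and-comb presentation (\cref{fig:handlebody}) and to propagate the identification $\SkAlgint_\cA(\Ann)\cong\cF$ of \cref{prop:annulusREA} through the gluing by excision. The handle-and-comb picture realizes $\Sigma^*$ as the comb disk $\DD$ with $2g$ copies $\Ann_1,\dots,\Ann_{2g}$ of the annulus glued on along $2g$ pairwise disjoint collar strips, each gluing region being an open disk over which factorization homology is $\cA$ itself.

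First I would apply excision for skein categories \cite{Cooke}, together with \cref{thm:skeinexcision} and \cref{lm:factorizationcocompletion}, to obtain in $\PrL$ an equivalence
\[\rZ_\cA(\Sigma^*)\;\simeq\;\fr{\cA}\rt{\fr{\cA}^{\otimes 2g}}\HC(\cA)^{\otimes 2g},\]
in which the comb contributes $\rZ_\cA(\DD)=\fr{\cA}$, a right $\fr{\cA}^{\otimes 2g}$-module via the $2g$ disjoint boundary disk-insertions, and the $i$-th handle contributes $\rZ_\cA(\Ann_i)\simeq\HC(\cA)$ (\cref{cor:annulusHC}), a left $\fr{\cA}$-module via the embedded disk $\DD\hookrightarrow\Ann$. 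Under the equivalence the distinguished object $\bu\in\SkCat_\cA(\Sigma^*)$ goes to the external tensor product of the distinguished objects of the pieces.

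By \cref{prop:internalskeinsendomorphisms}, $\SkAlgint_\cA(\Sigma^*)=\underline{\End}(\bu)$, so it remains to show that forming internal endomorphisms of this object turns the excision decomposition into a tensor product of algebras in $\fr{\cA}$: $\SkAlgint_\cA(\Sigma^*)\cong\underline{\End}_\cA(\bu)\otimes\bigotimes_{i=1}^{2g}\underline{\End}_{\SkCat_\cA(\Ann_i)}(\bu)$, which equals $\bu\otimes\cF^{\otimes 2g}\cong\cF^{\otimes 2g}$ since $\underline{\End}_\cA(\bu)=\bu$ and $\underline{\End}_{\SkCat_\cA(\Ann)}(\bu)=\cF$ by \cref{prop:annulusREA}; one then checks that this isomorphism is the map in the statement, induced by the $2g$ embeddings $\Ann\hookrightarrow\Sigma^*$. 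Concretely, unwinding the coend defining $\SkAlgint_\cA(\Sigma^*)(V)=\Hom_{\SkCat_\cA(\Sigma^*)}(\cP(V),\bu)$: after isotoping a skein into a regular neighborhood of the comb-plus-handles $1$-complex it decomposes as a composite of one skein in the comb (a morphism in $\cA$) with $2g$ skeins running around the handles (morphisms in $\SkCat_\cA(\Ann)$), and stacking respects this decomposition. This is essentially \cite[Theorem 5.14]{BZBJ1}, stated there in the language of moduli algebras.

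The main obstacle is this last step: checking that the topological gluing is intertwined with the genuine \emph{tensor product of algebras} in the braided category $\fr{\cA}$ — so that the braidings produced by the handle attachments give exactly the $2g$-fold tensor power of $\cF$ (and, in the $G$-case, the Alekseev--Grosse--Schomerus relations among the handles), not merely an identification of underlying objects. I would control this either by the neighborhood argument above with the braidings tracked carefully, or more economically by using \cref{prop:SkCatmonadicity}, which identifies $\rZ_\cA(\Sigma^*)$ with $\lmod{\SkAlgint_\cA(\Sigma^*)}{\fr{\cA}}$: then the iterated excision computation of $\rZ_\cA(\Sigma^*)$ above, together with the identification of the distinguished object with the free rank-one module, pins the algebra down on the nose. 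The remaining inputs — the handle-and-comb presentation with disjoint gluing regions, and \cref{prop:annulusREA} — are already in hand.
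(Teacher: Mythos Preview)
The paper does not give its own proof of this proposition: it simply records the statement and cites \cite[Theorem 5.14]{BZBJ1}. Your proposal is a reasonable reconstruction of that argument, and you yourself note as much when you write ``This is essentially \cite[Theorem 5.14]{BZBJ1}, stated there in the language of moduli algebras.'' So there is nothing to compare at the level of strategy --- you are sketching the cited proof, not departing from it.

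That said, since you have gone to the trouble of writing out a sketch, one comment on the content: the point you flag as ``the main obstacle'' --- that the excision decomposition gives the honest braided tensor product of algebras and not just an isomorphism of underlying objects --- is indeed the only nontrivial step, and your second proposed resolution (via \cref{prop:SkCatmonadicity}) is the cleaner one. The iterated excision identifies $\rZ_\cA(\Sigma^*)$ with $\lmod{\cF^{\otimes 2g}}{\fr{\cA}}$ as an $\fr{\cA}$-module category (this is how \cite{BZBJ1} sets things up, attaching one handle at a time and tracking the monad), and monadicity then forces the algebra to be $\cF^{\otimes 2g}$. The skein-by-skein argument you sketch first is correct in spirit but harder to make precise, exactly because of the braiding bookkeeping you mention.
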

	
	Consider the ring $k=\C\llbracket\hbar\rrbracket$ and let $\cA=\Repqfd(G)$ with $q=\exp(\hbar)$.  Then $\SkAlgint_\cA(\Sigma^*)$ can be considered as an algebra object in vector spaces. The following claim follows from \cite[Section 7.2]{BZBJ1}.
	
	\begin{proposition}
		$\SkAlgint_\cA(\Sigma^*)$ is a flat deformation quantization of $G^{2g}$ with respect to the Fock--Rosly Poisson bracket \cite{FockRosly}.
		\label{prop:fockrosly}
	\end{proposition}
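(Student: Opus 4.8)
The plan is to combine three inputs: the identification $\SkAlgint_\cA(\Sigma^*)\cong \cF^{\otimes 2g}$ from \cref{prop:handlecomb}, the known classical limit of $\cF$ as the algebra of functions on $G$, and the compatibility of the quantum moment map with the Fock--Rosly structure. First I would recall that for $\cA=\Repqfd(G)$ with $q=\exp(\hbar)$, the braided function algebra $\cF=\Oq(G)$ is a flat $\C\llbracket\hbar\rrbracket$-algebra whose classical limit $\cF/\hbar\cF$ is the commutative algebra $\O(G)$ of functions on $G$, and whose induced Poisson bracket (the $\hbar$-linear term of the commutator) is the Semenov-Tian-Shansky bracket on $G$ making $G$ into a Poisson variety dual to the standard Poisson--Lie structure. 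This is classical; it is the $n=1$ (single handle) case of the statement, and the flatness is built into Lusztig's integral form together with the Peter--Weyl decomposition $\cF\cong\bigoplus_{\lambda}V(\lambda)^*\otimes V(\lambda)$, which is manifestly a free $\C\llbracket\hbar\rrbracket$-module.

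Next, using \cref{prop:handlecomb}, $\SkAlgint_\cA(\Sigma^*)$ is isomorphic \emph{as an object of $\fr\cA$} to $\cF^{\otimes 2g}$, hence after applying the forgetful functor it is a flat $\C\llbracket\hbar\rrbracket$-module with classical limit $\O(G)^{\otimes 2g}=\O(G^{2g})$. The subtlety is that the algebra structure on $\SkAlgint_\cA(\Sigma^*)$ is \emph{not} the termwise tensor product of algebra structures on the factors: the handles link each other, so the product mixes the tensor factors through $R$-matrix crossings, exactly as in the Alekseev--Grosse--Schomerus moduli algebra. So I would argue that, modulo $\hbar$, all these crossing contributions become trivial (the $R$-matrix reduces to the identity), so the classical limit is the commutative algebra $\O(G^{2g})$; and that the $\hbar$-linear term of the commutator recovers precisely the Fock--Rosly Poisson bracket associated to the handle-and-comb decomposition. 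The cleanest way to see the latter is to invoke \cite[Section 7.2]{BZBJ1}: the moduli algebra there is defined via iterated relative tensor products / braided tensor products of copies of $\cF$ over the annulus, and its semiclassical limit is computed there to be the Fock--Rosly structure on $G^{2g}$; \cref{prop:handlecomb} identifies our $\SkAlgint_\cA(\Sigma^*)$ with that moduli algebra, so the computation transports directly.

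So the key steps, in order, are: (1) recall flatness of $\Oq(G)$ over $\C\llbracket\hbar\rrbracket$ and identify its classical limit with $(\O(G),\{-,-\}_{STS})$; (2) apply \cref{prop:handlecomb} to get $\SkAlgint_\cA(\Sigma^*)\cong\cF^{\otimes 2g}$ as a $\C\llbracket\hbar\rrbracket$-module, deducing flatness and the classical limit $\O(G^{2g})$; (3) identify $\SkAlgint_\cA(\Sigma^*)$ with the Alekseev--Grosse--Schomerus / moduli algebra $A_{\Sigma^*}$ (already noted in the remark after \cref{prop:internalskeinsendomorphisms}) and cite the semiclassical computation of \cite[Section 7.2]{BZBJ1} to identify the first-order bracket with the Fock--Rosly bracket. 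I expect the main obstacle to be step (3): one must be careful that the isomorphism of \cref{prop:handlecomb} is not merely an isomorphism of vector spaces but is compatible with enough structure (the embeddings $\Ann\hookrightarrow\Sigma^*$ of the handles, and hence the quantum moment maps) that the Poisson brackets match on the nose, including the cross-handle terms dictated by the combing order; handling the $\hbar$-adic convergence and well-definedness of the bracket (i.e. that the commutator is divisible by $\hbar$) is routine given flatness, but making the Fock--Rosly identification precise requires tracing through the excision equivalence carefully. Since \cite{BZBJ1} already carries this out for the moduli algebra, the proof here is essentially a citation once the identification of algebras is in hand.
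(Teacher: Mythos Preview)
Your proposal is correct and aligns with the paper's approach: the paper does not give an independent argument but simply states that the claim follows from \cite[Section 7.2]{BZBJ1}, which is exactly the citation you land on in step (3). The additional detail you supply---flatness via the Peter--Weyl decomposition of $\cF$ and the use of \cref{prop:handlecomb} to identify the underlying module---is a reasonable unpacking of what that citation contains, but the paper itself treats the proposition as a direct consequence of \cite{BZBJ1} without spelling any of this out.
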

	
	Let
	\[\D_\cA = \SkAlgint_\cA(T^2\setminus\DDc)\]
	be the internal skein algebra in genus 1.

	%
	%
	
	Writing a genus $g$ surface as a connected sum of tori, we get an isomorphism of algebras
	\[\D_\cA^{\otimes g}\cong \SkAlgint_\cA(\Sigma^*),\]
	where on the left we consider the braided tensor product of the algebras $\D_\cA$.
	
	\subsection{Handlebody modules}
	\label{sect:handlebodies}
	
	Consider an embedding $\Sigma\hookrightarrow \bR^3$ and let $H$ be its interior. So, $H$ is a handlebody with $\partial H\cong \Sigma$. In particular, it defines a relative skein module
	\[
	\SkMod_\cA(H)\colon \SkCat_\cA(\Sigma)^{\op}\longrightarrow \Vect.
	\]
	As usual, we choose an embedded disk on $\Sigma$ and set $\Sigma^* = \Sigma \setminus \DDc$. As explained in \cref{def:intskeinmod} we can restrict $\SkMod_\cA(H)$ to $\SkCat_\cA(\Sigma^\ast)^{op}$ and obtain a module $\Skint(H)$ for the internal skein algebra $ \SkAlgint_\cA(\Sigma^\ast)$ in $\fr\cA$. In \cref{sec:skeinalgebrasofsurfaces} we gave an explicit description of $\SkAlgint(\Sigma^\ast)$. The goal of this section is to compute the module $\Skint(H)$ in terms of this description. 
	
	Recall the handle and comb presentation of $\Sigma^\ast$ from \cref{sec:skeinalgebrasofsurfaces}. Such a presentation determines a geometric symplectic basis (a system of $a$ and $b$ cycles in $\Sigma^\ast$), i.e., $2g$ embeddings 
	\[
	a_1,b_1, \ldots , a_g, b_g\colon \Ann \to \Sigma^\ast
	\] 
	such that for all $i=1, \ldots, n$ the images of the $a_i$ (respectively $b_i$) are pairwise disjoint, and the intersection of $a_i$ and $b_i$ is a single disk.
	
	Moreover, we choose this system compatible with $H$ in the sense that the $b$-cycles are contractible in $H$. More precisely, we require that each embedding $b_i$ extends to a disk in $H$:
	\[
	\xymatrix{
		\Ann \ar[r]^{b_i} \ar[d] & \Sigma^\ast \ar[d] \\
		\DD \ar[r]& H 
	}
	\]
	
	Let $\Theta$ denote a disk with $g$ smaller disks removed from its interior. The $a$-cycles and the $b$-cycles can be combined to form two embeddings
	\[
	a,b\colon \Theta \hookrightarrow \Sigma
	\]
	See \cref{fig:Theta}.
	
	The following properties are immediate from the construction:
	\begin{itemize}
		\item The handlebody $H$ deformation retracts onto a copy of $\closure{a(\Theta)} \times I$. Indeed, one may begin by considering the manifold with corners $\closure{\Theta}\times I$ then define $\Sigma$ to be some smoothing of its boundary.
		
		\item The embedding $b$ extends over a disk in $H$:
		\[
		\xymatrix{
			\Theta \ar[r]^b \ar[d] & \Sigma^\ast \ar[d] \\
			\DD \ar[r]& H 
		}
		\]
	\end{itemize}
	
	\begin{figure}
		\includegraphics[height=1.5in]{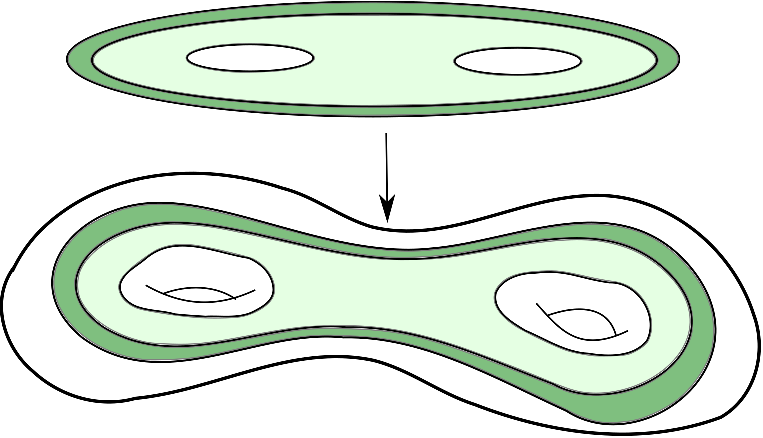}
		\caption{The surface $\Theta$, the embedding $a\colon \Theta\hookrightarrow \Sigma$ and the right $\Ann$-module structure.}\label{fig:Theta}
	\end{figure}
	
	With this set-up in hand, we may now proceed with our computation of the handlebody module. Note that $\Theta$ naturally carries the structure of a right $\DD$-module, by inserting disks inside the ``outer'' annulus in $\Theta$. We can choose the embeddings $a$ and $b$ to be compatible with the right $\DD$-module structure on $\Theta$ and $\Sigma^\ast$.
	
	We obtain the following maps on internal skeins:
	\begin{itemize}
		\item The embeddings $a,b\colon \Theta \hookrightarrow \Sigma^\ast$ determine maps of internal skein algebras
		\[
		i(a),i(b)\colon \SkAlgint_\cA(\Theta) \longrightarrow \SkAlgint_\cA(\Sigma^\ast)
		\]
		
		\item The embedding of $\Theta \hookrightarrow \DD$ determines a map of algebras
		\[
		\varepsilon\colon \SkAlgint_\cA(\Theta) \longrightarrow \SkAlgint_\cA(\DD) = \bu
		\]
		
		\item The composite $\Sigma^\ast \times I \hookrightarrow \Sigma \times I \hookrightarrow (\Sigma \times I) \sqcup_{\Sigma \times \{1\}} H \cong H$ determines a map of left $\SkAlgint_\cA(\Theta)$-modules
		\[\SkAlgint_\cA(\Sigma^\ast) \longrightarrow \Skint_\cA(H).\]
	\end{itemize}
	
	The main result of this section is the following:
	\begin{theorem}\label{thm:handlebody}
		There is an isomorphism of left $\SkAlgint_\cA(\Sigma^\ast)$-modules in $\fr\cA$:
		\[
		\Skint_\cA(H) \cong \SkAlgint_\cA(\Sigma^\ast) \rt{b,\SkAlgint_\cA(\Theta),\varepsilon} \bu
		\]
	\end{theorem}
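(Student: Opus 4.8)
The plan is to write down a natural comparison map from the right-hand side to $\Skint_\cA(H)$ and to prove it is an isomorphism by identifying both sides, as objects of $\fr\cA$, with $\SkAlgint_\cA(\Theta)$. The only geometric inputs are the two properties recorded in the set-up above: that $H$ deformation retracts onto $\closure{a(\Theta)}\times I$, and that $b\colon\Theta\hookrightarrow\Sigma^\ast$ factors through an embedded disk $\DD\hookrightarrow H$.

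\emph{Construction of the map.} Applying the relative skein module construction to the embeddings appearing in the set-up produces: the left $\SkAlgint_\cA(\Sigma^\ast)$-module map $\rho\colon\SkAlgint_\cA(\Sigma^\ast)\to\Skint_\cA(H)$ induced by the collar $\Sigma^\ast\times I\hookrightarrow H$; the algebra maps $i(a),i(b)\colon\SkAlgint_\cA(\Theta)\to\SkAlgint_\cA(\Sigma^\ast)$; and the augmentation $\varepsilon\colon\SkAlgint_\cA(\Theta)\to\SkAlgint_\cA(\DD)=\bu$ induced by $\Theta\hookrightarrow\DD$. The commuting square witnessing that $b$ extends over a disk in $H$ says that the two composites $\Theta\times I\to H$ (one through $\Sigma^\ast\times I$, one through $\DD\times I$) are isotopic; on relative skein modules this gives $\rho\circ i(b)=\rho\circ(\text{unit})\circ\varepsilon$, where $\text{unit}\colon\bu\to\SkAlgint_\cA(\Sigma^\ast)$ is the algebra unit. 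Since $\rho$ is left $\SkAlgint_\cA(\Sigma^\ast)$-linear, $\rho(z\cdot i(b)(y))=\rho(z\cdot\varepsilon(y))$ for all $z$ and $y$, so $\rho$ descends to a left $\SkAlgint_\cA(\Sigma^\ast)$-linear map
\[
\bar\rho\colon\ \SkAlgint_\cA(\Sigma^\ast)\rt{b,\SkAlgint_\cA(\Theta),\varepsilon}\bu\ \longrightarrow\ \Skint_\cA(H),\qquad \bar\rho\circ j=\rho\circ i(a),
\]
where $j\colon\SkAlgint_\cA(\Theta)\xrightarrow{i(a)}\SkAlgint_\cA(\Sigma^\ast)\to\SkAlgint_\cA(\Sigma^\ast)\rt{b,\SkAlgint_\cA(\Theta),\varepsilon}\bu$ is the canonical map.

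\emph{Identifying both sides.} The composite $\rho\circ i(a)$ is, by construction, the operation of drawing a skein of $\Theta\times I$ in the collar copy $a(\Theta)\times I\subset\Sigma^\ast\times I$ and pushing it into $H$. Because $H$ deformation retracts onto $\closure{a(\Theta)}\times I$ and, by compatibility of $a$ with the right $\DD$-module structures, this retraction preserves the marked point defining $\Skint_\cA(H)$, this operation is exactly the isomorphism of relative skein modules induced by the retraction; hence $\rho\circ i(a)$ is an isomorphism. Next, by \cref{prop:handlecomb} applied to $\Sigma^\ast$ and to $\Theta$ --- and the fact that the $g$ handles of the handle-and-comb presentation of $\Theta$ are carried by $a$, resp.\ $b$, onto the $a$-handles, resp.\ $b$-handles, among the $2g$ handles of $\Sigma^\ast$ --- the multiplication
\[
\SkAlgint_\cA(\Theta)\otimes\SkAlgint_\cA(\Theta)\ \xrightarrow{\ i(a)\otimes i(b)\ }\ \SkAlgint_\cA(\Sigma^\ast)\otimes\SkAlgint_\cA(\Sigma^\ast)\ \xrightarrow{\ m\ }\ \SkAlgint_\cA(\Sigma^\ast)
\]
is an isomorphism of objects of $\fr\cA$, right-linear for the $\SkAlgint_\cA(\Theta)$-action on the target through $i(b)$. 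Thus $\SkAlgint_\cA(\Sigma^\ast)$ is free as a right $\SkAlgint_\cA(\Theta)$-module via $i(b)$, with complementary free summand $i(a)(\SkAlgint_\cA(\Theta))$, and $i(a)$ is a split monomorphism (split by the counit of $\cF$). Consequently
\[
\SkAlgint_\cA(\Sigma^\ast)\rt{b,\SkAlgint_\cA(\Theta),\varepsilon}\bu\ \cong\ i(a)(\SkAlgint_\cA(\Theta))\otimes\bigl(i(b)(\SkAlgint_\cA(\Theta))\rt{\SkAlgint_\cA(\Theta)}\bu\bigr)\ \cong\ \SkAlgint_\cA(\Theta),
\]
and under this identification $j$ is the identity of $\SkAlgint_\cA(\Theta)$; in particular $j$ is an isomorphism. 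From $\bar\rho\circ j=\rho\circ i(a)$ we conclude $\bar\rho$ is an isomorphism of objects of $\fr\cA$, hence of left $\SkAlgint_\cA(\Sigma^\ast)$-modules. (Surjectivity of $\bar\rho$ is in any case clear: every skein in $H$ is isotopic into $\closure{a(\Theta)}\times I$, hence lies in the image of $\rho\circ i(a)$.)

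\emph{Main obstacle.} I expect the delicate point to be the first half of ``Identifying both sides'': arranging the deformation retraction of $H$ onto $\closure{a(\Theta)}\times I$ to fix the boundary marked point, equivalently choosing the removed disk $\DDc\subset\Sigma$, the disk-insertion regions, and the retraction mutually compatibly, so that $\rho\circ i(a)$ really is the retraction isomorphism rather than merely some isomorphism. This is the geometric heart of the statement; though flagged as ``immediate from the construction'' above, it warrants a careful choice. The remaining ingredients --- the commuting square for $b$, the handle-and-comb bookkeeping under $a$ and $b$, and the splitting of $i(a)$ --- are routine consequences of \cref{prop:handlecomb}.
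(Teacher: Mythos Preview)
Your proposal is correct and follows essentially the same route as the paper. The paper packages your three ingredients as separate lemmas---the $a$-cycle composite $\SkAlgint_\cA(\Theta)\to\Skint_\cA(H)$ is an isomorphism (your $\rho\circ i(a)$), the $b$-cycle action factors through $\varepsilon$ (your construction of $\bar\rho$), and the $a$-cycle composite into the relative tensor product is an isomorphism (your $j$)---and then concludes via the same commuting triangle $\bar\rho\circ j=\rho\circ i(a)$; the ``main obstacle'' you flag is exactly the geometric point the paper asserts without further comment.
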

	
	The proof of this theorem will occupy the rest of this section.
	
	Since the embedding $\Theta\times I\hookrightarrow H$ is a deformation retract, the skein theory of the handlebody can be understood in terms of the internal skein algebra of $a(\Theta)$.
	
	\begin{lemma}\label{lem:a-cyclesisoinH}
		The composite 
		\[
		\SkAlgint_\cA(\Theta) \xrightarrow{a} \SkAlgint_\cA(\Sigma^\ast) \to \Skint(H)
		\]
		is an isomorphism of left $\SkAlgint_\cA(\Theta)$-modules in $\fr \cA$.
	\end{lemma}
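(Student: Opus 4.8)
The strategy is to identify the composite with the map on relative skein modules induced by a concrete embedding of $3$-manifolds, and then to recognise that embedding as (isotopic to) a diffeomorphism.

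First I would unwind the two constituent maps. By definition $i(a)$ is induced by the embedding $a\times\id\colon\Theta\times I\hookrightarrow\Sigma^\ast\times I$, and the map $\SkAlgint_\cA(\Sigma^\ast)\to\Skint_\cA(H)$ is induced by the composite $\Sigma^\ast\times I\hookrightarrow\Sigma\times I\hookrightarrow H$ that glues $\Sigma\times I$ on as an exterior collar of $\partial H$. Composing, the map of the lemma is the map on relative skein modules induced by an embedding
\[
\iota\colon\Theta\times I\hookrightarrow H
\]
onto a copy of $\closure{a(\Theta)}\times I$ inside $H$, which sends the marked disk of $\Theta$ to the marked disk $\DDc\subset\Sigma^\ast\subset\partial H$ used to define $\Skint_\cA(H)$. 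Now the properties of the handle-and-comb presentation recorded before \cref{thm:handlebody} — that $\Sigma$ is obtained by smoothing the corners of $\partial(\closure\Theta\times I)$ and that $H$ deformation retracts onto $\closure{a(\Theta)}\times I$ — exhibit $H$ as a collar thickening of $\iota(\Theta\times I)\cong\Theta\times I$. In particular $\iota$ is a homotopy equivalence between compact handlebodies of the same genus, and is isotopic, through embeddings fixing the marked disk, to a diffeomorphism $\Theta\times I\xrightarrow{\ \sim\ }H$.

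Granting this, the conclusion is immediate. A diffeomorphism of $3$-manifolds with boundary carrying one boundary labeling to another induces an isomorphism of the corresponding relative skein modules, since both isotopy and the ball-supported skein relations are preserved under diffeomorphisms. Applying this to $\iota$ and to the labeling $\cP(V)$ for each $V\in\cA$ yields isomorphisms
\[
\SkAlgint_\cA(\Theta)(V)=\SkMod_\cA(\Theta\times I,\cP(V))\;\xrightarrow{\ \sim\ }\;\SkMod_\cA(H,\cP(V))=\Skint_\cA(H)(V),
\]
natural in $V$, and hence an isomorphism in $\fr\cA$. The resulting map is left $\SkAlgint_\cA(\Theta)$-linear: on either side the action is by stacking a skein coming from $\Theta\times I$ on top — on $\Skint_\cA(H)$ this is precisely the action defined via $i(a)$ — and stacking is visibly compatible with $\iota$.

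The crux is the topological input that $\iota$ is isotopic to a diffeomorphism, i.e. that the complement of $\iota(\Theta\times I)$ in $H$ is a product region. If one prefers to avoid appealing to this, one can instead check the bijection by hand: surjectivity holds by isotoping any ribbon graph in $H$ into the subregion $\iota(\Theta\times I)$ along the given deformation retract, chosen to fix a neighbourhood of the marked disk into which the endpoints of the graph cluster; injectivity holds because any finite chain of isotopies and ball-supported skein relations in $H$ relating two graphs coming from $\Theta\times I$ can be pushed, ball by ball, into $\iota(\Theta\times I)$, so that the relation already holds in $\SkAlgint_\cA(\Theta)(V)$. Organising this ``pushing into the collar'' coherently is the only delicate point, which is why I would favour the diffeomorphism formulation.
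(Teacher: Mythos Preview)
Your proposal is correct and follows exactly the line the paper indicates: the paper states the lemma without proof, preceded only by the sentence ``Since the embedding $\Theta\times I\hookrightarrow H$ is a deformation retract, the skein theory of the handlebody can be understood in terms of the internal skein algebra of $a(\Theta)$,'' and your argument is a careful unpacking of why that deformation retract (whose complement is a collar, so that $\iota$ is isotopic to a diffeomorphism) induces an isomorphism on relative skein modules compatibly with the $\SkAlgint_\cA(\Theta)$-action. Your identification of the crux---that one needs more than a bare homotopy equivalence, namely that the complement is a product region---is exactly the point the paper is taking for granted.
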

	
	In particular, the map $\SkAlgint_\cA(\Sigma^\ast) \to \Skint_\cA(H)$ is surjective. In other words:
	\begin{lemma}\label{lem:handlebodycyclic}
		The $\SkAlgint_\cA(\Sigma^\ast)$ module $\Skint_\cA(H)$ is cyclic, generated by the empty skein.
	\end{lemma}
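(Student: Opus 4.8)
The plan is to read this off directly from \cref{lem:a-cyclesisoinH}. Write $h\colon \SkAlgint_\cA(\Sigma^\ast)\to \Skint_\cA(H)$ for the map of left $\SkAlgint_\cA(\Sigma^\ast)$-modules in $\fr\cA$ induced by the collar inclusion $\Sigma^\ast\times I\hookrightarrow H$ described just before \cref{thm:handlebody}; by construction it sends the unit $\bu\to\SkAlgint_\cA(\Sigma^\ast)$ — that is, the empty skein — to the empty skein in $H$. By \cref{lem:a-cyclesisoinH} the composite of $h$ with the algebra map $a\colon \SkAlgint_\cA(\Theta)\to \SkAlgint_\cA(\Sigma^\ast)$ is an isomorphism, so $h$ admits a section and in particular is an epimorphism in $\fr\cA$.

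Now $\SkAlgint_\cA(\Sigma^\ast)$, regarded as a left module over itself, is free of rank one and is generated by its unit. Since $\Skint_\cA(H)$ is a quotient of this free rank-one module via $h$, and $h$ carries the unit to the empty skein, $\Skint_\cA(H)$ is generated as a $\SkAlgint_\cA(\Sigma^\ast)$-module by the empty skein. Transporting along the monadic equivalence $\rZ_\cA(\Sigma^\ast)\cong \lmod{\SkAlgint_\cA(\Sigma^\ast)}{\fr\cA}$ of \cref{prop:SkCatmonadicity}, under which $\bu\in\SkCat_\cA(\Sigma^\ast)$ corresponds to the free rank-one module, this says precisely that the functor $\Skint_\cA(H)\colon \SkCat_\cA(\Sigma^\ast)^{\op}\to\Vect$ is cyclic in the sense of \cref{def:cyclic}, generated by $s_0$ equal to the empty skein in $\SkMod_\cA(H,\cP(\bu))=\Sk_\cA(H)$.

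There is no real obstacle here — the substance is entirely contained in \cref{lem:a-cyclesisoinH}. The only points that deserve a sentence are (i) that $h$ is linear over $\SkAlgint_\cA(\Sigma^\ast)$ and not merely over $\SkAlgint_\cA(\Theta)$, which holds because the $\SkAlgint_\cA(\Sigma^\ast)$-action on both sides is implemented by stacking skeins in a collar of the distinguished boundary disk, an operation manifestly compatible with the inclusion $\Sigma^\ast\times I\hookrightarrow H$; and (ii) that ``cyclic'' is to be understood through the dictionary between left modules over the internal skein algebra and modules over the skein category furnished by \cref{prop:SkCatmonadicity}, so that the epimorphism $h$ out of the free rank-one module is exactly the surjectivity condition in \cref{def:cyclic}.
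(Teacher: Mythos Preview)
Your proof is correct and follows the paper's own approach: the paper simply notes, immediately before stating the lemma, that the composite in \cref{lem:a-cyclesisoinH} being an isomorphism makes the map $\SkAlgint_\cA(\Sigma^\ast)\to\Skint_\cA(H)$ surjective, and restates this as the lemma without further argument. Your write-up is more thorough—in particular your remarks (i) on $\SkAlgint_\cA(\Sigma^\ast)$-linearity of $h$ and (ii) on translating ``cyclic'' through the monadic equivalence of \cref{prop:SkCatmonadicity}—but the underlying idea is identical.
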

	
	The next result uses that the $b$-cycles are contractible in the handlebody.
	\begin{lemma}\label{lem:b-cycleskilledinH}
		The action of $\SkAlgint_\cA(\Theta)$ on the empty skein in $\Skint_\cA(H)$ via the inclusion of $b$-cycles factors through $\varepsilon$. In other words, there is a commutative diagram:
		\[
		\xymatrix{
			\SkAlgint_\cA(\Sigma^\ast) \ar[r] & \Skint_\cA(H)\\
			\SkAlgint_\cA(\Theta) \ar[u]_b \ar[r]_\varepsilon & \bu \ar[u] 
		}
		\]
		Thus, there is a well-defined morphism of left $\SkAlgint_\cA(\Sigma^\ast)$-modules
		\[
		\SkAlgint_\cA(\Sigma^\ast) \rt{b,\SkAlgint_\cA(\Theta),\varepsilon} \bu \to \Skint_\cA(H)
		\]
	\end{lemma}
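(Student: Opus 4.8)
The plan is to deduce commutativity of the square from functoriality of internal skein modules, the identification of the module action on the empty skein with a collar inclusion, and the hypothesis that the $b$-cycles bound disks in $H$.

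First I would unwind the two composites. By construction of the module structure in \cref{def:intskeinmod}, the map ``act on the empty skein'' $\SkAlgint_\cA(\Sigma^\ast)\to\Skint_\cA(H)$ agrees with the map on internal skein modules induced by the collar embedding $\Sigma^\ast\times I\hookrightarrow H$: the distinguished generator of $\Skint_\cA(H)$ is the image of the empty skein of $\Sigma^\ast\times I$ under that collar, and the $\SkAlgint_\cA(\Sigma^\ast)$-action is by stacking inside the collar. Precomposing with $i(b)$, which is induced by $b\times\id_I\colon\Theta\times I\hookrightarrow\Sigma^\ast\times I$, shows that the top-then-right composite $\SkAlgint_\cA(\Theta)\to\Skint_\cA(H)$ is the map on internal skein modules induced by the thickened embedding $\iota_b\colon\Theta\times I\xrightarrow{b\times\id}\Sigma^\ast\times I\xrightarrow{\ \mathrm{collar}\ }H$. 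Likewise, since $\varepsilon$ is induced by the hole-filling map $\Theta\hookrightarrow\DD$ and $\bu=\SkAlgint_\cA(\DD)\to\Skint_\cA(H)$ selects the empty skein of a ball in $H$, the left-then-bottom composite is induced by $\iota_\varepsilon\colon\Theta\times I\xrightarrow{(\text{fill the }g\text{ holes})\times\id}\DD\times I\hookrightarrow H$.

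The heart of the argument is then to show that $\iota_b$ and $\iota_\varepsilon$ are isotopic as embeddings $\Theta\times I\hookrightarrow H$, through embeddings preserving the right $\cA$-module structure on $\Theta$ — i.e. fixing the outer annular collar of $\Theta$ on which internal skeins terminate. This is precisely the thickened form of the commutative square
\[\xymatrix{\Theta \ar[r]^b \ar[d] & \Sigma^\ast \ar[d] \\ \DD \ar[r]& H}\]
whose commutativity up to isotopy in $H$ is the assumption that $b$ extends over a disk in $H$ (equivalently, that the $b$-cycles are contractible there). An isotopy of embeddings of the surface $\Theta$ into $H$ promotes, by the isotopy extension theorem, to an isotopy of regular neighbourhoods; and because both $b$ and the hole-filling map $\Theta\hookrightarrow\DD$ were chosen compatibly with the right $\DD$-module structures, this isotopy can be arranged to fix the terminating region. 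Hence $\iota_b$ and $\iota_\varepsilon$ induce the same map on internal skein modules, which is exactly the asserted commutative diagram. Concretely: a class in the image of $i(b)$ is represented by a skein supported near $b(\Theta)$ in the collar; sliding this neighbourhood along the disk in $H$ that $b(\Theta)$ bounds turns the skein ``wrapped around the $b$-handles'' into a skein in a ball, which is $\varepsilon$ applied to it.

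Finally, the existence of the factored morphism $\SkAlgint_\cA(\Sigma^\ast)\rt{b,\SkAlgint_\cA(\Theta),\varepsilon}\bu\to\Skint_\cA(H)$ is then formal: the relative tensor product is the coequalizer presenting the universal left $\SkAlgint_\cA(\Sigma^\ast)$-module receiving a module map from $\SkAlgint_\cA(\Sigma^\ast)$ whose two induced $\SkAlgint_\cA(\Theta)$-actions (via $b$, and via $\varepsilon$ followed by the unit) coincide; the collar map $\SkAlgint_\cA(\Sigma^\ast)\to\Skint_\cA(H)$ has just been shown to have this property, so it descends uniquely. I expect the main obstacle to be the middle step — upgrading the isotopy of $2$-dimensional embeddings coming from the commutative square to an isotopy of the thickened $3$-dimensional embeddings that is compatible with all the module structures and genuinely fixes the terminating disk, so that the two induced maps on internal skein modules are literally equal and not merely equal up to an automorphism; the rest is bookkeeping with the functorialities set up in \cref{sect:internalskeins}.
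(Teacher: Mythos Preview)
Your proposal is correct and follows essentially the same approach as the paper: both arguments reduce to the hypothesis that the $b$-cycle embedding $\Theta\hookrightarrow\Sigma^\ast\hookrightarrow H$ factors through the inclusion of a disk $\DD\hookrightarrow H$, and then invoke functoriality of internal skeins. The paper's proof is a single sentence to this effect, whereas you spell out carefully how the two composites are induced by thickened embeddings and why the given commutative square of surface embeddings suffices.

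One remark: the ``main obstacle'' you flag --- upgrading the 2-dimensional factorization to a 3-dimensional isotopy compatible with the module structures --- is less serious than you suggest. The commutative square $\Theta\to\DD\to H$ is part of the setup (not merely commutative up to isotopy), and thickening by $\times I$ preserves it; the compatibility with the right $\DD$-module structure was arranged in advance when the embeddings were chosen. So the two induced maps on internal skein modules agree by straightforward functoriality rather than by an isotopy argument that needs to be carefully controlled. The paper accordingly treats this as immediate.
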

	\begin{proof}
		This follows immediately from the fact that the $b$-cycle embedding
		\[
		\Theta \hookrightarrow \Sigma \hookrightarrow H,
		\]
		factors through the inclusion of a disk.
	\end{proof}
	
	It remains to show that the map $f$ is a isomorphism. To this end, recall from \cref{prop:handlecomb} that the inclusion of both $a$ and $b$ cycles determines an isomorphism in $\fr \cA$
	\[
	\SkAlgint_\cA(\Theta) \otimes \SkAlgint_\cA(\Theta) \xrightarrow{a \otimes b} \SkAlgint_\cA(\Sigma^\ast)
	\]
	Note, that this is not a morphism of algebra objects; however, it is naturally a morphism of right $\SkAlgint_\cA(b(\Theta))$-modules. Thus, we obtain the following:
	\begin{lemma}\label{lem:a-cyclesisototensorproduct}
		The composite
		\[
		\SkAlgint_\cA(\Theta) \xrightarrow{a} \SkAlgint_\cA(\Sigma^\ast) \rt{b,\SkAlgint_\cA(\Theta)} \bu
		\]
		is an isomorphism in $\fr \cA$.
	\end{lemma}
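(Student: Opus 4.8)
The plan is to obtain the lemma as a purely formal consequence of the right $\SkAlgint_\cA(\Theta)$-module isomorphism $a\otimes b\colon\SkAlgint_\cA(\Theta)\otimes\SkAlgint_\cA(\Theta)\xrightarrow{\sim}\SkAlgint_\cA(\Sigma^\ast)$ recorded just above (coming from \cref{prop:handlecomb}), by base-changing it along $\varepsilon\colon\SkAlgint_\cA(\Theta)\to\bu$ and tracking what happens to the inclusion of the first factor.

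First I would observe that the functor $(-)\rt{b,\SkAlgint_\cA(\Theta),\varepsilon}\bu$ on right $\SkAlgint_\cA(\Theta)$-modules in $\fr\cA$ preserves isomorphisms, so applying it to $a\otimes b$ produces an isomorphism in $\fr\cA$
\[
\bigl(\SkAlgint_\cA(\Theta)\otimes\SkAlgint_\cA(\Theta)\bigr)\rt{\SkAlgint_\cA(\Theta),\varepsilon}\bu\ \xrightarrow{\ \sim\ }\ \SkAlgint_\cA(\Sigma^\ast)\rt{b,\SkAlgint_\cA(\Theta),\varepsilon}\bu .
\]
Here the right $\SkAlgint_\cA(\Theta)$-action on the source is the one coming from $b$, i.e. multiplication on the second (rightmost) tensor factor; this is exactly what makes $\SkAlgint_\cA(\Theta)\otimes\SkAlgint_\cA(\Theta)$ the free right $\SkAlgint_\cA(\Theta)$-module on $\SkAlgint_\cA(\Theta)$. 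Since base change along $\varepsilon$ carries the free rank-one module to $\bu$ and is functorial, the source is canonically identified with $\SkAlgint_\cA(\Theta)\otimes\bu\cong\SkAlgint_\cA(\Theta)$, and under this identification the canonical map from $\SkAlgint_\cA(\Theta)\otimes\SkAlgint_\cA(\Theta)$ to the relative tensor product becomes $\id\otimes\varepsilon$.

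It then remains to match the composite in the statement with this isomorphism. Restricting $a\otimes b$ along the inclusion of the first factor $\SkAlgint_\cA(\Theta)\cong\SkAlgint_\cA(\Theta)\otimes\bu\hookrightarrow\SkAlgint_\cA(\Theta)\otimes\SkAlgint_\cA(\Theta)$ gives exactly $i(a)\colon\SkAlgint_\cA(\Theta)\to\SkAlgint_\cA(\Sigma^\ast)$ (using that $i(b)$ is unital), so the evident square relating $i(a)$, $a\otimes b$, and that inclusion commutes. Applying $(-)\rt{b,\SkAlgint_\cA(\Theta),\varepsilon}\bu$ to the square and using that $(\id\otimes\varepsilon)$ precomposed with the inclusion of the first factor is the identity of $\SkAlgint_\cA(\Theta)$ (again because $\varepsilon$ is unital), we conclude that the composite $\SkAlgint_\cA(\Theta)\xrightarrow{a}\SkAlgint_\cA(\Sigma^\ast)\rt{b,\SkAlgint_\cA(\Theta),\varepsilon}\bu$ is identified with the base-changed isomorphism above, hence is itself an isomorphism.

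The only point that requires genuine care is the bookkeeping internal to the braided category $\fr\cA$: one must check that the right $\SkAlgint_\cA(\Theta)$-action induced by $b$ really acts through the last tensor factor, so that the ``free module, then base change'' computation is literally valid with no stray braiding, and that $i(a)$, $i(b)$, $\varepsilon$, and the unit maps are the genuine internal-algebra structure morphisms attached to the handle-and-comb data. Given the setup already fixed, this is routine; everything else is the standard module-theoretic fact that base change along $\varepsilon$ sends the free rank-one module to $\bu$.
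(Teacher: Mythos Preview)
Your proposal is correct and is essentially the paper's own argument, carried out with more detail: the paper simply records that $a\otimes b\colon \SkAlgint_\cA(\Theta)\otimes\SkAlgint_\cA(\Theta)\to\SkAlgint_\cA(\Sigma^\ast)$ is an isomorphism of right $\SkAlgint_\cA(\Theta)$-modules (via $b$) and states the lemma as an immediate consequence, whereas you spell out the base-change step and the identification of the composite.
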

	
	\begin{proof}[Proof of \cref{thm:handlebody}]
		By \cref{lem:b-cycleskilledinH} there is an morphism 
		\[
		f:\SkAlgint_\cA(\Sigma^\ast) \rt{b,\SkAlgint_\cA(\Theta),\varepsilon} \bu \to \Skint_\cA(H)
		\]
		The inclusion of $a$-cycles define a commutative diagram:
		\[
		\xymatrix{
			\SkAlgint_\cA(\Theta) \ar[rd] \ar[rr]^\sim && \Skint_\cA(H) \\
			& \SkAlgint_\cA(\Sigma^\ast) \rt{\SkAlgint_\cA(\Theta)} \bu \ar[ru]_f
		}
		\]
		By \cref{lem:a-cyclesisoinH} the horizontal arrow is an isomorphism. By \cref{lem:a-cyclesisototensorproduct}, the lower right pointing arrow is an isomorphism. It follows that the upper right pointing arrow is an isomorphism as required.
	\end{proof}
	
	\section{Analysis}
	\label{sect:analysis}
	
	This section treats the analytic ingredients in our proof -- completions and localizations in the formal parameter $\hbar$, the finite-dimensionality of localized relative tensor products, deformation quantization modules, and the reduction to $D$-modules. 
	
	\subsection{Completions and localizations}
	
	\begin{definition}
		Let $M$ be a $\C\llbracket\hbar\rrbracket$-module. It is \defterm{$\hbar$-complete} if the map $M\rightarrow \widehat{M} = \lim M/\hbar^n M$ is an isomorphism.
	\end{definition}
	
	\begin{definition}
		Let $M$ be a complex of $\C\llbracket\hbar\rrbracket$-modules.  It is \defterm{cohomologically complete} if
		\[\bR\Hom(\C\llpar\hbar\rrpar, M) = 0.\]
	\end{definition}
	
	\begin{remark}
		The definition of cohomological completeness above agrees with the definition in \cite{KashiwaraSchapiraDQ} due to \cite[Proposition 1.6.(b)]{KashiwaraSchapiraDQ}.  Let us note in passing that without additional assumptions on $\hbar$-torsion, cohomological completeness of a module $M$ does not imply $\hbar$-completeness, nor vice versa.
	\end{remark}
	
	Let us collect from \cite{KashiwaraSchapiraDQ} a number of statements to prove that a module is $\hbar$-complete/cohomologically complete.
	
	\begin{proposition}\label{prop:complete}
		Let $A$ be a $\hbar$-complete $\C\llbracket\hbar\rrbracket$-algebra without $\hbar$-torsion such that $A/\hbar$ is Noetherian.  Let $M$ be an $A$-module.
		\begin{enumerate}
			\item $A$ is Noetherian \cite[Theorem 1.2.5.(i)]{KashiwaraSchapiraDQ}, hence an $A$-module $M$ is finitely generated over $A$ if, and only if, it is coherent as an $A$-module.
			\item If $M$ is a finitely generated $A$-module then $M$ is $\hbar$-complete (\cite[Theorem 1.2.5.(iii)]{KashiwaraSchapiraDQ}) and cohomologically complete (\cite[Theorem 1.6.1]{KashiwaraSchapiraDQ}).\label{prop:test}
			\item Assume $M$ has no $\hbar$-torsion and is $\hbar$-complete.   Then $M$ is cohomologically complete (\cite[Corollary 1.5.7]{KashiwaraSchapiraDQ}, noting that condition (b) is vacuous in our case).
			\item Assume $M$ is cohomologically complete, and let $N$ be a finitely generated right $A$-module.  Then the derived tensor product, $N\otimes^\bL_A M$ is cohomologically complete (\cite[Proposition 1.6.5]{KashiwaraSchapiraDQ}).
		\end{enumerate}
	\end{proposition}
	
	We will also make crucial use of the following ``cohomologically complete Nakayama'' theorem, 
	
	\begin{theorem}[{\cite[Theorem 0.2]{PortaShaulYekutieli}, see also \cite[Theorem 1.6.4]{KashiwaraSchapiraDQ}}]\label{thm:cohomNakayama}
		Let $M$ be a cohomologically complete complex of $\C\llbracket\hbar\rrbracket$-modules, such that $\H^i(M)=0$ for $i>0$, and such that $\H^{0}(\C\rt{\C\llbracket\hbar\rrbracket} M)$ is finitely generated.  Then $\H^{0}(M)$ is finitely generated as a $\C\llbracket\hbar\rrbracket$-module.
	\end{theorem}
	
	\subsection{DQ modules}
	
	Let $X$ be a smooth affine Poisson scheme and $L_1, L_2\subset X$ be smooth Lagrangian subschemes.  Here, by a Lagrangian subscheme of a Poisson scheme we will mean a subscheme of an open symplectic leaf which is Lagrangian there.  In addition, fix their deformation quantizations:
	\begin{itemize}
		\item Let $A$ be a $\hbar$-complete $\C\llbracket\hbar\rrbracket$-algebra without $\hbar$-torsion which is a deformation quantization of $\O(X)$.
		
		\item Let $M_1$ be a cyclic left $A$-module without $\hbar$-torsion which is a deformation quantization of $\O(L_1)$.
		
		\item Let $M_2$ be a cyclic right $A$-module without $\hbar$-torsion which is a deformation quantization of $\O(L_2)$.
	\end{itemize}
	
	In our applications, $A$ will be an internal skein algebra $\SkAlgint(\Sigma_g)$ for a surface $\Sigma_g$, while $M_1$ and $M_2$ will denote the internal skein module for the standard handlebody $H_g$, and its twist $H_g^\gamma$ by a mapping class group, as prescribed by a Heegaard splitting of some 3-manifold.  The remainder of the section is devoted to the proof of the following result.
	
	\begin{theorem}
		The localization
		\[(M_2\rt{A} M_1)[\hbar^{-1}]\]
		is a finite-dimensional $\C\llpar\hbar\rrpar$-vector space.
		\label{thm:holonomictensorproduct}
	\end{theorem}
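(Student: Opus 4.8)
The plan is to realise $M_1$ and $M_2$ as \emph{holonomic} deformation quantization modules and to deduce the statement from the Kashiwara--Schapira theory of DQ-modules \cite{KashiwaraSchapiraDQ}, in the form: the derived $\Hom$ (equivalently, derived tensor product) between two holonomic DQ-modules on a symplectic variety is ``$\C\llpar\hbar\rrpar$-constructible'', combined with the fact that the cohomology of a constructible complex on a scheme of finite type over $\C$ is finite-dimensional. This is the precise incarnation of the slogan that holonomic systems have finite-dimensional solution spaces. First I would reduce to the symplectic case: by the convention on Lagrangian subschemes, $L_1$ and $L_2$ lie in the open symplectic locus of $X$, and since $M_1,M_2$ are $\hbar$-torsion-free with $M_i/\hbar M_i=\O(L_i)$ supported there, replacing $A$ by its localization along that locus changes neither the $M_i$ nor their tensor product, so we may assume $X$ is smooth symplectic. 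Then $A$ is Noetherian by the proposition quoted above (its reduction $A/\hbar=\O(X)$ is), and since $\O(L_i)$ is a finite $\O(X)$-module, Nakayama's lemma (\cref{lm:nakayama}, applicable because $M_i$ is complete) shows $M_i$ is finitely generated, hence coherent, over $A$, with $\operatorname{gr}_\hbar M_i=\O(L_i)$ of Lagrangian support; thus $M_1$ (a left module) and $M_2$ (a right module) are holonomic.

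Next I would express the relative tensor product as a restriction to the Lagrangian diagonal. The external product $M_1\boxtimes M_2$ is a coherent holonomic module over the DQ-algebra $A\boxtimes A^{\op}$ on $X\times X$ equipped with the symplectic form $\omega\oplus(-\omega)$, supported on the Lagrangian $L_1\times L_2$; the diagonal $\Delta\hookrightarrow X\times X$ is also Lagrangian, and the relative tensor product is the global sections of the restriction of $M_1\boxtimes M_2$ to $\Delta$, i.e.
\[
M_2\otimes_A^{\mathbb{L}}M_1\;\cong\;\mathbb{R}\Gamma\bigl(\cA_\Delta\otimes^{\mathbb{L}}_{A\boxtimes A^{\op}}(M_1\boxtimes M_2)\bigr),
\]
where $\cA_\Delta$ is the holonomic, $\Delta$-supported diagonal bimodule. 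Because localization is exact, $(M_2\otimes_A M_1)[\hbar^{-1}]=H^0\bigl((M_2\otimes_A^{\mathbb{L}}M_1)[\hbar^{-1}]\bigr)$, so it suffices to show that $(M_2\otimes_A^{\mathbb{L}}M_1)[\hbar^{-1}]$ is a bounded complex of $\C\llpar\hbar\rrpar$-vector spaces with finite-dimensional cohomology.

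For this I would invoke the Kashiwara--Schapira machinery, transported to the algebraic setting: since $\cA_\Delta$ and $M_1\boxtimes M_2$ are holonomic, the complex $\cA_\Delta\otimes^{\mathbb{L}}_{A\boxtimes A^{\op}}(M_1\boxtimes M_2)$ becomes, after inverting $\hbar$, a $\C\llpar\hbar\rrpar$-constructible complex supported on $(L_1\times L_2)\cap\Delta\cong L_1\cap L_2$. The essential local input is that near a point of this intersection a DQ of a symplectic variety is modelled on asymptotic differential operators on a smooth variety $Y$, under which $M_1$ and $M_2$ become $\hbar$-families of holonomic $D$-modules on $Y$, and constructibility of the relevant $\RHom$/derived tensor of holonomic $D$-modules is Kashiwara's theorem. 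Since $L_1\cap L_2$ is a scheme of finite type over $\C$, the global sections of a $\C\llpar\hbar\rrpar$-constructible complex on it form a bounded complex with finite-dimensional cohomology (the Artin-type finiteness for constructible sheaves on algebraic varieties, valid verbatim over the field $\C\llpar\hbar\rrpar$), and taking $H^0$ gives the theorem. The separatedness of $M_2\otimes_A M_1$ established in \cref{prop:fgtensorproductcomplete}, together with Nakayama, is what allows one to pass freely between these modules and their $\hbar$-adic completions, the setting in which the Kashiwara--Schapira formalism is naturally framed.

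The main obstacle is this last step. The finiteness theorems of \cite{KashiwaraSchapiraDQ} are developed in the complex-analytic context and are usually stated under the hypothesis that the relevant support be \emph{compact}, whereas here $L_1\cap L_2$ is a possibly positive-dimensional non-compact affine scheme. The work lies in replacing ``finite-dimensionality of $\mathbb{R}\Gamma$'' by the two-step ``constructibility of the sheaf $+$ algebraic finiteness of $\mathbb{R}\Gamma$ over a finite-type base'', and in carrying the constructibility statement---ultimately a statement about holonomic $D$-modules and their $\hbar$-deformations via the local cotangent model---into the algebraic DQ framework over $\C\llbracket\hbar\rrbracket$. Along the way one must check the routine but necessary coherence and compatibility facts: finite generation of the $M_i$ over $A$, and control of the side-changing and degree-shift data on the symplectic locus (these are trivial in the geometric situation of interest, e.g. $X=G^{2g}$, where the canonical bundle is trivial).
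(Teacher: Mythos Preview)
Your high-level strategy---holonomicity plus Kashiwara--Schapira---is the right one, and you correctly identify the genuine obstacle: the finiteness theorems of \cite{KashiwaraSchapiraDQ} are analytic and stated under compactness hypotheses, so one cannot simply invoke them for an affine algebraic $X$. Your proposed resolution (``constructibility of the sheaf $+$ algebraic finiteness of $\mathbb{R}\Gamma$ over a finite-type base'') is too vague to count as a proof; carrying the constructibility statement into the algebraic DQ framework is exactly the hard part, and you have not said how to do it.

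The paper resolves this obstacle by a different and more concrete route than your diagonal-restriction picture. Rather than working on $X\times X$ and appealing to a global constructibility theorem, it \emph{formally completes along $L_1$}: the star product on $A$ is bidifferential, so it extends to a deformation quantization $\widehat{A}$ of $\O(\widehat{X}_{L_1})$, and $M_1$ becomes an $\widehat{A}$-module while $M_2$ is replaced by $\widehat{M}_2=M_2\otimes_A\widehat{A}$. By an algebraic Lagrangian neighborhood theorem, $\widehat{X}_{L_1}\cong\widehat{\T^*L_1}_{L_1}$, so $\widehat{A}$ is (up to a twist) completed differential operators. The key device is a second $\C\llbracket\hbar\rrbracket$-lattice $\widehat{A}_{L_1}\subset\widehat{A}[\hbar^{-1}]$ with $\widehat{A}_{L_1}/\hbar\cong\D_{\omega_0}(L_1)$; choosing $\widehat{A}_{L_1}$-lattices $N_i$ for the $M_i[\hbar^{-1}]$, one reduces via \cref{prop:fglattice} to showing $N_2/\hbar\otimes_{\D_{\omega_0}(L_1)}N_1/\hbar$ is finite-dimensional. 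This is now a pure $D$-module statement on the smooth affine variety $L_1$: the singular supports are the zero section and the normal cone $C_{L_1}(L_2)$ (this is the one place analytic input from \cite{KashiwaraSchapiraDQ} is used), so both are holonomic, and standard $D$-module finiteness (as in \cite{HottaTakeuchiTanisaki}) finishes the argument. The asymmetric role of $L_1$ here---completing along it to convert $M_2$ into a holonomic $D$-module on $L_1$---is the idea your symmetric diagonal approach misses.
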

	
	The proof of \cref{thm:holonomictensorproduct} will be modeled on the proof of constructibility of the derived Hom of holonomic DQ modules in the analytic setting, see \cite[Theorem 7.2.3]{KashiwaraSchapiraDQ}.  \emph{A priori} the tools of \cite{KashiwaraSchapiraDQ} apply only to the analogue of \cref{thm:holonomictensorproduct} for analytic DQ modules and their relative tensor products.  We will therefore repeat the outline of their proof in the algebraic context, which uses the deformation to the normal cone of $L_1$ to reduce the question to one about $D$-modules on $L_1$. Specifically, our definition of $\widehat{A}_{L_1}$ below is motivated by the analogous construction in \cite[Section 7.1]{KashiwaraSchapiraDQ}.
	
	Let us begin by choosing an isomorphism of vector spaces $A\cong \O(X)\llbracket\hbar\rrbracket$.  We obtain an associative multiplication on $\O(X)\llbracket\hbar\rrbracket$, which by \cite[Remark 1.7]{BezrukavnikovKaledin} we may assume is given by a power series of bidifferential operators.  Because a differential operator can only reduce the degree of vanishing along $L_1$ by a finite amount, the multiplication extends to the completion $\O(\widehat{X}_{L_1})\supset \O(X)$ along $L_1$. In this way we obtain a deformation quantization $\widehat{A}\supset A$ of $\O(\widehat{X}_{L_1})$.
	
	Recall that the $A$-module $M_1$ is cyclic, i.e. we have a surjection $A\rightarrow M_1$. In particular, the $\O(X)\llbracket\hbar\rrbracket$-module structure on $\O(L_1)\llbracket\hbar\rrbracket$ is also given by a bidifferential operator. Therefore, the $A$-module structure on $M_1$ extends to an $\widehat{A}$-module structure. Define
	\[\widehat{M}_2 = M_2\rt{A} \widehat{A}\]
	which is a finitely generated $\widehat{A}$-module. Then
	\[M_2\rt{A} M_1\cong \widehat{M}_2\rt{\widehat{A}} M_1.\]
	
	Following \cite{KashiwaraSchapiraDQ}[Section 7.1], we define $J\subset \widehat{A}[\hbar^{-1}]$ to be the kernel of
	\[
	\hbar^{-1}\widehat{A}\xrightarrow{\hbar} \widehat{A} \rightarrow \O(\widehat{X}_{L_1})\rightarrow \O(L_1)
	\]
	and denote by $\widehat{A}_{L_1}\subset \widehat{A}[\hbar^{-1}]$ the $\C\llbracket\hbar\rrbracket$-subalgebra generated by $J$.
	
	\begin{proposition}
		The inclusion $\widehat{A}_{L_1}\subset \widehat{A}[\hbar^{-1}]$ induces an equality
		\[\widehat{A}_{L_1}[\hbar^{-1}] = \widehat{A}[\hbar^{-1}].\]
	\end{proposition}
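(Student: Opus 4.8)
The plan is to reduce the claimed equality to the single observation that the subalgebra $\widehat{A}_{L_1}\subseteq\widehat{A}^{loc}$ already contains all of $\widehat{A}$; granting this, inverting $\hbar$ immediately yields both inclusions.

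First I would unwind the definition of $J$. The arrow $\hbar^{-1}\widehat{A}\xrightarrow{\hbar}\widehat{A}$ is the isomorphism of $\C\llbracket\hbar\rrbracket$-modules given by multiplication by $\hbar$ (both sides being $\hbar$-torsion-free submodules of $\widehat{A}^{loc}$, since $\widehat{A}\cong\O(\widehat{X}_{L_1})\llbracket\hbar\rrbracket$ is flat over $\C\llbracket\hbar\rrbracket$), and the composite $\widehat{A}\to\O(\widehat{X}_{L_1})\to\O(L_1)$ is reduction modulo $\hbar$ followed by restriction to $L_1$. Hence an element $\hbar^{-1}a\in\hbar^{-1}\widehat{A}$ lies in $J$ exactly when the image of $a$ in $\O(\widehat{X}_{L_1})=\widehat{A}/\hbar$ vanishes along $L_1$; in other words $J=\widehat{A}+\hbar^{-1}\widetilde{\mathcal{I}}$, where $\widetilde{\mathcal{I}}\subseteq\widehat{A}$ is any lift of the ideal of $L_1$ in $\O(\widehat{X}_{L_1})$. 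In particular, any $a\in\widehat{A}$ may be written $a=\hbar^{-1}(\hbar a)$ with $\hbar a\in\hbar\widehat{A}$, whose image in $\O(\widehat{X}_{L_1})$, and \emph{a fortiori} in $\O(L_1)$, is zero; thus $\widehat{A}\subseteq J$, and therefore $\widehat{A}\subseteq\widehat{A}_{L_1}$, since the latter is the $\C\llbracket\hbar\rrbracket$-subalgebra of $\widehat{A}^{loc}$ generated by $J$.

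Next I would conclude. Localizing the inclusion $\widehat{A}\subseteq\widehat{A}_{L_1}$ at $\hbar$ gives $\widehat{A}^{loc}=\widehat{A}[\hbar^{-1}]\subseteq\widehat{A}_{L_1}[\hbar^{-1}]$, while the inclusion $\widehat{A}_{L_1}\subseteq\widehat{A}^{loc}$ localizes to $\widehat{A}_{L_1}[\hbar^{-1}]\subseteq\widehat{A}^{loc}$, because $\hbar$ is already invertible in $\widehat{A}^{loc}$; together these give the equality. I do not anticipate a genuine obstacle here: the only point requiring care is the bookkeeping that identifies the abstract localization $\widehat{A}_{L_1}[\hbar^{-1}]$ with its image inside $\widehat{A}^{loc}$, which is justified by observing that $\widehat{A}_{L_1}$, being a $\C\llbracket\hbar\rrbracket$-submodule of the $\C\llpar\hbar\rrpar$-algebra $\widehat{A}^{loc}$, is $\hbar$-torsion-free, so that the localization map is injective. (The structural content of $\widehat{A}_{L_1}$ as the quantization relevant to microlocalization along $L_1$, used in the subsequent deformation-quantization arguments, plays no role in this particular statement.)
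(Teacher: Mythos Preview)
Your argument is correct. The paper does not supply its own proof of this statement; it simply cites \cite[Lemma 7.1.3]{KashiwaraSchapiraDQ}. Your self-contained verification---observing that every $a\in\widehat{A}$ lies in $J$ because $a=\hbar^{-1}(\hbar a)$ and $\hbar a$ dies modulo $\hbar$, hence $\widehat{A}\subseteq\widehat{A}_{L_1}$, and then localizing both inclusions---is exactly the elementary content behind that lemma, and nothing more is needed here.
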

	
	Thus, both $\widehat{A}_{L_1}$ and $\widehat{A}$ are $\C\llbracket\hbar\rrbracket$-lattices in $\widehat{A}[\hbar^{-1}]$. We can therefore reduce questions about $\widehat{A}$-modules to questions about $\widehat{A}_{L_1}$-modules.
	
	\begin{proposition}
		Suppose $N_1$ is a finitely generated left $\widehat{A}_{L_1}$-module without $\hbar$-torsion together with an isomorphism of left $\widehat{A}[\hbar^{-1}]$-modules $N_1[\hbar^{-1}]\cong M_1[\hbar^{-1}]$ and similarly for $N_2$. Suppose $N_2/\hbar\rt{\widehat{A}_{L_1}/\hbar} N_1/\hbar$ is a finite-dimensional $\C$-vector space. Then $(\widehat{M}_2\rt{\widehat{A}} M_1)[\hbar^{-1}]$ is a finite-dimensional $\C\llpar\hbar\rrpar$-vector space.
		\label{prop:fglattice}
	\end{proposition}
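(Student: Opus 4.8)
The plan is to reduce the statement modulo $\hbar$ and apply Nakayama's lemma, using \cref{prop:fgtensorproductcomplete} to get the separatedness needed to run Nakayama.

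First I would unwind the localizations. Localization at $\hbar$ is exact and commutes with relative tensor products, and by the previous proposition $\widehat{A}^{loc} = \widehat{A}[\hbar^{-1}] = \widehat{A}_{L_1}[\hbar^{-1}]$. Combining this with $M_2\rt{A} M_1\cong \widehat{M}_2\rt{\widehat{A}} M_1$ and the given isomorphisms of $\widehat{A}^{loc}$-modules $N_1[\hbar^{-1}]\cong M_1[\hbar^{-1}]$ and $N_2[\hbar^{-1}]\cong \widehat{M}_2[\hbar^{-1}]$, one obtains
\[(\widehat{M}_2\rt{\widehat{A}} M_1)[\hbar^{-1}]\cong \widehat{M}_2[\hbar^{-1}]\rt{\widehat{A}^{loc}} M_1[\hbar^{-1}]\cong N_2[\hbar^{-1}]\rt{\widehat{A}^{loc}} N_1[\hbar^{-1}]\cong (N_2\rt{\widehat{A}_{L_1}} N_1)[\hbar^{-1}].\]
So it suffices to prove that $N_2\rt{\widehat{A}_{L_1}} N_1$ is a finitely generated $\C\llbracket\hbar\rrbracket$-module, since then inverting $\hbar$ gives a finitely generated module over the field $\C\llpar\hbar\rrpar$, i.e. a finite-dimensional $\C\llpar\hbar\rrpar$-vector space.

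Next I would establish separatedness. The algebra $\widehat{A}_{L_1}$ is a complete $\C\llbracket\hbar\rrbracket$-algebra without $\hbar$-torsion, being a $\C\llbracket\hbar\rrbracket$-lattice in $\widehat{A}^{loc}$, and $\widehat{A}_{L_1}/\hbar$ is Noetherian; hence by the proposition above $\widehat{A}_{L_1}$ is Noetherian and the finitely generated module $N_1$ is complete. Applying \cref{prop:fgtensorproductcomplete} with $A=\widehat{A}_{L_1}$, with $N_1$ the complete left module and $N_2$ the finitely generated right module, shows that $N_2\rt{\widehat{A}_{L_1}} N_1$ is separated as a $\C\llbracket\hbar\rrbracket$-module. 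Then, by right-exactness and associativity of the relative tensor product, $(N_2\rt{\widehat{A}_{L_1}} N_1)/\hbar\cong (N_2/\hbar)\rt{\widehat{A}_{L_1}/\hbar} (N_1/\hbar)$, which is finite-dimensional over $\C$ by hypothesis. Choosing finitely many elements of $N_2\rt{\widehat{A}_{L_1}} N_1$ whose images span this quotient, \cref{lm:nakayama} applied over the complete ring $\C\llbracket\hbar\rrbracket$ to the separated module $N_2\rt{\widehat{A}_{L_1}} N_1$ shows they generate it over $\C\llbracket\hbar\rrbracket$, completing the argument.

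I expect the main obstacle to be the bookkeeping in the first paragraph: one must check that the base-change maps along $\widehat{A}_{L_1}\into \widehat{A}^{loc}$ and $\widehat{A}\into \widehat{A}^{loc}$ interact correctly with the relative tensor product after inverting $\hbar$, and that the hypothesized identifications of localizations are genuinely identifications of $\widehat{A}^{loc}$-modules compatible with the left/right actions. Once that reduction is in place, the remainder is a routine application of separatedness together with Nakayama's lemma, with no further analytic input required at this stage.
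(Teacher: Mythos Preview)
Your proposal is correct and follows essentially the same approach as the paper: apply \cref{prop:fgtensorproductcomplete} to get separatedness of $N_2\rt{\widehat{A}_{L_1}} N_1$, invoke Nakayama to conclude finite generation over $\C\llbracket\hbar\rrbracket$, and then identify the localization with $(\widehat{M}_2\rt{\widehat{A}} M_1)[\hbar^{-1}]$ via $\widehat{A}^{loc}=\widehat{A}_{L_1}[\hbar^{-1}]=\widehat{A}[\hbar^{-1}]$. The paper's proof presents these two steps in the opposite order and is considerably terser, leaving implicit the verifications (Noetherianity and completeness of $\widehat{A}_{L_1}$, completeness of $N_1$, the mod-$\hbar$ base-change identity) that you spell out.
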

	\begin{proof}
		Since $\widehat{A}_{L_1}$ is $\hbar$-complete and without $\hbar$-torsion, Claim (2) of \cref{prop:complete} implies it is cohomologically complete. Since $N_1$ is finitely generated as an $\widehat{A}_{L_1}$-module, Claim (1) of \cref{prop:complete} implies $N_1$ is cohomologically complete. Since $N_2$ is finitely generated, Claim (3) of \cref{prop:complete} implies that the derived tensor product $N_2\rtL{\widehat{A}_{L_1}} N_1$ is cohomologically complete.
		
		Since $M=N_2\rtL{\widehat{A}_{L_1}}\!\! N_1$ is concentrated in non-positive cohomological degrees and since
		\[\H^0(\C \rtL{\C\llbracket\hbar\rrbracket} M) = N_2/\hbar\rt{\widehat{A}_{L_1}/\hbar} N_1/\hbar\]
		is assumed to be finite-dimensional as a $\C$-vector space, the cohomologically complete Nakayama \cref{thm:cohomNakayama} implies that
		\[\H^0(M) = N_2\rt{\widehat{A}_{L_1}} N_1\]
		is finitely generated as a $\C\llbracket\hbar\rrbracket$-module.  In particular,
		\[
		(\widehat{M}_2\rt{\widehat{A}} M_1)[\hbar^{-1}]
		\cong (\widehat{M}_2[\hbar^{-1}])\rt{\widehat{A}[\hbar^{-1}]}(M_1[\hbar^{-1}])
		\cong (N_2[\hbar^{-1}])\rt{\widehat{A}[\hbar^{-1}]}(N_1[\hbar^{-1}])
		\cong (N_2\rt{\widehat{A}_{L_1}} N_1)[\hbar^{-1}].\]
		is a finite-dimensional $\C\llpar\hbar\rrpar$-vector space.
	\end{proof}
	
	\subsection{Reduction to \texorpdfstring{$D$}{D}-modules}
	
	In this section we work near a Lagrangian to reduce questions about DQ modules to questions about ordinary D-modules. First, we will need the following lemma.
	
	\begin{lemma}
	Let $L$ be a smooth scheme and consider the completion $\widehat{\T}^* L$ of its cotangent bundle along the zero section. Denote by $i\colon L\hookrightarrow \widehat{\T}^* L$ the natural embedding. Then the restriction map
	\[i^*\colon \H^\bullet_{dR}(\widehat{\T}^*L)\longrightarrow \H^\bullet_{dR}(L)\]
	on de Rham cohomology is an isomorphism.
	\label{lm:dRrestrictionisomorphism}
	\end{lemma}
	\begin{proof}
	Indeed, the claim follows from \cite[Proposition II.1.1]{Hartshorne} applied to $X=Z=L$ and $Y=\T^* L$.
	\end{proof}
	
	\begin{theorem}[Lagrangian neighborhood theorem]
		Let $X$ be an affine symplectic scheme and $L\subset X$ a smooth Lagrangian subscheme. Then there is a symplectomorphism of formal symplectic schemes $\widehat{X}_L\cong \widehat{\T}^*L$.
	\end{theorem}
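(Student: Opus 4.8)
The plan is to prove this by the Moser homotopy method, i.e.\ the algebraic analogue of Weinstein's Lagrangian neighborhood theorem, carried out in the formal category over $\C$; the smoothness and affineness of $L$ are what will make all the relevant obstructions vanish.

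First I would fix the linear model. Since $L$ is Lagrangian, contraction with $\omega$ identifies the normal bundle $N_{L/X}$ with $\T^*L$. Because $L$ is affine I can moreover choose a Lagrangian subbundle $C\subseteq \T X|_L$ complementary to $\T L$: such Lagrangian complements form a torsor over the global sections of a vector bundle on $L$, which is nonempty as $L$ is affine. This choice simultaneously pins down the isomorphism $N_{L/X}\cong\T^*L$ and supplies an $\omega$-isotropic model for the normal directions. Next, I would invoke the formal tubular neighborhood theorem to produce a (noncanonical) isomorphism of formal schemes $\phi_0\colon \widehat{X}_L\xrightarrow{\sim}\widehat{\T^*L}_L$ which is the identity on $L$ and, on first-order neighborhoods, induces the chosen isomorphism $N_{L/X}\cong\T^*L$. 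One builds $\phi_0$ by extending the isomorphism from the $n$-th to the $(n+1)$-st infinitesimal neighborhood one step at a time; the obstruction at each stage lies in $H^1$ of a coherent sheaf on $L$, and hence vanishes since $L$ is affine.

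Write $\omega_0$ for the canonical symplectic form on $\widehat{\T^*L}_L$ and $\omega_1=(\phi_0^{-1})^*\omega$. The point of having chosen $C$ \emph{Lagrangian} is that $\omega_0$ and $\omega_1$ then agree along $L$, i.e.\ the coefficients of $\omega_1-\omega_0$ lie in the ideal $\mathcal{I}_L$ of $L$: pulled back to $L$ both forms vanish, both make the cotangent-fibre directions and the zero-section directions isotropic, and the cross-pairing between them matches by construction. Now I would run the Moser argument. Since $\widehat{\T^*L}_L$ retracts onto $L$ by fibrewise scaling, its de Rham complex computes $H^\bullet_{\mathrm{dR}}(L)$, so the closed form $\omega_1-\omega_0$, whose restriction to $L$ is exact (indeed zero), equals $d\alpha$; applying the scaling homotopy operator, which raises the order of vanishing along $L$, I can take $\alpha$ to vanish to second order along $L$. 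Set $\omega_t=(1-t)\omega_0+t\omega_1$, which is closed and nondegenerate on the formal neighborhood, and let $v_t$ be the unique formal vector field with $\iota_{v_t}\omega_t=-\alpha$. Then $v_t$ vanishes to order two along $L$, so it is pro-nilpotent along $L$ and its flow $\psi_t$ is a well-defined family of automorphisms of $\widehat{\T^*L}_L$ fixing $L$. The standard identity $\frac{d}{dt}\psi_t^*\omega_t=\psi_t^*(d\iota_{v_t}\omega_t+d\alpha)=0$ yields $\psi_1^*\omega_1=\omega_0$, and then $\psi_1^{-1}\circ\phi_0\colon(\widehat{X}_L,\omega)\to(\widehat{\T^*L}_L,\omega_0)$ is the desired symplectomorphism of formal symplectic schemes.

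I expect the main obstacle to be verifying that each formal-geometric input genuinely goes through in the algebraic category rather than over $\bR$: the formal tubular neighborhood theorem, the formal Poincaré lemma with sufficient control on the order of vanishing of the primitive $\alpha$, and the convergence of the Moser flow as an automorphism of the formal scheme. Each of these holds because $L$ is smooth and affine — so coherent $H^1$ vanishes and lifts along pro-nilpotent thickenings exist — and because working over $\C$ causes no difficulty with the exponentials/flows involved.
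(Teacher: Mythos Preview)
Your proposal is correct and follows essentially the same strategy as the paper: a formal tubular neighborhood identification $\widehat{X}_L\cong\widehat{\T^*L}_L$ followed by Moser's trick to match the two symplectic forms. The paper cites \cite{CCT} for the tubular neighborhood and then runs Moser with $\omega_t=\omega_0+t\,d\alpha$, where $\alpha$ is chosen with $\alpha|_L=0$.

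Your version is slightly more careful than the paper's in two places. First, you explicitly choose a \emph{Lagrangian} complement $C\subset \T X|_L$ so that $\omega_0$ and $\omega_1$ agree on all of $\T X|_L$, not merely after pullback to $L$; this is what cleanly guarantees nondegeneracy of $\omega_t$ along $L$ (the paper's justification of this step is terse). Second, you arrange $\alpha$ to vanish to second order via the scaling homotopy, giving a pro-nilpotent $v_t$ and hence a genuinely convergent formal flow; the paper only arranges $\alpha|_L=0$. These refinements are improvements rather than departures from the method.
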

	\begin{proof}
		By \cite[Lemma 5.2]{CCT} we may identify $\widehat{X}_L\cong \widehat{\N} L$ as formal schemes. Since $L$ is Lagrangian, we may identify $\widehat{\N} L\cong \widehat{\T}^* L$ as formal schemes. Thus, we obtain two symplectic structures on $\widehat{\T}^* L$: $\omega_0$ coming from the cotangent bundle and $\omega_1$ coming from $\widehat{X}_L$. To prove the claim, we will use Moser's trick.
		
		By \cref{lm:dRrestrictionisomorphism} the restriction map $\H^2_{dR}(\widehat{\T}^* L)\rightarrow \H^2_{dR}(L)$ is an isomorphism. Therefore, $\omega_1 - \omega_0 = d \alpha$. Moreover, since $\omega_1|_L = \omega_0|_L$, we may arrange $\alpha$ so that $\alpha|_L = 0$. Consider the family of closed 2-forms
		\[\omega_t = \omega_0 + t d\alpha.\]
		By assumption $L\hookrightarrow \widehat{\T}^* L$ is isotropic for the whole family. So, to check that $\omega_t$ is symplectic, it is enough to check that $\omega_t^\sharp\colon \T_L\rightarrow \N^*_L$ is an isomorphism. But it immediately follows from the fact that $L$ is Lagrangian with respect to $\omega_t$ and $\alpha|_L = 0$.
		
		Since $\omega_t$ is symplectic, we may find a time-dependent vector field $v_t$ which satisfies Moser's equation $\iota_{v_t}\omega_t = -\alpha$ and which vanishes on $L$. We may integrate this vector field to an isotopy $\rho_t$ which by Moser's equation satisfies $\rho_t^*\omega_t = \const$. In particular, $\rho_1$ is an automorphism of $\widehat{\T}^* L$ preserving $L$ such that $\rho_1^*\omega_1 = \omega_0$.
	\end{proof}
	
	Let $\D(L_1)$ be the $\C$-algebra of differential operators. It admits a filtration given by the order of the differential operator. Consider the Rees algebra which is a graded $\C[\hbar]$-algebra and complete it in the $\hbar$-adic topology as well as with respect to the order filtration. We denote the completion by $\widehat{\D}_\hbar(L_1)$. The algebra $\widehat{\D}_\hbar(L_1)$ is flat over $\C\llbracket\hbar\rrbracket$ and we have an isomorphism
	\[\widehat{\D}_\hbar(L_1)/\hbar\cong \O(\widehat{\T}^*L_1)\]
	of Poisson algebras. In particular, $\widehat{\D}_\hbar(L_1)$ provides a deformation quantization of $\widehat{\T}^* L_1$.
	
	\begin{proposition}
		There is an isomorphism of algebras
		\[\widehat{\D}_\hbar(L_1)\cong \widehat{A}\]
		lifting a symplectomorphism $\widehat{\T}^* L_1\cong \widehat{X}_{L_1}$.
	\end{proposition}
	\begin{proof}
		By \cite[Theorem 1.8]{BezrukavnikovKaledin} (which is valid for formal symplectic varieties as well) deformation quantizations $\widehat{A}$ of $\widehat{\T}^* L_1$ are classified by their periods
		\[\Per(\widehat{A})\in \hbar \H^2_{dR}(\widehat{\T}^* L_1)\llbracket\hbar\rrbracket.\]
		
		If $B$ is a deformation quantization of $\widehat{\T}^*L_1$ which admits a module quantizing $L_1\subset \widehat{\T}^*L_1$, by \cite[Theorem 1.1.4, Lemma 5.3.5]{BGKP} we have
		\[i^*\Per(B)=-\hbar c_1(K_{L_1})/2,\]
		where $c_1(K_{L_1})\in\H^2_{dR}(L_1)$ is the first Chern class of the canonical bundle of $L_1$.
		
		The algebras $\widehat{\D}_\hbar(L_1)$ and $\widehat{A}$ are each deformation quantizations of $\widehat{\T}^*L_1$. The $\widehat{\D}_\hbar(L_1)$-module $\O(L_1)\llbracket\hbar\rrbracket$ and the $\widehat{A}$-module $M_1$ are each deformation quantizations of the Lagrangian $L_1\subset \widehat{\T}^*L_1$. Hence, we have
		\[i^* \Per(\widehat{A}) = -\hbar c_1(K_{L_1})/2 = i^*\Per(\widehat{\D}_\hbar(L_1)).\]
		By \cref{lm:dRrestrictionisomorphism} this implies that $\Per(\widehat{A}) = \Per(\widehat{\D}_\hbar(L_1))$. Therefore, the two deformation quantizations are isomorphic.
	\end{proof}
	
	Unpacking the definitions, the lattice
	\[\widehat{A}_{L_1}\subset \widehat{\D}_\hbar(L_1)[\hbar^{-1}]\]
	is generated by functions $f$ and $\tilde{v}=\hbar^{-1}v$ for vector fields $v$. Therefore, we obtain an isomorphism
	\[\widehat{A}_{L_1} / \hbar\cong \D(L_1).\]
	
	\begin{remark}
		Note that $\widehat{A}_{L_1} / \hbar\cong \D(L_1)$ is noncommutative while $\widehat{A}/\hbar\cong \O(\widehat{\T}^* L_1)$ is commutative. These are two different ways to take the $\hbar\rightarrow 0$ limit of the algebra $\widehat{A}[\hbar^{-1}]$.
	\end{remark}
	
	\begin{proposition}
		In the setup of Section 3.2, there is an $\widehat{A}_{L_1}$-lattice $N_2$ in the $\widehat{A}[\hbar^{-1}]$-module $\widehat{M}_2[\hbar^{-1}]$ such that the $\D(L_1)$-module $N_2/\hbar$ is holonomic.
		\label{prop:holonomiclattice}
	\end{proposition}
	\begin{remark}
		In fact, as mentioned in \cite[Lemma 7.1.12]{KashiwaraSchapiraDQ}, the holonomicity of $N_2/\hbar$ is independent of the choice of lattice. However, we will only need a single such choice.
	\end{remark}
	
	\begin{remark} Let us remark that \cref{prop:holonomiclattice} is essentially a special case of \cite[Proposition 7.1.16]{KashiwaraSchapiraDQ}: the proofs in \emph{loc. cit} are entirely algebraic in nature, and use only standard homological properties of $D$-modules and DQ-modules which hold as well in the algebraic setting as in the analytic.  Because we are only interested in the case when the support of $\widehat{M}_2$ is Lagrangian, the argument can be somewhat simplified, but otherwise applies nearly verbatim in our setting.\end{remark}
	
	We will require the following lemma concerning duality of $\widehat{A}$-modules, which is the analog of \cite[Proposition 2.3.11]{KashiwaraSchapiraDQ} in our setting (see also \cite[Theorem 2.6.6 \& D.4.3]{HottaTakeuchiTanisaki}).  

\begin{lemma}\label{lem:KS}
	The cohomology of the complex of $\widehat{A}$-modules
	\[
	\RHom_{\widehat{A}^{\op}}(\widehat{M}_2, \widehat{A}^{\op})
	\]
	is concentrated in degree $d=\dim(X)/2$.
	Moreover, we have an isomorphism of $\widehat{A}^\op$-modules
	\[
	\widehat{M_2} \cong \Ext^d_{\widehat{A}}(M',\widehat{A}),
	\]
	where $M'$ is the finitely generated $\hat{A}$-module defined by
	\[
	M' = \Ext^d_{\widehat{A}^{\op}}(\widehat{M}_2,\widehat{A}^{\op}).
	\]
\end{lemma}
\begin{proof}
The proof of this statement found in \cite[Proposition 2.3.11]{KashiwaraSchapiraDQ} applies essentially verbatim in our situation, after replacing the sheaf of $\C\llbracket\hbar\rrbracket$-algebras $\cA_X$ with the $\C\llbracket\hbar\rrbracket$-algebra $\widehat{A}$. The key ingredient is that the corresponding statements are true at the classical level (i.e. taking $\hbar=0$), which one can see by taking a Koszul resolution (locally) for $\O(L_2)$ as a $\O(X)$-module, and completing at $L_1$.
\end{proof}


	\begin{proof}[Proof of \cref{prop:holonomiclattice}]
	Note that the results of Lemma \ref{lem:KS} remain true after inverting $\hbar$, as localization is exact. Thus we can write 
	\[
	\widehat{M_2}[\hbar^{-1}] \cong \Ext^d_{\widehat{A}[\hbar^{-1}]}(M'[\hbar^{-1}],\widehat{A}[\hbar^{-1}]),
	\]	
		The argument now runs in parallel to the second paragraph of the proof of \cite[Proposition 7.1.16]{KashiwaraSchapiraDQ} (the reduction in the first paragraph is taken care of by Lemma \ref{lem:KS}). 
		
		Choosing an $\widehat{A}_{L_1}$-lattice $N'$ of $M'[\hbar^{-1}]$, we obtain a right $\widehat{A}_{L_1}$-module
		\[
		N'' := \Ext^d_{\widehat{A}_{L_1}}(N', \widehat{A}_{L_1}),
		\]
		such that $N''[\hbar^{-1}]\cong \widehat{M}_2[\hbar^{-1}]$. In particular, we have a morphism $N''\rightarrow \widehat{M}_2[\hbar^{-1}]$ and we define $N_2$ to be the image, which is an $\widehat{A}_{L_1}$-lattice of $\widehat{M}_2[\hbar^{-1}]$.
		
		In particular, we have a surjection $N''/\hbar\rightarrow N_2/\hbar$ of $\D(L_1)$-modules, so the claim follows once we show that $N''/\hbar$ is holonomic (as holonomicity is preserved by quotients). But $N''/\hbar$ naturally embeds as a submodule of $\Ext^d_{\D(L_1)}(N'/\hbar, \D(L_1))$ which is holonomic by \cite[Theorem 2.6.7]{HottaTakeuchiTanisaki}, and thus $N''/\hbar$ is also holonomic as required.
	\end{proof}
	
	We can now finish the proof of the main theorem of the section.
	
	\begin{proof}[Proof of \cref{thm:holonomictensorproduct}]
		By \cref{prop:holonomiclattice} choose $\widehat{A}_{L_1}$-lattices $N_1$ and $N_2$ for $M_1$ and $\widehat{M}_2$ respectively such that $N_1'=N_1/\hbar$ and $N_2/\hbar$ are holonomic $\D(L_1)$-modules. Consider the left $\D(L_1)$-module $N_2' = \RHom_{\D(L_1)^{\op}}(N_2/\hbar, \D(L_1))$ which is also holonomic by \cite[Corollary 2.6.8]{HottaTakeuchiTanisaki}. By \cite[Lemma 2.6.13]{HottaTakeuchiTanisaki} we have
		\[N_2/\hbar\otimes^\bL_{\D(L_1)} N_1/\hbar\cong \RHom_{\D(L_1)}(N_2', N_1').\]
		Applying \cite[Corollary 2.6.15]{HottaTakeuchiTanisaki}, by preservation of holonomicity (\cite[Theorem 3.2.3]{HottaTakeuchiTanisaki}) this is a bounded complex with finite-dimensional cohomology. Therefore, by \cref{prop:fglattice} $(\widehat{M}_2\rt{\widehat{A}} M_1)[\hbar^{-1}]$ is a finite-dimensional $\C\llpar\hbar\rrpar$-vector space.
	\end{proof}
	
	\section{Applications}
	\label{sect:applications}
	
	This section brings together the ingredients from the preceding three sections, to prove our main results.
	
	\subsection{Relative tensor product}
	
	\label{sect:relativetensorproduct}
	
	Our first goal is to prove the tensor product formula for the skein module of a 3-manifold.
	
	Let $\Sigma$ be a connected closed oriented surface and $N_1, N_2$ are oriented 3-manifolds with boundary such that $\partial N_1\cong \Sigma$ and $\partial N_2\cong \rev{\Sigma}$ and let
	\[M = N_2\cup_{\Sigma} N_1.\]
	Choose a disk embedding $\DDc\hookrightarrow \Sigma$ and let $\Sigma^* = \Sigma\setminus \DDc$.  Let $A$ denote $\SkAlgint_\cA(\Sigma^*)$, equipped with its quantum moment map \eqref{eq:skeinalgebraqmm}.
	
	\begin{theorem}
		There is an isomorphism
		\[\Sk_\cA(M)\cong \Hom_{\fr{\cA}}\left(\bu, \Skint_\cA(N_2)\rt{A} \Skint_\cA(N_1)\right).\]
		\label{thm:relativetensorproduct}
	\end{theorem}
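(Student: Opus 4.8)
The plan is to combine Walker's skein category TFT (\cref{thm:WalkerTFT}, \cref{thm:skeinexcision}) with the monadic reconstruction of skein categories over internal skein algebras (\cref{prop:SkCatmonadicity}). First I would observe that the Heegaard-type decomposition $M = N_2 \cup_\Sigma N_1$ expresses, via the gluing axiom of Walker's TFT, the skein module $\Sk_\cA(M)$ as the composition of the bimodules assigned to $N_1$ and $N_2$ over the skein category $\SkCat_\cA(\Sigma)$. Concretely, $\Sk_\cA(M)$ is the coend (relative tensor product over $\SkCat_\cA(\Sigma)$) of the functors $\SkMod_\cA(N_2, -)\colon \SkCat_\cA(\Sigma)^{\op}\to\Vect$ and $\SkMod_\cA(N_1,-)\colon \SkCat_\cA(\Sigma)\to\Vect$; here one uses \cref{lm:reverseorientation} to match the orientations, so that $\SkMod_\cA(N_2,-)$ is naturally a right module. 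This gluing statement is the skein-theoretic incarnation of the fact that composing a right module and a left module over a category yields the value on the glued manifold; it follows from the local nature of skein relations exactly as in the proof of excision.

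Next I would pass from the skein category $\SkCat_\cA(\Sigma)$ of the \emph{closed} surface to the internal skein algebra $\SkAlgint_\cA(\Sigma^*)$ of the once-punctured surface. By \cref{prop:SkCatmonadicity} the free cocompletion $\fr{\SkCat_\cA(\Sigma^*)}$ is identified with $\lmod{\SkAlgint_\cA(\Sigma^*)}{\fr\cA}$, and under this equivalence the restriction of $\SkMod_\cA(N_i,-)$ to $\SkCat_\cA(\Sigma^*)$ corresponds precisely to the internal skein module $\Skint_\cA(N_i)$, by \cref{def:intskeinmod}. So I would replace the tensor product over $\SkCat_\cA(\Sigma)$ by a tensor product over $\SkCat_\cA(\Sigma^*)$, at the cost of remembering the residual $\cA$-module structure coming from the removed disk; this is exactly the data of the right $\cA$-action discussed in \cref{sect:internalskeins}, i.e. the data making $\Skint_\cA(N_i)$ an honest module (not merely a bimodule) over $\SkAlgint_\cA(\Sigma^*)$. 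Using $\Sigma = \Sigma^* \cup_{\Ann}\DD$ together with \cref{cor:annulusHC} and \cref{prop:closedskeincategory}, the relative tensor product over the skein category of the closed surface becomes a relative tensor product over the internal skein algebra, with the coend over the annulus's $\HC(\cA)$-worth of data being absorbed into the strong-equivariance structure.

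The final step is to extract the $\Vect$-valued answer: the TFT assigns to the \emph{closed} 3-manifold $M$ an actual vector space, which is the invariant part $\Hom_{\fr\cA}(\bu, -)$ of the $\fr\cA$-internal relative tensor product $\Skint_\cA(N_2)\rt{\SkAlgint_\cA(\Sigma^*)}\Skint_\cA(N_1)$. Concretely, the ordinary skein module is the $\bu$-multiplicity space of the internal one (as recorded after \cref{def:intskeinmod} and in the remark following \cref{prop:handlecomb}), so taking $\Hom_{\fr\cA}(\bu,-)$ of the internal relative tensor product recovers $\Sk_\cA(M)$. I expect the main obstacle to be bookkeeping the module structures carefully through the two reductions — first from $\SkCat_\cA(\Sigma)$ to $\SkCat_\cA(\Sigma^*)$ and then to $\SkAlgint_\cA(\Sigma^*)$ — and in particular checking that the coend over $\SkCat_\cA(\Sigma)$ really does match the relative tensor product over $\SkAlgint_\cA(\Sigma^*)$ followed by $\Hom_{\fr\cA}(\bu,-)$, rather than some variant; this is where \cref{prop:closedskeincategory} (which computes $\rZ_\cA(\Sigma)$ as strongly equivariant modules) and the compatibility of all the $\HC(\cA)$-structures noted after \cref{cor:annulusHC} do the essential work.
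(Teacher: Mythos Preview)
Your outline is correct and follows the same overall strategy as the paper: express $\Sk_\cA(M)$ as a coend over $\SkCat_\cA(\Sigma)$ via \cref{thm:WalkerTFT}, then transport this to a relative tensor product over $\SkAlgint_\cA(\Sigma^*)$ using the monadic description of the skein category.

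The one place where the paper is sharper than your sketch is exactly the step you flag as the main obstacle. You propose to handle the passage from $\SkCat_\cA(\Sigma)$ to $\SkAlgint_\cA(\Sigma^*)$ by invoking the excision decomposition $\Sigma=\Sigma^*\cup_{\Ann}\DD$ and \cref{prop:closedskeincategory} directly, ``absorbing'' the $\HC(\cA)$ data into strong equivariance. The paper instead makes this precise via the duality machinery of Section~\ref{sect:algebra}: by \cref{ex:Homnondegenerate} the coend pairing on $\fr{\SkCat_\cA(\Sigma)}$ is nondegenerate, and by \cref{prop:stronglyequivariantduality} the pairing $(X_1,X_2)\mapsto \Hom_{\fr\cA}(\bu, X_2\rt{A} X_1)$ on strongly equivariant $A$-modules (with $A=\SkAlgint_\cA(\Sigma^*)$) is also nondegenerate. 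Since \cref{prop:closedskeincategory} identifies the two underlying categories, it then suffices to check that the two pairings agree, and this is done on the generating objects $\cP(V)$, $\cP(W)$ by a short computation using $\cP^\R\cP(W)\cong A\otimes W$.

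So your plan is right, but you should replace the informal ``absorbing the $\HC(\cA)$ data'' step with an appeal to \cref{prop:stronglyequivariantduality}; that proposition is precisely the tool that converts the categorical coend over the closed surface into $\Hom_{\fr\cA}(\bu,-)$ of the internal relative tensor product, and it is where the strong-equivariance bookkeeping you anticipate actually gets discharged.
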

	\begin{proof}
		Let
		\[\SkMod_\cA(N_1)\colon \SkCat_\cA(\Sigma)^{\op}\rightarrow \Vect,\qquad \SkMod_\cA(N_2)\colon \SkCat_\cA(\Sigma)\rightarrow \Vect\]
		be the relative skein modules for $N_1$ and $N_2$. By the TFT property (see \cref{thm:WalkerTFT}) we have
		\[\Sk_\cA(M)\cong \SkMod_\cA(N_2)\rt{\SkCat_\cA(\Sigma)} \SkMod_\cA(N_1).\]
		
		By \cref{ex:Homnondegenerate} the relative tensor product defines a nondegenerate pairing between $\fr{\SkCat_\cA(\Sigma)}=\Fun(\SkCat_\cA(\Sigma)^{\op}, \Vect)$ and $\Fun(\SkCat_\cA(\Sigma), \Vect)$. By \cref{prop:stronglyequivariantduality} the functor
		\[X_1, X_2\mapsto \Hom_{\fr{\cA}}(\bu, X_2\rt A X_1)\]
		defines a nondegenerate pairing between $\lmod{A}{\fr{\cA}}^{\str}$ and $\rmod{A}{\fr{\cA}}^{\str}$. In particular, it is enough to restrict all modules from $\Sigma$ to $\Sigma^*$.
		
		By definition the internal skein modules $\Skint_\cA(N_1)$ and $\Skint_\cA(N_2)$ are the images of $\SkMod_\cA(N_1)$ and $\SkMod_\cA(N_2)$ under the functors
		\[\fr{\SkCat_\cA(\Sigma^*)}\rightarrow \lmod{A}{\fr{\cA}},\qquad \fr{\SkCat_\cA(\Sigma^*)^{\op}}\rightarrow \rmod{A}{\fr{\cA}}\]
		which send $\cP(V)\mapsto A\otimes V$ and $\cP(V)\mapsto V^*\otimes A$ respectively.
		
		The claim is reduced to the commutativity of the diagram
		\[
		\xymatrix{
			{\fr{\SkCat_\cA(\Sigma^*)^{\op}}\otimes \fr{\SkCat_\cA(\Sigma^*)}} \ar[dr] \ar[dd] & \\
			& \Vect \\
			{\rmod{A}{\fr{\cA}}\otimes \lmod{A}{\fr{\cA}}} \ar[ur] &
		}
		\]
		
		It is enough to check it on the generating objects $\cP(V), \cP(W)$ for $V,W\in\cA$. Their image under the evaluation pairing on $\fr{\SkCat_\cA(\Sigma^*)}$ is
		\begin{align*}
		\Hom_{\SkCat_\cA(\Sigma^*)}(\cP(V), \cP(W))&\cong \Hom_{\fr{\cA}}(V, \cP^\R\cP(W)) \\
		&\cong \Hom_{\fr{\cA}}(V, A\otimes W).
		\end{align*}
		
		Similarly, their image under the evaluation pairing on $\lmod{A}{\fr{\cA}}$ is
		\[\Hom_{\fr{\cA}}(\bu, (V^*\otimes A)\rt{A} (A\otimes W))\cong \Hom_{\fr{\cA}}(\bu, V^*\otimes A\otimes W)\]
		which is equivalent to the previous pairing using rigidity of $\cA$.
	\end{proof}
	
	In the case when $\fr{\cA}$ has a trivial M\"uger center, the claim simplifies.
	
	\begin{corollary}
		Suppose $\fr{\cA}$ has a trivial M\"uger center. Then there is an isomorphism
		\[\Sk_\cA(M)\cong \Skint_\cA(N_2)\rt{A} \Skint_\cA(N_1)\in \ZMug(\fr{\cA})\cong\Vect.\]
		\label{cor:relativetensorproductMuger}
	\end{corollary}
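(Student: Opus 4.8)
The plan is to obtain \cref{cor:relativetensorproductMuger} as a formal consequence of \cref{thm:relativetensorproduct} and \cref{prop:stronglyequivarianttensorproductMuger}. Write $A = \SkAlgint_\cA(\Sigma^*)$ and $P = \Skint_\cA(N_2)\rt{A}\Skint_\cA(N_1)\in\fr{\cA}$. By \cref{thm:relativetensorproduct} we already have $\Sk_\cA(M)\cong\Hom_{\fr{\cA}}(\bu, P)$, so it is enough to show that $P$ lies in the M\"uger center $\ZMug(\fr{\cA})$. Once this is known, the triviality hypothesis means that $P$ is a direct sum of copies of $\bu$, and the functor $\Hom_{\fr{\cA}}(\bu,-)\colon\ZMug(\fr{\cA})\to\Vect$ is an equivalence (it preserves direct sums, being evaluation at $\bu$ under Yoneda), carrying $P$ to $\Hom_{\fr{\cA}}(\bu,P)\cong\Sk_\cA(M)$; transporting this isomorphism back through the equivalence gives the asserted $P\cong\Sk_\cA(M)$ in $\ZMug(\fr{\cA})\cong\Vect$.

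The input that must be supplied is that $\Skint_\cA(N_1)$ is a \emph{strongly equivariant} left $A$-module and $\Skint_\cA(N_2)$ a strongly equivariant right $A$-module. This is where the hypothesis that the boundaries of $N_1, N_2$ are the \emph{closed} surface $\Sigma$, resp.\ $\rev{\Sigma}$, enters: the relative skein module $\SkMod_\cA(N_i)$ is a priori an object of $\fr{\SkCat_\cA(\Sigma)} = \rZ_\cA(\Sigma)$, and by \cref{prop:closedskeincategory} (together with its right-module counterpart, via \cref{lm:reverseorientation}) one has $\rZ_\cA(\Sigma)\cong\lmod{A}{\fr{\cA}}^{\str}$. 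The internal skein modules $\Skint_\cA(N_i)$ are obtained from $\SkMod_\cA(N_i)$ by restriction of presheaves from $\Sigma$ to $\Sigma^*$, and under the above equivalences this restriction is the forgetful functor $\lmod{A}{\fr{\cA}}^{\str}\hookrightarrow\lmod{A}{\fr{\cA}}$ (it is the right adjoint to the factorization-homology functor $\rZ_\cA(\Sigma^*)\to\rZ_\cA(\Sigma)$, which in turn is the monad $S$ of \cref{sect:HC}); hence $\Skint_\cA(N_1)\in\lmod{A}{\fr{\cA}}^{\str}$ and $\Skint_\cA(N_2)\in\rmod{A}{\fr{\cA}}^{\str}$. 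This compatibility is already used implicitly in the proof of \cref{thm:relativetensorproduct}, so it can be cited rather than reproved.

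With both tensor factors strongly equivariant, \cref{prop:stronglyequivarianttensorproductMuger} applies verbatim to $V = \Skint_\cA(N_2)$ and $W = \Skint_\cA(N_1)$ and gives $P = V\rt{A}W\in\ZMug(\fr{\cA})$, which completes the argument as laid out in the first paragraph. I do not expect a genuine obstacle, since this is a corollary; the only step deserving care is the strong equivariance of the internal skein modules, i.e.\ the identification of the restriction functor $\rZ_\cA(\Sigma)\to\rZ_\cA(\Sigma^*)$ with the inclusion of the strongly equivariant subcategory, and, as noted, this is already available from \cref{prop:closedskeincategory} and the discussion leading to \cref{thm:relativetensorproduct}.
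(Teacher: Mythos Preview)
Your proposal is correct and follows essentially the same approach as the paper's proof: apply \cref{prop:stronglyequivarianttensorproductMuger} to place the relative tensor product in the M\"uger center, then invoke \cref{thm:relativetensorproduct} and the simplicity of the unit. The paper's proof is terser, simply asserting that ``both internal skein modules are strongly equivariant'' without justification, whereas you spell out why this holds via \cref{prop:closedskeincategory}; this extra care is appropriate and does not deviate from the intended argument.
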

	\begin{proof}
		Indeed, by \cref{prop:stronglyequivarianttensorproductMuger}
		\[\Skint_\cA(N_2)\rt{A} \Skint_\cA(N_1)\in \ZMug(\fr{\cA})\cong\Vect\]
		since both internal skein modules are strongly equivariant. The claim then follows follows from \cref{thm:relativetensorproduct} since the unit object of $\cA$ is simple.
	\end{proof}
	
	In the case of a Heegaard splitting, the relative tensor product formula simplifies.
	
	\begin{proposition}
		Suppose $N_1, N_2$ are handlebodies. Then there is an isomorphism
		\[\Sk_\cA(M)\cong \Sk_\cA(N_2)\rt{\SkAlg_\cA(\Sigma)} \Sk_\cA(N_1).\]
		\label{prop:tensorproducthandlebody}
	\end{proposition}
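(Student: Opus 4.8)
The plan is to upgrade the categorical tensor product formula underlying \cref{thm:relativetensorproduct} to an \emph{algebra}-level statement by means of \cref{prop:reltensorproductcategory}. As in the first step of the proof of \cref{thm:relativetensorproduct}, the TFT property of \cref{thm:WalkerTFT} gives
\[
\Sk_\cA(M) \cong \SkMod_\cA(N_2) \rt{\SkCat_\cA(\Sigma)} \SkMod_\cA(N_1),
\]
where $\SkMod_\cA(N_1)\colon \SkCat_\cA(\Sigma)^{\op} \to \Vect$ is a left module and, via \cref{lm:reverseorientation}, $\SkMod_\cA(N_2)\colon \SkCat_\cA(\Sigma) \to \Vect$ is a right module. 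Take the distinguished object of $\SkCat_\cA(\Sigma)$ to be the empty $\cA$-labeling $\bu$, so that $\End_{\SkCat_\cA(\Sigma)}(\bu) = \SkAlg_\cA(\Sigma)$ and $\SkMod_\cA(N_i)(\bu) = \Sk_\cA(N_i)$. Then \cref{prop:reltensorproductcategory} identifies the right-hand side with $\Sk_\cA(N_2) \rt{\SkAlg_\cA(\Sigma)} \Sk_\cA(N_1)$, \emph{provided} $\SkMod_\cA(N_1)$ and $\SkMod_\cA(N_2)$ are generated by invariants in the sense of \cref{def:cyclic}. So the whole proposition reduces to the claim that for a handlebody $H$ with $\partial H \cong \Sigma$, the $\SkCat_\cA(\Sigma)$-module $\SkMod_\cA(H)$ is generated by invariants --- in fact it is cyclic, generated by the empty skein.

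To prove the claim I would use the computation of the \emph{internal} skein module from \cref{sect:handlebodies}. Fix the chosen disk $\DDc \hookrightarrow \Sigma$, let $\Sigma^* = \Sigma \setminus \DDc$, and let $r\colon \SkCat_\cA(\Sigma^*) \to \SkCat_\cA(\Sigma)$ be the functor induced by the open inclusion $\Sigma^* \into \Sigma$; it preserves the distinguished object and $r(\cP(V))$ is a $V$-labeled disk in $\Sigma$. By construction (see \cref{def:intskeinmod} and \cref{prop:SkCatmonadicity}), $\Skint_\cA(H)$ is exactly the restriction of $\SkMod_\cA(H)$ along $r$. Now \cref{thm:handlebody}, equivalently \cref{lem:handlebodycyclic}, asserts that $\Skint_\cA(H)$ is a cyclic $\SkAlgint_\cA(\Sigma^*)$-module generated by the empty skein, i.e.\ for every $V \in \cA$ the map
\[
\Hom_{\SkCat_\cA(\Sigma^*)}(\cP(V), \bu) \longrightarrow \SkMod_\cA(H, \cP(V)),
\]
given by pushing a skein in $\Sigma^* \times I$ into $H$ through a collar, is surjective. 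Composing with the map $\Hom_{\SkCat_\cA(\Sigma^*)}(\cP(V), \bu) \to \Hom_{\SkCat_\cA(\Sigma)}(r(\cP(V)), \bu)$ coming from functoriality of $r$ --- the resulting triangle commutes, since both composites glue a skein supported in a collar onto $H$ --- shows that $\Hom_{\SkCat_\cA(\Sigma)}(r(\cP(V)), \bu) \to \SkMod_\cA(H, r(\cP(V)))$ is surjective for all $V$. Finally, since $\Sigma$ is connected, every $\cA$-labeling of $\Sigma$ may be isotoped into a single coordinate disk and is therefore isomorphic in $\SkCat_\cA(\Sigma)$ to some $r(\cP(V))$; as cyclicity with a fixed generator is invariant under isomorphism of the test object, $\SkMod_\cA(H)$ is cyclic over $\SkCat_\cA(\Sigma)$, generated by the empty skein. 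Applying this to $N_1$, and its orientation-reversed analogue (via \cref{lm:reverseorientation}, with \cref{thm:handlebody} applied to $N_2$ as a handlebody with boundary $\rev{\Sigma}$) to $N_2$, verifies the hypotheses of \cref{prop:reltensorproductcategory} and finishes the proof.

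The one genuinely non-formal input is \cref{thm:handlebody}: a skein in a handlebody ending on the boundary cannot in general be isotoped into a collar, so one really needs the handlebody computation --- which collapses such a skein onto the $a$-cycles --- rather than a soft argument valid for arbitrary $3$-manifolds with boundary. Everything else is bookkeeping: reconciling the $\SkCat_\cA(\Sigma)$-module picture (in which \cref{prop:reltensorproductcategory} is phrased, with distinguished object the empty labeling of the closed surface) with the $\SkAlgint_\cA(\Sigma^*)$-module picture (in which \cref{thm:handlebody} is proved, for the punctured surface), using that the ``invariant'' part is the ordinary skein module $\Sk_\cA(H)$ in both, that restriction along $r$ is compatible with the module structures, and that connectedness of $\Sigma$ lets one test cyclicity only on disk-shaped objects.
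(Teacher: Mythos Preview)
Your proof is correct and follows exactly the paper's approach: apply the TFT property to get the tensor product over $\SkCat_\cA(\Sigma)$, invoke \cref{lem:handlebodycyclic} to see the handlebody modules are cyclic (hence generated by invariants), and conclude via \cref{prop:reltensorproductcategory}. The paper's proof simply cites \cref{lem:handlebodycyclic} and asserts cyclicity of the $\SkCat_\cA(\Sigma)$-modules without further comment, whereas you carefully spell out the passage from cyclicity of $\Skint_\cA(H)$ over $\SkAlgint_\cA(\Sigma^*)$ to cyclicity of $\SkMod_\cA(H)$ over $\SkCat_\cA(\Sigma)$ (factoring through $r$, using that every labeling of the connected surface is isomorphic to one supported in the chosen disk); this is a genuine detail the paper elides, and your treatment of it is correct.
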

	\begin{proof}
		As before, by the TFT property (\cref{thm:WalkerTFT}) we have
		\[\Sk_\cA(M)\cong \SkMod_\cA(N_2)\rt{\SkCat_\cA(\Sigma)} \SkMod_\cA(N_1).\]
		
		By \cref{lem:handlebodycyclic} the handlebody skein modules are cyclic. In particular, they are generated by invariants. The claim then follows from \cref{prop:reltensorproductcategory}.
	\end{proof}
	
	\subsection{Skein category of the sphere}
	
	In this section we compute the skein category of $S^2$.
	
	\begin{proposition}
		The free cocompletion of the skein category $\SkCat_\cA(S^2)$ is equivalent to the M\"uger center $\ZMug(\fr{\cA})$.
		\label{prop:MugerS2}
	\end{proposition}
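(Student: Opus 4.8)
The plan is to apply \cref{prop:closedskeincategory} to $\Sigma = S^2$, exploiting the fact that removing a disk from $S^2$ leaves another disk. Set $\Sigma^* = S^2 \setminus \DDc$; this is an open disk, so $\SkCat_\cA(\Sigma^*) \cong \SkCat_\cA(\DD) \cong \cA$, and under this equivalence the distinguished object becomes the monoidal unit while the functor $\cP \colon \cA \to \SkCat_\cA(\Sigma^*)$ becomes (up to isomorphism) the identity, since a labelled disk inserted near the boundary of $\DD$ can be isotoped freely. By \cref{prop:internalskeinsendomorphisms} it follows that the internal skein algebra $\SkAlgint_\cA(\Sigma^*)$ is the monoidal unit $\bu \in \fr{\cA}$. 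Moreover the quantum moment map $\mu \colon \cF \to \SkAlgint_\cA(\Sigma^*)$ — the one coming from the right $\SkCat_\cA(\Ann)$-module structure via \cref{cor:annulusHC} and \cref{prop:HCmomentmap}, equivalently the algebra map induced by the embedding $\Ann \hookrightarrow \DD$ of the boundary annulus — is the counit $\epsilon \colon \cF \to \bu$, since a loop around the hole of $\Ann$ is contractible in $\DD$. (These identifications rely on the compatibility between skein-categorical and algebraic structures noted after \cref{cor:annulusHC}; one may reach the same point by decomposing $S^2$ into two disks glued over an annulus, applying \cref{thm:skeinexcision} and \cref{cor:annulusHC}, and then \cref{prop:HCstrongequivariance}.)

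Granting this, \cref{prop:closedskeincategory} yields
\[\rZ_\cA(S^2) \cong \lmod{\bu}{\fr{\cA}}^{\str},\]
the full subcategory of $\fr{\cA}$ on those objects $V$ which are strongly equivariant with respect to $\epsilon$. It then remains to identify this subcategory with $\ZMug(\fr{\cA})$. I would do this by unwinding the definition of strong equivariance in the case $A = \bu$, $\mu = \epsilon$: the defining commuting diagram says precisely that $(\epsilon \otimes \id_V) \circ \tau_V = \id_V \otimes \epsilon$ as maps $V \otimes \cF \to V$, where $\tau_V$ is the field goal transform. Writing $\cF \cong \int^{X \in \cA} X^* \otimes X$ and using that $\epsilon$ restricts to $\ev_X$ on the summand $X^* \otimes X$, this translates — after applying rigidity of $\cA$ — into the statement that the double braiding $\sigma_{X, V} \circ \sigma_{V, X}$ is the identity for every $X \in \cA$, i.e.\ that $V \in \ZMug(\fr{\cA})$. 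This final computation is the mirror image, with left and right interchanged, of the one carried out in the proof of \cref{prop:trivialOqGbimodule}, which I would invoke and transpose rather than repeat. Combining the two equivalences gives $\fr{\SkCat_\cA(S^2)} = \rZ_\cA(S^2) \cong \ZMug(\fr{\cA})$.

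The only genuine obstacle I anticipate is bookkeeping rather than anything substantive: carrying the braiding conventions correctly through the field goal transform and the rigidity isomorphisms, so as to see that strong equivariance relative to $\epsilon$ is exactly M\"uger-centrality, together with the routine verification that the moment map on $\SkAlgint_\cA(\DD) = \bu$ is the counit. Everything else is an assembly of results already established.
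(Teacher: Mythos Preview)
Your proposal is correct and follows essentially the same route as the paper: apply \cref{prop:closedskeincategory} with $\Sigma^* = \DDout$, identify $\SkAlgint_\cA(\DDout) \cong \bu$ with moment map the counit $\epsilon$, and then invoke \cref{prop:trivialOqGbimodule} to equate strong equivariance for $(\bu,\epsilon)$ with M\"uger-centrality. The only cosmetic difference is that the paper phrases the last step via $\triv_r(M)$ having trivial left $\cF$-action, whereas you unwind the explicit commuting diagram for strong equivariance; these are the same computation.
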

	\begin{proof}
		Choose a disk embedding $\DDc\hookrightarrow S^2$ and let $\DDout = S^2 - \DDc$. Then by \cref{prop:closedskeincategory} we may identify
		\[\rZ_\cA(S^2)\cong \lmod{\SkAlgint_\cA(\DDout)}{\fr{\cA}}^{\str}.\]
		The internal skein algebra $\SkAlgint_\cA(\DDout)$ is obtained by monadic reconstruction from the forgetful functor $\SkCat_\cA(\DDout)\rightarrow \cA=\SkCat_\cA(\DDout)$ which is the identity. Therefore, $\SkAlgint_\cA(\DDout)\cong \bu$. The quantum moment map $\mu\colon \cF\rightarrow \bu$ is the map $\SkAlgint_\cA(\Ann)\rightarrow \SkAlgint_\cA(\DDout)$ obtained by embedding $\Ann\hookrightarrow \DDout$. This embedding sends the skein $s_{V, W, f}$ (see \cref{fig:REA}) to a simple skein connecting $V$ and $W$ via $f$. Thus, the moment map in this case is simply the counit $\epsilon\colon \cF\rightarrow \bu$.
		
		Thus,
		\[\rZ_\cA(S^2)\cong \lmod{\bu}{\fr{\cA}}^{\str}.\]
		An object $M\in\fr{\cA}$ is a strongly equivariant $\bu$-module iff $\triv_r(M)$ has the trivial left $\cF$-module structure. By \cref{prop:trivialOqGbimodule} it is equivalent to the condition that $M$ lies in the M\"uger center of $\fr{\cA}$.
	\end{proof}
	
	Recall from \cref{def:HH} the notion of the zeroth Hochschild homology of a category.
	
	\begin{lemma}
		Let $\Sigma$ be a closed oriented surface. Then
		\[\Sk_\cA(\Sigma\times S^1)\cong \HH_0(\SkCat_\cA(\Sigma)).\]
		\label{lm:HHS1}
	\end{lemma}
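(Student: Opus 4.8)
The plan is to deduce this from Walker's skein TFT (\cref{thm:WalkerTFT}) together with the identification of $\HH_0$ with a categorical dimension (\cref{rmk:HHdimension}). The $3$-manifold $\Sigma\times S^1$ is obtained from the cylinder $\Sigma\times[0,1]$, regarded as a cobordism from $\Sigma$ to $\Sigma$, by gluing the outgoing boundary $\Sigma\times\{1\}$ to the incoming boundary $\Sigma\times\{0\}$ along $\mathrm{id}_\Sigma$. Now $\SkMod_\cA$ sends the cylinder cobordism to the identity $1$-morphism of $\SkCat_\cA(\Sigma)$ in $\Bimod$: indeed, by the very definition of the skein category, $\SkMod_\cA(\Sigma\times[0,1],X,Y) = \Hom_{\SkCat_\cA(\Sigma)}(X,Y)$, which is precisely the bimodule representing $\id_{\SkCat_\cA(\Sigma)}$.

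Next I would invoke the general yoga of topological field theories valued in a symmetric monoidal bicategory: closing up an endo-cobordism of $\Sigma$ corresponds to taking the trace in $\Bimod$ of the associated $1$-endomorphism of $\SkCat_\cA(\Sigma)$, and applied to the identity this trace is the dimension $\ev\circ\coev$ of the dualizable object $\SkCat_\cA(\Sigma)$, where $(\SkCat_\cA(\Sigma))^\vee = \SkCat_\cA(\Sigma)^{\op}$ with the canonical evaluation and coevaluation of \cref{ex:Homnondegenerate}. Hence
\[
\Sk_\cA(\Sigma\times S^1)\;\cong\; \mathrm{Tr}_{\Bimod}\big(\id_{\SkCat_\cA(\Sigma)}\big)\;=\;\dim_{\Bimod}\SkCat_\cA(\Sigma),
\]
and by \cref{rmk:HHdimension} the right-hand side is exactly $\HH_0(\SkCat_\cA(\Sigma))$.

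The one point that genuinely needs care is the compatibility of the topological gluing with the categorical trace, i.e. with the coend $\int^{X\in\SkCat_\cA(\Sigma)}\Hom_{\SkCat_\cA(\Sigma)}(X,X)$ computing $\ev\circ\coev$. Unwinding it amounts to the following: a skein in $\Sigma\times S^1$ can be isotoped transverse to a slice $\Sigma\times\{\mathrm{pt}\}$, cutting along which produces a skein in $\Sigma\times[0,1]$ both of whose ends carry the same $\cA$-labeling $X$ of $\Sigma$, i.e. an endomorphism of $X$ in $\SkCat_\cA(\Sigma)$; isotopies that slide strands across the slice impose exactly the coend relations, and the locality of the skein relations ensures they do not interact with the cut. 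This compatibility is subsumed in the functoriality and monoidality asserted by \cref{thm:WalkerTFT}, and is really the only obstacle in the proof.
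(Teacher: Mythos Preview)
Your proposal is correct and takes essentially the same approach as the paper: the paper reads the cylinder $\Sigma\times[0,1]$ as bordisms $\Sigma\coprod\rev{\Sigma}\to\varnothing$ and $\varnothing\to\rev{\Sigma}\coprod\Sigma$ to obtain $\ev$ and $\coev$, and then identifies $\Sk_\cA(\Sigma\times S^1)$ with the composite $\ev\circ\coev=\HH_0(\SkCat_\cA(\Sigma))$ via \cref{rmk:HHdimension}. Your trace-of-the-identity formulation is an equivalent repackaging of the same argument.
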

	\begin{proof}
		Considering the cylinder $\Sigma\times [0, 1]$ as a bordism $\Sigma\coprod \rev{\Sigma}\rightarrow \varnothing$, the relative skein module provides an evaluation pairing
		\[\ev\colon \SkCat_\cA(\Sigma)\otimes \SkCat_\cA(\rev{\Sigma})\longrightarrow \Vect.\]
		Similarly, considering the same cylinder as a bordism $\varnothing\rightarrow \rev{\Sigma}\coprod \Sigma$ we obtain a coevaluation pairing
		\[\coev\colon \SkCat_\cA(\rev{\Sigma})\otimes \SkCat_\cA(\Sigma)\longrightarrow \Vect.\]
		
		Thus, $\Sk_\cA(\Sigma\times S^1)$ is given by the categorical dimension of $\SkCat_\cA(\Sigma)$ which by \cref{rmk:HHdimension} coincides with the zeroth Hochschild homology.
	\end{proof}
	
	Let us now present some corollaries of the computation of the skein category of the sphere.
	
	\begin{corollary}
		For $q$ not a root of unity the $G$-skein module $\Sk_G(S^2\times S^1)$ is one-dimensional.
		\label{cor:skeinS2}
	\end{corollary}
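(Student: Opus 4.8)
The plan is to combine the identification of skein modules of mapping tori with Hochschild homology (\cref{lm:HHS1}) with the computation of the skein category of the sphere (\cref{prop:MugerS2}). Applying \cref{lm:HHS1} with $\Sigma = S^2$ gives an isomorphism $\Sk_G(S^2\times S^1)\cong \HH_0(\SkCat_G(S^2))$, so it suffices to prove that this Hochschild homology is one-dimensional.

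First I would rewrite the Hochschild homology as a categorical dimension. By \cref{rmk:HHdimension}, $\HH_0(\SkCat_G(S^2))$ is the categorical dimension of $\SkCat_G(S^2)$ regarded as a dualizable object of $\Bimod$. Since the free cocompletion functor $\fr{(-)}\colon \Bimod\to\PrL$ is symmetric monoidal, it preserves dualizable objects together with their dimensions, so $\HH_0(\SkCat_G(S^2))$ is equally the categorical dimension of $\fr{\SkCat_G(S^2)}$ computed in $\PrL$. Now \cref{prop:MugerS2} identifies $\fr{\SkCat_G(S^2)}$ with $\ZMug(\fr{\Repqfd(G)})$, and since for $q$ not a root of unity the finite-dimensional representations are exactly the compact projective objects of $\Repq(G)$, this is $\ZMug(\Repq(G))$. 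By \cref{prop:genericMugercenter} this M\"uger center is trivial, i.e.\ every object is a direct sum of the simple unit, so $\fr{\SkCat_G(S^2)}\cong \Vect$, the monoidal unit of $\PrL$. Finally, the categorical dimension of the unit object of any symmetric monoidal bicategory is the endomorphism object of its own unit, so $\dim_{\PrL}(\Vect)\cong \k$ is one-dimensional; assembling the chain of identifications gives $\Sk_G(S^2\times S^1)\cong \k$, as claimed.

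I do not expect any serious obstacle here, since the substance is entirely contained in \cref{lm:HHS1}, \cref{prop:MugerS2} and \cref{prop:genericMugercenter}. The only point worth pausing over is the passage between the Hochschild homology of the small category $\SkCat_G(S^2)$ and the categorical dimension of its cocompletion, which is just the invariance of dimensions of dualizable objects under the symmetric monoidal functor $\Bimod\to\PrL$, as recorded in \cref{rmk:HHdimension} (equivalently, one may note that $\HH_0$ is a Morita invariant and that $\fr{\SkCat_G(S^2)}\cong\Vect$ forces $\SkCat_G(S^2)$ to be Morita equivalent to the one-object $\k$-linear category, whose Hochschild homology is manifestly $\k$).
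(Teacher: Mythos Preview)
Your proposal is correct and follows essentially the same approach as the paper's proof, which combines \cref{lm:HHS1}, \cref{prop:MugerS2}, and \cref{prop:genericMugercenter}. You are in fact more careful than the paper about the passage from the small skein category to its free cocompletion when computing Hochschild homology; the paper's proof simply writes $\SkCat_G(S^2)\cong\Vect$ where strictly \cref{prop:MugerS2} only gives $\fr{\SkCat_G(S^2)}\cong\Vect$, but the intended argument is exactly the one you spell out.
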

	\begin{proof}
		For $q$ not a root of unity the M\"uger center of $\Repq(G)$ is trivial (see \cref{prop:genericMugercenter}), i.e.
		\[\ZMug(\Repq(G))\cong \Vect.\]
		Therefore, by \cref{prop:MugerS2} we get $\SkCat_G(S^2)\cong \Vect$ for $q$ not a root of unity. Thus, by \cref{lm:HHS1} $\Sk_G(S^2\times S^1)$ is one-dimensional.
	\end{proof}
	
	\begin{corollary}
		Let $N_1$ and $N_2$ be 3-manifolds. For $q$ not a root of unity we have
		\[\Sk_G(N_2\sharp N_1)\cong \Sk_G(N_2)\otimes \Sk_G(N_1).\]
		\label{cor:connectedsum}
	\end{corollary}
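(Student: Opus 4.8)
The plan is to realize the connected sum as a gluing along $S^2$ and feed it into \cref{cor:relativetensorproductMuger}. Write $\widetilde N_i$ for $N_i$ with a small open $3$-ball removed, so that $\partial \widetilde N_1 \cong S^2 \cong \partial \rev{\widetilde N}_2$ and, by definition of the connected sum, $N_2 \sharp N_1 = \widetilde N_2 \cup_{S^2} \widetilde N_1$. Fix a disk embedding $\DDc \hookrightarrow S^2$ and set $\DDout = S^2 \setminus \DDc$. Since $q$ is not a root of unity the M\"uger center of $\fr{\cA} = \Repq(G)$ is trivial (\cref{prop:genericMugercenter}), so \cref{cor:relativetensorproductMuger}, applied with $\Sigma = S^2$, gives
\[
\Sk_G(N_2\sharp N_1) \;\cong\; \Skint_G(\widetilde N_2) \rt{\SkAlgint_G(\DDout)} \Skint_G(\widetilde N_1) \;\in\; \ZMug(\fr{\cA}) \cong \Vect .
\]

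Next I would unwind the right-hand side using the analysis in the proof of \cref{prop:MugerS2}. There it is shown that $\SkAlgint_\cA(\DDout) \cong \bu$ (the internal skein algebra of a disk is trivial), so the relative tensor product over it is just the tensor product in $\fr{\cA}$, and that the equivalence $\rZ_\cA(S^2) \cong \lmod{\bu}{\fr{\cA}}^{\str}$ identifies strongly equivariant $\bu$-modules with objects of $\ZMug(\fr{\cA})$. The internal skein modules $\Skint_G(\widetilde N_i)$ are the images of the relative skein functors $\SkMod_G(\widetilde N_i)$, which are defined on all of $\SkCat_G(S^2)$ and hence strongly equivariant, so each lies in $\ZMug(\fr{\cA})$. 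For $q$ not a root of unity the functor $X \mapsto \Hom_{\fr{\cA}}(\bu, X)$ is a symmetric monoidal equivalence $\ZMug(\fr{\cA}) \xrightarrow{\sim} \Vect$, so it carries $\Skint_G(\widetilde N_2) \otimes \Skint_G(\widetilde N_1)$ to
\[
\Hom_{\fr{\cA}}(\bu, \Skint_G(\widetilde N_2)) \otimes \Hom_{\fr{\cA}}(\bu, \Skint_G(\widetilde N_1)) = \Sk_G(\widetilde N_2) \otimes \Sk_G(\widetilde N_1),
\]
using the general identity $\Hom_{\fr{\cA}}(\bu, \Skint_\cA(N)) \cong \Skint_\cA(N)(\bu) = \Sk_\cA(N)$.

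It then remains to record the elementary fact that $\Sk_G(\widetilde N_i) \cong \Sk_G(N_i)$: any $\cA$-colored ribbon graph in $N_i$, and any $3$-ball witnessing a skein relation, may be isotoped off the small removed ball, so the inclusion $\widetilde N_i \hookrightarrow N_i$ induces an isomorphism of skein modules. Chaining the three identifications yields $\Sk_G(N_2\sharp N_1)\cong \Sk_G(N_2)\otimes \Sk_G(N_1)$. I do not expect a serious obstacle: the substantive content is already contained in \cref{cor:relativetensorproductMuger} and \cref{prop:MugerS2}, and the remaining work is matching the connected-sum decomposition to their hypotheses and pushing the computation through the monoidal equivalence $\ZMug(\fr{\cA})\cong\Vect$; the one point needing care is the topological statement that deleting a ball does not change the skein module (essentially the observation underlying \cite{PrzytyckiConnectedSum}). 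An essentially equivalent route, avoiding internal skein algebras altogether, would instead note that $\SkCat_G(S^2)$ has free cocompletion $\Vect$, so every module over it is generated by invariants and \cref{prop:reltensorproductcategory} applies directly with $\End_{\SkCat_G(S^2)}(\bu) = \SkAlg_G(S^2) \cong k$.
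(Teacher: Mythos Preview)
Your argument is correct. Your primary route differs from the paper's in two places, though interestingly the alternative you mention at the end is exactly what the paper does.

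First, the paper works directly at the level of skein categories rather than passing to internal skein algebras: it invokes the TFT property to write $\Sk_G(N_2\sharp N_1)\cong \SkMod_G(N_2')\otimes_{\SkCat_G(S^2)}\SkMod_G(N_1')$, then uses $\fr{\SkCat_G(S^2)}\cong\Vect$ (from \cref{prop:MugerS2} and \cref{prop:genericMugercenter}) to see that every $\SkCat_G(S^2)$-module is generated by invariants, and applies \cref{prop:reltensorproductcategory} with $\End_{\SkCat_G(S^2)}(\bu)=\SkAlg_G(S^2)\cong k$. Your route through \cref{cor:relativetensorproductMuger} and the identification $\SkAlgint_G(\DDout)\cong\bu$ reaches the same tensor product $\Sk_G(\widetilde N_2)\otimes\Sk_G(\widetilde N_1)$ but packages the M\"uger-center input differently; it is a bit more roundabout, since you have to argue separately that the internal skein modules land in $\ZMug$ and then push through the monoidal equivalence $\ZMug(\fr{\cA})\cong\Vect$, whereas the paper's approach collapses all of that into the single observation that the skein category of $S^2$ is already $\Vect$.

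Second, for the step $\Sk_G(\widetilde N_i)\cong\Sk_G(N_i)$, the paper does \emph{not} invoke the isotopy-off-a-ball argument. Instead it bootstraps: it applies the just-proved formula $\Sk_G(N_2\sharp N_1)\cong\Sk_G(N_2')\otimes\Sk_G(N_1')$ to the special case $N_2=S^3$, using that $\Sk_G(B^3)\cong\SkAlg_G(\DD)$ is one-dimensional, to conclude $\Sk_G(N_1)\cong\Sk_G(N_1')$. This is slicker and stays entirely within the algebraic machinery already developed, whereas your isotopy argument, while standard, imports an extra topological lemma.
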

	\begin{proof}
		Let $B^3$ be the three-ball and denote $N_1' = N_1\setminus B^3$ and $N_2'=N_2\setminus B^3$. By the TFT property (\cref{thm:WalkerTFT}) we have
		\[\Sk_G(N_2\sharp N_1)\cong \SkMod_G(N_2')\otimes_{\SkCat_G(S^2)} \SkMod_G(N_1').\]
		
		By \cref{prop:MugerS2} and \cref{prop:genericMugercenter} $\fr{\SkCat_G(S^2)}\cong \Vect$. In particular, any $\SkCat_G(S^2)$-module is generated by invariants. Thus, by \cref{prop:reltensorproductcategory} we get
		\[\Sk_G(N_2\sharp N_1)\cong \Sk_G(N_2')\otimes \Sk_G(N_1').\]
		
		The skein module $\Sk_G(B^3)$ is isomorphic to the skein algebra $\SkAlg_G(\DD)$, which is one-dimensional. Therefore, applying the above formula for $N_2 = S^3$ we get
		\[\Sk_G(N_1)\cong \Sk_G(N_1').\]
		Thus,
		\[\Sk_G(N_2\sharp N_1)\cong \Sk_G(N_2)\otimes \Sk_G(N_1)\]
		as required.
	\end{proof}
	
	\subsection{Finite-dimensionality}
	\label{sec:finite-dimensionality}
	The goal of this section is to prove that the skein module of closed oriented 3-manifold is finite-dimensional for generic values of the quantization parameter.  Recall that $\Repqfd(G)$ as a ribbon category is defined over the ring $k=\Z[q^{1/d},q^{-1/d}])$ for some integer $d$.
	
	\begin{theorem}
		Let $M$ be a closed oriented 3-manifold. The $G$-skein module $\Sk_G(M)$ is a finite-dimensional $\Q(q^{1/d})$-vector space.
		\label{thm:witten}
	\end{theorem}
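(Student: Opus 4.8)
The plan is to assemble the pieces established in the preceding sections. First I would choose a Heegaard splitting $M = N_2\cup_\Sigma N_1$, where $N_1, N_2$ are genus $g$ handlebodies with common boundary $\Sigma = \Sigma_g$. Fix a disk embedding $\DDc\hookrightarrow\Sigma$ and set $\Sigma^* = \Sigma\setminus\DDc$. We work with $\cA = \Repqfd(G)$, and since $q$ is generic the M\"uger center of $\fr{\cA} = \Repq(G)$ is trivial by \cref{prop:genericMugercenter}. Applying \cref{cor:relativetensorproductMuger} (whose hypotheses are met), we identify
\[
\Sk_G(M) \cong \Skint_G(N_2)\rt{\SkAlgint_G(\Sigma^*)} \Skint_G(N_1)
\]
as a $\C(q^{1/d})$-vector space. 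So it remains to prove this relative tensor product is finite-dimensional.

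The second step is to descend to the formal setting in order to invoke the deformation-quantization machinery of \cref{sect:analysis}. Namely, take $k = \C\llbracket\hbar\rrbracket$ with $q = \exp(\hbar)$, and let $A = \SkAlgint_{\Repqfd(G)}(\Sigma^*)$ over this ring. By \cref{prop:fockrosly} (together with \cref{prop:handlecomb}), $A$ is a flat deformation quantization of $G^{2g}$ with the Fock--Rosly Poisson bracket, which is generically symplectic; its smooth symplectic locus is an open dense subscheme. By \cref{thm:handlebody}, the internal skein modules $\Skint(N_i)$ are, as $A$-modules, induced from $\bu$ along the $b$-cycle (resp.\ $a$-cycle) embedding $\SkAlgint(\Theta)\to A$; concretely they are flat deformation quantizations of the Lagrangian subschemes $G^g\hookrightarrow G^{2g}$ cut out by the $b$-cycles (resp.\ $a$-cycles), which lie in the symplectic locus. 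Thus $M_1 = \Skint(N_1)$ and $M_2 = \Skint(N_2)$ (the latter regarded as a right module via \cref{lm:reverseorientation}) satisfy exactly the hypotheses of \cref{thm:holonomictensorproduct}: complete $\hbar$-torsion-free deformation quantizations of smooth Lagrangians in a smooth affine symplectic scheme, over a complete $\hbar$-torsion-free quantization $A$ of $\O(G^{2g})$.

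The third step invokes \cref{thm:holonomictensorproduct} directly: the localization $(M_2\rt{A} M_1)[\hbar^{-1}]$ is a finite-dimensional $\C\llpar\hbar\rrpar$-vector space, say of dimension $N$. Finally I would transfer this bound back to generic $q$. The internal skein algebra and modules are defined over $\C[q^{1/d}, q^{-1/d}]$; base-changing along $\C[q^{1/d},q^{-1/d}]\to\C(q^{1/d})$ gives the relative tensor product computing $\Sk_G(M)$ over $\C(q^{1/d})$, while base-changing along $q^{1/d}\mapsto \exp(\hbar/d)$ (i.e.\ completing) and inverting $\hbar$ gives the $\C\llpar\hbar\rrpar$-vector space above. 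Since dimension of a finitely-presented module's generic fiber is upper semicontinuous and the tensor product commutes with these (flat, for the field extensions) base changes, the $\C(q^{1/d})$-dimension of $\Sk_G(M)$ is bounded above by $N$, hence finite. The main obstacle in writing this out carefully is the last transfer step — one must check that the relative tensor product over the generic field and the relative tensor product over $\C\llpar\hbar\rrpar$ are fibers of a single finitely generated module over $\C[q^{1/d},q^{-1/d}]$ (using Noetherianity from \cref{prop:handlecomb} and finite generation of the handlebody modules from \cref{lem:handlebodycyclic}), so that generic finite-dimensionality over $\C\llpar\hbar\rrpar$ forces generic finite-dimensionality over $\C(q^{1/d})$; everything else is a direct citation of results already proved.
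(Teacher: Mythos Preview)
Your approach is essentially the same as the paper's, and the architecture is right: Heegaard splitting, \cref{cor:relativetensorproductMuger}, identification of the internal skein algebra and handlebody modules as deformation quantizations via \cref{prop:fockrosly} and \cref{thm:handlebody}, then \cref{thm:holonomictensorproduct}. Two points deserve comment.

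First, the ``main obstacle'' you flag in the transfer step is not an obstacle at all, and your proposed semicontinuity argument is unnecessarily heavy. The map $\C(q^{1/d})\hookrightarrow\C\llpar\hbar\rrpar$ sending $q^{1/d}\mapsto\exp(\hbar/d)$ is a field extension, so the $\C(q^{1/d})$-dimension of $\Sk_G(M)$ \emph{equals} the $\C\llpar\hbar\rrpar$-dimension of its base change. The paper simply states this at the outset and then works entirely over $\C\llbracket\hbar\rrbracket$; there is no need to compare two separate base changes from $\C[q^{1/d},q^{-1/d}]$ or to invoke semicontinuity.

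Second, there are two verifications you gloss over that the paper carries out explicitly. You assert that the Lagrangians $G^g\subset G^{2g}$ lie in the symplectic locus, but in the setup of \cref{thm:holonomictensorproduct} ``Lagrangian'' means Lagrangian in an open symplectic leaf of the Poisson scheme, and this needs checking: the paper observes that the open leaf is $\mu^{-1}(G^*)$ for the moment map $\mu(x_1,y_1,\dots,x_g,y_g)=\prod_i[x_i,y_i]$, and that the handlebody Lagrangian (where the $b$-coordinates are $1$) maps to $1\in G^*$. You also need to check that $\SkAlgint(\Sigma^*)$ and $\Skint(H)$ are $\hbar$-complete and $\hbar$-torsion-free; the paper does this via the Peter--Weyl decomposition $\O_\hbar(G)\cong\bigoplus_V V^*\otimes V$ with each $V$ free of finite rank over $\C\llbracket\hbar\rrbracket$.
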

	\begin{proof}
		The dimension of the $\Q(q^{1/d})$-vector space $\Sk_G(M)\rt{\Z[q^{1/d}, q^{-1/d}]}\Q(q^{1/d})$ coincides with the dimension of the $\C\llpar\hbar\rrpar$-vector space $\Sk_G(M)\rt{\Z[q^{1/d}, q^{-1/d}]} \C\llpar\hbar\rrpar$, where $q = \exp(\hbar)$. Denote by $\Repfd_\hbar(G)$ the category of representations of the quantum group over $\k=\C\llbracket\hbar\rrbracket$, where each representation is a free $\k$-module of finite rank. From now on we will drop the subscript $\Repfd_\hbar(G)$ from our notations for skein modules and skein categories.
		
		Choose a Heegaard splitting of $M$. Then we get a closed oriented surface $\Sigma$ of genus $g$, a handlebody $H$ such that $\partial H\cong \Sigma$ and an orientation-preserving diffeomorphism $\sigma\colon \Sigma\rightarrow \Sigma$, so that
		\[M\cong \rev{H}\coprod_\Sigma H.\]
		
		Choose a disk embedding $\DDc\hookrightarrow \Sigma$ and let $\Sigma^*=\Sigma\setminus\DDc$. Without loss of generality we may assume that $\sigma$ restricts to an orientation-preserving diffeomorphism of $\Sigma^*$.
		
		Let $\Skint(H)$ be the internal skein module of $H$, which is a strongly equivariant left $\SkAlgint(\Sigma^*)$-module in $\fr{\cA}$.
		
		The diffeomorphism $\sigma\colon \Sigma^*\rightarrow \Sigma^*$ defines an automorphism of $\SkAlgint(\Sigma^*)$ (denoted by the same letter). Let $\Skint(\rev{H})$ be the internal skein module of $\rev{H}$, which is a strongly equivariant \emph{right} $\SkAlgint(\Sigma^*)$-module in $\fr{\cA}$. By \cref{cor:relativetensorproductMuger} we obtain an isomorphism
		\[\Sk(M)[\hbar^{-1}]\cong \sigma(\Skint(\rev{H}))\rt{\SkAlgint(\Sigma^*)} \Skint(H)[\hbar^{-1}].\]
		
		We will now apply the results of \cref{sect:analysis}. As a Poisson scheme we take $X = G^{2g}$ with the Fock--Rosly Poisson structure. Note that by \cite[Theorem 2.14, Proposition 4.3]{GaJS} the open symplectic leaf of $X$ is given by $\mu^{-1}(G^*)$, where the moment map $\mu\colon G^{2g}\rightarrow G$ is given by
		\[\mu(x_1,y_1,\dots, x_g,y_g) = \prod_i[x_i, y_i]\]
		and $G^*\subset G$ is the big Bruhat cell.
		
		By \cref{prop:fockrosly} $\SkAlgint(\Sigma^*)$ is a flat deformation quantization of $\O(X)$. As an object of $\Rep_\hbar(G)$, we may identify
		\[\SkAlgint(\Sigma^*)\cong \O_\hbar(G)^{\otimes 2g}.\]
		Since $\Rep_\hbar(G)$ is semisimple, we may identify
		\[\O_\hbar(G)\cong \bigoplus_V V^*\otimes V,\]
		where $V$ ranges over isomorphism classes of simple objects of $\Rep_\hbar(G)$. Since each $V$ is free of finite rank as a $k$-module, we conclude that $\O_\hbar(G)$ is $\hbar$-complete and has no $\hbar$-torsion. In a similar way, $\SkAlgint(\Sigma^*)$ is $\hbar$-complete and has no $\hbar$-torsion.
		
		By \cref{thm:handlebody} $\Skint(H)\cong (\O_\hbar(G))^{\otimes g}$. In particular, it is $\hbar$-complete and without $\hbar$-torsion. Moreover, it is a deformation quantization of $L_1 = G^g\subset G^{2g}$. The image of $L_1$ under the moment map is $1\in G$, so $L_1$ is contained in the open symplectic leaf of $G^{2g}$. As $L_1$ is coisotropic and half-dimensional, it is Lagrangian. In a similar way, $L_2 = \sigma(G^g)$ is also Lagrangian. We conclude that
		\[\sigma(\Skint(\rev{H}))\rt{\SkAlgint(\Sigma^*)} \Skint(H) [\hbar^{-1}]\]
		is a finite-dimensional $\C\llpar\hbar\rrpar$-vector space using \cref{thm:holonomictensorproduct}.
	\end{proof}
	
	\begin{corollary}
		The Kauffman bracket skein module $\Sk(M)$ is a finite-dimensional $\Q(A)$-vector space.
	\end{corollary}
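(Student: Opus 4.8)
The plan is to obtain this statement as the special case $G = \SL_2$ of \cref{thm:witten}. First I would recall from \cref{prop:KauffmanTL} that for $A$ not a root of unity there is an isomorphism of $\C[A, A^{-1}]$-modules
\[
\Sk(M) \cong \Sk_{\SL_2}(M),
\]
where $\Sk_{\SL_2}(M) = \Sk_{\Repqfd(\SL_2)}(M)$ is built from the ribbon category $\Repqfd(\SL_2)$, realized (as explained in \cref{sect:skeinexamples}) as the Cauchy completion of the Temperley--Lieb category $\TL$ with parameter $q = A^2$, equipped with the ribbon element $-A^{-3}$ on the generating object $[1]$. For $G = \SL_2$ one takes $d = 2$, so the ground field $\C(q^{1/d})$ appearing in \cref{thm:witten} is $\C(q^{1/2}) = \C(A)$.

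Then I would simply invoke \cref{thm:witten} with $G = \SL_2$: it asserts that $\Sk_{\SL_2}(M)$ is a finite-dimensional $\C(q^{1/2})$-vector space. Transporting this along the identification $\C(q^{1/2}) = \C(A)$ and along the isomorphism of \cref{prop:KauffmanTL}, we conclude that $\Sk(M) \otimes_{\C[A, A^{-1}]} \C(A)$ is finite-dimensional over $\C(A)$, which is precisely the assertion (matching the formulation of \cref{thm:maintheorem} in the introduction).

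There is essentially no obstacle here beyond bookkeeping: the one substantive point — that the ribbon structure on $\Repqfd(\SL_2)$ used throughout the paper agrees with the Temperley--Lieb diagrammatics under the substitution $q = A^2$ — is already settled in \cref{sect:skeinexamples}, and granting it the corollary is a pure specialization of the main theorem.
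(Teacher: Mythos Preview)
Your proposal is correct and follows essentially the same approach as the paper: invoke \cref{prop:KauffmanTL} to identify $\Sk(M)\cong \Sk_{\SL_2}(M)$ with $q=A^2$, then apply \cref{thm:witten} for $G=\SL_2$. Your added bookkeeping that $d=2$ forces $\C(q^{1/d})=\C(A)$ makes explicit a step the paper leaves implicit.
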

	\begin{proof}
		By \cref{prop:KauffmanTL} we may identify $\Sk(M)\cong \Sk_{\SL_2}(M)$ as $\Q(A)$-modules, where $q=A^2$. The claim then follows from \cref{thm:witten}.
	\end{proof}
	
	\section{Discussion}
	\label{sect:discussion}
	
	In this section we collect some remarks about how our results fit in the context of topological field theory, character theory and instanton Floer homology for complex groups. We then discuss an approach for the computation of skein modules using computer algebra.
	
	\subsection{Topological field theory}
	\label{sect:TFT}
	
	In this paper we have used Walker's skein 3-2 TFT for $\cA$ a ribbon category to decompose $\cA$-skein modules on 3-manifolds in terms of a Heegaard splitting. Let us mention some related topological field theories.
	
	\begin{enumerate}[wide, labelindent=0pt]
		\item Walker's skein TFT for an arbitrary ribbon category is not defined on general 4-manifolds (however, the main result of this paper, \cref{thm:witten}, is that it is defined on 4-manifolds of the form $S^1\times M^3$). If we take $\cA$ to be a \emph{modular tensor category}, the theory becomes the Crane--Yetter--Kauffman TFT \cite{CYK}. In fact, in the modular case the TFT is invertible, i.e. it assigns nonzero numbers to closed 4-manifolds, lines to closed 3-manifolds and so on. For example, one may take the modular tensor category associated to the quantum group $\Uq\g$ at a root of unity. In that setting, the 4-dimensional Crane--Yetter--Kauffman TFT carries a boundary theory given by the 3-dimensional Witten--Reshetikhin--Turaev TFT, a mathematical incarnation of Chern--Simons theory for the compact form of $G$. It seems natural to view Walker's TFT associated to the ribbon category $\Repq(G)$ with $q$ generic in the context of analytically continued Chern-Simons theory as discussed in \cite{WittenAnalytic}.
		
		\item The work \cite{BJS} constructs a 3-2-1-0 TFT for an arbitrary rigid braided tensor category. It is conjectured there that for semi-simple ribbon categories their construction coincides with Walker's 3-2 TFT. This conjecture is now proved at the level of surfaces in \cite{Cooke}. However it still remains to compare the functors defined in \cite{BJS} via invocation of the cobordism hypothesis with the concrete formulas from relative skein modules, and it also remains to exhibit Walker's skein category approach as defining a fully local 3-2-1-0 TFT (in which case one might hope to invoke the uniqueness statement in the cobordism hypothesis). We expect that the techniques of blob homology \cite{MorrisonWalker} and the $\beta$ version of factorization homology \cite{AyalaFrancisRozenblyum} might be useful to construct such an extension. We regard these as interesting directions of future inquiry. Note that an arbitrary ribbon category is \emph{not} 4-dualizable, so it does not define a fully extended 4-dimensional TFT. To see this, consider the case $\cA=\Repqfd(\SL_2)$. Then $\SkCat_{\SL_2}(T^2)$ is not 2-dualizable since $\SkAlg(T^2)\cong \Hom_{\SkCat_{\SL_2}(T^2)}(\bu, \bu)$ is infinite-dimensional.
		
		\item One may also consider the derived version of the TFT defined in \cite{BJS} which to a point assigns a version of the derived category of representations of the quantum group. We believe that it is still 3-dualizable, so it should assign complexes to closed 3-manifolds which one may view as ``derived skein modules''. However, we expect that for generic $q$ the derived skein modules are unbounded complexes (i.e. infinite-dimensional), as opposed to the non-derived version.
		
		\item \label{characterTFT} Compactifying the 3-2-1-0 TFT for $\Repq(G)$ on the circle, we obtain a 2-1-0 TFT which assigns to the point $\HC_q(G)$, the monoidal category of $q$-Harish-Chandra bimodules (we refer to this as the \emph{$q$-$G$-character theory}). This theory has a degeneration (the \emph{$G$-character theory}) where we replace $\HC_q(G)$ by $\HC(G)$, the monoidal category of Harish-Chandra bimodules. The derived version of this TFT was studied in \cite{BZN, BZGN}. See \cref{sec:charactertheory} below for further details.
		
		\item Kapustin and Witten \cite{KapustinWitten} have studied a topological twist (first described by Marcus in \cite{Marcus}) which is parametrized by a number $t\in\CP^1$ of the 4d $\cN=4$ supersymmetric Yang--Mills theory (for a compact form of a complex simple simply-connected group $G$) with complexified coupling constant $\tau$. They have shown that the corresponding topological field theory only depends on a combination of $t$ and $\tau$
		\[\Psi = \frac{\tau+\overline{\tau}}{2} + \frac{\tau-\overline{\tau}}{2}\left(\frac{t-t^{-1}}{t+t^{-1}}\right)\in\CP^1\]
		and that the $S$-duality in the Yang--Mills theory after the twist is related to the geometric Langlands duality. We refer to \cite{ElliottYoo} for a study of the spaces of classical solutions in this topological field theory from the perspective of derived algebraic geometry. We expect that for generic $\Psi$ the space of states on a 3-manifold $M$ in this TFT is related to the derived $G$-skein module for $q=\exp\left(\frac{\pi i}{\Psi r_G}\right)$, where $r_G$ is the lacing number (the ratio of the norm squared of the long root to that of the short root).
		
		\item Vafa and Witten \cite{VafaWitten} have studied another topological twist (first described by Yamron in \cite{Yamron}) of the $\cN=4$ supersymmetric 4d Yang--Mills theory. The restriction of the Vafa--Witten TFT and the $t=0$ Kapustin--Witten TFT to 3-manifolds coincide (see \cite[Section 5.3]{Setter} and \cite[Example 4.32]{ElliottSafronov}), so the previous remark applies to the Vafa--Witten theory as well.
	\end{enumerate}
	
	\subsection{Character theory}\label{sec:charactertheory}
	It was shown in \cite{BZGN} that the assignment of the derived category of $G$-Harish-Chandra bimodules to a point defines an oriented 2-1-0 TFT which to a closed 2-manifold $\Sigma$ assigns the Borel--Moore homology of the $G$-character stack. By the remarks in \cref{characterTFT} above one may thus consider the skein module $\Sk_G(\Sigma \times S^1)$ as a $q$-deformation of the zeroth Borel-Moore homology of $\Loc_G(\Sigma)$. It is natural to apply the same methods to study both invariants. 
	
	An interesting feature of the $G$-character theory (respectively $q$-$G$-character theory) is that it admits an additional continuous parameter, arising from the spectrum of the commutative algebra $\U(\g)^G$ (respectively $\O_q(G)^G$) acting on Harish-Chandra bimodules. For example, fixing the generalized eigenvalues in the character theory to $0 \in \h/W = \Spec(\U(\g)^G)$ gives a theory (the \emph{unipotent} character theory) which assigns the finite Hecke category to a point and the category of Lusztig's unipotent character sheaves to a circle. Studying the $q$-analogue of these objects is an interesting area for further study (see \cite{GJV} for a discussion of $q$-character sheaves). 
	
	The theories obtained by fixing the eigenvalues in the character theory appear to enjoy an extra degree of finiteness. One indicator of this is given by the truncated 2-sided cell fusion categories of \cite{LusztigCells,BFO} (note that fusion categories define 3-2-1-0 TFTs \cite{DSPS}). Understanding the character theory as a family over this space of parameters is the subject of an ongoing project of the first author with David Ben-Zvi. An appropriate $q$-analogue of these ideas suggests that one can compute the dimensions of the skein modules for manifolds of the form $\Sigma \times S^1$ using the $3$-manifold invariants associated to certain fusion categories. It would be interesting to compare these predictions with the lower bounds for the dimensions of such skein modules given by Gilmer and Masbaum \cite{GilmerMasbaum}.
	
	\subsection{Complexified instanton Floer homology}\label{sec:complexified-instanton-floer-homology}
	
	Let $\Sigma$ be a closed oriented surface. Then the skein algebra $\SkAlg(\Sigma)$ is a deformation quantization of the Goldman (equivalently, Atiyah--Bott or Fock--Rosly) Poisson structure on the character variety $\underline{\Loc}_{\SL_2}(\Sigma)$ \cite{TuraevSkein,BFKB}, so that $\SkAlg(\Sigma)|_{A=-1}\cong \O(\underline{\Loc}_{\SL_2}(\Sigma))$. In a similar way, we may view the skein category $\SkCat(\Sigma)$ as a deformation quantization of the $0$-shifted symplectic structure \cite{PTVV} on the character \emph{stack} $\Loc_{\SL_2}(\Sigma)$, so that $\fr{\SkCat(\Sigma)}|_{A=-1}\cong \QCoh(\Loc_{\SL_2}(\Sigma))$.
	
	Now consider a Heegaard splitting $M = N_2\cup_{\Sigma} N_1$ of a closed oriented 3-manifold. The character stack $\Loc_{\SL_2}(M)$ in this case has a $(-1)$-shifted symplectic structure and the restriction maps
	\[\Loc_{\SL_2}(N_1), \Loc_{\SL_2}(N_2)\longrightarrow \Loc_G(\Sigma)\]
	are $0$-shifted Lagrangian, so that we have a derived Lagrangian intersection
	\[\Loc_{\SL_2}(M)\cong \Loc_{\SL_2}(N_2)\times_{\Loc_{\SL_2}(\Sigma)} \Loc_{\SL_2}(N_1).\]
	
	It was shown by Bullock \cite{BullockRings} and Przytycki and Sikora \cite{PrzytyckiSikora} that the $A=-1$ specialization of the skein module $\Sk(M)$ is isomorphic to the algebra of functions $\O(\Loc_{\SL_2}(M))$ on the character variety (equivalently, character stack). Passing to the derived level, we may view the \emph{derived} skein module as a BV quantization \cite[Section 7]{CostelloGwilliam} of the $(-1)$-shifted symplectic structure on the character stack $\Loc_{\SL_2}(M)$.
	
	One model of such a BV quantization is constructed by Ben-Bassat, Brav, Bussi and Joyce \cite{BBBBJ} given the choice of \emph{orientation} of $\Loc_{\SL_2}(M)$ (which is automatic in our context). Namely, using their results one may construct a perverse sheaf $\cP^\bullet(M)$ on the classical stack $t_0(\Loc_{\SL_2}(M))$. We expect that the hypercohomology of this perverse sheaf is closely related to derived skein modules (in fact, we expect such a relationship to hold for any $G$).
	
	A version of this approach was realized by Abouzaid and Manolescu \cite{AbouzaidManolescu}. Namely, they consider a subset $\underline{\Loc}_{\SL_2}^{irr}(\Sigma)\subset \underline{\Loc}_{\SL_2}(\Sigma)$ of irreducible local systems, which is a complex symplectic manifold. The image of the character varieties of handlebodies $\underline{\Loc}_{\SL_2}(L_1)$ and $\underline{\Loc}_{\SL_2}(L_2)$ in $\underline{\Loc}_{\SL_2}(\Sigma)$ then define Lagrangian subvarieties $L_1, L_2\subset \underline{\Loc}_{\SL_2}^{irr}(\Sigma)$, so that
	\[\underline{\Loc}_{\SL_2}^{irr}(M)\cong L_2\cap L_1.\]
	
	Given two Lagrangians $L_1, L_2$ in a complex symplectic manifold $X$, Bussi \cite{Bussi} has constructed a perverse sheaf on $t_0(L_2\times_X L_1) = L_2\cap L_1$ which is equivalent to the perverse sheaf of \cite{BBBBJ} on the derived Lagrangian intersection $L_2\times_X L_1$. Using these results Abouzaid and Manolescu have constructed a perverse sheaf $P^\bullet(M)$ on $\underline{\Loc}_{\SL_2}^{irr}(M)$ which they have shown is independent of the Heegaard splitting of $M$. The relationship between the hypercohomologies of $P^\bullet(M)$ and $\cP^\bullet(M)$ may thus be viewed as an $\SL(2, \C)$-version of the Atiyah--Floer conjecture. 
	Abouzaid and Manolescu also define a framed version $HP^\bullet_\sharp(M)$ of their construction, corresponding to the derived intersection of Lagrangians inside the representation variety of the punctured surface. In forthcoming work of the first and third named authors, we will establish an equivalence between $HP_\sharp(M)$ and the derived tensor product of internal skein modules as explained in \cref{rem:derived}.
	
	\subsection{Computer algebra}\label{sec:comp-alg} Computers perform remarkably well as algebraists.  As topologists, less so.  The essential ``3-dimensionality" in the definition of skein modules makes it very difficult to use computer algebra to study them: computer algebra packages are well equipped to work in \emph{one dimension} -- that is, to computing with non-commutative associative algebras and their modules, bimodules, etc, but how does one program into a computer a vector space spanned by links in a 3-manifold?
	
	The relative tensor product formula of \cref{cor:tensorproduct} provides a relatively straightforward and elementary algebraic ``one-dimensional" algorithm for computing skein modules, as well as a theoretical proof that said algorithm terminates.  To illustrate this and in order to generate new conjectures about skein modules and their dimensions, we have written a program in MAGMA to implement this algorithm. We have uploaded the source code here:  \url{http://www.maths.ed.ac.uk/~djordan/skeins}.
	
\providecommand{\bysame}{\leavevmode\hbox to3em{\hrulefill}\thinspace}
\providecommand{\MR}{\relax\ifhmode\unskip\space\fi MR }
\providecommand{\MRhref}[2]{%
	\href{http://www.ams.org/mathscinet-getitem?mr=#1}{#2}
}
\providecommand{\href}[2]{#2}

\end{document}